\documentclass[10pt]{amsart}
\usepackage{amssymb}
\usepackage{bm}
\usepackage{graphicx}
\usepackage[centertags]{amsmath}
\usepackage{amsfonts}
\usepackage{amsthm}
\usepackage{amsbsy}
\usepackage{mathtools}
\usepackage{mathrsfs}
\usepackage{cases}
\usepackage[all]{xy}
\usepackage[linktocpage=true]{hyperref}
\usepackage{hyperref}
\hypersetup{
    colorlinks=true,
    linkcolor=blue,
    filecolor=blue,
    citecolor=blue,
    urlcolor=cyan}
%---------
\usepackage[margin=4cm]{geometry}
%---------
\linespread{1.18}

%-----------------------THEOREM Environment-------------------------%

\newtheorem{thm}{Theorem}[section]
\newtheorem{cor}[thm]{Corollary}
\newtheorem{lem}[thm]{Lemma}
\newtheorem{sbl}[thm]{Sublemma}
\newtheorem{prop}[thm]{Proposition}
\newtheorem{claim}[thm]{Claim}
\newtheorem{fact}[thm]{Fact}
\newtheorem{observation}[thm]{Observation}

\newtheorem{defn}[thm]{Definition}

\theoremstyle{remark}
\newtheorem{rem}[thm]{Remark}
\newtheorem{examp}[thm]{Example}

%-----
\newcommand{\rr}{\mathbb{R}}
\newcommand{\nn}{\mathbb{N}}
\newcommand{\ee}{\varepsilon}

\newcommand{\meg}{\geqslant}
\newcommand{\mik}{\leqslant}
\newcommand{\ave}{\mathbb{E}}
\newcommand{\prob}{\mathbb{P}}

\newcommand{\bbx}{\boldsymbol{X}}
\newcommand{\xtensor}{\boldsymbol{X}=\langle X_i: i\in [n]^d\rangle}
\newcommand{\bbth}{\boldsymbol{\theta}}
\newcommand{\thtensor}{\boldsymbol{\theta}=\langle \theta_i: i\in [n]^d\rangle}
\newcommand{\extension}{\sqsubseteq}
\newcommand{\osc}{\mathrm{osc}}

%-----

%-----
\newcommand{\seminorm}[1]{{\left\vert\kern-0.25ex\left\vert\kern-0.25ex\left\vert #1
    \right\vert\kern-0.25ex\right\vert\kern-0.25ex\right\vert}}
%-----
\setcounter{tocdepth}{1}
%-----

\begin{document}

\title[Anticoncentration and Berry--Esseen bounds for random tensors]{Anticoncentration
and Berry--Esseen bounds for random tensors}

\author{Pandelis Dodos and Konstantinos Tyros}

\address{Department of Mathematics, University of Athens, Panepistimiopolis 157 84, Athens, Greece}
\email{pdodos@math.uoa.gr}

\address{Department of Mathematics, University of Athens, Panepistimiopolis 157 84, Athens, Greece}
\email{ktyros@math.uoa.gr}

\thanks{2010 \textit{Mathematics Subject Classification}: 62E17, 60F05.}
\thanks{\textit{Key words}: random tensors, exchangeability, Berry--Esseen bounds, anticoncentration,
combinatorial central limit theorem, Stein's method, polynomials of boolean random variables.}

%------------------------Abstract-------------------------------%

\begin{abstract}
We obtain estimates for the Kolmogorov distance to appropriately chosen gaussians, of linear functions
\[ \sum_{i\in [n]^d} \theta_i X_i \]
of random tensors $\bbx=\langle X_i:i\in [n]^d\rangle$ which are symmetric and exchangeable, and
whose entries have bounded third moment and vanish on diagonal indices. These estimates are expressed
in terms of intrinsic (and easily computable) parameters associated with the random tensor $\bbx$
and the given coefficients $\langle \theta_i:i\in [n]^d\rangle$, and they are optimal in various regimes.

The key ingredient---which is of independent interest---is a combinatorial CLT for high-dimensional
tensors which provides quantitative non-asymptotic normality under suitable conditions, of statistics
of the form
\[ \sum_{(i_1,\dots,i_d)\in [n]^d} \boldsymbol{\zeta}\big(i_1,\dots,i_d,\pi(i_1),\dots,\pi(i_d)\big) \]
where $\boldsymbol{\zeta}\colon [n]^d\times [n]^d\to\rr$ is a deterministic real tensor,
and $\pi$ is a random permutation uniformly distributed on the symmetric group $\mathbb{S}_n$.
Our results extend, in any dimension $d$, classical work of Bolthausen who covered
the one-dimensional case, and more recent work of Barbour/Chen who treated the
two-dimensional case.
\end{abstract}

\maketitle

\tableofcontents

%-------------------------------------------------------------%
%                         Introduction                        %
%-------------------------------------------------------------%

\part{Introduction} \label{part1}

%-Normal approximation of linear functions of random tensors--%

\section{Normal approximation of linear functions of random tensors} \label{sec1}

\numberwithin{equation}{section}

\subsection{Framework} \label{subsec1.1}

We will be working with tensors, and as such it is useful to begin by introducing
some notation. In what follows, let $n,d$ be positive integers with $n\meg 2d$.

We denote by $[n]^d$ the set of all sequences of length $d$ taking values in the discrete
interval $[n]\coloneqq \{1,\dots,n\}$. We view, however, every element of $[n]^d$ also as
a function from $[d]$~to~$[n]$. Thus, for every $i\in [n]^d$ and every $r\in [d]$ by $i(r)$
we denote the $r$-th element of $i$; moreover, if $s$ is a positive integer with $s\mik d$
and $p\in [n]^s$, then we write $p\extension i$ to denote the fact that $i$ extends $p$,
that is, if $p(r)=i(r)$ for every $r\in [s]$. We also denote by~$[n]^d_{\mathrm{Inj}}$
the set of all one-to-one functions in $[n]^d$ (equivalently, the set of all finite
sequences in~$[n]^d$ with distinct entries).

\subsubsection{\!} \label{subsubsec1.1.1}

Our objects of study are random tensors $\xtensor$ whose entries satisfy
\[ \ave[X_i]=0, \ \ \ \ \ave[X_i^2]\mik 1 \ \ \ \text{ and } \ \ \ \ave\big[|X_i|^3\big]<\infty, \]
and which are
\begin{enumerate}
\item[---] \textit{symmetric}, that is, $X_{(i_1,\dots,i_d)}=X_{(i_{\tau(1)},\dots,i_{\tau(d)})}$
for every $(i_1,\dots,i_d)\in [n]^d$ and every permutation $\tau$ of $[d]$, and
\item[---] \textit{exchangeable}, that is, for every permutation $\pi$ of $[n]$ the random
tensors $\bbx$ and $\bbx_\pi\coloneqq\langle X_{\pi\circ i}: i\in [n]^d\rangle$ have the same distribution.
\end{enumerate}
(Here and in the rest of this paper, we set $\pi\circ i\coloneqq\big(\pi(i_1),\dots,\pi(i_d)\big)\in [n]^d$
for every $i=(i_1,\dots,i_d)\in [n]^d$ and every permutation $\pi$ of $[n]$.)
This is, arguably, a large class of random tensors which encompasses the following examples.
\medskip

\noindent $\bullet$ It includes random tensors of the form $X_i=h(\xi_{i(1)},\dots,\xi_{i(d)})$,
where $(\xi_k)$ is a sequence of i.i.d. random variables which take values in a measurable space $\mathcal{E}$,
and $h\colon\mathcal{E}^d\to\rr$ is a measurable symmetric function; see \cite{Ald83,Kal05}.
These random tensors are, of course, ubiquitous in probability and statistics.
\medskip

\noindent $\bullet$ It also includes random tensors of the form
$X_i=\prod_{\ell=1}^d \zeta_{i(\ell)} - \ave\big[ \prod_{\ell=1}^d \zeta_{i(\ell)}\big]$,
where $(\zeta_1,\dots,\zeta_n)$ is an exchangeable random vector which take values in $[0,1]^n$.
An important special case of this class of examples is obtained by considering boolean random vectors
$(\zeta_1,\dots,\zeta_n)$ which are uniformly distributed on a ``slice"
$\binom{[n]}{k}\coloneqq \{A\subseteq [n]: |A|=k\}$; see, \textit{e.g.}, \cite{FKMW18}.

\begin{rem}
Note that the class of symmetric and exchangeable random tensors  is closed under mixtures.
This is a basic property which is significant both from a theoretical as well as an applied point of view.
\end{rem}

\subsubsection{ } \label{subsubsec1.1.2}

Now let $\thtensor$ be a (not necessarily symmetric) deterministic real tensor whose diagonal terms
vanish---that is, $\theta_i=0$ if $i\notin [n]^d_{\mathrm{Inj}}$---and consider the random variable
\begin{equation} \label{e1.1}
\langle \bbth,\bbx\rangle \coloneqq \sum_{i\in [n]^d} \theta_i X_i.
\end{equation}
Our goal is to estimate the quantity
\begin{equation} \label{e1.2}
d_K\big(\langle \bbth,\bbx\rangle,\mathcal{N}(0,\sigma^2)\big)
\end{equation}
where $\sigma^2$ denotes the variance of $\langle \bbth,\bbx\rangle$ and $\mathcal{N}(0,\sigma^2)$
denotes the normal random variable with zero mean and variance $\sigma^2$; here, and in the rest of this paper,
for a pair $X,Y$ of real-valued random variables, by $d_K(X,Y)$ we denote their \textit{Kolmogorov distance}
\begin{equation} \label{e1.3}
d_K(X,Y)\coloneqq\sup_{x\in\rr} \big|\prob(X\mik x)-\prob(Y\mik x)\big|.
\end{equation}
Notice that, since $\bbx$ is symmetric, we may write
\begin{equation} \label{e1.4}
\langle \bbth,\bbx\rangle =\langle \boldsymbol{a}, \boldsymbol{Y}\rangle
\end{equation}
where $\boldsymbol{a}=\langle a_i: i\in [n]^d\rangle$ is a symmetric real tensor whose diagonal
terms vanish, and $\boldsymbol{Y}=\langle Y_i:i\in [n]^d\rangle$ is the random tensor defined by
setting $Y_i=X_i$ if $i\in [n]^d_{\mathrm{Inj}}$, and $Y_i=0$ otherwise. (In particular, $\boldsymbol{Y}$
is also symmetric and exchangeable.) Thus, in what follows, we make the following basic assumptions
on $\bbx$ and $\bbth$.
\begin{enumerate}
\item[($\mathcal{A}1$)] \label{A1} We have $\ave[X_i]=0$, $\ave[X_i^2]\mik 1$
and $\ave\big[|X_i|^3\big]<\infty$ for every $i\in [n]^d$.
\item[($\mathcal{A}2$)] \label{A2} The random tensor $\bbx$ is symmetric, exchangeable
and its diagonal terms vanish.
\item[($\mathcal{A}3$)] \label{A3} The real tensor $\bbth$ is symmetric
and its diagonal terms vanish.
\end{enumerate}

\subsection{Relevant parameters} \label{subsec1.2}

We shall estimate the distance in \eqref{e1.2} using some intrinsic parameters which are associated
with $\bbth$ and $\bbx$ respectively.

\subsubsection{\!} \label{subsubsec1.2.1}

First, for every $s\in \{0,1,\dots,d\}$ we set
\begin{equation} \label{e1.5}
\seminorm{\bbth}_s\coloneqq \Big( \sum_{j\in[n]^s}\big(\!\sum_{j\sqsubseteq i\in[n]^d} \!
\theta_i \big)^2 \Big)^{1/2}
\end{equation}
with the convention that the first sum vanishes if $s=0$. Thus, $\seminorm{\cdot}_s$ is a seminorm
which interpolates between the summing seminorm ($s=0$) and the euclidean\footnote{In this context,
the euclidean norm is also referred to as the Hilbert--Schmidt norm.}  norm ($s=d$).

Moreover, for every $s\in [d]$ we set
\begin{equation} \label{e1.6}
\delta_s=\delta_s(\bbx)\coloneqq \ave[X_{(1,\dots,d)}X_{(1,\dots,s,d+1,\dots,2d-s)}]
\end{equation}
and
\begin{equation} \label{e1.7}
\delta_0=\delta_0(\bbx)\coloneqq \ave[X_{(1,\dots,d)}X_{(d+1,\dots,2d)}].
\end{equation}
Notice that, since $\bbx$ is exchangeable, the quantities $\delta_0,\delta_1,\dots,\delta_d$
completely determine the correlation matrix of $\bbx$.

In order to see the relevance of the parameters introduced so far, we shall use them to
compute the variance of $\langle \bbth,\bbx\rangle$; the proof is given in Subsection \ref{subsec8.2}.
\begin{prop} \label{p1.2}
Let $\bbx,\bbth$ which satisfy \emph{(\hyperref[A1]{$\mathcal{A}$1})},
\emph{(\hyperref[A2]{$\mathcal{A}$2})} and \emph{(\hyperref[A3]{$\mathcal{A}$3})}.
Then we have
\begin{equation} \label{e1.8}
\mathrm{Var}\big(\langle \bbth,\bbx\rangle\big)=
\sum_{s=0}^{d} \binom{d}{s}^2\, s!\, \Big( \sum_{t=0}^s \binom{s}{t} (-1)^{s-t}\,\delta_t\Big)\,
\seminorm{\boldsymbol{\theta}}_s^2.
\end{equation}
\end{prop}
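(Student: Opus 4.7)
The plan is to compute $\mathrm{Var}(\langle \bbth,\bbx\rangle) = \sum_{i_1,i_2 \in [n]^d} \theta_{i_1}\theta_{i_2}\, \ave[X_{i_1}X_{i_2}]$ directly, then reorganize according to the size of the overlap of the two index tuples, and match the resulting sums with the seminorms $\seminorm{\bbth}_s$ via a Möbius inversion. Since $\bbth$ has vanishing diagonal terms, only $i_1,i_2 \in [n]^d_{\mathrm{Inj}}$ contribute. Symmetry and exchangeability of $\bbx$ imply that for such a pair the correlation $\ave[X_{i_1}X_{i_2}]$ depends only on $t = |\mathrm{range}(i_1)\cap\mathrm{range}(i_2)|$, and one checks that this common value equals $\delta_t$ (for $t \in \{0,1,\dots,d\}$) as defined in \eqref{e1.6}--\eqref{e1.7}. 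Writing $P_t \coloneqq \sum_{|\mathrm{range}(i_1)\cap\mathrm{range}(i_2)|=t} \theta_{i_1}\theta_{i_2}$, this yields $\mathrm{Var}(\langle \bbth,\bbx\rangle) = \sum_{t=0}^{d} \delta_t\, P_t$.

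The key step is now to express $P_t$ in terms of the seminorms. I would introduce the auxiliary quantity
\[
R_s \coloneqq \sum_{\substack{S\subseteq[n]\\|S|=s}} \Big(\sum_{\substack{i\in[n]^d_{\mathrm{Inj}}\\ S\subseteq\mathrm{range}(i)}} \theta_i\Big)^2,
\]
which counts pairs weighted by $\binom{|\mathrm{range}(i_1)\cap\mathrm{range}(i_2)|}{s}$, giving $R_s = \sum_{t\geq s}\binom{t}{s} P_t$. By Möbius inversion this inverts to
\[
P_t = \sum_{s\geq t}(-1)^{s-t}\binom{s}{t}\, R_s.
\]

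Next I would rewrite $R_s$ using the symmetry of $\bbth$: for an injective tuple $i$ with range $R$, the value $\theta_i$ depends only on $R$ (call it $\theta_R$), so the inner sum equals $d!\sum_{R\supseteq S,\,|R|=d}\theta_R$, giving $R_s = (d!)^2 \sum_{|S|=s}(\sum_{R\supseteq S}\theta_R)^2$. A parallel symmetry argument applied to \eqref{e1.5}, fixing an injective prefix $j\in[n]^s_{\mathrm{Inj}}$ and summing over its $(d-s)!$ extensions within each range $R\supseteq\mathrm{range}(j)$, gives
\[
\seminorm{\bbth}_s^2 = s!\,\big((d-s)!\big)^2\!\sum_{|S|=s}\Big(\sum_{R\supseteq S,\,|R|=d}\theta_R\Big)^2.
\]
Comparing the two identities yields $R_s = \binom{d}{s}^2 s!\,\seminorm{\bbth}_s^2$.

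Substituting back, $\mathrm{Var}(\langle \bbth,\bbx\rangle) = \sum_{t} \delta_t P_t = \sum_{s} R_s \sum_{t\leq s}(-1)^{s-t}\binom{s}{t}\delta_t$, which is precisely \eqref{e1.8}. The only real obstacle is the bookkeeping in the symmetry reductions that turn both $R_s$ and $\seminorm{\bbth}_s^2$ into the same quantity $\sum_{|S|=s}(\sum_{R\supseteq S}\theta_R)^2$ up to the combinatorial factor; once that is clean, everything else is Möbius inversion of a triangular system.
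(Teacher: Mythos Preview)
Your proposal is correct and follows essentially the same route as the paper: expand the variance, group pairs $(i_1,i_2)$ by the size of the overlap of their ranges, pass to unordered sets via the symmetry of $\bbth$, and invert the resulting triangular system to express the overlap sums in terms of the $\seminorm{\bbth}_s$. The only notable difference is in the inversion step: you apply the standard binomial M\"obius inversion $R_s=\sum_{t\geq s}\binom{t}{s}P_t \Leftrightarrow P_t=\sum_{s\geq t}(-1)^{s-t}\binom{s}{t}R_s$ directly, whereas the paper solves the recurrence $\kappa_s=\lambda_s-\sum_{r>s}\binom{r}{s}\kappa_r$ by introducing auxiliary coefficients $\gamma_{s,r}$ (defined by a path-counting formula) and then proving separately that $\gamma_{s,r}=(-1)^{r-s}\binom{r}{s}$. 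Your shortcut is cleaner for this isolated proposition; the paper's longer route pays off because the same $\gamma_{s,r}$ machinery is reused in Section~\ref{sec9} to control the weights $w_{s,r}$ in the Hoeffding decomposition.
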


\subsubsection{\!} \label{subsubsec1.2.2}

Next, for every $s\in \{0,1,\dots,d\}$ set
\begin{equation} \label{e1.9}
\Sigma_s=\Sigma_s(\bbx)\coloneqq \sum_{t=0}^s \binom{s}{t}\, (-1)^{s-t}\, \delta_t.
\end{equation}
By \eqref{e1.8}, it is clear that these quantities are related to the variance of
$\langle \bbth,\bbx\rangle$, thought their role is most transparently seen in the case
of random tensors that admit a Hoeffding decomposition. That said, we have the following information
for general exchangeable random tensors; see Lemma \ref{l10.5} for a more precise result.
\begin{fact} \label{f1.3}
Let $\bbx$ be a random tensor which satisfies \emph{(\hyperref[A1]{$\mathcal{A}$1})}
and \emph{(\hyperref[A2]{$\mathcal{A}$2})}.
Then, we have $\Sigma_s\meg - \frac{8d^2 2^d}{n}$ for every $s\in\{0,\dots,d\}$.
\end{fact}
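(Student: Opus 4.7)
The plan is to realize $\Sigma_s$ as (a positive multiple of) the second moment of an explicit zero-mean linear combination of entries of $\bbx$, up to an error of order $1/n$ coming from rare coincidence events in a uniform sampling. The non-negativity of that second moment will then yield the lower bound.

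Concretely, I would fix $2s$ distinct elements $a_1,b_1,\ldots,a_s,b_s$ of $[n]$, set $M\coloneqq[n]\setminus\{a_r,b_r:r\in[s]\}$, and for each $\varepsilon\in\{0,1\}^s$ and each $c\in M^{d-s}_{\mathrm{Inj}}$ form the injection $i_{\varepsilon,c}\in[n]^d_{\mathrm{Inj}}$ that places $a_r$ or $b_r$ at position $r\in[s]$ according to the value of $\varepsilon_r$ and places $c_{r-s}$ at position $r\in[s+1,d]$. Writing $\overline{X}_\varepsilon$ for the uniform average of $X_{i_{\varepsilon,c}}$ over $c\in M^{d-s}_{\mathrm{Inj}}$, I would then consider
\[ W\coloneqq \sum_{\varepsilon\in\{0,1\}^s}(-1)^{|\varepsilon|}\overline{X}_\varepsilon. \]
By the symmetry and exchangeability assumptions on $\bbx$, the correlation $\ave[X_{i_{\varepsilon,c}}X_{i_{\eta,c'}}]$ depends only on the overlap of the underlying multisets, which equals $(s-|\varepsilon\triangle\eta|)+|c\cap c'|$. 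Expanding $\ave[W^2]$ and grouping contributions by $m\coloneqq|\varepsilon\triangle\eta|$ and $k\coloneqq|c\cap c'|$ gives
\[ \ave[W^2]\;=\;2^s\sum_{k=0}^{d-s} p_k\,\Sigma_s^{(k)}, \qquad \Sigma_s^{(k)}\coloneqq\sum_{t=0}^s\binom{s}{t}(-1)^{s-t}\delta_{t+k}, \]
where $p_k$ is the probability that two independent uniform elements of $M^{d-s}_{\mathrm{Inj}}$ intersect in exactly $k$ positions.

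Since $\Sigma_s=\Sigma_s^{(0)}$, isolating the $k=0$ contribution in the inequality $\ave[W^2]\geq 0$ yields $p_0\Sigma_s\geq -\sum_{k\geq 1}p_k\Sigma_s^{(k)}$. The crude bound $|\Sigma_s^{(k)}|\leq 2^s$, which follows from $|\delta_t|\leq 1$ (a consequence of Cauchy--Schwarz and $\ave[X_i^2]\leq 1$), combined with the standard intersection estimate $\sum_{k\geq 1}p_k\leq\ave_{c,c'}|c\cap c'|\leq(d-s)^2/|M|$ and the identity $|M|=n-2s$, delivers the advertised bound $\Sigma_s\geq -8d^2 2^d/n$ after tracking constants. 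The main technical obstacle will be the regime where $s$ is close to $d$, in which $|M|$ may be as small as $O(d-s)$ and $p_0$ is not bounded below by an absolute constant: the extreme case $s=d$ is handled by the non-averaged version of the construction (which uses only $2d\leq n$ distinct indices and immediately yields $\Sigma_d\geq 0$), while the remaining near-boundary cases require either ad hoc refinements of the averaging or a shift-by-telescoping argument built on the identity $\Sigma_{s+1}^{(t)}=\Sigma_s^{(t+1)}-\Sigma_s^{(t)}$ applied to the base non-averaged inequality $\Sigma_s^{(d-s)}\geq 0$.
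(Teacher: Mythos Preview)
Your approach is correct and genuinely different from the paper's. The paper proves Fact~1.3 via Lemma~10.5: it shows that the manifestly nonnegative quantity $\sum_{p\in[n]^s}\ave\big[\mathcal{H}[\mathcal{A}_{[s]}[\bbx_\omega]](p)^2\big]$ equals $n^s\Sigma_s$ up to an error of size $n^s\cdot 8d^22^d/n$. The Hoeffding operator there amounts, at each of the first $s$ coordinates, to ``value at $p_r$ minus full average over $[n]$'', followed by a sum of squares over all $p\in[n]^s$; this plugs directly into the operator formalism already needed for Theorem~1.4. Your two-point difference ``value at $a_r$ minus value at $b_r$'' at a single set of anchors is a more elementary realization of the same alternating-sum idea, giving a self-contained argument that does not invoke the $\mathcal{H}$/$\mathcal{A}$ machinery at all.

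The boundary obstacle you worry about is illusory, however, and the telescoping workaround is unnecessary. You already use $|\delta_t|\mik 1$, which gives $|\Sigma_s|\mik 2^s\mik 2^d$ unconditionally; hence the claimed inequality is vacuous whenever $n\mik 8d^2$. For $n>8d^2$ one has $|M|=n-2s\meg n-2d>6d^2$, so $\sum_{k\meg 1}p_k\mik (d-s)^2/|M|<1/6$ and $p_0>5/6$ uniformly in $s$. Your main estimate then yields
\[
\Sigma_s\ > \ -\tfrac{6}{5}\cdot 2^s\cdot\frac{(d-s)^2}{n-2s}\ >\ -\tfrac{6}{5}\cdot 2^d\cdot\frac{4d^2}{3n}\ =\ -\frac{8d^22^d}{5n},
\]
which is stronger than required. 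So once you observe the trivial small-$n$ case, your argument closes with no further refinements.
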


\subsubsection{\!} \label{subsubsec1.2.3}

We will need one last parameter which is associated with the random tensor $\bbx$.
Specifically, we define the \textit{oscillation} of $\bbx$ by
\begin{equation} \label{e1.10}
\osc(\bbx)\coloneqq  \Big{\|} \frac{1}{n}\sum_{j=1}^{n} \Big( \frac{1}{n^{d-1}}
\sum_{\substack{i\in[n]^d\\i(1) = j}}X_i \Big)^2 - \delta_1\Big{\|}_{L_1}.
\end{equation}
The oscillation of random vectors appeared, for instance, in recent work of Bobkov,
Chistyakov and G\"{o}tze \cite{BCG18} albeit with different terminology.
In higher dimensions, it can be thought of as a quantitative measure
of dissociativity; of course, in order to estimate the distance
in \eqref{e1.2}, some information of this form is necessary. The advantage of the
oscillation is that it leads---more often than not---to optimal results and, more importantly,
it can be fairly easily estimated for several classes of random tensors, thus making our task
computationally feasible. We shall discuss these issues in detail in Section \ref{sec3}.

\subsection{Main estimate} \label{subsec1.3}

We are now ready to state the first main result of this paper.
\begin{thm} \label{t1.4}
Let $\bbx,\bbth$ which satisfy \emph{(\hyperref[A1]{$\mathcal{A}$1})},
\emph{(\hyperref[A2]{$\mathcal{A}$2})} and \emph{(\hyperref[A3]{$\mathcal{A}$3})},
and such that $\seminorm{\bbth}_1=1$. Set $\kappa=\kappa(d)\coloneqq 20 d^3 18^d (2d)!$ and
$B\coloneqq \big\|\frac{1}{n^d}\sum_{i\in [n]^d} X_i\big\|_{L_2}^2$, and let
$\alpha \in (0,1)$ such that the following non-degenericity condition holds true
\begin{equation} \label{e1.11}
\delta_1\meg \max\Big\{ \mathrm{osc}(\bbx)^\alpha, B^\alpha,
\Big(\frac{\kappa}{n}\Big)^\alpha\Big\}.
\end{equation}
Then, setting $\sigma^2\coloneqq \mathrm{Var}\big(\langle \bbth,\bbx\rangle\big)$, we have
\begin{equation} \label{e1.12}
d_K\big( \langle\bbth,\bbx\rangle, \mathcal{N}(0,\sigma^2)\big) \mik E_1+E_2+E_3
\end{equation}
where
\begin{align}
\label{e1.13} E_1 & \coloneqq 5\mathrm{osc}(\bbx)^{1-\alpha}+
5|\delta_0|^{1-\alpha} +
\Big|\frac{\delta_0}{d^2\delta_1}\, (\seminorm{\bbth}^2_0-1)\Big| +
\frac{6\kappa}{n^{1-\alpha}} +
4\, \frac{\seminorm{\bbth}^2_0}{n} \\
\label{e1.14} E_2 & \coloneqq 2^{36}\, \frac{\ave\big[|X_{(1,\dots,d)}|^3\big]}{\delta_1^{3/2}}
\Big( \sum_{j=1}^n \Big|\!\sum_{\substack{i\in[n]^d\\i(1) = j}} \theta_i\Big|^3\Big) \\
\label{e1.15} E_3 & \coloneqq 3\kappa\,
\frac{1}{d\sqrt{\delta_1}}\, \sum_{s=2}^d \binom{d}{s} \sqrt{s!} \,
\sqrt{\Sigma_s+ \frac{16d^2 2^d}{n}} \, \seminorm{\bbth}_s.
\end{align}
\end{thm}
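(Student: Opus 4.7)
The plan is to reduce the problem to the combinatorial CLT advertised in the abstract by exploiting the exchangeability of $\bbx$, and then to collect the three error terms $E_1$, $E_2$, $E_3$ from successive layers of the reduction. First I would use condition (\hyperref[A2]{$\mathcal{A}$2}) to realize the conditional law of $\bbx$, given the $\sigma$-algebra of permutation-invariant functionals, as the orbit of a fixed tensor $\widetilde{\bbx}$ under a uniformly random $\pi\in\mathbb{S}_n$; conditionally,
\[ \langle \bbth,\bbx\rangle \stackrel{d}{=} \sum_{i \in [n]^d} \theta_i\, \widetilde{X}_{\pi\circ i}, \]
which is precisely the statistic handled by the combinatorial CLT. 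Next I would decompose $\widetilde{\bbx}$ into its Hoeffding-type components of rank $s = 0, 1, \dots, d$, so that pairing with $\bbth$ splits $\langle \bbth,\bbx\rangle$ as the sum of a rank-$1$ ``core'', a rank-$0$ drift, and higher-rank remainders.

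The rank-$1$ core collapses, by the symmetry of $\bbth$, to a one-dimensional statistic of the form $d\sum_{j\in[n]} c_j\, W_{\pi(j)}$ with $c_j = \sum_{j\sqsubseteq i} \theta_i$ (so that $\sum_j c_j^2 = \seminorm{\bbth}_1^2 = 1$) and the $W_j$ obtained from the row-sums of $\widetilde{\bbx}$; this is exactly the setting of the Bolthausen-type combinatorial CLT, which delivers a Kolmogorov bound of order $\delta_1^{-3/2}\, \ave\big[|X_{(1,\dots,d)}|^3\big]\sum_j |c_j|^3$, matching $E_2$. The rank-$s$ remainders for $s\meg 2$ are controlled in $L_2$ using Proposition \ref{p1.2} together with Fact \ref{f1.3} (to absorb the $-8d^2 2^d/n$ floor on $\Sigma_s$); absorbing them into the Kolmogorov distance via Chebyshev combined with the standard anticoncentration bound $\prob\big(|\mathcal{N}(0,\sigma^2)|\mik t\big)\mik t/(\sigma\sqrt{2\pi})$ yields $E_3$. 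The rank-$0$ drift, the $\delta_0(\seminorm{\bbth}_0^2 - 1)/(d^2\delta_1)$ correction, and the error incurred when the \emph{conditional} parameters $\widetilde{\delta}_s$ are replaced by their intrinsic counterparts $\delta_s$ (an error naturally controlled by $\osc(\bbx)$ and by $B$) together produce $E_1$; the $(1-\alpha)$ exponents arise from trading factors of $\osc(\bbx)$, $|\delta_0|$ and $\kappa/n$ against negative powers of $\delta_1$ using the non-degenericity condition \eqref{e1.11}.

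The main obstacle is the combinatorial CLT feeding the rank-$1$ step: even in its one-dimensional projection it must be quantitative enough to yield the sharp third-moment constant, the correct dimensional dependence, and the precise scaling in $\delta_1$; the full $d$-dimensional version promised in the abstract is needed to handle the rank-$s$ remainders uniformly, and its Stein-method derivation extending Bolthausen and Barbour--Chen is expected to be the longest component of the proof. A secondary difficulty, purely bookkeeping but delicate, is showing that the conditional variance, conditional third moment, and conditional $\delta_s$'s of $\widetilde{\bbx}$ are controlled in expectation by the intrinsic quantities $\delta_1$, $\Sigma_s$, $\osc(\bbx)$, $B$, with constants that cooperate with \eqref{e1.11} and absorb cleanly into the stated form of $E_1+E_2+E_3$.
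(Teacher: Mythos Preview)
Your overall architecture matches the paper's: exchangeability reduces $\langle\bbth,\bbx\rangle$ to a mixture of permutation statistics, a Hoeffding-type decomposition splits each into a rank-$1$ core plus a rank-$0$ drift plus rank-$\geq 2$ remainders, and the three error terms arise from these three pieces together with the passage from conditional to intrinsic parameters. Two points deserve correction, one minor and one substantive.

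The minor one: the paper does not condition on the permutation-invariant $\sigma$-algebra. It simply observes that by exchangeability $\langle\bbth,\bbx\rangle\stackrel{d}{=}\sum_i\theta_i X_{\pi\circ i}$ with $\pi$ uniform on $\mathbb{S}_n$ and \emph{independent} of $\bbx$, and then conditions on the realization $\bbx_\omega$. This is more elementary than your de~Finetti-style formulation and avoids any representation-theoretic subtleties for finite exchangeable tensors.

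The substantive gap is your mechanism for absorbing the rank-$\geq 2$ remainders. ``Chebyshev combined with Gaussian anticoncentration'' applied to $W=\Xi_1+\Theta$ gives at best $d_K(W,\mathcal{N})\lesssim d_K(\Xi_1,\mathcal{N})+\|\Theta\|_{L_2}^{1/2}$ (via Markov) or $\|\Theta\|_{L_2}^{2/3}$ (via Chebyshev), because $\Theta$ and $\Xi_1$ are dependent and you cannot simply integrate the anticoncentration bound over $|\Theta|$. But $E_3$ is \emph{linear} in $\sqrt{\Sigma_s}\,\seminorm{\bbth}_s$, i.e.\ linear in the $L_2$ norm of the remainder. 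The paper obtains this linear dependence inside Theorem~\ref{t2.2} through the Barbour--Chen concentration inequality (Lemma~\ref{l7.3}), which bounds $\mathbb{P}(z\leq\Xi_1\leq z+|\Theta|)$ directly by a Stein exchangeable-pair argument; this is precisely the ``Stein-method derivation extending Bolthausen and Barbour--Chen'' you allude to in your final paragraph, and it is what delivers the correct power. Your two paragraphs are thus slightly inconsistent: the second one has it right, the first does not. Also, the $L_2$ control of the rank-$s$ pieces comes not from Proposition~\ref{p1.2} but from Lemma~\ref{l10.5}, which bounds $\sum_p\ave\big[\mathcal{H}[\mathcal{A}_{[s]}[\bbx_\omega]](p)^2\big]$ in terms of $\Sigma_s$.
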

\begin{rem} \label{r1.5}
The proof of Theorem \ref{t1.4} actually yields a better estimate by comparing the Kolmogorov distance of
$\langle\bbth,\bbx\rangle$ with a gaussian whose variance is an appropriately selected approximation of
$\mathrm{Var}\big(\langle \bbth,\bbx\rangle\big)$; see Proposition \ref{p11.1} for details.
\end{rem}
\begin{rem} \label{r1.6}
The assumption in Theorem \ref{t1.4} that $\seminorm{\bbth}_1=1$ is, of course, a normalization and it can always
be achieved by rescaling $\bbth$.
\end{rem}
\begin{rem} \label{r1.7}
The dependence of the constant $\kappa$ on $d$ is, most likely, non-optimal. It is an
interesting problem to determine its dependence on the dimension $d$.
\end{rem}
\begin{rem} \label{r1.8}
We note that a non-degenericity condition like \eqref{e1.11} is natural at this level of generality;
indeed, Theorem \ref{t1.4} can be seen as a quantitative generalization of Hoeffding's CLT for degenerate
symmetric U-statistics \cite{Ho48}. (See Paragraph \ref{subsubsec12.2.1} for more details.)
On the other hand, classical results from the theory of  U-statistics---see, \textit{e.g.},
\cite[Section 5.5]{Se80}---suggest that, in the degenerate case, the random variable
$\langle\bbth,\bbx\rangle$ is typically far from normal. That said, we note that normal approximation
for degenerate statistics is also heavily investigated; see, \textit{e.g.}, \cite{NP12} and the references therein.
\end{rem}

\subsubsection{\!} \label{subsubsec1.3.1}

We close this section by briefly discussing the nature of the bound \eqref{e1.12}.

The first term of $E_1$ is, essentially, the oscillation of $\bbx$. The second and third
terms are quantitative measures of the correlation of the entries of $\bbx$, and they can be absorbed
in the rest of the error terms if $\delta_0=O(1/n)$. The fourth term is related to the non-degenericity
assumption \eqref{e1.11}, and it can also be absorbed in the rest of the error terms if the parameter
$\alpha$ is less than or equal to $1/2$. The last term in $E_1$ is more subtle,
and it is related to the extendability\footnote{See Paragraph \ref{subsubsec12.2.1}
for the definition of extendability.} of $\bbx$.

In order to appreciate the error terms $E_2$ and $E_3$, consider the important special case
of a random tensor $\bbx$ whose entries are functions of i.i.d. random variables.
In this case, applying Hoeffding's decomposition, we may decompose the random variable
$\langle \bbth,\bbx\rangle$ as a linear term $L$ plus a remainder $R$; then the bound
$E_2$ is proportional to the classical Berry--Esseen bound for the linear term $L$,
while the bound $E_3$ is proportional to the ratio of the standard deviations of $R$ and $L$.
For this particular case, this bound is known and it follows (with better constants)
from the powerful nonlinear Berry--Esseen theorem of Chen/Shao \cite{CS07}.
Theorem \ref{t1.4} essentially asserts that one can reach similar conclusions by merely
assuming exchangeability instead of independence (as long as one can control the oscillation)
despite the fact that in this context there is no canonical decomposition like Hoeffding's decomposition.
Thus, we may loosely describe Theorem~\ref{t1.4}  as a ``nonlinear Berry--Esseen theorem without independence".

%------------------------Main tool:---------------------------%
%-------combinatorial CLT for high-dimensional tensors--------%

\section{Main tool: combinatorial CLT for high-dimensional tensors} \label{sec2}

\numberwithin{equation}{section}

The starting observation of the proof of Theorem \ref{t1.4} is that the distribution of the random
variable $\langle \bbth,\bbx\rangle$ can be expressed as a mixture of certain tensor permutation statistics.
(See Subsection \ref{subsec11.1} for a more detailed high-level overview of the argument.)
Thus, Theorem \ref{t1.4} is effectively reduced to a quantitative combinatorial central
limit theorem for high-dimensional tensors which we are about to describe.

\subsection{Tensor permutation statistics} \label{subsec2.1}

Let $n,d$ be positive integers with $n\meg d$, and let $\boldsymbol{\zeta}\colon [n]^d\times [n]^d\to\rr$
be a (deterministic) real tensor. With $\boldsymbol{\zeta}$ we associate the statistic
\begin{equation} \label{e2.1}
Z=\sum_{i\in [n]^d} \boldsymbol{\zeta}(i,\pi\circ i)
\end{equation}
where $\pi$ is a random permutation which is uniformly distributed on the symmetric group~$\mathbb{S}_n$.
(Recall that for every $\pi\in\mathbb{S}_n$ and every $i=(i_1,\dots,i_d)\in [n]^d$ we set
$\pi\circ i\coloneqq\big(\pi(i_1),\dots,\pi(i_d)\big)\in [n]^d$.) These statistics
are classical---see, \textit{e.g.}, \cite{Da44}---and appear in a variety of disciplines
with pure as well as applied orientation.

One drawback of tensor permutation statistics is the computational difficulty of their basic parameters,
like the variance. This defect can be fixed by restricting our attention to the following class of tensors.
\begin{defn}[Hoeffding tensor] \label{d2.1}
Let $n,d$ be positive integers with $n\meg d$, and let $\boldsymbol{\xi}\colon [n]^d\times [n]^d\to\rr$.
We say that $\boldsymbol{\xi}$ is a \emph{Hoeffding tensor} if for every $r\in [d]$,
every $j_0,q_0\in [n]^{[d]\setminus \{r\}}$ and every $i_0,p_0\in [n]^d$ we have
\begin{equation} \label{e2.2}
\sum_{j_0\extension i\in [n]^d} \!\! \boldsymbol{\xi}(i,p_0)=0 \ \ \ \ \text{ and } \ \ \ \
\sum_{q_0\extension p\in [n]^d} \!\! \boldsymbol{\xi}(i_0,p)=0
\end{equation}
where $[n]^{[d]\setminus \{r\}}$ denotes the set of all maps from $[d]\setminus \{r\}$
to $[n]$. $($See also Section \emph{\ref{sec4}}.$)$
\end{defn}
In order to see the relevance of Hoeffding tensors in this context note that,
by applying an appropriate decomposition which goes back to Hoeffding  \cite{Ho51}, the statistic \eqref{e2.1}
can be written as\footnote{Note that $\pi\circ i\coloneqq\big(\pi(i_1),\dots,\pi(i_s)\big)\in [n]^s_{\mathrm{Inj}}$
for every $\pi\in\mathbb{S}_n$ and every $i=(i_1,\dots,i_s)\in [n]^s_{\mathrm{Inj}}$.}
\begin{equation} \label{e2.3}
W=\sum_{s=1}^d \sum_{i\in [n]^s_{\mathrm{Inj}}} \boldsymbol{\xi}_s(i,\pi\circ i)
\end{equation}
where $\boldsymbol{\xi}_s\colon [n]^s\times [n]^s\to \rr$ is a Hoeffding tensor for every $s\in [d]$.
(For the class of tensors which are relevant to Theorem \ref{t1.4}, we describe this transformation
in Section \ref{sec9}.) Therefore, in what follows we shall focus on statistics of the form \eqref{e2.3}.

\subsection{Permutation statistics of order one} \label{subsec2.2}

Classical results for matrix permutation statistics were obtained by
Wald/Wolfowitz \cite{WW44} and Hoeffding \cite{Ho51} who established asymptotic normality
under general conditions.

The problem of establishing quantitative, non-asymptotic, normality of $W$-statistics of order one,
was more delicate. The optimal result in this direction was eventually obtained by Bolthausen \cite{Bo84}
who showed that
\begin{equation} \label{e2.4}
d_K\Big( \sum_{i=1}^n \boldsymbol{\xi}\big(i,\pi(i)\big), \mathcal{N}(0,1)\Big)\mik
\frac{C_1}{n} \sum_{i,j=1}^n |\boldsymbol{\xi}(i,j)|^3
\end{equation}
for every Hoeffding tensor $\boldsymbol{\xi}\colon [n]\times [n]\to\rr$ which satisfies
$\sum_{i,j=1}^n \boldsymbol{\xi}(i,j)^2=n-1$; here, $C_1\meg 1$ is an absolute constant.
(It was shown in \cite{CF15} that we can take $C_1=451$.) Bolthausen's work was one of
the earliest and most successful applications of Stein's method of normal
approximation \cite{CGS11,St82,St86}.

\subsection{Permutation statistics of order two} \label{subsec2.3}

$W$-statistics of order two are also studied systematically in the literature;
see, \textit{e.g.}, \cite{BE86,ZBCL97} and the references therein.
The strongest quantitative normal approximation was obtained by Barbour and Chen \cite{BC05}
who showed\footnote{This result was not explicitly isolated in \cite{BC05},
but it follows fairly straightforwardly from the methods developed therein.}
that if $\boldsymbol{\xi}_1\colon [n]\times [n]\to\rr$ and
$\boldsymbol{\xi}_2\colon [n]^2\times [n]^2\to \rr$ are Hoeffding tensors with
$\sum_{i,j=1}^n \boldsymbol{\xi}_1(i,j)^2=n-1$, and $W$ is the statistic associated with
$\boldsymbol{\xi}_1,\boldsymbol{\xi}_2$ via \eqref{e2.3}, then
\begin{equation} \label{e2.5}
d_K\big(W,\mathcal{N}(0,1)\big)\mik \frac{a C_1}{n} \sum_{i,j=1}^n |\boldsymbol{\xi}_1(i,j)|^3 +
C_2 \sqrt{\frac{1}{n^2}\sum_{i,p\in [n]^2} \!\! \boldsymbol{\xi}_2(i,p)^2}
\end{equation}
where $C_1\meg 1$ is the same constant as in \eqref{e2.4}, and $a,C_2 \meg 1$ are absolute
(and effective) constants. The work of Barbour and Chen was based on the Stein/Chen method
of normal approximation via concentration, and it also used Bolthausen's estimate \eqref{e2.4}.
Note that \eqref{e2.5} should be interpreted as a perturbation result, and when viewed as such,
it is essentially optimal; see \cite[Section 1]{BC05} for a detailed discussion.

\subsection{Permutation statistics of arbitrary order} \label{subsec2.4}

High-dimensional statistics related to \eqref{e2.1} and \eqref{e2.3} have been studied,
for instance, in \cite{BG93,BoG02,Lo96}. That said, however, so far no high-dimensional analogue
of \eqref{e2.4} and \eqref{e2.5} has been obtained. The following theorem---which is
the second main result of this paper---fills in this gap,
and extends these estimates to $W$-statistics of arbitrary order.
\begin{thm} \label{t2.2}
Let $n,d$ be positive integers such that $n\meg 4d^2$. For every $s\in [d]$ let
$\boldsymbol{\xi}_s\colon [n]^s\times [n]^s\to\rr$ be a Hoeffding tensor, and set
\begin{equation} \label{e2.6}
\beta_s\coloneqq \sum_{i,p\in [n]^s} \!\! \boldsymbol{\xi}_s(i,p)^2.
\end{equation}
Assume that $\beta_1=n-1$, and let $W$ be the statistic associated with
$\boldsymbol{\xi}_1,\dots,\boldsymbol{\xi}_d$ via \eqref{e2.3}. Then we have
\begin{equation} \label{e2.7}
d_K\big(W, \mathcal{N}(0,1)\big)\mik \frac{2^{18} C_1}{n} \sum_{i,j=1}^n |\boldsymbol{\xi}_1(i,j)|^3 +
C_d\, \sum_{s=2}^d \sqrt{\frac{\beta_s}{n^s}}
\end{equation}
where $C_1\meg 1$ is as in \eqref{e2.4}, and $C_d$ is a positive constant that depends only~on~$d$.
In fact, we can take $C_d=5d^2 e^d (2d)!$.
\end{thm}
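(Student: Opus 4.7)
The strategy is to extend the Bolthausen--Barbour--Chen approach by decomposing $W$ along the Hoeffding levels: set $W_1\coloneqq\sum_{i=1}^n\boldsymbol{\xi}_1(i,\pi(i))$, $W_s\coloneqq\sum_{i\in[n]^s_{\mathrm{Inj}}}\boldsymbol{\xi}_s(i,\pi\circ i)$ for $s\meg 2$, and $R\coloneqq\sum_{s=2}^d W_s$, so that $W=W_1+R$. A direct calculation using the uniform distribution of $\pi$ on $\mathbb{S}_n$ together with the Hoeffding cancellation \eqref{e2.2} shows that the components $W_s$ are pairwise orthogonal in $L_2$: when one expands $\ave[\boldsymbol{\xi}_s(i,\pi\circ i)\boldsymbol{\xi}_t(j,\pi\circ j)]$, applying the vanishing condition successively in each coordinate forces $s=t$ and $\{i\}=\{j\}$ as sets. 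This yields $\ave[W_s^2]\mik (s!)^2\beta_s/(n)_s$ for $s\meg 2$, while $\beta_1=n-1$ gives $\ave[W_1^2]=(n-1)/n$. Hence $\mathrm{Var}(R)\mik\sum_{s=2}^d(s!)^2\beta_s/n^s$ to leading order and $\mathrm{Var}(W)$ lies within $O_d(1/n)$ of $1$.

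Applying Bolthausen's bound \eqref{e2.4} to the (approximately standard) $W_1$ yields
\[d_K\big(W_1,\mathcal{N}(0,\ave[W_1^2])\big)\mik\frac{C_1}{n}\sum_{i,j=1}^n|\boldsymbol{\xi}_1(i,j)|^3,\]
and the rescaling of the variance to unity costs an additional $O(1/n)$ error absorbed into the first term of \eqref{e2.7}. To transfer this estimate to $W$, I will use the standard smoothing inequality: for every $x\in\rr$ and $\delta>0$,
\[|\prob(W\mik x)-\Phi(x)|\mik d_K(W_1,\mathcal{N}(0,1))+\frac{\delta}{\sqrt{2\pi}}+\prob(|R|>\delta).\]

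The delicate point is that a Chebyshev estimate for $\prob(|R|>\delta)$ combined with optimization in $\delta$ produces only the rate $\mathrm{Var}(R)^{1/3}$, whereas Theorem \ref{t2.2} demands $\sqrt{\mathrm{Var}(R)}$ (matching $\sqrt{\beta_s/n^s}$ summand-by-summand). To reach the optimal rate I plan to establish a hypercontractive-type moment inequality of the form $\ave[R^{2k}]\mik C_{k,d}\,\mathrm{Var}(R)^k$ for every integer $k\meg 1$, then choose $\delta=M\sqrt{\mathrm{Var}(R)}$ and $k$ large depending on $d$ so that Markov's inequality makes the tail term of the same order as $\delta$. The main obstacle will be exactly this moment inequality: uniform permutations do not tensorize, so one cannot copy the i.i.d. argument; instead, one must induct on the degree $s$ using the Hoeffding cancellation and a martingale or coupling argument. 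A plausible alternative route, adapting the strategy of Barbour/Chen for $d=2$, is to bypass the moment inequality altogether and apply Stein's method with the exchangeable pair $(\pi,\pi\circ\tau_{IJ})$ directly to $W$: the higher-order pieces $W_s$ contribute errors to the approximate linearity $\ave[W'-W\mid\pi]\approx-\lambda W$ and variance $\ave[(W'-W)^2\mid\pi]\approx 2\lambda$ conditions that can be bounded by $\sqrt{\beta_s/n^s}$, and the Chen concentration technique then delivers the $\sqrt{}$ rate in one stroke.
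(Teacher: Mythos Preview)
Your first route (hypercontractivity plus smoothing) cannot deliver the rate in \eqref{e2.7}. Even granting $\ave[R^{2k}]\mik C_{k,d}\,\mathrm{Var}(R)^k$, Markov at $\delta=M\sqrt{\mathrm{Var}(R)}$ gives $\prob(|R|>\delta)\mik C_{k,d}/M^{2k}$, an additive constant that does not shrink with $\mathrm{Var}(R)$. If instead you optimise $\delta$ for fixed $k$, the two terms balance at $\delta\sim\mathrm{Var}(R)^{k/(2k+1)}$, strictly worse than $\sqrt{\mathrm{Var}(R)}$ for every finite $k$; letting $k\to\infty$ destroys the constant. So this route is a genuine dead end for the sharp exponent.

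Your second route is the one the paper takes, but you have underestimated where the work lies. The paper does apply Bolthausen to $\Xi_1$ and then the Barbour--Chen concentration device (Lemma~\ref{l7.3}) to absorb $\Theta=\sum_{s\meg2}\Xi_s$. However, that device needs \emph{two} inputs for each $s\meg2$: the variance bound $\ave[\Xi_s^2]\mik c_d\,\beta_s/n^s$ \emph{and} the exchangeable-pair estimate $\ave[(\Xi_s-\Xi_s')^2]\mik c_d\,\beta_s/n^{s+1}$ (Lemma~\ref{l7.2}). Neither follows from the one-line ``Hoeffding cancellation forces $\{i\}=\{j\}$'' argument you sketched: for $i,j\in[n]^s_{\mathrm{Inj}}$ with $|\mathrm{Im}(i)\cap\mathrm{Im}(j)|=r<s$, the covariance $\ave[\boldsymbol{\xi}_s(i,\pi\circ i)\boldsymbol{\xi}_s(j,\pi\circ j)]$ is \emph{not} zero term-by-term; one must control the full sum $\sum_{(i,j,p,q)}\boldsymbol{\xi}_s(i,p)\boldsymbol{\xi}_s(j,q)$ over equivalence classes of fixed overlap $r$, and show it is $O\big(n^{s-r}\beta_s\big)$. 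This is the content of Proposition~\ref{p5.1}, whose proof is a nontrivial inclusion--exclusion over partitions of $\{0,1\}\times[s]$ and is precisely the ``combinatorially intricate'' step flagged after Theorem~\ref{t2.2}. The bound on $\ave[(\Xi_s-\Xi_s')^2]$ is harder still, since the transposition touches only two coordinates and one must track which indices hit $\{I_1,I_2\}$ (the $\mathrm{PartInj}$ sums in \eqref{e7.16}).

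Two minor slips: with $\beta_1=n-1$ one has $\ave[W_1^2]=1$ exactly (not $(n-1)/n$), and the components $W_s$, $W_t$ are orthogonal for $s\neq t$, but within a fixed level $s$ the summands indexed by $i\neq j$ with overlapping images are not.
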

As expected, the proof of Theorem \ref{t2.2} relies on the work of Bolthausen, and Barbour/Chen.
In higher dimensions, the additional difficulty is to obtain tight estimates for the $L_2$ distance
of certain exchangeable pairs of random variables, a task that becomes more and more combinatorially intricate
as the dimension $d$ increases. These estimates are, in fact, responsible for the scaling $n^s$ that
appears in \eqref{e2.7}; this scaling is, in turn, important for the proof of Theorem \ref{t1.4}
and its applications.

%----------------------Applications---------------------------%

\section{Applications} \label{sec3}

\numberwithin{equation}{section}

%------------------------------------------------%

\subsection{One-dimensional examples: exchangeable random vectors} \label{subsec3.1}

The simplest instance of Theorem \ref{t1.4} concerns random vectors. Specifically,
let $n\meg 2$ be an integer, let $\bbx=(X_1,\dots,X_n)$ be an exchangeable random vector whose entries
satisfy~(\hyperref[A1]{$\mathcal{A}$1}), and let $\bbth=(\theta_1,\dots,\theta_n)$ be a
vector in $\rr^n$. (Assumption (\hyperref[A3]{$\mathcal{A}$3}) is superfluous if $d=1$.)
It is straightforward to check that
\begin{enumerate}
\item[(i)] $\seminorm{\bbth}_0=|\theta_1+\dots+\theta_n|$,
\item[(ii)] $\seminorm{\bbth}_1=\|\bbth\|_{\ell_2}=(\theta_1^2+\dots+\theta_n^2)^{1/2}$,
\item[(iii)] $\delta_0=\ave[X_1 X_2]$ and $\delta_1=\ave[X_1^2]$.
\end{enumerate}
Specializing the bound obtained by Theorem \ref{t1.4} to random vectors,
we obtain the following corollary. (See Subsection \ref{subsec12.1} for the proof.)
\begin{cor} \label{c3.1}
Let $\bbx$ be an exchangeable random vector in $\rr^n$ $(n\meg 2)$ which satisfies
\begin{equation} \label{e3.1}
 \ave[X_1]=0, \ \ \ \ \ave[X_1^2]= 1 \ \ \text{ and } \ \ \ \ave\big[|X_1|^3\big]<\infty.
\end{equation}
Then for every unit vector $\bbth\in \rr^n$, setting
$\sigma^2\coloneqq \mathrm{Var}(\theta_1X_1+\dots+\theta_n X_n)$, we have
\begin{align} \label{e3.2}
d_K\Big( \sum_{i=1}^n \theta_i X_i, \mathcal{N}(0,\sigma^2)\Big) & \mik
5\mathrm{osc}(\bbx)+ 6|\delta_0|+ \frac{\kappa_1}{n} +
\Big(|\delta_0|+\frac{1}{n}\Big)\cdot \Big(\sum_{i=1}^n\theta_i\Big)^2 + \\
& \hspace{2cm} + \kappa_1\, \ave\big[|X_1|^3\big] \, \sum_{i=1}^n |\theta_i|^3. \nonumber
\end{align}
Here, $\kappa_1\meg 1$ is an absolute constant. In fact, we can take $\kappa_1= 4320$.
\end{cor}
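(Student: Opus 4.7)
The plan is to specialize Theorem \ref{t1.4} to the one-dimensional setting and then to sharpen the Berry--Esseen contribution by invoking Bolthausen's estimate \eqref{e2.4} directly. For $d=1$ the parameters simplify to $\seminorm{\bbth}_1 = \|\bbth\|_{\ell_2} = 1$, $\seminorm{\bbth}_0 = |\sum_i \theta_i|$, $\delta_1 = \ave[X_1^2] = 1$, $\delta_0 = \ave[X_1 X_2]$, and the sum in \eqref{e1.15} is empty so $E_3 = 0$. Since $d_K \mik 1$, the claim is vacuous unless every summand on the right-hand side of \eqref{e3.2} is strictly less than $1$; this lets me assume $\osc(\bbx) < 1/5$, $|\delta_0| < 1/6$, and $n > \kappa_1 = 4320$. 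Under these assumptions $B = \|n^{-1}\sum_i X_i\|_{L_2}^2 \mik |\delta_0| + 1/n < 1$, so the non-degenericity condition \eqref{e1.11} holds for every $\alpha \in (0,1)$.

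Taking $\alpha \downarrow 0$, the $E_1$-contribution of Theorem \ref{t1.4} simplifies to
\[
 E_1 \mik 5\,\osc(\bbx) + 5|\delta_0| + \big|\delta_0(\seminorm{\bbth}_0^2 - 1)\big| + \frac{6\kappa(1)}{n} + \frac{4\seminorm{\bbth}_0^2}{n}.
\]
The crude inequality $|\delta_0(\seminorm{\bbth}_0^2 - 1)| \mik |\delta_0|\,\seminorm{\bbth}_0^2 + |\delta_0|$ merges the stray $|\delta_0|$ into the preceding $5|\delta_0|$ to yield $6|\delta_0|$; the identity $\kappa(1) = 20 \cdot 18 \cdot 2! = 720$ turns $6\kappa/n$ into $\kappa_1/n = 4320/n$; and $4\seminorm{\bbth}_0^2/n$ is absorbed into the $(|\delta_0| + 1/n)(\sum_i \theta_i)^2$ summand of \eqref{e3.2}. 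This reproduces the first four summands of the desired inequality.

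The main obstacle is $E_2$, for which Theorem \ref{t1.4} yields the prohibitive constant $2^{36}$ because its proof routes the linear piece through the general Theorem \ref{t2.2}. To recover $\kappa_1 = 4320$ I would replace this step by a direct application of Bolthausen's bound \eqref{e2.4}, whose constant is $C_1 \mik 451$. Concretely, conditionally on the unordered multiset $\{X_1,\dots,X_n\}$ (equivalently, on a labelling $\boldsymbol{Y} = (Y_1,\dots,Y_n)$ of its elements) the vector $\bbx$ is distributed as $(Y_{\pi(1)},\dots,Y_{\pi(n)})$ for a uniform $\pi \in \mathbb{S}_n$, and with $\bar\theta = n^{-1}\sum_i \theta_i$ and $\bar Y = n^{-1}\sum_j Y_j$,
\[
 \sum_i \theta_i X_i \;\stackrel{d}{=}\; \bar\theta \sum_j Y_j \;+\; \sum_i (\theta_i - \bar\theta)\big(Y_{\pi(i)} - \bar Y\big).
\]
The second summand is a one-dimensional permutation statistic with Hoeffding tensor $\boldsymbol{\xi}(i,j) = (\theta_i - \bar\theta)(Y_j - \bar Y)$, to which \eqref{e2.4} applies pointwise in $\boldsymbol{Y}$; averaging over $\boldsymbol{Y}$ produces the $\kappa_1\,\ave[|X_1|^3]\sum_i|\theta_i|^3$ contribution. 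The delicate point is that Bolthausen's bound compares the conditional statistic to a Gaussian of conditional variance $\sigma_{\boldsymbol{Y}}^2$, whereas the target is $\mathcal{N}(0,\sigma^2)$ with $\sigma^2 = \ave[\sigma_{\boldsymbol{Y}}^2] + \bar\theta^2\,\mathrm{Var}(\sum_j Y_j)$; reconciling the two Gaussians through a Gaussian-density comparison is precisely what produces the $5\,\osc(\bbx)$ term \emph{linearly} in $\osc(\bbx)$, together with the $|\delta_0|\,(\sum_i \theta_i)^2$ correction coming from the mean-shift $\bar\theta\sum_j Y_j$.
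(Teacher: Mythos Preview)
Your proposal is essentially the paper's own argument. The paper also reduces to the case where the right-hand side is nontrivial, notes that \eqref{e1.11} then holds for every $\alpha\in(0,1)$ since $\delta_1=1$, and obtains the result from Theorem~\ref{t1.4}; for the sharp constant $\kappa_1=4320$ it directs the reader back to the proof of Proposition~\ref{p11.1}, where for $d=1$ one may invoke Bolthausen's bound \eqref{e2.4} (with $C_1\mik 451$) in place of Theorem~\ref{t2.2}. Your conditioning on the multiset $\{X_1,\dots,X_n\}$ is exactly the paper's mixture-of-$Z$-statistics device in \eqref{e11.6}--\eqref{e11.7}, and your final paragraph (comparing conditional and target Gaussians via $\mathrm{osc}(\bbx)$ and the mean-shift) mirrors \eqref{e11.18}, \eqref{e11.31} and Lemma~\ref{l10.2}.

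One small bookkeeping slip: you cannot absorb $4\seminorm{\bbth}_0^2/n$ from Theorem~\ref{t1.4} into the target term $(|\delta_0|+\tfrac{1}{n})(\sum_i\theta_i)^2$, since the latter only carries the coefficient $1/n$. The resolution is precisely what your last paragraph already does---work directly through Proposition~\ref{p11.1} rather than the packaged Theorem~\ref{t1.4}. There the relevant term is $\seminorm{\bbth}_0^2 B/(\sqrt{2\pi e}\,\sigma_1^2)$, and for $d=1$ one has $B=\tfrac{1}{n}+\tfrac{n-1}{n}\delta_0\mik |\delta_0|+\tfrac{1}{n}$ and $\sigma_1^2\meg 3/4$, so the coefficient comes out below~$1$. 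In short, the hybrid ``Theorem~\ref{t1.4} for $E_1$, re-derive for $E_2$'' is slightly inconsistent; just run your last paragraph for the whole estimate.
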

It follows, in particular, that if $\bbx$ is as in Corollary \ref{c3.1} and it satisfies
$|\delta_0|\mik C/n$ for some positive constant $C$, then for every positive integer $k\mik n$ we have
\begin{equation} \label{e3.3}
d_K\Big( \frac{X_1+\dots+X_k}{\sqrt{k}}, \mathcal{N}(0,1)\Big) \mik
5 \mathrm{osc}(\bbx)+ (8C+\kappa_1+1)\, \frac{k}{n} +
\kappa_1\, \ave\big[|X_1|^3\big]\, \frac{1}{\sqrt{k}}.
\end{equation}
This bound is optimal up to universal constants; see Paragraph \ref{subsubsec12.1.1} for details.

\subsubsection{Estimating the oscillation} \label{subsubsec3.1.1}

By Corollary \ref{c3.1}, the problem of establishing normal approximation of the random variable
$\theta_1 X_1+\cdots+\theta_n X_n$ reduces to that of estimating the oscillation
$\mathrm{osc}(\bbx)$ of the exchangeable random vector $\bbx$.

To this end, we first observe that if the entries of $\bbx$ have zero mean, unit variance
and finite fourth moment, then we have
\begin{equation} \label{e3.4}
\mathrm{osc}(\bbx) \mik \sqrt{ \big|\ave[X_1^2 X_2^2]-1\big|} + \frac{\ave[X_1^4]^{1/2}}{\sqrt{n}}.
\end{equation}
(See Fact \ref{f12.2}.) The quantity $\big|\ave[X_1^2X_2^2]-1\big|$
that appears in \eqref{e3.4} is very natural in this context: it controls the variance
$\mathrm{Var}\big(\|\bbx\|^2_{\ell_2}\big)$ of the square of the euclidean norm of~$\bbx$;
see, \textit{e.g.}, \cite{BCG18}. On the other hand, as we shall see in Proposition \ref{p12.3},
without further assumptions on the existence of moments, the basic hypothesis~\eqref{e3.1} yields~that
\begin{equation} \label{e3.5}
\mathrm{osc}(\bbx) \mik \sqrt{ \big|\ave[X_1^2 X_2^2]-1\big|} +
\frac{4\ave\big[|X_1|^3\big]}{\sqrt[4]{n}}.
\end{equation}

\subsubsection{Exchangeable and isotropic random vectors} \label{subsubsec3.1.2}

Combining \eqref{e3.2} and \eqref{e3.4}, we see that if $\bbx$ is an exchangeable and
isotropic\footnote{Recall that a random vector $\bbx$ in $\rr^n$ is called \textit{isotropic}
if its entries are uncorrelated random variables with zero mean and unit variance.}
random vector in $\rr^n$ whose entries have finite fourth moment, then, setting
$\tau=\tau(\bbx)\coloneqq n\big|\ave[X_1^2 X_2^2]-1\big|$, for every unit
vector $\bbth\in \rr^n$~we~have
\begin{align} \label{e3.6}
d_K\Big( \sum_{i=1}^n \theta_i X_i, \mathcal{N}(0,1)\Big) & \mik
\big(5\sqrt{\tau}+5\ave[X_1^4]^{1/2}+\kappa_1\big)\, \frac{1}{\sqrt{n}} +
\frac{1}{n}\, \Big(\sum_{i=1}^n \theta_i\Big)^2 + \\
& \hspace{3cm} + 2\kappa_1\, \ave\big[|X_1|^3\big]\, \sum_{i=1}^n |\theta_i|^3. \nonumber
\end{align}
The estimate \eqref{e3.6} extends and improves a result of Bobkov \cite[Proposition 6.1]{Bob04}.
It~shows, among others, that under the thin-shell condition $\big|\ave[X_1^2 X_2^2]-1\big|=O(1/n)$
we have the Berry--Esseen bound
\[ d_K\Big( \frac{X_1+\dots+X_k}{\sqrt{k}}, \mathcal{N}(0,1)\Big) =O\Big(\frac{1}{\sqrt{k}}\Big) \]
in the regime $k= O(n^{2/3})$. This range is also optimal; see Example \ref{ex12.5}.

%------------------------------------------------%

\subsection{High-dimensional examples} \label{subsec3.2}

We proceed to discuss the general case of high-dimensional random tensors which satisfy
(\hyperref[A1]{$\mathcal{A}$1}) and (\hyperref[A2]{$\mathcal{A}$2}). It is clear that the remaining
task is to estimate their oscillation. Not surprisingly, to this end we will need an analogue of the
quantity $\big|\ave[X_1^2X_2^2]-1\big|$ that appears in \eqref{e3.4} and \eqref{e3.5}.

Specifically, let $n,d$ be positive integers with $n\meg 4d$, let
$\xtensor$ be a random tensor, and define the \emph{parallelepipedal correlation} of $\bbx$ by
\begin{equation} \label{e3.7}
\mathrm{pc}(\bbx)\coloneqq \big|\ave[ X_{(1,\dots,d)}X_{(1,d+1,\dots,2d-1)} X_{(2d,\dots,3d-1)}
X_{(2d,3d,\dots,4d-2)}]-\delta_1^2\big|.
\end{equation}
For example, if $d=2$, then $\mathrm{pc}(\bbx)=\big|\ave[X_{(1,2)}X_{(1,3)}X_{(4,5)}X_{(4,6)}]-\delta_1^2\big|$.

%------------------------FIGURE-------------------------%
\begin{figure}[htb] \label{figure1}
\centering \includegraphics[width=.37\textwidth]{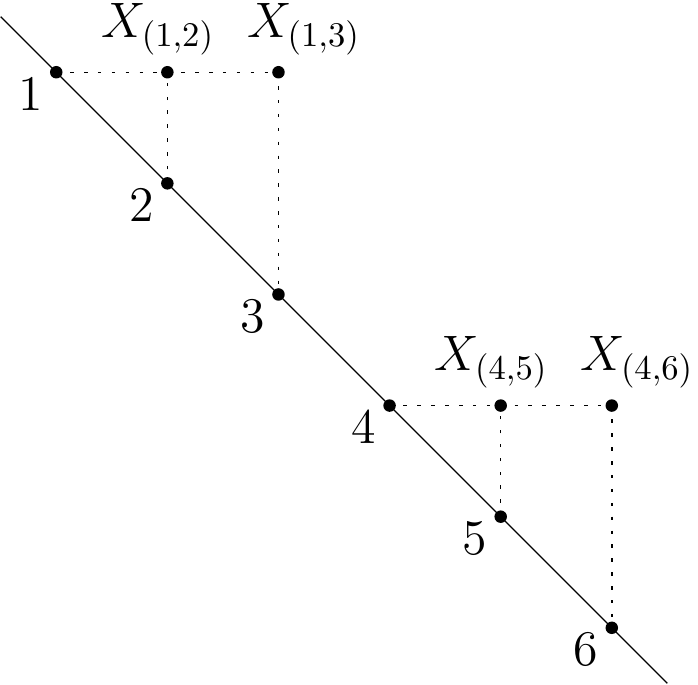}
\caption{The parallelepipedal correlation.}
\end{figure}
%------------------------FIGURE-------------------------%

\noindent The parameter $\mathrm{pc}(\bbx)$ has already appeared---though, not explicitly
isolated---in \cite[Theorem~4]{EW78}. Notice that if $\bbx$ is a random vector,
then $\mathrm{pc}(\bbx)=\big|\ave[X_1^2X_2^2]-\delta_1^2\big|$; in particular,
the parallelepipedal correlation $\mathrm{pc}(\bbx)$ of random vectors which satisfy \eqref{e3.1}
coincides with the aforementioned quantity $\big|\ave[X_1^2X_2^2]-1\big|$.

\subsubsection{Random tensors whose entries have bounded fourth moment} \label{subsubsec3.2.1}

We first observe that if a random tensor~$\bbx$ satisfies (\hyperref[A1]{$\mathcal{A}$1}) and (\hyperref[A2]{$\mathcal{A}$2})
and its entries have bounded fourth moment, then we have the following analogue of \eqref{e3.4},
\begin{equation} \label{e3.8}
\osc(\bbx)\mik \sqrt{\mathrm{pc}(\bbx)}+
\frac{5d}{\sqrt{n}} \big(1+\ave[X_{(1,\dots,d)}^4]\big)^{1/2}.
\end{equation}
(See Fact \ref{f12.6} for a slightly more precise estimate.)

\subsubsection{Dissociated random tensors and their mixtures} \label{subsubsec3.2.2}

Another natural class of random tensors whose oscillation can be effectively estimated is that
of exchangeable and dissociated random tensors. Recall that a random tensor $\xtensor$ is called
\textit{dissociated} if for every pair $K,L$ of disjoint subsets of $[n]$ with $|K|,|L|\meg d$,
the random tensors $\bbx_K\coloneqq \langle X_i : i\in K^d\rangle$
and $\bbx_L\coloneqq \langle X_i : i\in L^d\rangle$ are independent.
Dissociativity plays an important role in the general theory of exchangeable
random tensors; see \cite{Ald81,Hoo79,Kal05}. To see its relevance in this context,
notice that $\mathrm{pc}(\bbx)=0$ for every exchangeable and dissociated random tensor.
That said, we have
\begin{equation} \label{e3.9}
\osc(\bbx)\mik \frac{16d\, \big(\ave\big[|X_{(1,\dots,d)}|^3\big]+1\big)}{\sqrt[4]{n}}
\end{equation}
for every dissociated random tensor $\bbx$ which satisfies (\hyperref[A1]{$\mathcal{A}$1})
and (\hyperref[A2]{$\mathcal{A}$2}); more generally, if $\bbx$ is a mixture of exchangeable,
symmetric and dissociated random tensors and the entries of $\bbx$ have finite third moment, then
\begin{equation} \label{e3.10}
\osc(\bbx)\mik \sqrt{\mathrm{pc}(\bbx)} + \frac{16d\, \big(\ave\big[|X_{(1,\dots,d)}|^3\big]+1\big)}{\sqrt[4]{n}}.
\end{equation}
(See Proposition \ref{p12.7}). These estimates together with Theorem \ref{t1.4} cover a wide
range of exchangeable random tensors, including all infinitely extendible random tensors which
satisfy (\hyperref[A1]{$\mathcal{A}$1}) and (\hyperref[A2]{$\mathcal{A}$2})---see
Paragraph \ref{subsubsec12.2.1} for more details.

\subsection{Anticoncentration of polynomials of boolean random variables} \label{sec3.3}

Recall that, given a real-valued random variable~$X$, its \textit{anticoncentration}
\begin{equation} \label{e3.11}
\sup_{x\in \rr} \prob(X=x)
\end{equation}
is a quantitative measure of its discreteness. A closely related, and more informative,
quantity is the \textit{L\'{e}vy concentration function of $X$},
\begin{equation} \label{e3.12}
\mathcal{L}_X(\ee)\coloneqq \sup_{x\in \rr} \prob(x\mik X\mik x+\ee)
\end{equation}
which bounds the probability that $X$ lies in an interval of length $\ee>0$.
The first results on anticoncentration were discovered by Littlewood/Offord \cite{LO43}
and Erd\H{o}s \cite{Erd45} who obtained optimal anticoncentration of linear functions of
random vectors with i.i.d. Rademacher entries.

Much more recently, Costello, Tao and Vu \cite{CTV06} have put forth\footnote{We note
that closely related questions have been studied earlier---see, \textit{e.g.}, \cite{RS96}.}
a higher-degree version of the classical Littlewood--Offord theory whose main goal is to understand
the anticoncentration of random variables of the form $f(\boldsymbol{\xi})$ where
$f\colon \rr^n\to\rr$ is a polynomial with real coefficients, and $\boldsymbol{\xi}$
is a random vector in $\rr^n$ with a well-behaved distribution. Examples of random
vectors which have been studied in this context include:
\begin{enumerate}
\item[$\bullet$] random vectors with i.i.d. gaussian/Rademacher/Bernoulli entries, and
\item[$\bullet$] random vectors which are uniformly distributed on a slice;
\end{enumerate}
see, \textit{e.g.}, \cite{CTV06,CW01,FKMW18,FM19,MNV16,MOO10,KST19,LLTTY17,RV13}.
The topic is quite diverse, and it has found a variety of applications in analysis,
combinatorics, discrete probability and theoretical computer science; we refer the
reader to \cite{NV13,Vu14} for recent expositions.

\subsubsection{The L\'{e}vy concentration function of polynomials of boolean random variables}
\label{subsec3.4.1}

Theorem \ref{t1.4} can be used to estimate the L\'{e}vy concentration function of homogeneous
polynomials of an important class of random vectors with boolean but not independent entries.

Specifically, let $n\meg d$ be positive integers, and let $f\colon \rr^n\to\rr$
\begin{equation} \label{e3.13}
f(x_1,\dots,x_n)= \sum_{F\in \binom{[n]}{d}} a_F \, \prod_{i\in F}x_i
\end{equation}
be a homogeneous multilinear polynomial of degree $d$, with no constant term
and real coefficients $\boldsymbol{a}=\langle a_F : F\in \binom{[n]}{d}\rangle$.
For every $s\in \{0,1,\dots,d\}$ we set
\begin{equation} \label{e3.14}
\seminorm{\boldsymbol{a}}_s\coloneqq \Big( \sum_{G\in \binom{[n]}{s}}
\big(\!\!\sum_{G\subseteq F\in \binom{[n]}{d}} \!\! a_F \big)^2 \Big)^{1/2}
\end{equation}
where we use again the convention that the first sum vanishes if $s=0$; these seminorms
are, of course, the analogues of the seminorms introduced in \eqref{e1.5}.

We have the following theorem. (The proof is given in Subsection \ref{subsec12.3}.)
\begin{thm} \label{t3.2}
Let $n,d$ be positive integers with $n\meg (4\kappa)^{2d}$ where $\kappa=20 d^3 18^d (2d)!$
is as in Theorem \emph{\ref{t1.4}}, and let $f\colon \rr^n\to\rr$ be as in~\eqref{e3.13}.
Also let $k$ be a positive integer~with
\begin{equation} \label{e3.15}
(2\kappa)\, n^{1-\frac{1}{2d}} \mik k \mik n- (2\kappa)\, n^{1-\frac{1}{2d}}
\end{equation}
and let $\boldsymbol{\xi}=(\xi_1,\dots,\xi_n)$ be a random vector in $\{0,1\}^n$ which is uniformly
distributed\,\footnote{Here, we identify $\binom{[n]}{k}$ with the set of all $x\in\{0,1\}^n$
which have exactly $k$ nonzero coordinates.} on~$\binom{[n]}{k}$. Then, setting $p\coloneqq k/n$
and $\sigma^2\coloneqq \mathrm{Var}\big(f(\boldsymbol{\xi})\big)$, we have
\begin{equation} \label{e3.16}
\Big| \sigma^2-\sum_{s=1}^d p^{2d-s}(1-p)^s \, \seminorm{\boldsymbol{\alpha}}^2_s\Big| \mik
\frac{12d^2 2^d}{(1-p)^d}\cdot \frac{\seminorm{\boldsymbol{\alpha}}_0^2}{n} +
\frac{12d^2 2^d}{p(1-p)^d}\cdot \frac{\sigma^2}{n}
\end{equation}
and, for every $\ee>0$\footnote{We follow the convention that $\frac{1}{0}=\infty$.},
\begin{align} \label{e3.17}
\mathcal{L}_{f(\boldsymbol{\xi})}(\ee) & \mik \frac{\ee}{\sqrt{2\pi}\,\sigma} +
\frac{16\kappa}{\sqrt{p}}\cdot \frac{1}{\sqrt{n}} +
\frac{12d^2}{(1-p)\seminorm{\boldsymbol{\alpha}}^2_1}\cdot \frac{\seminorm{\boldsymbol{\alpha}}^2_0}{n}\ + \\
& \hspace{1.1cm} + \frac{2^{39}p^{3/2}}{p^{3d}(1-p)^{3/2} \seminorm{\boldsymbol{\alpha}}_1^3}\cdot
\sum_{j=1}^n \Big| \sum_{j\in F\in \binom{[n]}{d}} a_F\Big|^3 \ + \nonumber \\
& \hspace{1.9cm} +  \frac{16\kappa p^{1/2}}{p^d(1-p)^{1/2} \seminorm{\boldsymbol{\alpha}}_1}\cdot
\sum_{s=2}^d \sqrt{p^{2d-s}(1-p)^s}\, \seminorm{\boldsymbol{\alpha}}_s. \nonumber
\end{align}
\end{thm}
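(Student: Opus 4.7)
The plan is to realize $f(\boldsymbol{\xi})$, after centering, as a linear statistic $\langle\bbth,\bbx\rangle$ of a symmetric exchangeable random tensor so that Theorem~\ref{t1.4} applies, and then to convert the resulting Berry--Esseen bound into a L\'evy concentration bound. Concretely, write $\mu\coloneqq \ave[\xi_1\cdots\xi_d]=\binom{n-d}{k-d}/\binom{n}{k}$ and define, for every $i\in[n]^d$,
\[ X_i\coloneqq \Big(\prod_{\ell=1}^d\xi_{i(\ell)}\Big)-\mu\ \ \text{if } i\in [n]^d_{\mathrm{Inj}}, \qquad X_i\coloneqq 0\ \ \text{otherwise,} \]
together with $\theta_i\coloneqq \tfrac{1}{d!}\, a_{\{i(1),\dots,i(d)\}}$ for $i\in [n]^d_{\mathrm{Inj}}$ and $\theta_i=0$ otherwise. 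This is an instance of the second bulleted example in Subsection~\ref{subsubsec1.1.1}, so $\bbx$ satisfies (\hyperref[A1]{$\mathcal{A}$1}), (\hyperref[A2]{$\mathcal{A}$2}), and $\bbth$ satisfies (\hyperref[A3]{$\mathcal{A}$3}). Counting the $(d-s)!$ orderings of the free coordinates yields the basic identity
\[ \binom{d}{s}^2\, s!\, \seminorm{\bbth}_s^2=\seminorm{\boldsymbol{a}}_s^2\qquad (0\mik s\mik d), \]
and by construction $\langle\bbth,\bbx\rangle=f(\boldsymbol{\xi})-\ave[f(\boldsymbol{\xi})]$.

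In the second step I would compute the correlation parameters of $\bbx$ in closed form. From $\ave[\xi_{j_1}\cdots\xi_{j_r}]=\prod_{\nu=0}^{r-1}(k-\nu)/(n-\nu)$ for distinct indices one reads off
\[ \delta_s=\frac{\binom{n-(2d-s)}{k-(2d-s)}}{\binom{n}{k}}-\mu^2\ \ (s\meg 1), \qquad \delta_0=\frac{\binom{n-2d}{k-2d}}{\binom{n}{k}}-\mu^2. \]
A binomial expansion of $\Sigma_s=\sum_{t=0}^s\binom{s}{t}(-1)^{s-t}\delta_t$ shows that, in the i.i.d.\ Bernoulli$(p)$ limit, $\Sigma_s=p^{2d-s}(1-p)^s$ for $s\meg 1$ and $\Sigma_0=0$; on the slice the same identity holds up to an error of order $O_d\big(1/(n(1-p)^d)\big)$ coming from the discrepancy $\binom{n-r}{k-r}/\binom{n}{k}-p^r$ for $r\mik 2d$. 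Substituting these expansions, together with the identity from Step~1, into Proposition~\ref{p1.2} immediately gives \eqref{e3.16}.

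The third step is to estimate the remaining inputs to Theorem~\ref{t1.4}. A pleasant simplification is that $B=0$ exactly on the slice: by the multinomial identity $\binom{n}{d}\binom{n-d}{k-d}=\binom{n}{k}\binom{k}{d}$ one has $\sum_{i\in[n]^d}X_i=d!\,\big(e_d(\boldsymbol{\xi})-\binom{n}{d}\mu\big)=0$ almost surely, since $e_d(\boldsymbol{\xi})=\binom{k}{d}$ under $\sum_j\xi_j=k$. For the oscillation, the same deterministic constraint gives $\sum_{i(1)=j,\,i\in [n]^d_{\mathrm{Inj}}}X_i=(d-1)!\binom{k-1}{d-1}(\xi_j-p)$, and since $\sum_j(\xi_j-p)^2=np(1-p)$ is also deterministic, $\osc(\bbx)$ reduces to the polynomial error caused by replacing $(d-1)!\binom{k-1}{d-1}/n^{d-1}$ with $p^{d-1}$; this is of order $O_d(1/n)$ with an appropriate power of $p$. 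Since $\delta_1\approx p^{2d-1}(1-p)$, the hypothesis \eqref{e3.15} forces $\delta_1\meg (2\kappa)^{2d}/n$, which lets us verify the non-degenericity condition \eqref{e1.11} with a suitable $\alpha\in(0,1)$. Rescaling $\bbth$ by $d/\seminorm{\boldsymbol{a}}_1$ so that $\seminorm{\bbth}_1=1$, I would then invoke Theorem~\ref{t1.4} (or the sharper Proposition~\ref{p11.1} indicated in Remark~\ref{r1.5}) and rewrite each of $E_1,E_2,E_3$ in terms of $p$, the seminorms $\seminorm{\boldsymbol{a}}_s$, and $\kappa$; the surviving contributions match the non-Gaussian summands on the right-hand side of \eqref{e3.17}. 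The L\'evy concentration bound is then immediate from the standard inequality
\[ \mathcal{L}_X(\ee)\mik \frac{\ee}{\sqrt{2\pi}\,\sigma}+2\, d_K\big(X,\mathcal{N}(0,\sigma^2)\big) \]
applied to $X=f(\boldsymbol{\xi})-\ave[f(\boldsymbol{\xi})]$, whose L\'evy concentration function coincides with that of $f(\boldsymbol{\xi})$.

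The main obstacle I expect is the bookkeeping in Step~3: each of the rescaling by $d/\seminorm{\boldsymbol{a}}_1$, the choice of $\alpha$, and the substitution of the slice values of $\delta_s,\Sigma_s,\osc(\bbx)$ contributes its own powers of $p$ and $1-p$, and these have to conspire to produce the precise exponents $p^{3d}(1-p)^{3/2}$, $p^d(1-p)^{1/2}$, and the factor $(1-p)^{-d}$ that appear in \eqref{e3.16}--\eqref{e3.17}. A subsidiary difficulty is the sharp oscillation estimate itself: the uniform distribution on $\binom{[n]}{k}$ is not a mixture of dissociated measures, so the general bound \eqref{e3.10} does not apply directly and the tailored slice computation sketched above is required.
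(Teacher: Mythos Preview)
Your proposal is correct and follows essentially the same route as the paper: define $\bbx,\bbth$ exactly as you do, verify $B=0$ and the deterministic formula for $\osc(\bbx)$ via the slice identities, estimate $\delta_s$ and $\Sigma_s$ by their Bernoulli$(p)$ values plus $O_d(1/k)$, check \eqref{e1.11}, apply Theorem~\ref{t1.4}, and finish with the standard bound $\mathcal{L}_X(\ee)\mik \ee/(\sqrt{2\pi}\,\sigma)+2\,d_K$. The paper pins down $\alpha=1/2$ (which is what makes the term $6\kappa/n^{1-\alpha}$ land inside $16\kappa/\sqrt{pn}$), and your slice computations for $B$ and $\osc(\bbx)$ are in fact more explicit than what the paper records; otherwise the arguments coincide.
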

\begin{rem} \label{r3.3}
Using Proposition \ref{p1.2}, it is easy to see that the left-hand-side of \eqref{e3.16} is equal to
$\big|\mathrm{Var}\big(f(\boldsymbol{\xi})\big)-\mathrm{Var}\big(f(\boldsymbol{b})\big)\big|$
where $\boldsymbol{\xi}$ is as in Theorem \ref{t3.2} and $\boldsymbol{b}$ is a random vector
in $\{0,1\}^n$ with i.i.d. Bernoulli entries with expectation $p$.
\end{rem}
In order to put Theorem \ref{t3.2} into context, fix $f$ as in \eqref{e3.13}, and recall that
if $\boldsymbol{G}$ is a random vector in $\rr^n$ with independent standard normal entries
such that $\mathrm{Var}\big(f(\boldsymbol{G})\big)=1$, then the Carbery--Wright inequality \cite{CW01}
yields that for every $\ee>0$,
\begin{equation} \label{e3.18}
\mathcal{L}_{f(\boldsymbol{G})}(\ee) = O(d\ee^{1/d}).
\end{equation}
Using their invariance principle, Mossel, O’Donnell and Oleszkiewicz \cite{MOO10} extended
this estimate to general random vectors with i.i.d. entries with an extra term on the right-hand-side
of \eqref{e3.18} depending on the influence of the coefficients of $f$; more recently, Filmus, Kindler,
Mossel and Wimmer \cite{FKMW18} also covered the case of random vectors $\boldsymbol{\xi}$ as in
Theorem \ref{t3.2} under an extra harmonicity assumption on the polynomial $f$.
(See, also, \cite{FM19} for related results).

On the other hand, if $\boldsymbol{\xi}$ is as in Theorem \ref{t3.2} with, say,
$\frac{n}{10}\mik k \mik \frac{9n}{10}$ and $\mathrm{Var}\big(f(\boldsymbol{\xi})\big)=1$,
then, by \eqref{e3.16} and~\eqref{e3.17}, for every $0<\ee\mik 1/2$ we have
\begin{equation} \label{e3.19}
\mathcal{L}_{f(\boldsymbol{\xi})}(\ee) = O(\ee)
\end{equation}
provided that $n$ is sufficiently large and
\[ \sum_{j=1}^n \Big|\!\sum_{j\in F\in \binom{[n]}{d}} \! a_F\Big|^3,
\frac{\seminorm{\boldsymbol{a}}_0^2}{n},
\seminorm{\boldsymbol{a}}_2,\dots,\seminorm{\boldsymbol{a}}_d\mik c_d\ee\]
where $c_d$ is a positive constant that depends only on $d$. In other words,
we have an improvement over~\eqref{e3.18} as long as the coefficients of $f$ are balanced
and not too dense, and the contribution of the seminorms $\seminorm{\boldsymbol{a}}_2,
\dots,\seminorm{\boldsymbol{a}}_d$ in $\mathrm{Var}\big(f(\boldsymbol{\xi})\big)$
is relatively small.

%------------------------------------------------%
%
%\subsection{Subgraph statistics} \label{subsec3.3}
%
%------------------------------------------------%

%------------------------Notation-----------------------------%

\section{Notation} \label{sec4}

\numberwithin{equation}{section}

In this section we collect all pieces of notation that are used throughout this paper.
Working with tensors (and, in general, with high-dimensional objects) necessitates some notation
which, unfortunately, is not completely standardized. Most of this notation will be used
in Sections \ref{sec5}, \ref{sec7} and \ref{sec9}. The reader should have in mind this remark,
and consult this particular section for any possible clarification while reading the paper.

As we have mentioned, for every integer $n\meg 1$ we set $[n]\coloneqq\{1,\dots,n\}$;
moreover, for every $k\in \{0,\dots,n\}$ by $\binom{[n]}{k}$ we denote the collection
of all subsets of $[n]$ of cardinality~$k$.

\subsection{\!} \label{subsec4.1}

For every positive integer $n$ and every (possibly empty) set $S$ by $[n]^S$ we denote the set of
all functions from $S$ into $[n]$, and by $[n]^S_{\mathrm{Inj}}$ the set of all one-to-one functions from
$S$ into~$[n]$. In particular, if $S$ is empty, then $[n]^S$ contains only the empty function
which shall be denoted by $\emptyset$; on the other hand, if $S=[d]$ for some positive integer $d$,
then the corresponding sets of functions shall be denoted simply by $[n]^d$ and~$[n]^d_{\mathrm{Inj}}$.
Finally, if $S$ is nonempty, then by $\mathrm{PartInj}(S,[n])$ we denote the set of all \textit{nonempty},
partial, one-to-one maps from $S$ into $[n]$. (We emphasize, again, the nonemptyness.)

\subsection{\!} \label{subsec4.2}

Given a function $f$, by $\mathrm{dom}(f)$ we shall denote its domain and by $\mathrm{Im}(f)$
its image. For every (possibly empty) subset $F$ of $\mathrm{dom}(f)$, by
$f\upharpoonright F$ we denote the restriction of $f$~on~$F$. If $g$ is another function, then
we write $f\sqsubseteq g$ provided that $\mathrm{dom}(f)\subseteq\mathrm{dom}(g)$ and
$g\upharpoonright\mathrm{dom}(f) = f$. Finally, if $\mathrm{Im}(f)\subseteq\mathrm{dom}(g)$,
then by $g\circ f$ we denote the composition of $g$ with $f$, that is, the map
$\mathrm{dom}(f)\ni i \mapsto g(f(i))\in \mathrm{Im}(g)$.

In particular, if
$n,d$ are positive integers, then for every $i=(i_1,\dots,i_d)\in [n]^d$,
every permutation $\pi$ of $[n]$ and every permutation~$\tau$ of $[d]$ we have
\begin{equation} \label{e4.1}
\pi\circ i\coloneqq\big(\pi(i_1),\dots,\pi(i_d)\big)\in [n]^d \ \ \ \text{ and } \ \ \
i\circ \tau\coloneqq \big(i_{\tau(1)},\dots,i_{\tau(d)}\big)\in [n]^d;
\end{equation}
moreover, $i\upharpoonright F\in [n]^F$ for every subset $F$ of $[d]$.

\subsection{\!} \label{subsec4.3}

Let $n,d$ be positive integers, and let $i,j,p,q\in [n]^d$. We say that the pairs $(i,j)$ and
$(p,q)$ are \textit{equivalent} if there exists a permutation $\pi$ of $[n]$ such that $p=\pi\circ i$
and $q=\pi\circ j$. We shall write $(i,j)\sim(p,q)$ to denote the fact that $(i,j)$ and
$(p,q)$ are equivalent.

\subsection*{Acknowledgments}

We would like to thank the anonymous referees for their comments, remarks and suggestions.

The research was supported by the Hellenic Foundation for Research and Innovation
(H.F.R.I.) under the “2nd Call for H.F.R.I. Research Projects to support
Faculty Members \& Researchers” (Project Number: HFRI-FM20-02717).

%-------------------------------------------------------------%
%        Combinatorial CLT for high-dimensional tensors       %
%                                                             %
%-------------------------------------------------------------%

\part{Combinatorial CLT for high-dimensional tensors} \label{part2}

%------------A tool for computing the variance----------------%

\section{A tool for computing the variance} \label{sec5}

\numberwithin{equation}{section}

The following proposition is the first step of the proof of Theorem \ref{t2.2}.
It is the main tool that enables us to compute the variance of several random
variables associated with $W$-statistics.
\begin{prop} \label{p5.1}
Let $n\meg s$ be positive integers, and let $\boldsymbol{\xi}\colon [n]^s \times [n]^s \to \rr$
be a Hoeffding tensor. Also let\,\footnote{That is, $t$ is either the empty map, or a nonempty,
partial, one-to-one map from $[s]$ into $[n]$.} $t\in \{\emptyset\}\cup\mathrm{PartInj}([s],[n])$
and $r\in \big\{|\mathrm{Im}(t)|,\dots,s\big\}$. Then,
\begin{equation} \label{e5.1}
\sum_{(i,j,p,q)\in \mathcal{M}_{t,r}} \!\!\! \boldsymbol{\xi}(i,p)\,\boldsymbol{\xi}(j,q) \mik
e^{2s}\,\big((2s)!\big)^2 n^{s-r} \sum_{\substack{i,p\in[n]^s\\t\sqsubseteq i}} \boldsymbol{\xi}(i,p)^2
\end{equation}
where $\mathcal{M}_{t,r}=\big\{(i,j,p,q)\in \big([n]^s_{\mathrm{Inj}}\big)^4: t\sqsubseteq i,\,
t\sqsubseteq j,\, (i,j)\sim(p,q) \text{ and } |\mathrm{Im}(i)\cap\mathrm{Im}(j)| = r\big\}$.
\end{prop}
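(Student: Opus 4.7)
The plan is to combine a combinatorial parameterization of $\mathcal{M}_{t,r}$ with iterated application of the Hoeffding vanishing of $\boldsymbol{\xi}$, followed by Cauchy--Schwarz. For each pair of injective tuples $(i,p)\in\big([n]^s_{\mathrm{Inj}}\big)^2$ with $t\sqsubseteq i$, every compatible $(j,q)$ with $(i,j,p,q)\in\mathcal{M}_{t,r}$ is described by: (a) a subset $B\subseteq[s]\setminus\mathrm{dom}(t)$ of size $r-|\mathrm{Im}(t)|$ marking the positions of $j$ whose values lie in $\mathrm{Im}(i)\setminus\mathrm{Im}(t)$; (b) the injection $\mu=j\upharpoonright B$ into $\mathrm{Im}(i)\setminus\mathrm{Im}(t)$; and (c) a pair of injections $\sigma\colon C\to[n]\setminus\mathrm{Im}(i)$ and $\rho\colon C\to[n]\setminus\mathrm{Im}(p)$, where $C=[s]\setminus(\mathrm{dom}(t)\cup B)$ has cardinality $s-r$. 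The equivalence $(i,j)\sim(p,q)$ pins $q\upharpoonright(\mathrm{dom}(t)\cup B)$ in terms of $(i,p,B,\mu)$, so the genuinely free information is carried by the pair $(\sigma,\rho)$, living on $2(s-r)$ free coordinates.

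For fixed $(i,p,B,\mu)$ the inner sum $S:=\sum_{\sigma,\rho}\boldsymbol{\xi}(j,q)$ is controlled via the Hoeffding identities
\[
\sum_{(\sigma,\rho)\in[n]^{C}\times[n]^{C}}\boldsymbol{\xi}(j,q)\;=\;0\;=\;\sum_{\sigma\in[n]^{C}}\sum_{\rho\text{ legal}}\boldsymbol{\xi}(j,q),
\]
and their symmetric versions, all of which follow by iterating the vanishing of $\boldsymbol{\xi}$ over a single free coordinate with everything else frozen. Applying these identities to replace the ``legal'' constraints on $\sigma,\rho$ by their complements, and expanding the resulting sum by inclusion--exclusion over all coincidence patterns on $C\sqcup C$ (allowing also pins into $\mathrm{Im}(i)\cup\mathrm{Im}(p)$), one decomposes $S$ as a signed sum over a bounded collection of ``terminal'' strata in which all $2(s-r)$ free dimensions have been collapsed. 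A Cauchy--Schwarz on each stratum then yields a bound of the form
\[
|S|\;\leq\;C_{s}\,\sqrt{\sum_{(\sigma,\rho)\in[n]^{C}\times[n]^{C}}\boldsymbol{\xi}(j,q)^{2}}
\]
with $C_{s}$ a combinatorial constant coming from counting the relevant coincidence patterns.

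A final Cauchy--Schwarz in the outer variables $(i,p,B,\mu)$, combined with the multiplicity estimate that each $(j,q)\in[n]^{s}\times[n]^{s}$ with $t\sqsubseteq j$ is produced by at most $O\big(4^{s}s!\,n^{2(s-r)}\big)$ tuples $(i,p,B,\mu,\sigma,\rho)$, consolidates the estimate into $C'_{s}\,n^{s-r}\sum_{i,p\in[n]^{s},\,t\sqsubseteq i}\boldsymbol{\xi}(i,p)^{2}$ after dropping the injectivity of $i,p$ on the right-hand side (which only enlarges it). The main obstacle, and the origin of the constant $e^{2s}((2s)!)^{2}$, is the careful combinatorial bookkeeping in the iterated Hoeffding reduction: the $(2s)!$-factors account for the at-most-$(2s)!$ positional choices in $(B,\mu)$ and in the analogous data on the $(p,q)$ side, while the $e^{2s}$-factor absorbs the sums of partition-number type counts over all coincidence patterns on the $2(s-r)$ free coordinates. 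Keeping this overhead bounded by $e^{2s}((2s)!)^{2}$, and verifying that each terminal stratum really does contribute Cauchy--Schwarz-manageable pieces with no residual powers of $n$, is the technical heart of the argument.
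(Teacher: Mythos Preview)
Your outline has the right ingredients (Hoeffding cancellation, terminal strata, Cauchy--Schwarz), but there is a genuine gap in the power-of-$n$ accounting, and it sits exactly at the step you flag as ``the technical heart''. The claim that a Cauchy--Schwarz on each terminal stratum yields
\[
|S|\;\le\;C_s\,\Big(\sum_{(\sigma,\rho)\in[n]^C\times[n]^C}\boldsymbol{\xi}(j,q)^2\Big)^{1/2}
\]
is not justified by your reduction. The Hoeffding identities only let you eliminate a \emph{singleton} free coordinate; once two $\sigma$-coordinates (or two $\rho$-coordinates) have been glued together to a common value $x$, the sum $\sum_{x\in[n]}\boldsymbol{\xi}(j,q)$ with $x$ appearing in two slots of the same argument is \emph{not} forced to vanish by the Hoeffding property. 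Hence terminal strata retain up to $s-r$ free dimensions (at most $(s-r)/2$ from glued $\sigma$-clusters and $(s-r)/2$ from glued $\rho$-clusters). Cauchy--Schwarz on such a stratum then produces an unavoidable factor $\sqrt{n^{\,k_T}}$ with $k_T$ possibly as large as $s-r$, so your bound on $|S|$ is off by $\sqrt{n^{\,s-r}}$ in the worst case. Propagating this through your outer Cauchy--Schwarz and multiplicity count gives $n^{3(s-r)/2}$ rather than the required $n^{s-r}$.

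The paper avoids this by \emph{not} fixing $(i,p)$ and reducing only $(j,q)$. Instead it parametrises the pair $(i,j)$ by a single injection $f\in[n]^{P}_{\mathrm{Inj}}$ on a partition $P$ of $\{0,1\}\times[s]$, and independently the pair $(p,q)$ by $g\in[n]^{P}_{\mathrm{Inj}}$. The Hoeffding reduction is carried out symmetrically on $f$ and on $g$, each time merging a singleton cell of $P$ into another cell, until one reaches ``final'' partitions $Q,S$ whose cells all have size $\ge 2$. Only then is Cauchy--Schwarz applied, but to the \emph{bilinear} expression
\[
\sum_{f'\in\mathcal{M}(Q),\,g'\in[n]^S_{\mathrm{Inj}}}\boldsymbol{\xi}(i_{f'},i_{g'})\,\boldsymbol{\xi}(j_{f'},j_{g'}),
\]
splitting it into a factor depending only on $(i_{f'},i_{g'})$ and one depending only on $(j_{f'},j_{g'})$. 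The residual free cells of $Q$ lying entirely in $\{0\}\times[s]$ go to one factor, those lying in $\{1\}\times[s]$ to the other; since final cells have size $\ge 2$ and the initial partition has exactly $s-r$ singletons on each side, each factor picks up at most $n^{(s-r)/2}$ of overcounting, and the product gives $n^{s-r}$. This symmetric splitting is precisely what your asymmetric scheme (fix $(i,p)$, reduce $(j,q)$) loses; to repair your argument you would in effect have to also Hoeffding-reduce the outer sum over $(i,p)$, at which point you are back to the paper's partition formalism.
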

(Recall that, for a quadruple $(i,j,p,q)$ of elements of $[n]^s$, we write $(i,j)\sim (p,q)$
to denote the fact $(i,j)$ and $(p,q)$ are equivalent; see Subsection \ref{subsec4.3}.)
For the proof of Proposition \ref{p5.1} we need to do some preparatory work. In what follows,
let $n,s,\boldsymbol{\xi},t,r$ be as in Proposition \ref{p5.1}.

\subsection{Partitions} \label{subsec5.1}

Consider the set $\{0,1\}\times [s]$ which we view as a partially ordered set equipped with the
lexicographical order $<_{\mathrm{lex}}$ defined by setting $(\mu,a)<_{\mathrm{lex}}(\nu,b)$
if either $\mu<\nu$, or $\mu=\nu$ and $a<b$. We denote by $\Pi$ the set of all partitions
(with nonempty~parts) of $\{0,1\}\times [s]$; we shall use the letters $P,Q,R,S$ to denote partitions.
Moreover, with every $(i,j)\in [n]^s\times [n]^s$ we associate a partition $P(i,j)\in \Pi$ defined by
\begin{equation} \label{e5.2}
\Big\{ \big\{(0,u) \in \{0\}\times[s] :  i(u)=x\big\} \cup
\big\{(1,v) \in \{1\}\times[s] : j(v)=x\big\}: x\in\mathrm{Im}(i)\cup\mathrm{Im}(j) \Big\}.
\end{equation}
We isolate, for future use, the following elementary fact.
\begin{observation} \label{o5.2}
If $i,j,p,q\in [n]^s$, then $(i,j)\sim(p,q)$ if and only if  $P(i,j)=P(p,q)$.
\end{observation}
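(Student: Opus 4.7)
The proof is an unpacking of the definitions; the plan is to verify both directions by observing that $P(i,j)$ encodes precisely the ``equality pattern'' among the coordinates of $i$ and $j$, and that this pattern is preserved under the action of a permutation of $[n]$ (forward direction) while, conversely, it determines a relabeling of values uniquely up to extension (backward direction).

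For the forward direction, I would assume that $(i,j)\sim(p,q)$ via some $\pi\in\mathbb{S}_n$, so that $p=\pi\circ i$ and $q=\pi\circ j$. Since $\pi$ is injective, for any $u,v\in[s]$ one has $i(u)=i(v)\Leftrightarrow p(u)=p(v)$, $j(u)=j(v)\Leftrightarrow q(u)=q(v)$, and $i(u)=j(v)\Leftrightarrow p(u)=q(v)$. Therefore two elements $(\mu,a),(\nu,b)\in\{0,1\}\times[s]$ lie in the same cell of $P(i,j)$ if and only if they lie in the same cell of $P(p,q)$, which gives $P(i,j)=P(p,q)$.

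For the backward direction, assume $P(i,j)=P(p,q)$. Observe first that the indexing described in \eqref{e5.2} gives a bijection between $\mathrm{Im}(i)\cup\mathrm{Im}(j)$ and the set of cells of $P(i,j)$, since distinct values in the image are tagged by disjoint cells (the cell of $x$ contains, e.g., every $(0,u)$ with $i(u)=x$); analogously for $P(p,q)$. Consequently $|\mathrm{Im}(i)\cup\mathrm{Im}(j)|=|\mathrm{Im}(p)\cup\mathrm{Im}(q)|$, and I can define a map
\[ \pi_0\colon \mathrm{Im}(i)\cup\mathrm{Im}(j)\to \mathrm{Im}(p)\cup\mathrm{Im}(q) \]
by sending each $x$ to the unique $y$ such that the cell indexed by $x$ in $P(i,j)$ coincides with the cell indexed by $y$ in $P(p,q)$. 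By construction $\pi_0$ is a bijection, and for every $u\in [s]$ one has $p(u)=\pi_0(i(u))$ and $q(u)=\pi_0(j(u))$: indeed, $(0,u)$ lies in the cell of $i(u)$ in $P(i,j)$, hence in the cell of $\pi_0(i(u))$ in $P(p,q)$, which forces $p(u)=\pi_0(i(u))$, and similarly for $q$.

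It then suffices to extend $\pi_0$ to a bijection $\pi\colon[n]\to[n]$, which is possible because its domain and codomain have the same cardinality, hence so do their complements in $[n]$. The resulting $\pi\in\mathbb{S}_n$ satisfies $p=\pi\circ i$ and $q=\pi\circ j$, establishing $(i,j)\sim(p,q)$. The only point requiring any care---and the nearest thing to an obstacle---is the well-definedness of $\pi_0$, which rests on the observation that the correspondence between values in the image and cells of the partition is a bijection on both sides.
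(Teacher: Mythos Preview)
Your proof is correct. The paper does not actually supply a proof of this observation; it is stated as an ``elementary fact'' and left to the reader, so your write-up simply spells out the definition-chasing that the authors leave implicit. The two directions you give are exactly the natural ones, and the only nontrivial point---that the cell-labeling in \eqref{e5.2} sets up a bijection between $\mathrm{Im}(i)\cup\mathrm{Im}(j)$ and the cells of $P(i,j)$, allowing one to build $\pi_0$ and then extend it to all of $[n]$---is handled cleanly.
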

Next, we set
\begin{equation} \label{e5.3}
\mathcal{I}\coloneqq \big\{P(i,j): (i,j) \in [n]^s_{\mathrm{Inj}}\times [n]^s_{\mathrm{Inj}},\,
|\mathrm{Im}(i)\cap\mathrm{Im}(j)|=r, \, t\sqsubseteq i \text{ and } t\sqsubseteq j\big\};
\end{equation}
notice that for every $P\in\mathcal{I}$, its cells have cardinality 1 or 2, and it has exactly
$r$ cells with cardinality 2. We also set
\begin{equation} \label{e5.4}
\mathcal{F} \coloneqq \big\{ P\in\Pi: \text{every cell of $P$ has cardinality at least } 2\big\}.
\end{equation}
We view $\mathcal{I}$ as the set of ``initial" partitions, and $\mathcal{F}$ as the set of
``final" partitions.

\subsection{Ordering partitions} \label{subsec5.2}

We about to define a partial order $\preccurlyeq$ on the set~$\Pi$. To this end,
first, for every $P\in\Pi\setminus\mathcal{F}$ let $X(P)\in P$ denote the $<_{\mathrm{lex}}$-minimal
element of $\{X\in P: |X|=1\}$; namely, among all cells of $P$ which are singletons, $X(P)$
is the lexicographically least. Next, for every $P\in\Pi\setminus\mathcal{F}$ and
every $Q\in\Pi$ we write $Q \prec_1 P$ if there exist $X_1\in Q$ and $X_2\in P$ such that
\begin{enumerate}
\item[($\mathcal{P}$1)] \label{P1} $X_2\neq X(P)$ and $X_1 = X_2 \cup X(P)$, and
\item[($\mathcal{P}$2)] \label{P2} for every $X\in Q$ with $X\neq X_1$ we have $X\in P$.
\end{enumerate}
Notice, in particular, that if $Q\prec_1 P$, then $|P| = |Q|+1$.

Finally, we define $\preccurlyeq$ to be the transitive closure of $\prec_1$; more precisely,
given $P, Q\in \Pi$, we write $Q \preccurlyeq P$ if either $P=Q$, or there exists a finite sequence
$(P_0,P_1,\dots,P_\ell)$ in $\Pi$ such that $Q=P_\ell\prec_1 \dots \prec_1 P_0=P$.
We shall refer to such a sequence $(P_0,P_1,\dots,P_\ell)$ as a \emph{path $($from $P$ to $Q$$)$}
and we call the positive integer $\ell$ as the \emph{length of the path}; moreover,
by $\alpha(P,Q)$ we shall denote the number of paths from $P$ to $Q$. We will also need
the following observation. (Its proof is straightforward.)
\begin{observation} \label{o5.3}
If\, $Q\preccurlyeq P$, then all paths from $P$ to $Q$ have the same length.
\end{observation}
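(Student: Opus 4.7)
The plan is to observe that a single $\prec_1$ step changes the number of cells by exactly one, so the length of any $\prec_1$-chain from $P$ to $Q$ is forced to equal $|P|-|Q|$, a quantity that depends only on the endpoints.

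First I would unpack the definition of $\prec_1$. If $Q \prec_1 P$ with witnesses $X_1 \in Q$ and $X_2 \in P$, then by condition (\hyperref[P1]{$\mathcal{P}$1}) the cell $X_1$ is the disjoint union $X_2 \cup X(P)$, and by (\hyperref[P2]{$\mathcal{P}$2}) every other cell of $Q$ appears verbatim in $P$. Reading this the other way: the cells of $P$ are exactly the cells of $Q$ with $X_1$ removed and replaced by the two cells $X_2$ and $X(P)$. Hence $|P| = |Q| + 1$, as already noted in the text.

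Next I would use this to analyze paths. Given a path $(P_0,P_1,\dots,P_\ell)$ from $P$ to $Q$, i.e.\ $Q = P_\ell \prec_1 P_{\ell-1} \prec_1 \cdots \prec_1 P_0 = P$, an immediate induction on $\ell$ using the one-step identity yields $|P_0| = |P_\ell| + \ell$, that is,
\[
\ell \;=\; |P| - |Q|.
\]
Since the right-hand side depends only on $P$ and $Q$, every path from $P$ to $Q$ must have this same length, which proves the observation.

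There is no real obstacle here: once one notices that $\prec_1$ is a ``merge one cell at a time'' relation, the number-of-cells function $|\cdot|$ serves as a rank function, and the equal-length property is automatic. The only care needed is to verify the one-step count $|P| = |Q| + 1$ from the definition, which is the straightforward unpacking above.
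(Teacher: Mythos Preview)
Your proof is correct and is precisely the straightforward argument the paper has in mind: the text notes just before the observation that $Q\prec_1 P$ implies $|P|=|Q|+1$, and summing this along any path gives $\ell=|P|-|Q|$.
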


\subsection{Successors} \label{subsec5.3}

For every $P\in \Pi$ we set
\begin{equation} \label{e5.5}
\mathrm{Succ}(P)\coloneqq \{Q\in\Pi:Q \preccurlyeq P\} \ \ \  \text{ and } \ \ \
\mathrm{SuccFin}(P)\coloneqq \mathrm{Succ}(P)\cap\mathcal{F}.
\end{equation}
Moreover, for every $P\in\Pi\setminus\mathcal{F}$ set
$\mathrm{Succ}_1(P)\coloneqq \{Q\in\Pi:Q \prec_1 P\}$. In what follows, we will be mainly interested
in the set
\begin{equation} \label{e5.6}
\mathcal{A}\coloneqq \bigcap_{P\in\mathcal{I}}\mathrm{Succ}(P)\subseteq \Pi.
\end{equation}

\subsection{Coding pairs} \label{subsec5.4}

For every $(i,j)\in [n]^s\times [n]^s$ let $P(i,j)$ be as in \eqref{e5.2} and define
$f_{i,j}\colon P(i,j)\to [n]$ by
\begin{equation} \label{e5.7}
f_{i,j}(X)\coloneqq
\begin{cases}
i(u) &  \text{if } (0,u)\in X, \\
j(v) & \text{if } (1,v)\in X.
\end{cases}
\end{equation}
To spell it out in more detail, for every $u\in [s]$ we have that $f_{i,j}(X)=i(u)$ where $X$
is the unique cell of the partition $P(i,j)$ that contains $(0,u)$; respectively, for every
$v\in [s]$ we have that $f_{i,j}(X)=j(v)$ where $X$ is the unique cell of $P(i,j)$ that contains $(1,v)$.
Observe that, by definition, we have that $f_{i,j}\in [n]^{P(i,j)}_{\mathrm{Inj}}$.

Our next goal is to show that the process of obtaining $P(i,j)$ and $f_{i,j}$ from the pair
$(i,j)$, can be reversed. More precisely, for every $P\in \Pi$ and every $f\in [n]^P_{\mathrm{Inj}}$ we define
$i_f,j_f\colon [s]\to[n]$ as follows. For every $u\in[s]$ we set $i_f(u) = f(X)$ where $X$ is the unique
cell of $P$ that contains $(0,u)$; respectively, for every $v\in[s]$ we set $j_f(v) = f(X)$ where $X$
is the unique cell of $P$ that contains $(1,v)$. It is clear that for every $P\in \Pi$ and
$f\in [n]^P_{\mathrm{Inj}}$ we have $P(i_f,j_f) = P$ and $f_{i_f,j_f}=f$. Moreover, for every
$(i,j)\in [n]^s\times [n]^s$~the~map
\begin{equation} \label{e5.8}
[(i,j)]_\sim \ni (p,q) \to f_{p,q} \in [n]^{P(i,j)}_{\mathrm{Inj}}
\end{equation}
is a bijection, and its inverse is given by
\begin{equation} \label{e5.9}
[n]^{P(i,j)}_{\mathrm{Inj}} \ni f \to (i_f,j_f) \in [(i,j)]_\sim
\end{equation}
where $[(i,j)]_\sim=\big\{(p,q)\in [n]^s\times [n]^s: (p,q)\sim(i,j)\big\}$ denotes the equivalence
class~of~$(i,j)$.

\subsection{The restriction of $t$} \label{subsec5.5}

Recall that in the left-hand-side of \eqref{e5.1} the sum is over all $(i,j)$ which
both extend the given partial map $t$. Having this constrain in mind,
for every $P\in \mathcal{A}$ we set
\begin{equation} \label{e5.10}
\mathcal{M}(P)\coloneqq \big\{f\in[n]^P_{\mathrm{Inj}}:t\sqsubseteq i_f \text{ and } t\sqsubseteq j_f\big\}.
\end{equation}
The set $\mathcal{M}(P)$ can be alternatively described as follows. Notice that
the map which sends each $k\in\mathrm{dom}(t)$ to the unique cell $X_k\in P$
that contains $(0,k)$, is one-to-one. Then $\mathcal{M}(P)$ consists of all
$f\in [n]^{P}_{\mathrm{Inj}}$ such that $f(X_k) = t(k)$ for every $k\in \mathrm{dom}(t)$.

\subsection{Using the fact that $\boldsymbol{\xi}$ is a Hoeffding tensor} \label{subsec5.6}

For every $P,Q\in \Pi$ with $Q\prec_1 P$ and every $g\in [n]^{P\setminus\{X(P)\}}_{\mathrm{Inj}}$
we define $T^{P\to Q}(g)\in [n]^{Q}_{\mathrm{Inj}}$ as follows. Let $X_1\in Q$ and $X_2\in P$
be the unique cells which satisfy (\hyperref[P1]{$\mathcal{P}$1}) and
(\hyperref[P2]{$\mathcal{P}$2}). Then we set
\begin{equation} \label{e5.11}
T^{P\to Q}(g)(X_1) = g(X_2)
\end{equation}
and $T^{P\to Q}(g)(X)=g(X)$ for every $X\in Q\setminus\{X_1\}$.

Using the material introduced so far and invoking the fact that $\boldsymbol{\xi}$
is a Hoeffding tensor, we see that for every $P\in \Pi\setminus\mathcal{F}$, every
$g\in [n]^{P\setminus\{X(P)\}}_{\mathrm{Inj}}$ and every $(p,q)\in [n]^s\times [n]^s$,
\begin{equation} \label{e5.12}
\sum_{\substack{f\in[n]^{P}_{\mathrm{Inj}} \\ f\upharpoonright P\setminus\{X(P)\}= g}}
\!\!\!\!\! \boldsymbol{\xi}(i_f,p)\, \boldsymbol{\xi}(j_f,q) = \ - \!\!\!
\sum_{Q\in\mathrm{Succ}_1(P)} \!\! \boldsymbol{\xi}(i_{T^{P\to Q}(g)},p)\,
\boldsymbol{\xi}(j_{T^{P\to Q}(g)},q)
\end{equation}
\begin{equation} \label{e5.13}
\sum_{\substack{f\in[n]^{P}_{\mathrm{Inj}} \\ f\upharpoonright P\setminus\{X(P)\} = g}}
\!\!\!\!\! \boldsymbol{\xi}(p,i_f)\, \boldsymbol{\xi}(q,j_f) = \ - \!\!\!
\sum_{Q\in\mathrm{Succ}_1(P)} \!\! \boldsymbol{\xi}(p,i_{T^{P\to Q}(g)})\,
\boldsymbol{\xi}(q,j_{T^{P\to Q}(g)}).
\end{equation}
Therefore, by \eqref{e5.10} and \eqref{e5.12}, for every $P\in \mathcal{A}\setminus\mathcal{F}$ and
every $(p,q)\in [n]^s\times [n]^s$ we have
\begin{equation} \label{e5.14}
\sum_{f\in\mathcal{M}(P)} \! \boldsymbol{\xi}(i_f,p)\, \boldsymbol{\xi}(j_f,q) = \ -
\!\! \sum_{Q\in\mathrm{Succ}_1(P)} \sum_{g\in\mathcal{M}(Q)} \!
\boldsymbol{\xi}(i_g,p)\, \boldsymbol{\xi}(j_g,q);
\end{equation}
respectively, by \eqref{e5.10} and \eqref{e5.13}, for every $R\in \Pi\setminus\mathcal{F}$
and every $(p,q)\in [n]^s\times [n]^s$,
\begin{equation} \label{e5.15}
\sum_{f\in[n]^R_{\mathrm{Inj}}} \! \boldsymbol{\xi}(p,i_f)\, \boldsymbol{\xi}(q,j_f) = \ -
\!\! \sum_{S\in\mathrm{Succ}_1(R)} \sum_{g\in[n]^S_{\mathrm{Inj}}} \!
\boldsymbol{\xi}(p,i_g)\, \boldsymbol{\xi}(q,j_g).
\end{equation}
We have the following claim. (Recall that for every $P,Q\in \Pi$ by $\alpha(P,Q)$ we denote the
number of paths from $P$ to $Q$.)
\begin{claim} \label{c5.4}
For every $P\in \mathcal{A}$ and every $(p,q)\in [n]^s\times [n]^s$ we have
\begin{equation} \label{e5.16}
\sum_{f\in\mathcal{M}(P)} \! \boldsymbol{\xi}(i_f,p)\, \boldsymbol{\xi}(j_f,q)
=\!\! \sum_{Q\in\mathrm{SuccFin}(P)} \!\!\!\!\! (-1)^{|P|-|Q|}\, \alpha(P,Q) \!
\sum_{g\in\mathcal{M}(Q)} \! \boldsymbol{\xi}(i_g,p)\, \boldsymbol{\xi}(j_g,q);
\end{equation}
respectively, for every $R\in \Pi$ and every $(p,q)\in [n]^s\times [n]^s$ we have
\begin{equation} \label{e5.17}
\sum_{f\in[n]^{R}_{\mathrm{Inj}}} \! \boldsymbol{\xi}(p,i_f)\, \boldsymbol{\xi}(q,j_f)
=\!\! \sum_{S\in\mathrm{SuccFin}(R)} \!\!\!\!\! (-1)^{|R|-|S|}\, \alpha(R,S) \!
\sum_{g\in[n]^{S}_{\mathrm{Inj}}} \! \boldsymbol{\xi}(p,i_g)\, \boldsymbol{\xi}(q,j_g).
\end{equation}
\end{claim}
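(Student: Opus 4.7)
The plan is to prove both identities by a simultaneous induction on the quantity $\ell(P)\coloneqq \max\{|P|-|Q|:Q\in\mathrm{SuccFin}(P)\}$, which by Observation~\ref{o5.3} is the common length of any longest path from $P$ to a final partition. Since each $\prec_1$-step decreases cardinality by exactly one, $\ell(P)$ is bounded by $|P|-1$, so the induction is well-founded.

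For the base case $\ell(P)=0$, one has $P\in\mathcal{F}$ and $\mathrm{SuccFin}(P)=\{P\}$; adopting the standard convention $\alpha(P,P)=1$ (corresponding to the degenerate path of length zero), both \eqref{e5.16} and \eqref{e5.17} reduce to tautologies.

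For the inductive step assume $\ell(P)\meg 1$, so that $P\notin\mathcal{F}$. For \eqref{e5.16}, note that $\mathcal{A}$ is downward closed under $\preccurlyeq$, being the intersection of the obviously downward closed sets $\mathrm{Succ}(P')$ with $P'\in\mathcal{I}$; hence every $Q\in\mathrm{Succ}_1(P)$ still lies in $\mathcal{A}$ and the inductive hypothesis applies to it. Starting from \eqref{e5.14}, substituting the inductive expression for each inner sum, using $|Q|=|P|-1$ so that the overall sign becomes $(-1)^{|P|-|Q'|}$ after absorbing the external minus, and swapping the order of summation, the problem reduces to verifying
\begin{equation*}
\alpha(P,Q')=\sum_{\substack{Q\in\mathrm{Succ}_1(P)\\ Q'\preccurlyeq Q}}\alpha(Q,Q')
\end{equation*}
for every $Q'\in\mathrm{SuccFin}(P)$. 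This identity holds because any path from $P$ to $Q'\neq P$ is uniquely determined by its first step $Q\prec_1 P$ together with the rest, which is itself a path from $Q$ to $Q'$. The identity \eqref{e5.17} is handled by exactly the same induction, this time starting from \eqref{e5.15}, with $[n]^{R}_{\mathrm{Inj}}$ playing the role of $\mathcal{M}(P)$; here no hypothesis corresponding to $P\in\mathcal{A}$ is needed, since \eqref{e5.15} is valid for every $R\in\Pi\setminus\mathcal{F}$.

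The one step that requires genuine attention is the sign/coefficient bookkeeping in the inductive step: one must track that the external minus sign from \eqref{e5.14}/\eqref{e5.15} correctly turns $(-1)^{|Q|-|Q'|}=(-1)^{|P|-|Q'|-1}$ into $(-1)^{|P|-|Q'|}$, and then invoke the path-counting identity to assemble $\alpha(P,Q')$ out of the $\alpha(Q,Q')$'s. Once these are in place, the rest is routine algebraic rearrangement.
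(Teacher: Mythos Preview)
Your proof is correct and follows essentially the same strategy as the paper: both arguments derive \eqref{e5.16} and \eqref{e5.17} from the one-step recursions \eqref{e5.14}/\eqref{e5.15} together with a path-counting identity for $\alpha(P,\cdot)$. The only organizational difference is that the paper fixes $P$ and inducts on the distance $\ell$ from $P$, proving the intermediate identity \eqref{e5.20} (which decomposes $\alpha(P,Q)$ by the \emph{last} step via \eqref{e5.19}) and then specializing at $\ell=|P|$, whereas you perform a structural induction on $P$ itself via the parameter $\ell(P)$ and decompose $\alpha(P,Q')$ by the \emph{first} step; these are dual bookkeeping choices and neither is materially simpler than the other.
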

\begin{proof}
We will only give the proof of \eqref{e5.16}; equality \eqref{e5.17} follows with identical arguments.
Fix $P\in\mathcal{A}$, and for every $Q\in \mathrm{Succ}(P)$, we set $\mathrm{dist}(P,Q)=|P|-|Q|$;
by Observation \ref{o5.3}, this quantity coincides with the length of an arbitrary path from $P$
to~$Q$. Moreover, for every positive integer $\ell$ we set
\begin{equation} \label{e5.18}
\mathrm{Succ}_\ell(P)\coloneqq \big\{Q\in \Pi:Q\preccurlyeq P \text{ and } \mathrm{dist}(P,Q)=\ell\big\}.
\end{equation}
Clearly, we have $Q\in \mathrm{Succ}_1(P)$ if and only if $Q\prec_1 P$. On the other hand,
using Observation \ref{o5.3} once again, we see that for every positive integer $\ell$ and every
$Q\in\mathrm{Succ}_{\ell+1}(P)$,
\begin{equation} \label{e5.19}
\alpha(P,Q)= \!\! \sum_{Q\prec_1 R\in\mathrm{Succ}_\ell(P)} \!\!\! \alpha(P,R).
\end{equation}
Finally, using \eqref{e5.14}, \eqref{e5.19} and proceeding by induction on $\ell$, we obtain that
\begin{align}
\label{e5.20} \sum_{f\in\mathcal{M}(P)} \! \boldsymbol{\xi}(i_f,p)\, \boldsymbol{\xi}(j_f,q)
& =\!\! \sum_{Q\in\mathrm{Succ}_\ell(P)}  \!\!\!\!\! (-1)^{|P|-|Q|}\, \alpha(P,Q) \!
\sum_{g\in\mathcal{M}(Q)} \! \boldsymbol{\xi}(i_g,p)\, \boldsymbol{\xi}(j_g,q) \ + \\
& \ \ \ \ \ \ + \!\!\sum_{\substack{Q\in\mathrm{SuccFin}(P)\\\mathrm{dist}(P,Q)<\ell}} \!\!\!\!\!
(-1)^{|P|-|Q|}\, \alpha(P,Q) \! \sum_{g\in\mathcal{M}(Q)} \!
\boldsymbol{\xi}(i_g,p)\, \boldsymbol{\xi}(j_g,q). \nonumber
\end{align}
Equality \eqref{e5.16} follows from \eqref{e5.20} for $\ell=|P|$.
\end{proof}

\subsection{Estimating the number of paths} \label{subsec5.7}

Notice, first, that the number of partitions of $\{0,1\}\times[s]$---known as the Bell number $B_{2s}$---is
at most $(2s)^{2s}$; also observe that every path has length at most $2s$. Thus, we immediately obtain that
$\alpha(P,Q)\mik (2s)^{4s^2}$ for every $P,Q\in \Pi$ with $Q\preccurlyeq P$. We can improve this estimate as follows.
\begin{fact} \label{f5.5}
For every $P\in \Pi$ and every $Q\in \mathrm{SuccFin}(P)$ we have $\alpha(P, Q)\mik e^{s}$.
\end{fact}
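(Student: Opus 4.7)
The plan is to decompose $\alpha(P,Q)$ as a product of local path counts, one per cell of $Q$, and then to bound each factor via a rigid structural description of the local merges.

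First I would observe that a path $P = P_0 \succ_1 P_1 \succ_1 \dots \succ_1 P_\ell = Q$ is specified by a sequence of choices, one per step, namely which cell of $P_k$ to merge with $X(P_k)$; moreover, since $X(P_k)$ belongs to a unique cell $C$ of the fixed final partition $Q$ and the absorbed cell must lie inside the same $C$, every merge happens strictly inside one cell of $Q$, and the choices in different cells are independent. Hence $\alpha(P,Q) = \prod_{C \in Q} a(C)$, where $a(C)$ is the number of local merging patterns that amalgamate the $r_C \coloneqq |\{X \in P : X \subseteq C\}|$ cells of $P$ inside $C$ into the single set $C$.

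For a fixed $C \in Q$, I would view the local merges as a rooted binary tree on $r_C$ leaves (the cells of $P$ inside $C$) with $r_C - 1$ internal nodes (the merges). The key rigidity is that at every internal node one of the two combined cells must have been a singleton at the moment of the merge; since intermediate merged cells are non-singletons, this ``singleton partner'' is always an original singleton of $P$. Two consequences follow: (i) no internal node may have two internal children, so the tree is a caterpillar whose merges unfold bottom-up along the spine, and (ii) a matching from internal nodes to distinct singleton leaves exists, so at most one of the $r_C$ cells of $P$ inside $C$ can be a non-singleton. A short case analysis then shows that $a(C) \mik \max(1, r_C - 1)$. Indeed, if $r_C = 1$ there are no merges; if $C$ contains a non-singleton $X$, the caterpillar/singleton constraint forces $X$ to the very bottom of the spine paired with the lex-minimal singleton of $C$, and each higher merge is forced to glue the next lex-minimal singleton onto the unique growing chain, so $a(C) = 1$; finally, if all leaves are singletons, only the first local merge offers any freedom (one of the $r_C - 1$ partners for the lex-minimal singleton), because any later merge that pairs two singletons would split $C$ into two non-singleton chains with no remaining singleton to bridge them, preventing the path from reaching $Q \in \mathcal{F}$.

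Finally I would use the elementary inequality $x \mik e^{x/2}$, valid for every $x \meg 0$ (since $e^{x/2} - x$ has minimum $2(1 - \ln 2) > 0$ attained at $x = 2 \ln 2$), to conclude
\[
\alpha(P,Q) \mik \prod_{C \in Q} \max(1, r_C - 1) \mik \prod_{C \in Q} e^{(r_C-1)/2} = e^{(|P|-|Q|)/2} \mik e^{|P|/2} \mik e^{s},
\]
using $\sum_{C \in Q} r_C = |P|$ and $|P| \mik 2s$ since $P$ partitions a set of size $2s$. The main obstacle is the forcing argument inside each cell $C$: justifying that, after the first local merge, no further freedom remains, which rests on the ``no bridging between two non-singleton chains'' observation sketched above.
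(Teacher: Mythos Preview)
Your proof is correct and follows essentially the same route as the paper: both arguments factorize $\alpha(P,Q)$ over the cells of $Q$, both observe that a cell $C$ of $Q$ can contain at most one non-singleton cell of $P$ (else two non-singleton ``chains'' could never be bridged), and both establish that the local count $a(C)$ equals $1$ when $C$ contains a non-singleton and $r_C-1$ when it does not. The only difference is the final numerical step: the paper bounds the resulting product by the maximum of $\prod a_r$ subject to $\sum a_r = 2s-\ell$, obtaining $\exp(2s/e)\mik e^s$, whereas you use the pointwise bound $r_C-1\mik e^{(r_C-1)/2}$ and telescope via $\sum_C (r_C-1)=|P|-|Q|\mik 2s$; your version is slightly simpler, the paper's gives a slightly sharper intermediate constant, and both land at $e^s$.
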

\begin{proof}
The result is a consequence of some rigidity properties of the partial order $\preccurlyeq$.
Indeed, fix $P\in \Pi$ and $Q\in \mathrm{SuccFin}(P)$, and notice that the cells of $Q$
can be categorized according to whether they contain ``large" cells of $P$. More precisely,
let $Y\in Q$ be arbitrary. First, assume that $Y$ contains a cell $X\in P$ with $|X|\meg 2$;
then the definitions of $\prec_1$ and $\preccurlyeq$ in Section \ref{subsec5.2} imply that,
in any possible path from $P$ to $Q$, there exists a unique way to glue the cells
$\{X\in P: X\subseteq Y\}$ and arrive at the cell $Y$. Next, suppose that every cell of $P$
which is contained in $Y$ is a singleton; in this case, observe that there are precisely $|Y|-1$
ways to glue the cells $\{X\in P: X\subseteq Y\}$. Taking these remarks into account,
we see that the number of paths from $P$ to $Q$ is at most
\begin{equation} \label{e5.21}
\max\Big\{ \prod_{r=1}^{\ell} a_r:\ell\in [s], \, a_1,\dots,a_\ell\in [2s] \text{ and }
a_1+\dots+a_\ell=2s-\ell\Big\}.
\end{equation}
A straightforward computation shows that this number is at most $\exp\big(\frac{2s}{e}\big)\mik e^s$.
\end{proof}
		
\subsection{Proof of Proposition \ref{p5.1}} \label{subsec5.8}		
		
By \eqref{e5.16}, \eqref{e5.17}, Fact \ref{f5.5} and the Cauchy--Schwarz inequality,
for every $P\in \mathcal{I}$ and every $R\in \Pi$ we have
\begin{align}
\label{e5.22} \sum_{\substack{f\in\mathcal{M}(P)\\g\in[n]^R_{\mathrm{Inj}}}} \!\!
\boldsymbol{\xi}(i_f,&i_g)\,  \boldsymbol{\xi}(j_f,j_g)  =
\sum_{\substack{Q\in\mathrm{SuccFin}(P)\\S\in\mathrm{SuccFin}(R)}} \!\!\!
(-1)^{|P|+|R|-|Q|-|S|} \, \alpha(P,Q)\, \alpha(R,S) \ \times \\
& \hspace{3.6cm} \times \!\! \sum_{\substack{f'\in\mathcal{M}(Q)\\g'\in[n]^S_{\mathrm{Inj}}}} \!\!
\boldsymbol{\xi}(i_{f'},i_{g'}) \, \boldsymbol{\xi}(j_{f'},j_{g'}) \nonumber \\
& \mik e^{2s} \!\! \sum_{\substack{Q\in\mathrm{SuccFin}(P)\\S\in\mathrm{SuccFin}(R)}}
\Big( \! \sum_{\substack{f'\in\mathcal{M}(Q)\\g'\in[n]^{S}_{\mathrm{Inj}}}}
\boldsymbol{\xi}(i_{f'},i_{g'})^2 \Big)^{1/2}
 \Big( \! \sum_{\substack{f'\in\mathcal{M}(Q)\\g'\in[n]^S_{\mathrm{Inj}}}}
\boldsymbol{\xi}(j_{f'},j_{g'})^2 \Big)^{1/2}. \nonumber
\end{align}
For every $P\in \Pi$ set $\mathcal{X}_0(P)\coloneqq \big\{X \in P: X\subseteq\{0\}\times[s]\big\}$,
$\mathcal{X}_1(P)\coloneqq \big\{X \in P: X\subseteq\{1\}\times[s]\big\}$ and
$\mathcal{X}_{0,1}(P)\coloneqq P\setminus \big(\mathcal{X}_0(P) \cup \mathcal{X}_1(P)\big)$;
notice that if $Q\in \mathrm{SuccFin}(P)$, then
\begin{enumerate}
\item[($\mathcal{P}$3)] \label{P3} $|\mathcal{X}_0(Q)|\mik |\mathcal{X}_0(P)|/2$ and
$|\mathcal{X}_1(Q)|\mik |\mathcal{X}_1(P)|/2$,
\item[($\mathcal{P}$4)] \label{P4} for every $X\in\mathcal{X}_{0,1}(P)$
there exists $Y\in\mathcal{X}_{0,1}(Q)$ such that $X\subseteq Y$.
\end{enumerate}
Also observe that for every $(i,j)\in [n]^s_{\mathrm{Inj}}\times [n]^s_{\mathrm{Inj}}$ such that
$|\mathrm{Im}(i)\cap\mathrm{Im}(j)|=r$ we have
$|\mathcal{X}_0(P(i,j))| = |\mathcal{X}_1(P(i,j))|=s-r$. Therefore, using the bijection
described in \eqref{e5.8} and~\eqref{e5.9}, we obtain that
\begin{align}
\label{e5.23} & \sum_{(i,j,p,q)\in\mathcal{F}_{t,r}} \boldsymbol{\xi}(i,p)\, \boldsymbol{\xi}(j,q)
= \sum_{P\in\mathcal{I}} \sum_{\substack{f\in\mathcal{M}(P)\\g\in[n]^P_{\mathrm{Inj}}}}
\boldsymbol{\xi}(i_f,i_g)\, \boldsymbol{\xi}(j_f,j_g) \\
& \hspace{0.4cm} \stackrel{\eqref{e5.22}}{\mik} e^{2s} \sum_{P\in\mathcal{I}} \sum_{Q,S\in\mathrm{SuccFin}(P)}
\!\! \Big(\! \sum_{\substack{f'\in\mathcal{M}(Q)\\g'\in[n]^S_{\mathrm{Inj}}}}
\boldsymbol{\xi}(i_{f'},i_{g'})^2 \Big)^{1/2}
\Big(\! \sum_{\substack{f'\in\mathcal{M}(Q)\\g'\in[n]^S_{\mathrm{Inj}}}}
\boldsymbol{\xi}(j_{f'},j_{g'})^2 \Big)^{1/2} \nonumber \\
& \hspace{0.4cm} \mik e^{2s} \sum_{P\in\mathcal{I}} \sum_{Q,S\in\mathrm{SuccFin}(P)}
\Big( n^{|\mathcal{X}_1(Q)|+|\mathcal{X}_1(S)|} \sum_{\substack{i,p\in[n]^s\\t\sqsubseteq i}}
\boldsymbol{\xi}(i,p)^2\Big)^{1/2} \ \times \nonumber \\
& \hspace{5.4cm} \times  \Big( n^{|\mathcal{X}_0(Q)|+|\mathcal{X}_0(S)|}
\sum_{\substack{j,q\in[n]^s\\t\sqsubseteq j}} \boldsymbol{\xi}(j,q)^2\Big)^{1/2} \nonumber \\
& \hspace{0.4cm} \mik e^{2s}\, n^{s-r} \sum_{P\in\mathcal{I}} \sum_{Q,S\in\mathrm{SuccFin}(P)}
\sum_{\substack{i,p\in[n]^s\\t\sqsubseteq i}} \boldsymbol{\xi}(i,p)^2 \ \mik \
e^{2s} \big((2s)!\big)^2 n^{s-r} \!\!
\sum_{\substack{i,p\in[n]^s\\t\sqsubseteq i}} \boldsymbol{\xi}(i,p)^2 \nonumber
\end{align}
where in the last inequality we have used the fact that
\begin{equation} \label{e5.24}
|\mathcal{I}|\mik \Big(\frac{s!}{(s-r)!}\Big)^2 \ \ \ \ \text{ and } \ \ \ \
|\mathrm{SuccFin}(P)|\mik(2s-r-1)!
\end{equation}
for every $P\in \Pi$. The proof of Proposition \ref{p5.1} is completed.

%--------An exchangeable pair of random permutations----------%

\section{An exchangeable pair of random permutations} \label{sec6}

\numberwithin{equation}{section}

Let $X,Y$ be random variables which are defined on a common probability space and
take values in a common measurable space, and recall that the pair $(X,Y)$ is called
\textit{exchangeable} if $(X,Y)$ and $(Y,X)$ have the same distribution. This notion
was introduced by Stein \cite{St86}, and it is a key concept in his method for normal
approximation as well as in several related developments (see, \textit{e.g.}, \cite{Ch14}).

An important ingredient of the proof of Theorem \ref{t2.2} is an exchangeable pair of random permutations
which also appears in the works of Bolthausen \cite{Bo84} and Barbour/Chen~\cite{BC05}.
Specifically, given an integer $n\meg 2$, we fix a triple $I_1,I_2,\pi_1$ of independent
random variables such that $I_1, I_2$ are uniformly distributed on $[n]$ and $\pi_1$
is uniformly distributed on $\mathbb{S}_n$, and we set
\begin{equation} \label{e6.1}
J_1\coloneqq\pi_1(I_1), \ \ \ \ J_2\coloneqq\pi_1(I_2) \ \ \text{ and } \ \
\pi_2\coloneqq \pi_1\circ t(I_1,I_2)
\end{equation}
where for every $i_1,i_2\in [n]$ by $t(i_1,i_2)\in\mathbb{S}_n$ we denote the transposition\footnote{That is,
$t(i_1,i_2)$ is the unique permutation which maps $i_1$ to $i_2$ and $i_2$ to $i_1$, and it is the identity
on $[n]\setminus \{i_1,i_2\}$.} of $i_1$ and $i_2$. We will need the following basic (and well-known)
properties of this construction.
\begin{enumerate}
\item[($\mathcal{E}$1)] \label{E1} $\pi_1$ and $(I_1,I_2)$ are independent.
\item[($\mathcal{E}$2)] \label{E2} $\pi_2$ and $(I_1,I_2)$ are independent.
\item[($\mathcal{E}$3)] \label{E3} $\pi_1$ and $\pi_2$ are uniformly distributed on $\mathbb{S}_n$.
\item[($\mathcal{E}$4)] \label{E4} $(\pi_1,\pi_2)$ is an exchangeable pair.
\end{enumerate}
Property (\hyperref[E1]{$\mathcal{E}$1}) is an immediate consequence of the relevant definitions.
Properties (\hyperref[E2]{$\mathcal{E}$2}) and (\hyperref[E3]{$\mathcal{E}$3}) follow from the fact
that for every $\ell_1,\ell_2\in [n]$ and every $\pi\in \mathbb{S}_n$ we have that
$\mathbb{P}\big(\pi_2 = \pi\, |\, (I_1,I_2) = (\ell_1,\ell_2)\big) = \frac{1}{n!}$; indeed, observe that
\begin{align} \label{e6.2}
\mathbb{P}\big(\pi_2 = \pi\, |\, (I_1,I_2) = (\ell_1,\ell_2)\big) & =
\mathbb{P}\big(\pi_1 \circ t(\ell_1,\ell_2) = \pi\, |\, (I_1,I_2) = (\ell_1,\ell_2)\big) \\
& = \mathbb{P}\big(\pi_1 = \pi \circ t(\ell_1,\ell_2)^{-1}\, |\, (I_1,I_2) = (\ell_1,\ell_2)\big)
= \frac{1}{n!}. \nonumber
\end{align}
Finally, for property (\hyperref[E4]{$\mathcal{E}$4}) notice that, by (\hyperref[E1]{$\mathcal{E}$1}),
(\hyperref[E2]{$\mathcal{E}$2}) and (\hyperref[E3]{$\mathcal{E}$3}), the pairs
$\big(\pi_1, (I_1,I_2)\big)$ and $\big(\pi_2, (I_1,I_2)\big)$ have the same distribution; consequently,
the pairs $\big(\pi_1,\pi_1\circ t(I_1,I_2)\big)$ and $\big(\pi_2, \pi_2\circ t(I_1,I_2)\big)$ also
have the same distribution. By \eqref{e6.1} we have $\pi_2=\pi_1\circ t(I_1,I_2)$ which implies, in particular,
that $\pi_2\circ t(I_1,I_2)=\pi_1 \circ t(I_1,I_2)\circ t(I_1,I_2)=\pi_1$. Combining the previous
observations, we conclude that $(\pi_1,\pi_2)$ is an exchangeable pair, as desired.

%----------------Proof of Theorem \ref*{t2.2}-----------------%

\section{Proof of Theorem \ref*{t2.2}} \label{sec7}

\numberwithin{equation}{section}

We have already pointed out that the one-dimensional case of Theorem \ref{t2.2} is due to Bolthausen,
while the two-dimensional case is due to Barbour/Chen. That said, the proof for all high-dimensional
cases is uniform, and so, in what follows we fix two positive integers $n,d$ with $d\meg 2$ and $n\meg 4d^2$.
Let $\boldsymbol{\xi}_1,\dots,\boldsymbol{\xi}_d$ be as in Theorem \ref{t2.2} and recall that
$\beta_1=n-1$, where $\beta_s$ is as in \eqref{e2.6} for every $s\in [d]$; moreover,
let $\pi_1, \pi_2$ be the random permutations described in Section \ref{sec6} and set
\begin{equation} \label{e7.1}
\Xi_s=\sum_{i\in [n]^s_{\mathrm{Inj}}} \! \boldsymbol{\xi}_s(i,\pi_1\circ i) \ \ \
\text{ and } \ \ \ \ \Xi'_s=\sum_{i\in [n]^s_{\mathrm{Inj}}} \! \boldsymbol{\xi}_s(i,\pi_2\circ i).
\end{equation}
Note that, by property (\hyperref[E4]{$\mathcal{E}$4}), the pair $(\Xi_s,\Xi'_s)$ is exchangeable
for every $s\in [d]$. We~also~set
\begin{equation} \label{e7.2}
\Lambda\coloneqq \sum_{i,j=1}^n |\boldsymbol{\xi}_1(i,j)|^3.
\end{equation}
In the following lemma we collect some known properties of the exchangeable pair $(\Xi_1,\Xi_1')$;
for a proof, see \cite{BC05,Bo84,ZBCL97}.
\begin{lem} \label{l7.1}
We have
\begin{align}
\label{e7.3} & \ave[\Xi_1^2] = 1 \\
\label{e7.4} & \ave[\Xi_1 - \Xi_1'\, |\, \pi_1] = \frac{2}{n}\, \Xi_1 \\
\label{e7.5} & \ave\big[(\Xi_1-\Xi_1')^2\big] = \frac{4}{n} \\
\label{e7.6} & \ave\big[|\Xi_1-\Xi_1'|^3\big] \mik 64\, \frac{\Lambda}{n^2}.
\end{align}
\end{lem}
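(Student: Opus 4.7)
All four identities can be derived by direct calculation, exploiting the Hoeffding property of $\boldsymbol{\xi}_1$ together with the independence structure of $(I_1, I_2, \pi_1)$ set up in Section \ref{sec6}. The first key observation is that, for the exchangeable pair $(\Xi_1, \Xi_1')$, the representation $\pi_2 = \pi_1 \circ t(I_1, I_2)$ gives the explicit formula
\[
\Xi_1 - \Xi_1' = \boldsymbol{\xi}_1(I_1, \pi_1(I_1)) - \boldsymbol{\xi}_1(I_1, \pi_1(I_2)) + \boldsymbol{\xi}_1(I_2, \pi_1(I_2)) - \boldsymbol{\xi}_1(I_2, \pi_1(I_1))
\]
whenever $I_1 \neq I_2$, and $\Xi_1 - \Xi_1' = 0$ otherwise. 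This four-term formula is the common engine for (\ref{e7.4}), (\ref{e7.5}) and (\ref{e7.6}).

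For (\ref{e7.3}), I would expand $\Xi_1^2 = \sum_{i_1, i_2} \boldsymbol{\xi}_1(i_1, \pi_1(i_1)) \boldsymbol{\xi}_1(i_2, \pi_1(i_2))$ and split into the diagonal case $i_1 = i_2$ (which yields $\beta_1/n$) and the off-diagonal case, where $(\pi_1(i_1), \pi_1(i_2))$ is uniform on ordered distinct pairs. A quick accounting, using $\sum_i \boldsymbol{\xi}_1(i,j) = 0 = \sum_j \boldsymbol{\xi}_1(i,j)$, reduces the off-diagonal piece to $-\frac{1}{n(n-1)} \sum_{i_1 \neq i_2, j} \boldsymbol{\xi}_1(i_1,j)\boldsymbol{\xi}_1(i_2,j) = \frac{\beta_1}{n(n-1)}$; summing both contributions gives $\beta_1/(n-1) = 1$. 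For (\ref{e7.4}), I would condition on $\pi_1$ and average the four-term formula over the independent uniform pair $(I_1, I_2) \in [n]^2$: the two ``diagonal'' terms each produce $\Xi_1/n$, while the two ``mixed'' terms are of the form $\frac{1}{n^2} \sum_{i_1, i_2} \boldsymbol{\xi}_1(i_1, \pi_1(i_2))$ and vanish by the column-sum part of the Hoeffding property.

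For (\ref{e7.5}), the calculation I expect to be most laborious is squaring the four-term formula and averaging. Writing the square as a sum of sixteen products $\pm \boldsymbol{\xi}_1(\cdot,\cdot)\boldsymbol{\xi}_1(\cdot,\cdot)$, then averaging over $\pi_1$ conditional on a fixed pair $i_1 \neq i_2$ (using that $(\pi_1(i_1), \pi_1(i_2))$ is uniform on ordered distinct pairs), I would observe that twelve of the sixteen terms vanish after summing over $(j_1, j_2) \in [n]^2$ by a single application of the Hoeffding property, and the four surviving contributions are the row-norms $R(i) \coloneqq \sum_j \boldsymbol{\xi}_1(i,j)^2$ and the row-correlations $C(i_1, i_2) \coloneqq \sum_j \boldsymbol{\xi}_1(i_1,j)\boldsymbol{\xi}_1(i_2, j)$. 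Using $\sum_{i_1 \neq i_2} [R(i_1) + R(i_2)] = 2(n-1)\beta_1$ and $\sum_{i_1 \neq i_2} C(i_1, i_2) = -\beta_1$ (again by Hoeffding), the remaining algebra collapses to $4\beta_1 n/(n-1) = 4n$, which divided by $n^2$ yields $4/n$.

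Finally, for (\ref{e7.6}), I would apply the power-mean inequality $|a_1 + a_2 + a_3 + a_4|^3 \leq 16 \sum_{k=1}^4 |a_k|^3$ to the four-term formula, obtaining
\[
|\Xi_1 - \Xi_1'|^3 \leq 16\, \mathbf{1}_{I_1 \neq I_2} \sum_{u, v \in \{1, 2\}} |\boldsymbol{\xi}_1(I_u, \pi_1(I_v))|^3.
\]
For each of the four summands the joint distribution of $(I_u, \pi_1(I_v))$, after conditioning on $I_1 \neq I_2$, makes $I_u$ uniform on $[n]$ and $\pi_1(I_v)$ uniform on $[n]$ independently of $I_u$ (for $u=v$ directly, for $u \neq v$ because $\pi_1 \perp (I_1, I_2)$), so the expectation of each summand is at most $\Lambda/n^2$; the total is at most $64\Lambda/n^2$. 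The main obstacle is the bookkeeping in (\ref{e7.5})---keeping track of which of the sixteen cross terms survive the Hoeffding cancellations---but it is entirely mechanical.
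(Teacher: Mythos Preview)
Your approach is correct and is exactly the standard computation found in the references the paper cites (\cite{Bo84,BC05,ZBCL97}); the paper itself does not give a proof of this lemma and simply points to those sources. One minor slip in your outline for \eqref{e7.5}: when you sum each of the sixteen products over $(j_1,j_2)\in[n]^2$, it is eight (not twelve) that vanish by the Hoeffding property---the four squares and the four products sharing a common $j$-index (those of type $\boldsymbol{\xi}_1(i_1,j)\boldsymbol{\xi}_1(i_2,j)$) survive---but this does not affect the rest of your algebra, and your final expression $4\beta_1 n/\big(n^2(n-1)\big)=4/n$ is right.
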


The next lemma complements Lemma \ref{l7.1} and provides analogous estimates
for the rest of the exchangeable pairs $(\Xi_s,\Xi_s')$. We note that it is precisely
in the proof of this lemma where Proposition \ref{p5.1} is applied.
\begin{lem} \label{l7.2}
For every $s\in \{2,\dots,d\}$ we have
\begin{align}
\label{e7.7} & \ave[\Xi_s^2] \mik 2 e^{2s}\big((2s)!\big)^2\, \frac{\beta_s}{n^s} \\
\label{e7.8} & \ave\big[(\Xi_s - \Xi_s')^2\big] \mik 24s^4 e^{2s}\big((2s)!\big)^2\,
\frac{\beta_s}{n^{s+1}}.
\end{align}
\end{lem}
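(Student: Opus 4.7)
Both estimates are reductions to Proposition \ref{p5.1}, combined with the elementary fact that if $\pi_1$ is uniform on $\mathbb{S}_n$ then for each fixed pair $(i,j)\in ([n]^s_{\mathrm{Inj}})^2$ with $|\mathrm{Im}(i)\cup\mathrm{Im}(j)|=2s-r$, the random pair $(\pi_1\circ i,\pi_1\circ j)$ is uniformly distributed on the $\sim$-equivalence class of $(i,j)$, each element carrying the probability $(n-2s+r)!/n!$. The key ingredient that ``activates'' Proposition \ref{p5.1} is, in each case, the identification of the right partial map $t$.

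\textbf{For \eqref{e7.7}.} The plan is to expand
\begin{equation*}
\ave[\Xi_s^2]=\sum_{i,j\in [n]^s_{\mathrm{Inj}}}\ave\bigl[\boldsymbol{\xi}_s(i,\pi_1\circ i)\,\boldsymbol{\xi}_s(j,\pi_1\circ j)\bigr]
\end{equation*}
and stratify by $r=|\mathrm{Im}(i)\cap\mathrm{Im}(j)|$; by the preceding remark,
\begin{equation*}
\ave[\Xi_s^2]=\sum_{r=0}^{s}\frac{(n-2s+r)!}{n!}\sum_{(i,j,p,q)\in\mathcal{M}_{\emptyset,r}}\boldsymbol{\xi}_s(i,p)\,\boldsymbol{\xi}_s(j,q).
\end{equation*}
I would then apply Proposition \ref{p5.1} with $t=\emptyset$ to each stratum, obtaining an upper bound of $e^{2s}\bigl((2s)!\bigr)^2\beta_s \sum_{r=0}^s (n-2s+r)!\,n^{s-r}/n!$. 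The assumption $n\meg 4d^2\meg 4s^2$ makes the falling factorial $n^{\underline{2s-r}}$ comparable to $n^{2s-r}$; specifically, $\prod_{j=0}^{2s-r-1}(1-j/n)\meg 1/2$, so $(n-2s+r)!\,n^{s-r}/n!\mik 2/n^s$. Summing over $r$ yields the bound with a constant to be absorbed into the factor $2$ after a more careful (but routine) factorial estimate.

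\textbf{For \eqref{e7.8}.} After changing variable $i\mapsto t(I_1,I_2)\circ i$ in the definition of $\Xi_s'$ (which is a bijection of $[n]^s_{\mathrm{Inj}}$), I would rewrite
\begin{equation*}
\Xi_s-\Xi_s'=\sum_{i\in [n]^s_{\mathrm{Inj}}}\bigl[\boldsymbol{\xi}_s(i,\pi_1\circ i)-\boldsymbol{\xi}_s(t(I_1,I_2)\circ i,\pi_1\circ i)\bigr],
\end{equation*}
so that only $i$ with $\mathrm{Im}(i)\cap\{I_1,I_2\}\neq\emptyset$ contribute. Conditioning on $(I_1,I_2)=(\ell_1,\ell_2)$ and splitting the $i$-sum by the position(s) $u\in[s]$ at which $\ell_1$ or $\ell_2$ sit (the case ``both in $\mathrm{Im}(i)$'' contributes $O(n^{s-2})$ many terms and is easily controlled), I would expand $(\Xi_s-\Xi_s')^2$ as a double sum, take the $\pi_1$-expectation, and reduce each resulting inner sum to Proposition \ref{p5.1} applied with the nontrivial partial map $t=\{u\mapsto \ell\}$ (or a similar two-point map). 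This produces the key gain: the bound on the $\pi_1$-expectation now carries an extra factor $n^{-1}$ compared with \eqref{e7.7}, because one coordinate of $i$ (and of $j$) is pinned. After summing over the $O(s^2)$ position choices for $i$ and $j$, over the constant number of case splits, and over the uniform choice of $(\ell_1,\ell_2)\in[n]^2$ (contributing $1/n^2$), one obtains the $\beta_s/n^{s+1}$ scaling; the polynomial constant $s^4$ in the final bound accounts for the combined combinatorial enumerations.

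\textbf{Main obstacle.} The substantive difficulty is \eqref{e7.8}: there is no direct Hoeffding cancellation inside the difference $\boldsymbol{\xi}_s(i,\pi_1\circ i)-\boldsymbol{\xi}_s(t(I_1,I_2)\circ i,\pi_1\circ i)$, so the extra $1/n$ must be extracted entirely from the constraint that $i$ is forced to pass through one of $I_1,I_2$. Organising the resulting case analysis so that Proposition \ref{p5.1} can be applied with the correct $t$ (and so that the quadratic form generated by the cross-terms has the right ``pinning'' on both $i$ and $j$) is the delicate part, and is where the dimension-dependent constant $s^4$ appears.
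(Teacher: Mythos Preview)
Your proposal is essentially the paper's approach. For \eqref{e7.7} it is identical: stratify by $r$, use $(n-2s+r)!/n!\mik 2/n^{2s-r}$, and apply Proposition~\ref{p5.1} with $t=\emptyset$.

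For \eqref{e7.8} there is one genuine wrinkle in your plan. After your change of variable you propose to ``expand $(\Xi_s-\Xi_s')^2$ as a double sum'' and feed each inner sum into Proposition~\ref{p5.1}. The pure terms are fine, but the cross terms
\[
\boldsymbol{\xi}_s(i,\pi_1\circ i)\,\boldsymbol{\xi}_s\bigl(t(I_1,I_2)\circ j,\pi_1\circ j\bigr)
\]
do \emph{not} fit the hypotheses of Proposition~\ref{p5.1}: after taking the $\pi_1$-expectation you obtain a sum over $(p,q)\sim(i,j)$, whereas the first arguments are $(i,\,t\circ j)$, and in general $P(i,j)\neq P(i,t\circ j)$ (for example when $\ell_1\in\mathrm{Im}(i)\cap\mathrm{Im}(j)$ but $\ell_2\notin\mathrm{Im}(i)$). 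The partition governing the $(p,q)$-sum is then misaligned with the first arguments, and Proposition~\ref{p5.1} does not apply directly.

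The paper sidesteps this by using the crude bound $(a-b)^2\mik 2a^2+2b^2$ and the fact that $(\pi_1,(I_1,I_2))$ and $(\pi_2,(I_1,I_2))$ are equidistributed, reducing to $4\,\ave\bigl[(\sum_{i:\,\mathrm{Im}(i)\cap\{\ell_1,\ell_2\}\neq\emptyset}\boldsymbol{\xi}_s(i,\pi_1\circ i))^2\bigr]$ before expanding; this eliminates the cross terms entirely. From there the paper rewrites the constrained sum via inclusion--exclusion over $\mathrm{PartInj}([s],\{\ell_1,\ell_2\})$, applies Cauchy--Schwarz across the $s(s+1)$ partial maps, and then invokes Proposition~\ref{p5.1} for each fixed partial map $t$. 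Your ``splitting by position'' is an alternative bookkeeping for the same decomposition, but you need the decoupling step first; once you insert it, your argument and the paper's coincide.
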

\begin{proof}
Fix $s\in\{2,\dots,d\}$. For every $r\in \{0,\dots,s\}$ set
\begin{enumerate}
\item[$\bullet$] $\mathcal{F}_r^1 \coloneqq \big\{(i,j)\in [n]^s_{\mathrm{Inj}}\times [n]^s_{\mathrm{Inj}}:
|\mathrm{Im}(i)\cap\mathrm{Im}(j)|=r\big\}$ and
\item[$\bullet$] $\mathcal{F}_r^2 \coloneqq \big\{ (i,j,p,q)\in \big([n]^s_{\mathrm{Inj}}\big)^4:
|\mathrm{Im}(i)\cap\mathrm{Im}(j)|=r \text{ and } (i,j)\sim(p,q)\big\}$.
\end{enumerate}
Then observe that
\begin{align}
\label{e7.9} & \ave[\Xi_s^2] = \ave\Big[ \sum_{i,j\in[n]^s_{\mathrm{Inj}}}
\boldsymbol{\xi}_s(i,\pi_1\circ i)\, \boldsymbol{\xi}_s(j,\pi_1\circ j)\Big] = \\
= \sum_{r=0}^{s} \sum_{(i,j)\in\mathcal{F}_r^1} & \ave\big[\boldsymbol{\xi}_s(i,\pi_1\circ i)\,
\boldsymbol{\xi}_s(j,\pi_1\circ j)\big] =
\sum_{r=0}^{s} \sum_{(i,j,p,q)\in\mathcal{F}_r^2} \!\! \frac{(n-2s+r)!}{n!}\,
\boldsymbol{\xi}_s(i,p)\, \boldsymbol{\xi}_s(j,q). \nonumber
\end{align}
Also notice that for every $k\mik \sqrt{n}$ we have
\begin{equation} \label{e7.10}
\frac{(n-k)!}{n!}\mik\frac{1}{n^k}\cdot\frac{1}{1-\binom{k}{2}/n}\mik\frac{2}{n^k};
\end{equation}
since $2s\mik 2d\mik \sqrt{n}$, by \eqref{e7.9} we obtain that
\begin{equation} \label{e7.11}
\ave[\Xi_s^2] \mik 2\,\sum_{r=0}^{s} \frac{1}{n^{2s-r}} \,
\Big|\sum_{(i,j,p,q)\in\mathcal{F}_r^2} \boldsymbol{\xi}_s(i,p)\, \boldsymbol{\xi}_s(j,q) \Big|.
\end{equation}
Finally, plugging in the right-hand-side of \eqref{e7.11} the estimate in \eqref{e5.1}
applied for ``$t=\emptyset$", we conclude that \eqref{e7.7} is satisfied.

We proceed to the proof of inequality \eqref{e7.8}. For every $\ell_1,\ell_2\in [n]$ set
\begin{enumerate}
\item[$\bullet$] $\mathcal{F}_{\ell_1,\ell_2}^3 \coloneqq \big\{ i\in[n]^s_{\mathrm{Inj}}:
\mathrm{Im}(i)\cap\{\ell_1,\ell_2\}\neq\emptyset\big\}$.
\end{enumerate}
We start by observing that
\begin{align}
\label{e7.12} & \ \ \ave\big[(\Xi_s - \Xi_s')^2\big]
 = \ave\Big[ \mathbf{1}_{[I_1\neq I_2]}\, \Big( \sum_{i\in\mathcal{F}^3_{I_1,I_2}}
\boldsymbol{\xi}_s(i,\pi_1\circ i) - \boldsymbol{\xi}_s(i,\pi_2\circ i) \Big)^2\Big] \mik \\
\mik 2 \ave\Big[ & \mathbf{1}_{[I_1\neq I_2]}  \, \Big(\!\!\!
\sum_{\substack{i\in[n]^s_{\mathrm{Inj}}\\\mathrm{Im}(i)\cap\{I_1,I_2\}\neq\emptyset}}
\!\!\!\!\! \boldsymbol{\xi}_s(i,\pi_1\circ i)\Big)^2\Big] + 2 \ave\Big[ \mathbf{1}_{[I_1\neq I_2]} \,
\Big(\!\!\! \sum_{\substack{i\in[n]^s_{\mathrm{Inj}}\\\mathrm{Im}(i)\cap\{I_1,I_2\}\neq\emptyset}}
\!\!\!\!\! \boldsymbol{\xi}_s(i,\pi_2\circ i)  \Big)^2\Big]. \nonumber
\end{align}
By (\hyperref[E1]{$\mathcal{E}$1}), (\hyperref[E2]{$\mathcal{E}$2}) and
(\hyperref[E3]{$\mathcal{E}$3}), we see that the pairs
$\big(\pi_1,(I_1,I_2)\big)$ and $\big(\pi_2,(I_1,I_2)\big)$ have the same distribution.
Therefore, by \eqref{e7.12}, we have
\begin{equation} \label{e7.13}
\ave\big[(\Xi_s - \Xi_s')^2\big] \mik 4 \ave\Big[ \mathbf{1}_{[I_1\neq I_2]}\,
\Big( \sum_{\substack{i\in[n]^s_{\mathrm{Inj}}\\\mathrm{Im}(i)\cap\{I_1,I_2\}\neq\emptyset}}
\!\!\!\! \boldsymbol{\xi}_s(i,\pi_1\circ i)\Big)^2\Big]
\end{equation}
which implies, by (\hyperref[E1]{$\mathcal{E}$1}) and the fact that $(I_1,I_2)$ is uniformly
distributed on $[n]^2$, that
\begin{align}
\label{e7.14} \ave\big[(\Xi_s & - \Xi_s')^2\big] \mik
\frac{4}{n^2} \sum_{\substack{\ell_1,\ell_2\in[n]\\\ell_1\neq\ell_2}}
\ave\Big[ \Big( \sum_{\substack{i\in[n]^s_{\mathrm{Inj}}\\\mathrm{Im}(i)\cap\{\ell_1,\ell_2\}\neq\emptyset}}
\!\!\!\!\!\!\! \boldsymbol{\xi}_s(i,\pi_1\circ i) \Big)^2\Big] \\
& = \frac{4}{n^2} \sum_{\substack{\ell_1,\ell_2\in[n]\\\ell_1\neq\ell_2}}
\ave\Big[ \Big( \sum_{t\in\mathrm{PartInj}([s],\{\ell_1,\ell_2\})}
\sum_{\substack{t\sqsubseteq i\in[n]^s_{\mathrm{Inj}}\\ (\mathrm{Im}(i)\setminus\mathrm{Im}(t))
\cap\{\ell_1,\ell_2\}=\emptyset}} \!\!\!\!\!\!\!\!\!\!\!\!\!\!
\boldsymbol{\xi}_s(i,\pi_1\circ i) \Big)^2\Big] \nonumber \\
& = \frac{4}{n^2} \sum_{\substack{\ell_1,\ell_2\in[n]\\\ell_1\neq\ell_2}}
\ave\Big[\Big( \sum_{t\in\mathrm{PartInj}([s],\{\ell_1,\ell_2\})}
(-1)^{|t|+1} \sum_{t\sqsubseteq i\in[n]^s_{\mathrm{Inj}}}
\boldsymbol{\xi}_s(i,\pi_1\circ i)\Big)^2\Big]. \nonumber
\end{align}
Now observe that
\begin{equation} \label{e7.15}
\big|\mathrm{PartInj}([s],\{\ell_1,\ell_2\})\big|=2s+2\, \binom{s}{2}= s(s+1).
\end{equation}
Hence, by \eqref{e7.14}, \eqref{e7.15} and the Cauchy--Schwarz inequality,
\begin{align}
\label{e7.16} & \ave\big[ (\Xi_s - \Xi_s')^2\big] \mik
\frac{4s(s+1)}{n^2} \sum_{\substack{\ell_1,\ell_2\in[n]\\\ell_1\neq\ell_2}}
\ \sum_{t\in\mathrm{PartInj}([s],\{\ell_1,\ell_2\})} \!\!
\ave\Big[\Big( \sum_{t\sqsubseteq i\in[n]^s_{\mathrm{Inj}}}
\boldsymbol{\xi}_s(i,\pi_1\circ i)\Big)^2\Big] \\
= \ \ & \frac{4s(s+1)}{n^2} \sum_{\substack{\ell_1,\ell_2\in[n]\\\ell_1\neq\ell_2}} \
\sum_{t\in\mathrm{PartInj}([s],\{\ell_1,\ell_2\})} \
\sum_{\substack{i,j\in[n]^s_{\mathrm{Inj}}\\t\sqsubseteq i,j}}
\ave\big[\boldsymbol{\xi}_s(i,\pi_1\circ i)\, \boldsymbol{\xi}_s(j,\pi_1\circ j)\big] \nonumber \\
= \ \ & \frac{4s(s+1)}{n^2}\!\! \sum_{\substack{\ell_1,\ell_2\in[n]\\\ell_1\neq\ell_2}}
\ \sum_{t\in\mathrm{PartInj}([s],\{\ell_1,\ell_2\})} \
\sum_{r=|\mathrm{Im}(t)|}^{s} \!\!\!\! \frac{(n-(2s-r))!}{n!} \!\!\!\!\!\!\!\!
\sum_{\substack{i,j,p,q\in[n]^s_{\mathrm{Inj}}\\t\sqsubseteq i,j\\
|\mathrm{Im}(i) \cap \mathrm{Im}(j)|=r \\ (i,j)\sim(p,q)}}
\!\!\!\!\!\!\! \boldsymbol{\xi}_s(i,p)\, \boldsymbol{\xi}_s(j,q) \nonumber \\
\stackrel{\eqref{e7.10}}{\mik} & \frac{8s(s+1)}{n^2}\!\! \sum_{\substack{\ell_1,\ell_2\in[n]\\\ell_1\neq\ell_2}}
\ \sum_{t\in\mathrm{PartInj}([s],\{\ell_1,\ell_2\})} \
\sum_{r=|\mathrm{Im}(t)|}^{s} \frac{1}{n^{2s-r}}\,
\Big|\sum_{\substack{i,j,p,q\in[n]^s_{\mathrm{Inj}}\\t\sqsubseteq i,j\\
|\mathrm{Im}(i) \cap \mathrm{Im}(j)|=r \\ (i,j)\sim(p,q)}}
\!\!\!\!\!\!\! \boldsymbol{\xi}_s(i,p)\, \boldsymbol{\xi}_s(j,q)\Big|. \nonumber
\end{align}
Setting $K_d\coloneqq 8s(s+1)\,e^{2s}\,\big((2s)!\big)^2$ and applying \eqref{e5.1}
for every $t\in\mathrm{PartInj}([s],\{\ell_1,\ell_2\})$ and every $r\in\big\{|\mathrm{Im}(t)|,\dots,s\big\}$,
by \eqref{e7.16} we have
\begin{equation} \label{e7.17}
\ave\big[ (\Xi_s - \Xi_s')^2\big] \mik \frac{K_d}{n^{s+2}}
\sum_{\substack{\ell_1,\ell_2\in[n]\\\ell_1\neq\ell_2}}
\ \sum_{t\in\mathrm{PartInj}([s],\{\ell_1,\ell_2\})} \
\sum_{r=|\mathrm{Im}(t)|}^{s} \,
\sum_{\substack{i,p\in[n]^s\\t\sqsubseteq i}} \boldsymbol{\xi}_s(i,p)^2
\end{equation}
By double-counting, we see that the right-hand-side of \eqref{e7.17} is equal to
\[ \frac{K_d}{n^{s+2}} \Big( 2(n-1)s(s-1) \sum_{F\in \binom{[s]}{1}} \sum_{x\in [n]^F}
\sum_{\substack{i,p\in[n]^s\\x\sqsubseteq i}} \boldsymbol{\xi}_s(i,p)^2 +
2(s-2) \sum_{H\in \binom{[s]}{2}} \sum_{y\in [n]^H_{\mathrm{Inj}}}
\sum_{\substack{i,p\in[n]^s\\y\sqsubseteq i}} \boldsymbol{\xi}_s(i,p)^2 \Big) \]
which is at most, since $s\mik d\mik n$,
\begin{equation} \label{e7.18}
2\,K_d\, s(s-1)\,\frac{\beta_s}{n^{s+1}}  + K_d\, s(s-1)(s-2)\, \frac{\beta_s}{n^{s+2}}
\mik 3\, K_d\, s(s-1) \frac{\beta_s}{n^{s+1}}.
\end{equation}
By \eqref{e7.17}, \eqref{e7.18} and the choice of $K_d$, we conclude that \eqref{e7.8} is satisfied.
\end{proof}
Now set
\begin{equation} \label{e7.19}
\Theta \coloneqq \sum_{s=2}^{d}\Xi_s \ \ \ \text{ and } \ \ \ \Theta' \coloneqq \sum_{s=2}^{d}\Xi_s'.
\end{equation}
The last ingredient of the proof of Theorem \ref{t2.2} is the following lemma which is taken
from \cite[Lemma 2.2]{BC05}.
\begin{lem}[Barbour/Chen] \label{l7.3}
For every $z\in \rr$ we have
\begin{align} \label{e7.20}
\max\big\{ \mathbb{P}(z- |\Theta|\mik \Xi_1\mik z),&\,\mathbb{P}(z\mik \Xi_1 \mik z+|\Theta|)\big\} \mik \\
& \mik 2^5\frac{\Lambda}{n}+ 2^{17} \frac{\Lambda^2}{n^2} +
5d^2 e^d (2d)! \sum_{s=2}^{d}\sqrt{\frac{\beta_s}{n^s}} \nonumber
\end{align}
where $\Lambda$ is as in \eqref{e7.2} and $\beta_s$ is as in \eqref{e2.6} for every $s\in \{2,\dots,d\}$.
\end{lem}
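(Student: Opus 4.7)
The plan is to adapt the Stein-method concentration technique of Barbour and Chen, replacing their two-dimensional $L_2$-estimates for the higher-order part by the $d$-dimensional bounds established in Lemma \ref{l7.2}. The quantity to be bounded is the probability that $\Xi_1$ lies in a random interval of length $|\Theta|$ adjacent to the point $z$.

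The first step is a deterministic-interval concentration inequality for $\Xi_1$ alone. Using the exchangeable pair $(\Xi_1,\Xi_1')$ from Section \ref{sec6} and the basic identity $\ave[(\Xi_1-\Xi_1')(g(\Xi_1)-g(\Xi_1'))] = 2\ave[g(\Xi_1)(\Xi_1-\Xi_1')]$ valid for any bounded measurable $g$, together with the linear regression $\ave[\Xi_1-\Xi_1' \mid \pi_1] = (2/n)\Xi_1$ from \eqref{e7.4}, I would specialize $g$ to a monotone approximation of the indicator of an interval $[a,b]$ and control the resulting quantity via $\ave[(\Xi_1-\Xi_1')^2]=4/n$ from \eqref{e7.5} and $\ave[|\Xi_1-\Xi_1'|^3]\mik 64\Lambda/n^2$ from \eqref{e7.6}. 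A standard Stein concentration argument (as in \cite{CGS11,Bo84}) then yields a bound of the form $\mathbb{P}(a\mik\Xi_1\mik b)\mik 2^5 (b-a)\Lambda/n + 2^{17}\Lambda^2/n^2$.

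The second step upgrades this to the random interval length $|\Theta|$. Splitting
\[
\mathbb{P}(z-|\Theta|\mik\Xi_1\mik z) \mik \mathbb{P}(z-T\mik\Xi_1\mik z) + \mathbb{P}(|\Theta|>T),
\]
applying Markov's inequality to the second term and the deterministic concentration bound to the first, I would optimize the threshold $T$ to balance $T\Lambda/n$ against $\|\Theta\|_{L_2}^2/T^2$. The necessary $L_2$-bound is obtained from Minkowski's inequality and Lemma \ref{l7.2}:
\[
\|\Theta\|_{L_2} \mik \sum_{s=2}^{d}\|\Xi_s\|_{L_2} \mik \sqrt{2}\sum_{s=2}^{d} e^s (2s)!\,\sqrt{\beta_s/n^s}.
\]
Collecting constants produces a term of the form $C_d\sum_{s=2}^d\sqrt{\beta_s/n^s}$ with $C_d\mik 5d^2 e^d (2d)!$. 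The symmetric estimate for $\mathbb{P}(z\mik\Xi_1\mik z+|\Theta|)$ follows identically, producing the maximum in the statement.

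The main technical difficulty lies in the bookkeeping of absolute constants through the Stein concentration estimate. For the two-dimensional case, this is precisely Lemma~2.2 of \cite{BC05}; since the only place where the full dimension $d$ enters the argument is through $\|\Theta\|_{L_2}$, and Lemma \ref{l7.2} provides the natural $d$-dimensional extension of the Barbour--Chen $L_2$-estimate, the rest of their proof transports essentially verbatim. Accordingly, I would primarily verify that the prefactor $5d^2 e^d(2d)!$ absorbs the constants $\sqrt{2}\,e^s(2s)!$ uniformly over $s\in\{2,\dots,d\}$, and that the Stein-method truncation step contributes exactly $2^5 \Lambda/n + 2^{17}\Lambda^2/n^2$ after optimizing $T$.
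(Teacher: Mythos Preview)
Your two-step plan has a structural gap that prevents it from reaching the stated bound.

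First, the deterministic concentration inequality you quote, $\mathbb{P}(a\mik\Xi_1\mik b)\mik 2^5(b-a)\Lambda/n + 2^{17}\Lambda^2/n^2$, is false as written: since $\Xi_1$ is approximately standard normal, $\mathbb{P}(0\mik\Xi_1\mik 1)$ stays bounded away from zero while your right-hand side tends to $0$ as $\Lambda/n\to 0$. The correct Stein concentration estimate has the form $\mathbb{P}(a\mik\Xi_1\mik b)\lesssim (b-a)+\Lambda/n$, with $(b-a)$ appearing \emph{additively}, not multiplied by $\Lambda/n$.

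Second, and more seriously, once you have the correct deterministic bound the threshold-plus-Markov scheme cannot recover the linear dependence on $\sum_s\sqrt{\beta_s/n^s}$. Balancing $T$ against $\|\Theta\|_{L_2}/T$ (or $\|\Theta\|_{L_2}^2/T^2$) gives an optimized contribution of order $\|\Theta\|_{L_2}^{1/2}$ (or $\|\Theta\|_{L_2}^{2/3}$), which is strictly worse than the required $\|\Theta\|_{L_2}$ in the small-$\|\Theta\|_{L_2}$ regime. The decoupling of $|\Theta|$ from $\Xi_1$ via a deterministic threshold discards the joint exchangeable-pair structure that is needed.

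The paper (following Barbour--Chen) does something essentially different from what you describe, even though you cite their lemma: the Stein test function $f_\Theta$ is chosen to depend on the \emph{random} $|\Theta|$ itself. Exchangeability of $(\pi_1,\pi_2)$ then forces a term $\ave\big[(\Xi_1'-\Xi_1)\big(f_{\Theta'}(\Xi_1)-f_\Theta(\Xi_1)\big)\big]$, which is controlled by $\|\Theta-\Theta'\|_{L_2}$, i.e.\ by $\sum_s\|\Xi_s-\Xi_s'\|_{L_2}$. This is precisely where inequality \eqref{e7.8} of Lemma~\ref{l7.2} enters, and it is what produces the linear term $\sum_s\sqrt{\beta_s/n^s}$ directly, without any optimization in a free parameter. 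Your outline uses only \eqref{e7.7} and never invokes \eqref{e7.8}; without it the argument does not close with the claimed constants.
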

Since the setting in \cite{BC05} is not identical with ours, for completeness we give the
proof of Lemma \ref{l7.3} in Appendix \ref{appendix}.

We are now in a position to complete the proof of the theorem. Notice that
\eqref{e2.7} is automatically satisfied if $\frac{\Lambda}{n}\meg 1$. Thus, in what follows,
we may assume that $\frac{\Lambda}{n}<1$. By Bolthausen's theorem \cite{Bo84}, we have 		
\begin{equation} \label{e7.21}
\sup_{z\in\rr} \big| \mathbb{P}(\Xi_1\mik z) - \mathbb{P}\big(\mathcal{N}(0,1)\mik z\big) \big|
\mik C_1\frac{\Lambda}{n}
\end{equation}
where $C_1\meg 1$ is as in \eqref{e2.4}. Since $\Theta+|\Theta|\meg 0$, for every $z\in\rr$ we have
\begin{align}
\label{e7.22} & \mathbb{P}(W\mik z) = \mathbb{P}\big(\Xi_1+\Theta+|\Theta|\mik z+|\Theta|\big)
\mik \mathbb{P}\big(\Xi_1\mik z+|\Theta|\big) \\
\label{e7.23} & \mathbb{P}(W\mik z) = \mathbb{P}\big(\Xi_1+\Theta-|\Theta|\mik z-|\Theta|\big)
\meg \mathbb{P}\big(\Xi_1\mik z-|\Theta|\big)
\end{align}
and so,
\begin{align}
\label{e7.24} & \mathbb{P}(W\mik z) - \mathbb{P}(\Xi_1\mik z)
 \mik \mathbb{P}\big(z< \Xi_1\mik z+|\Theta|\big) \\
\label{e7.25} & \mathbb{P}(W\mik z) - \mathbb{P}(\Xi_1\mik z)
 \meg -\mathbb{P}(z-|\Theta|< \Xi_1\mik z).
\end{align}
Hence, by Lemma \ref{l7.3} and the fact that $\frac{\Lambda}{n}<1$, we obtain that
\begin{align}
\label{e7.26}
\sup_{z\in\rr}|\mathbb{P}(W\mik z) & - \mathbb{P}(\Xi_1\mik z)|  \\
& \mik \sup_{z\in\rr}\max\big\{ \mathbb{P}\big(z< \Xi_1\mik z+|\Theta|\big),
\mathbb{P}(z-|\Theta|< \Xi_1\mik z)\big\} \nonumber \\
& \mik (2^5+2^{17})\frac{\Lambda}{n} + 5d^2 e^d (2d)! \sum_{s=2}^{d}\sqrt{\frac{\beta_s}{n^s}}. \nonumber
\end{align}
Inequality \eqref{e2.7} follows from \eqref{e7.21}, \eqref{e7.26}, the triangle inequality
and the choice of the constant $C_d$ in Theorem \ref{t2.2}. The proof is completed.
\begin{rem}[Towards a group-theoretic combinatorial CLT] \label{r7.4}
Let $I$ be a nonempty finite set, and let $G$ be a subgroup of $\mathbb{S}_I$ which acts
transitively on $I$. With every matrix $\boldsymbol{\zeta}\colon I\times I\to \rr$ we
associate the statistic
\begin{equation} \label{e7.27}
Z_G = \sum_{i\in I} \boldsymbol{\zeta}\big(i,\pi(i)\big)
\end{equation}
where $\pi$ is a random permutation which is uniformly distributed on $G$. The case $G=\mathbb{S}_I$
is, of course, the setting of the classical combinatorial CLT; on the other hand, it is easy to see
that the statistics covered by the work of Barbour/Chen \cite{BC05} and Theorem~\ref{t2.2} are
also included\footnote{More precisely, in these cases the index set $I$ can be identified with
the set $\binom{[n]}{d}$ and the group $G$ consists of those permutations $\overline{\pi}$ of
$\binom{[n]}{d}$ which are of the form $\overline{\pi}\big(\{j_1,\dots,j_d\}\big)=
\{\pi(j_1),\dots,\pi(j_d)\}$ for some permutation $\pi$ of $[n]$.} in this general
framework. It is thus natural to ask\footnote{In fact, it is quite likely that this
question has already been asked, but we could not find something relevant in the
literature.} for which finite transitive permutation groups $G$ we have quantitative
normal approximation of the statistics $Z_G$ under suitable conditions on the
matrix $\boldsymbol{\zeta}$. The case of alternating groups is already interesting.
\end{rem}
\begin{rem}[Finite population statistics] \label{r7.8}
This remark has been kindly communicated to us by the referee. Let $n\meg d\meg 2$ be positive integers,
and let $t\colon [n]^d\to\rr$ be a (deterministic) symmetric real tensor. With $t$ we associate the statistic
\begin{equation} \label{e7.28}
T \coloneqq t\big(\pi(1),\dots,\pi(d)\big)
\end{equation}
where $\pi$ is a random permutation which is uniformly distributed on $\mathbb{S}_n$. These statistics
have been studied by several authors; see, \textit{e.g.}, \cite{BG93,BoG02,ZC90} and the references therein.
It was shown by Bloznelis and G\"{o}tze \cite{BoG01} that $T-\ave[T]$ can be written~as
\begin{equation} \label{e7.29}
T-\ave[T]= \sum_{s=1}^d \ \, \sum_{1\mik i_1<\dots <i_s\mik d} \!\!g_s\big(\pi(i_1),\dots,\pi(i_d)\big)
\end{equation}
where, for every $s\in [d]$, $g_s\colon [n]^s\to\rr$ is a symmetric, Hoeffding\footnote{That is,
$\sum_{i_0\sqsubseteq i\in [n]^s} g_s(i)=0$ for every $i_0\in [n]^{s-1}$.} tensor whose diagonal
terms vanish. Using this representation, it is easy to see that the random variable $T-\ave[T]$ can also
be expressed as a $W$-statistic of order $d$. More precisely, for every $s\in \{2,\dots,d\}$ define
$\boldsymbol{\xi}_s\colon [n]^s\times [n]^s\to \rr$ by setting
$\boldsymbol{\xi}_s(i,j)=\alpha^s_{|\mathrm{Im}(i)|} \cdot \mathbf{1}_{[d]^s}(i) \cdot g_s(j)$ where
$\alpha^s_s\coloneqq 1/s!$ and $\alpha^s_l\coloneqq (1/s!) \prod_{k=1}^{s-l} \big( 1-\frac{d}{s-k}\big)$
if $l\in \{1,\dots, s-1\}$; moreover, define $\boldsymbol{\xi}_1\colon [n]\times [n]\to\rr$ by
$\boldsymbol{\xi}_1(i,j)=\big( \mathbf{1}_{[d]}(i)-\frac{d}{n}\big)\, g_1(j)$. Then observe that the
tensors $\boldsymbol{\xi}_1,\dots,\boldsymbol{\xi}_d$ are Hoeffding in the sense of Definition \ref{d2.1},
and the $W$-statistic associated with them via~\eqref{e2.3} coincides with $T-\ave[T]$.
In particular, if $n\meg 4d^2$, then, by Theorem \ref{t2.2}, we have\footnote{Here, for every
tensor $g\colon [n]^s\to\rr$ and every $p>1$ we set $\|g\|_{\ell_p}\coloneqq \big( \sum_{i\in [n]^s}
|g(i)|^p\big)^{1/p}$.}
\begin{equation} \label{e7.30}
d_K\big( T,\mathcal{N}(\mu,\sigma^2)\big) \mik
\frac{2^{19}C_1}{\sqrt{dn}} \, \Big(\frac{\|g_1\|_{\ell_3}}{\|g_1\|_{\ell_2}}\Big)^3 +
2 C_d \sum_{s=2}^d \frac{1}{s!}\, \Big(\frac{d}{\sqrt{n}}\Big)^{s-1} \,
\frac{\|g_s\|_{\ell_2}}{\|g_1\|_{\ell_2}}
\end{equation}
where $\mu\coloneqq \ave[T]$ and $\sigma^2\coloneqq \frac{d}{n-1}\big(1-\frac{d}{n}\big) \|g_1\|_{\ell_2}^2$.
(Recall that $C_1\meg 1$ is as in \eqref{e2.4}, and $C_d=5d^2 e^d(2d)!$.)
\end{rem}

%-------------------------------------------------------------%
%             Proofs of Theorem \ref*{t1.4} and its           %
%                       applications                          %
%-------------------------------------------------------------%

\part{Proofs of Theorem \ref*{t1.4} and its applications} \label{part3}

\section{Computation of the variance: proof of Proposition \ref*{p1.2}} \label{sec8}

\numberwithin{equation}{section}

This section is devoted to the proof of Proposition \ref{p1.2}. We start by introducing some
auxiliary quantities which will also be used in Section \ref{sec9}.

\subsection{The parameters $\gamma_{s,r}$} \label{subsec8.1}

For every $s,r\in \{0,1,\dots\}$ with $s\mik r$,
\begin{enumerate}
\item[$\bullet$] let $\mathcal{X}_{s,r}$ denote the set of all strictly increasing maps
$u\colon [\ell] \to \mathbb{Z}$ for some $\ell\in\nn$ which satisfy $u(1)=s$ and $u(\ell)= r$
(thus, $\ell\mik r-s+1$),
\end{enumerate}
and set
\begin{align}
\label{e8.1} \gamma_{s,r} & \coloneqq \sum_{u\in\mathcal{X}_{s,r}}
(-1)^{|\mathrm{dom}(u)|+1} \prod_{i=2}^{|\mathrm{dom}(u)|} \binom{u(i)}{u(i-1)} \\
& = \sum_{u\in\mathcal{X}_{s,r}} (-1)^{|\mathrm{dom}(u)|+1}
\binom{r}{s,u(2)-u(1),\dots, u\big(|\mathrm{dom}(u)|\big)-u\big(|\mathrm{dom}(u)|-1\big)} \nonumber
\end{align}
with the convention that the above product is equal to $1$ if $i>|\mathrm{dom}(u)|$.
Observe that
\begin{equation} \label{e8.2}
\gamma_{s,s}=1.
\end{equation}
Also notice that for every $s,r\in \{0,1,\dots\}$ with $s\mik r$ we have
\begin{equation} \label{e8.3}
\mathcal{X}_{s-1,r} = \bigcup_{x=s}^r\{f_{s-1, u}:u\in \mathcal{X}_{x,r}\}
\end{equation}
where the union above is a disjoint union; here, $f_{s-1, u}\colon \big[|\mathrm{dom}(u)|+1\big]\to \mathbb{Z}$
denotes the function defined by $f_{s-1, u}(1)=s-1$ and $f_{s-1, u}(i) = u(i-1)$
if $i\in \{2,\dots,|\mathrm{dom}(u)|+1\}$. By \eqref{e8.1}, it follows in particular that
for every $s,r\in \{1,2,\dots\}$ with $s\mik r$ we have
\begin{equation} \label{e8.4}
\gamma_{s-1,r} = - \sum_{x=s}^{r}\binom{x}{s-1}\, \gamma_{x,r}.
\end{equation}

\subsubsection{\!} \label{subsubsec8.1.1}

We proceed to compute these quantities. More precisely, we claim
that for every $s,r\in\{0,1,\dots\}$ with $s\mik r$ we have
\begin{equation} \label{e8.5}
\gamma_{s,r} = (-1)^{r-s} \binom{r}{s}.
\end{equation}
The case ``$s=r$" follows, of course,  from \eqref{e8.2}. So, let $s,r\in \{0,1,,\dots\}$ with $s<r$.
Notice that for every $u\in \mathcal{X}_{s,r}$ we have $2 \mik|\mathrm{dom}(u)|\mik r-s+1$.
For every $\ell\in [r-s]$ set
\begin{enumerate}
\item[$\bullet$] $\mathcal{X}_{s,r}^\ell \coloneqq \big\{u\in\mathcal{X}_{s,r}:|\mathrm{dom}(u)|=\ell+1\big\}$
and  $\mathcal{Y}_\ell\coloneqq \big\{y\in[r-s]^\ell: \sum_{i=1}^\ell y(i) = r-s\big\}$.
\end{enumerate}
Then we have
\begin{align}
\label{e8.6} & \gamma_{s,r} = \binom{r}{s} \sum_{\ell = 1}^{r-s}\, (-1)^{\ell+2}
\sum_{u\in\mathcal{X}_{s,r}^\ell} \binom{r-s}{u(2)-u(1),\dots,u(\ell+1)-u(\ell)} \\
 = & \binom{r}{s} \sum_{\ell = 1}^{r-s}\, (-1)^{\ell} \sum_{y\in\mathcal{Y}_\ell}
\binom{r-s}{y(1),\dots,y(\ell)} = \binom{r}{s} \sum_{\ell = 1}^{r-s}\, (-1)^{\ell}\,
\big|\big\{f\in[\ell]^{r-s}:f\text{ is onto}\big\}\big| \nonumber \\
= & \binom{r}{s} \sum_{\ell = 1}^{r-s}\, (-1)^{\ell}
\sum_{j=0}^\ell\, (-1)^j\, \binom{\ell}{j}\, (\ell-j)^{r-s}
= \binom{r}{s} \sum_{0\mik j \mik \ell \mik r-s} \,
(-1)^{\ell-j}\, \binom{\ell}{j}\, (\ell-j)^{r-s} \nonumber
\end{align}
which yields, by setting ``$t=\ell-j$", that
\begin{equation} \label{e8.7}
\gamma_{s,r} = \binom{r}{s} \sum_{t=0}^{r-s} (-1)^t\, t^{r-s} \sum_{j=0}^{r-s-t} \binom{t+j}{j}.
\end{equation}
Noticing that $\sum_{j=0}^{r-s-t} \binom{t+j}{j}= \binom{r-s+1}{r-s-t}=\binom{r-s+1}{t+1}$
for every positive integer $t$, by \eqref{e8.7} and an additional change of variable ``$q=t+1$",
we obtain that
\begin{equation} \label{e8.8}
\gamma_{s,r} = \binom{r}{s} \sum_{t=0}^{r-s} (-1)^t t^{r-s}\,\binom{r-s+1}{t+1}
= -\binom{r}{s} \sum_{q=1}^{r-s+1} (-1)^q (q-1)^{r-s} \binom{r-s+1}{q}.
\end{equation}
By a classical formula which goes back to Euler---see, \textit{e.g.}, \cite[Eq. (5.42)]{GKP94}---and the
fact that the degree of the polynomial $(x-1)^{r-s}$ is strictly smaller than $r-s+1$, we also obtain that
$\sum_{q=0}^{r-s+1} (-1)^q (q-1)^{r-s} \binom{r-s+1}{q} = 0$. Therefore, by \eqref{e8.8}, we conclude that
\eqref{e8.5} is satisfied.

\subsection{Proof of Proposition \ref{p1.2}} \label{subsec8.2}

In what follows, let $n,d$ be positive integers with $n\meg 2d$,
and fix $\xtensor$ and $\thtensor$ which satisfy (\hyperref[A1]{$\mathcal{A}$1}),
(\hyperref[A2]{$\mathcal{A}$2}) and (\hyperref[A3]{$\mathcal{A}$3}).
We define $\boldsymbol{a}=\langle a_F: F\in \binom{[n]}{d}\rangle$ by setting
$a_F = \theta_{(i_1,\dots,i_d)}$ for every $F=\{i_1<\dots<i_d\}\in \binom{[n]}{d}$.
By (\hyperref[A2]{$\mathcal{A}$2}), we see that $a_F=\theta_i$ for every $F\in \binom{[n]}{d}$
and every $i\in [n]^d$ with $\mathrm{Im}(i)=F$. Moreover, by (\hyperref[A1]{$\mathcal{A}$1}),
we have $\ave\big[ \langle \bbth,\bbx\rangle\big]=0$. Therefore,
\begin{equation} \label{e8.9}
\mathrm{Var}\big(\langle \bbth,\bbx\rangle\big) \!= \!\!\!\!
\sum_{i,j\in[n]^d} \!\! \theta_i\, \theta_j\,\ave[X_i X_j] \stackrel{(\hyperref[A2]{\mathcal{A}2})}{=}
\sum_{s=0}^{d}\delta_s \!\!\!\!\!\!\!\! \sum_{\substack{i,j\in[n]^d\\|\mathrm{Im}(i)\cap\mathrm{Im}(j)|=s}}
\!\!\!\!\!\!\!\!\!\! \theta_i\, \theta_j
=(d!)^2 \sum_{s=0}^{d}\delta_s \!\!\!\! \sum_{\substack{F,G\in \binom{[n]}{d}\\|F\cap G|=s}}\!\!\!\! a_F a_G.
\end{equation}
For every $s\in \{0,1,\dots,d\}$ set
\begin{equation} \label{e8.10}
\kappa_s \coloneqq \!\! \sum_{\substack{F,G\in \binom{[n]}{d}\\|F\cap G|=s}}\! a_F\, a_G
\ \ \text{ and } \ \
\lambda_s \coloneqq \sum_{H\in \binom{[n]}{s}}
\Big(\sum_{F\in \binom{[n]\setminus H}{d-s}} \! a_{H\cup F}\Big)^2.
\end{equation}
Notice that $\kappa_d = \lambda_d$; moreover, for every $s\in \{0,\dots,d-1\}$ we have
\begin{align} \label{e8.11}
\kappa_s & = \sum_{H\in \binom{[n]}{s}}
\sum_{\substack{K,L\in \binom{[n]\setminus H}{d-s}\\K\cap L = \emptyset}} a_{H\cup K}\, a_{H\cup L} =
\lambda_s-\sum_{t=1}^{d-s} \sum_{H\in \binom{[n]}{s}}
\sum_{\substack{K,L\in \binom{[n]\setminus H}{d-s}\\|K\cap L| = t}} a_{H\cup K}\, a_{H\cup L}\\
& = \lambda_s - \sum_{t=1}^{d-s} \binom{s+t}{s} \kappa_{s+t} =
\lambda_s - \sum_{r=s+1}^d \binom{r}{s} \kappa_r. \nonumber
\end{align}
Proceeding by backwards induction and using the fact that $\kappa_d = \lambda_d$,
\eqref{e8.11}, \eqref{e8.2} and \eqref{e8.4}, we see that for every $s\in \{0,1,\dots,d\}$,
\begin{equation} \label{e8.12}
\kappa_s=\sum_{r=s}^d \gamma_{s,r}\, \lambda_r.
\end{equation}
By \eqref{e8.10}, the description of the variance of $\langle \bbth,\bbx\rangle$ in \eqref{e8.9}
and \eqref{e8.12}, we obtain that
\begin{equation} \label{e8.13}
\mathrm{Var}\big(\langle \bbth,\bbx\rangle\big) =
(d!)^2\sum_{s=0}^{d}\delta_s \sum_{r=s}^d \gamma_{s,r}\, \lambda_r =
(d!)^2\sum_{r=0}^{d}\lambda_r \sum_{s=0}^r \gamma_{s,r}\, \delta_s
\end{equation}
which in turn implies, by \eqref{e8.5}, that
\begin{equation} \label{e8.14}
\mathrm{Var}\big( \langle \bbth,\bbx\rangle\big) =
(d!)^2\sum_{r=0}^{d}\,\lambda_r \Big(\sum_{s=0}^r (-1)^{r-s}\, \binom{r}{s}\, \delta_s\Big).
\end{equation}
Equality \eqref{e1.8} follows from \eqref{e8.14} after observing that
$\seminorm{\boldsymbol{\theta}}^2_s = s!\,\big((d-s)!\big)^2 \lambda_s$
for every $s\in \{0,\dots,d\}$. The proof of Proposition \ref{p1.2} is completed.

%----------From $Z$-statistics to $W$-statistics--------------%

\section{From $Z$-statistics to $W$-statistics} \label{sec9}

\numberwithin{equation}{section}

We have pointed out in Section \ref{sec2} that every statistic of the form \eqref{e2.1}
can be written in the form \eqref{e2.3}. In the one-dimensional case, this transformation is
classical; see \cite{Ho51}. The two-dimensional case is also well-known;
see, \textit{e.g.},~\cite{ZBCL97}. These arguments can be extended to the higher-dimensional case
though, to the best of our knowledge, this has not appeared in the literature.
Since this step is needed for the proof of Theorem~\ref{t1.4} and, more important, it affects
its quantitative aspects\footnote{Specifically, it affects the constant $\kappa$.},
in this section we shall describe this transformation for those tensor permutation
statistics which are relevant to Theorem~\ref{t1.4}. That said, we note that
some of the combinatorial constructions that appear in this section are related
to the classical work of de Jong \cite{deJ90} as well as the more recent work of
D\"{o}bler, Kasprzak and Peccati \cite{DKP22}.

We also point out that a slightly different type of decomposition of permutation statistics---which
is more akin to the standard Hoeffding decomposition \cite{Ho48}---has been considered by various authors,
including Zhao/Chen \cite{ZC90} and Bloznelis/G\"{o}tze \cite{BoG01}, and culminating in the work of
Peccati \cite{P04} (see, also, \cite{PP11}).

\subsection{Summing, averaging and Hoeffding operators} \label{subsec9.1}

Let $n\meg s$ be positive integers, and let $\boldsymbol{a}\colon [n]^s\to\rr$
be a real tensor. For every (possibly empty) subset $F$~of~$[s]$ we introduce the tensors
$\mathcal{S}_F[\boldsymbol{a}]\colon [n]^F\to\rr$ and
$\mathcal{A}_F[\boldsymbol{a}]\colon [n]^F\to \rr$ by setting for every~$i\in [n]^F$,
\begin{equation} \label{e9.1}
\mathcal{S}_F[\boldsymbol{a}](i) \coloneqq \sum_{i\sqsubseteq j\in[n]^s} \!\!\boldsymbol{a}(j)
\ \ \text{ and } \ \ \mathcal{A}_F[\boldsymbol{a}](i) \coloneqq
\frac{1}{n^{s-|F|}}\sum_{i\sqsubseteq j\in[n]^s} \!\!\boldsymbol{a}(j).
 \end{equation}
Note that if ``$F=\emptyset$", then $[n]^{\emptyset} = \{\emptyset\}$ and, consequently,
$\mathcal{S}_\emptyset[\boldsymbol{a}]$ and $\mathcal{A}_\emptyset[\boldsymbol{a}]$ are
tensors with a single entry which is the sum and the average of the entries of $\boldsymbol{a}$
respectively. Also observe that
$\mathcal{S}_{[s]}[\boldsymbol{a}]= \mathcal{A}_{[s]}[\boldsymbol{a}] =\boldsymbol{a}$.

Moreover, we define the \textit{Hoeffding tensor} $\mathcal{H}[\boldsymbol{a}]\colon [n]^s\to\rr$
\textit{associated with} $\boldsymbol{a}$ by setting for every~$i\in [n]^s$,
\begin{equation} \label{e9.2}
\mathcal{H}[\boldsymbol{a}](i) \coloneqq \sum_{F\subseteq[s]}(-1)^{s- |F|}\,
\mathcal{A}_F[\boldsymbol{a}](i\upharpoonright F).
\end{equation}

\subsubsection{Doubly-indexed tensors} \label{subsubsec9.1.1}

We will also work with averaging and Hoeffding operators associated with doubly-indexed tensors.
More precisely, let $n\meg s$ be positive integers, and let
$\boldsymbol{\zeta}\colon [n]^s\times [n]^s\to\rr$. For every pair $F,G$ of (possibly empty) subsets
of $[s]$ we define the tensor $\mathcal{A}_{F,G}[\boldsymbol{\zeta}]\colon [n]^F\times[n]^G\to\rr$
by setting for every $i\in [n]^F$ and every $p\in [n]^G$,
\begin{equation} \label{e9.3}
\mathcal{A}_{F,G}[\boldsymbol{\zeta}](i,p) \coloneqq \frac{1}{n^{2s-|F|-|G|}}
\sum_{\substack{i\sqsubseteq j\in[n]^s\\p\sqsubseteq q\in [n]^s}}\!\!\boldsymbol{\zeta}(j,q).
\end{equation}
Again, observe that $\mathcal{A}_{[s],[s]}[\boldsymbol{\zeta}] = \boldsymbol{\zeta}$; moreover,
the tensor $\mathcal{A}_{\emptyset,\emptyset}(\boldsymbol{\zeta})$ contains a single entry which
is equal to the average of the entries of $\boldsymbol{\zeta}$.

Finally, we define the \textit{Hoeffding tensor}
$\mathcal{H}[\boldsymbol{\zeta}]\colon [n]^s\times [n]^s\to\rr$ \textit{associated with} $\boldsymbol{\zeta}$
by setting for every $i,p\in [n]^s$,
\begin{equation} \label{e9.4}
\mathcal{H}[\boldsymbol{\zeta}](i,p) \coloneqq \sum_{F,G\subseteq[s]} (-1)^{|F|+|G|}\,
\mathcal{A}_{F,G}[\boldsymbol{\zeta}](i\upharpoonright F,p\upharpoonright G).
\end{equation}
We have the following elementary---though important---fact.
\begin{fact} \label{f9.1}
For every $\boldsymbol{\zeta}\colon [n]^s\times [n]^s\to\rr$, the tensor
$\mathcal{H}[\boldsymbol{\zeta}]$ associated with $\boldsymbol{\zeta}$
is Hoeffding in the sense of Definition \emph{\ref{d2.1}}.
\end{fact}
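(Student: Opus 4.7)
The plan is to exploit the inclusion--exclusion structure of $\mathcal{H}[\boldsymbol{\zeta}]$ by pairing subsets of $[s]$ that differ only on the coordinate $r$ over which we are summing. Fix an arbitrary $r \in [s]$, and fix $j_0 \in [n]^{[s]\setminus\{r\}}$ and $p_0 \in [n]^s$. By the definition of $\mathcal{H}[\boldsymbol{\zeta}]$ in \eqref{e9.4}, I would write
\[
\sum_{j_0 \sqsubseteq i \in [n]^s} \!\!\! \mathcal{H}[\boldsymbol{\zeta}](i,p_0)
\;=\; \sum_{G \subseteq [s]} (-1)^{|G|} \sum_{F \subseteq [s]} (-1)^{|F|}
\sum_{j_0 \sqsubseteq i \in [n]^s} \!\!\! \mathcal{A}_{F,G}[\boldsymbol{\zeta}](i\upharpoonright F, p_0\upharpoonright G),
\]
and it will suffice to show that, for each fixed $G \subseteq [s]$, the inner double sum over $F$ and $i$ vanishes.

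To this end, I would reindex the sum over $F \subseteq [s]$ by $F' \coloneqq F \setminus \{r\} \subseteq [s]\setminus\{r\}$, noting that each $F'$ corresponds to the two subsets $F' $ and $F' \cup \{r\}$ of $[s]$, whose signs $(-1)^{|F|}$ differ. Thus the inner sum becomes
\[
\sum_{F' \subseteq [s]\setminus\{r\}} (-1)^{|F'|} \Bigl[ A_0(F') - A_1(F') \Bigr],
\]
where $A_0(F')$ and $A_1(F')$ are the two summations $\sum_{j_0\sqsubseteq i} \mathcal{A}_{F,G}[\boldsymbol{\zeta}](i\upharpoonright F, p_0\upharpoonright G)$ for $F = F'$ and $F = F' \cup\{r\}$ respectively. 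The main (but easy) computation is that $A_0(F') = A_1(F')$: for $F = F'$ the quantity $i \upharpoonright F' = j_0 \upharpoonright F'$ is already fixed so the summation over $i$ with $j_0 \sqsubseteq i$ produces a factor of $n$; for $F = F' \cup \{r\}$, the extra coordinate $i(r)$ sums over $[n]$, but by unwinding the definition \eqref{e9.3} one checks that
\[
\sum_{i(r)=1}^n \mathcal{A}_{F'\cup\{r\},G}[\boldsymbol{\zeta}]\bigl((j_0 \upharpoonright F')\con i(r),\,p_0 \upharpoonright G\bigr) \;=\; n\cdot \mathcal{A}_{F',G}[\boldsymbol{\zeta}](j_0 \upharpoonright F',\, p_0 \upharpoonright G),
\]
since summing over an additional ``frozen'' coordinate and compensating the normalization $n^{2s-|F|-|G|}$ by the extra factor of $n$ returns the coarser average. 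Hence $A_0(F') - A_1(F') = 0$ for every $F'$, and the first Hoeffding identity in \eqref{e2.2} holds.

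The second identity, $\sum_{q_0\sqsubseteq p}\mathcal{H}[\boldsymbol{\zeta}](i_0,p) = 0$, follows by the exact same argument applied to the $G$-variable (or, equivalently, by noting that $\mathcal{H}$ is built symmetrically in its two arguments). I do not anticipate any real obstacle: the whole argument is a one-step pairing/cancellation in the inclusion--exclusion, and the key identity $A_0(F')=A_1(F')$ is just the observation that the averaging operators $\mathcal{A}_{F,G}$ are nested consistently as $F$ grows. No assumption on $\boldsymbol{\zeta}$ beyond being a real tensor is used.
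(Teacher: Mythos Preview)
Your argument is correct: the pairing $F'\leftrightarrow F'\cup\{r\}$ together with the identity $\sum_{i(r)}\mathcal{A}_{F'\cup\{r\},G}=n\,\mathcal{A}_{F',G}$ is exactly the cancellation needed, and the second identity in \eqref{e2.2} follows by symmetry in the two arguments. The paper does not actually supply a proof of Fact~\ref{f9.1}---it is stated as ``elementary'' and left to the reader---so your write-up simply fills in the standard inclusion--exclusion computation that the authors omit.
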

\begin{rem} \label{r9.1}
It is clear that every tensor $\boldsymbol{\zeta}\colon [n]^s\times[n]^s\to\rr$ can be
naturally identified with a tensor $\boldsymbol{a}_{\boldsymbol{\zeta}}\colon [n]^{2s}\to\rr$.
Then observe that the Hoeffding tensor $\mathcal{H}[\boldsymbol{\zeta}]$ associated with
$\boldsymbol{\zeta}$ via \eqref{e9.4} coincides with the Hoeffding tensor
$\mathcal{H}[\boldsymbol{a}_{\boldsymbol{\zeta}}]$ associated
with  $\boldsymbol{a}_{\boldsymbol{\zeta}}$ via~\eqref{e9.2}.
\end{rem}

\subsection{Decomposition} \label{subsec9.2}

Let $n,r$ be integers with $n\meg r\meg 2$,
and let $\boldsymbol{\zeta}\colon [n]^r\times[n]^r\to\rr$ be a real tensor.
In what follows, we make the following assumptions on $\boldsymbol{\zeta}$.
\begin{enumerate}
\item[($\mathcal{A}$5)] \label{A5} (Symmetry) We have\footnote{Recall that
for every $i=(i_1,\dots,i_r)\in [n]^r$ and every $\tau\in\mathbb{S}_r$ we set
$i \circ \tau=(i_{\tau(1)},\dots,i_{\tau(r)})\in [n]^r$.}
$\boldsymbol{\zeta}(i,j) = \boldsymbol{\zeta}(i \circ \tau, j \circ \rho)$
for every $i,j\in [n]^r$ and $\tau,\rho\in \mathbb{S}_r$.
\item [($\mathcal{A}$6)] \label{A6} (Vanishing diagonal) We have $\boldsymbol{\zeta}(i,j)=0$
if either $i\notin [n]^r_{\mathrm{Inj}}$ or $j\notin [n]^r_{\mathrm{Inj}}$.
\end{enumerate}
As we shall shortly see, this class includes---among others---all those tensors
which appear in the proof of Theorem \ref{t1.4}.

Next, for every $s\in \{0,1,\dots,r\}$ we define $\boldsymbol{\zeta}_s\colon [n]^s\times [n]^s\to\rr$ by
\begin{equation} \label{e9.5}
\boldsymbol{\zeta}_s \coloneqq \mathcal{A}_{[s],[s]}[\boldsymbol{\zeta}]
\end{equation}
where $\mathcal{A}_{[s],[s]}$ is the averaging operator introduced in \eqref{e9.3}; in particular,
$\boldsymbol{\zeta}_0=\mathcal{A}_{\emptyset,\emptyset}(\boldsymbol{\zeta})$ is the average
of the entries of $\boldsymbol{\zeta}$. Moreover, for every $s\in [r]$ set
\begin{equation} \label{e9.6}
R_s\coloneqq \sum_{i\in [n]^s_{\mathrm{Inj}}} \mathcal{H}[\boldsymbol{\zeta}_s](i,\pi \circ i)
\end{equation}
where $\pi$ is a random permutation uniformly distributed on $\mathbb{S}_n$; here
$\mathcal{H}[\boldsymbol{\zeta}_s]$ denotes the Hoeffding tensor associated with $\boldsymbol{\zeta}_s$
via \eqref{e9.4}.

\subsubsection{The weights $w_{s,r}$} \label{subsubsec9.2.1}

For every pair of integers $h,s$ with $0\mik h< s\mik r$ set
\begin{equation} \label{e9.7}
\mu_{h,s} \coloneqq \binom{s}{h} \sum_{f=0}^{s-h} 2^{s-h-f}\,(-1)^f \binom{s-h}{f}\,
\frac{(n-s+f)!}{(n-s)!\, n^f}.
\end{equation}
Moreover, for every $s\in \{0,\dots,r-1\}$,
\begin{enumerate}
\item[$\bullet$] let $\mathcal{X}_{s,r}$ be as in Subsection \ref{subsec8.1},
namely, $\mathcal{X}_{s,r}$ is the set of all strictly increasing maps
$u\colon [\ell]\to\mathbb{Z}$ for some $\ell\in [\nn]$ such that $u(1)=s$ and $u(\ell)=r$.
\end{enumerate}
Given $u\in \mathcal{X}_{s,r}$ set
\begin{equation} \label{e9.8}
\mu_u \coloneqq \! \prod_{i=1}^{|\mathrm{dom}(u)|-1} \!\!\! \mu_{u(i),u(i+1)}
\end{equation}
and for every $s\in\{0,\dots,r-1\}$ define
\begin{equation} \label{e9.9}
w_{s,r} \coloneqq (-1)^{r-s+1}\sum_{u\in \mathcal{X}_{s,r}} (-1)^{|\mathrm{dom}(u)|}\,\mu_u.
\end{equation}
Also set
\begin{equation} \label{e9.10}
w_{r,r} \coloneqq 1.
\end{equation}
We have the following estimate for these quantities.
\begin{lem} \label{l9.3}
If $n\meg 6^r r^2$, then for every $s\in \{0,1,\dots,r\}$,
\begin{equation} \label{e9.11}
\Big| w_{s,r} - \binom{r}{s} \Big| \mik \frac{r^3 18^r r!}{n}.
\end{equation}
\end{lem}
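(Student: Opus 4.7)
The plan is to approximate each $\mu_{h,s}$ by $\binom{s}{h}$, propagate this approximation through the product defining $\mu_u$, and then sum over $u\in\mathcal{X}_{s,r}$ to identify $w_{s,r}$ as a small perturbation of a main term which, by the identity $\gamma_{s,r}=(-1)^{r-s}\binom{r}{s}$ established in Subsection~\ref{subsec8.1}, equals $\binom{r}{s}$. The case $s=r$ is trivial since $w_{r,r}=\binom{r}{r}=1$, so I would focus on $s<r$.

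For the first step, I would rewrite the ratio in the definition of $\mu_{h,s}$ as $\frac{(n-s+f)!}{(n-s)!\,n^f}=\prod_{k=1}^{f}\bigl(1-(s-k)/n\bigr)$ and observe that $\bigl|\prod_{k=1}^{f}(1-(s-k)/n)-1\bigr|\leq s^2/n$ for every $0\leq f\leq s-h$. Since the idealized sum $\sum_{f=0}^{s-h} 2^{s-h-f}(-1)^f\binom{s-h}{f}$ telescopes to $(2-1)^{s-h}=1$, a triangle inequality together with $\sum_f 2^{s-h-f}\binom{s-h}{f}=3^{s-h}$ gives
\[
\Big|\mu_{h,s}-\binom{s}{h}\Big|\leq \binom{s}{h}\cdot 3^{s-h}\cdot \frac{s^2}{n}\leq \binom{s}{h}\cdot\frac{3^r r^2}{n}.
\]
Setting $P_u\coloneqq\prod_{i=1}^{|\mathrm{dom}(u)|-1}\binom{u(i+1)}{u(i)}$ and writing $\mu_{u(i),u(i+1)}=\binom{u(i+1)}{u(i)}+\delta_i$ with $|\delta_i|\leq (3^r r^2/n)\cdot\binom{u(i+1)}{u(i)}$, the hypothesis $n\geq 6^r r^2$ ensures $\ell\cdot 3^r r^2/n\leq 1$ for every $\ell\leq r$, so the standard bound $(1+x)^\ell-1\leq 2\ell x$ applied to the product expansion yields $|\mu_u-P_u|\leq (2\cdot 3^r r^3/n)\cdot P_u$.

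For the main term, a direct telescoping computation gives $P_u=\binom{r}{s}\binom{r-s}{u(2)-u(1),\dots,u(\ell+1)-u(\ell)}$, and comparing the definitions of $\gamma_{s,r}$ and $w_{s,r}$ together with \eqref{e8.5} gives $(-1)^{r-s+1}\sum_u(-1)^{|\mathrm{dom}(u)|}P_u=(-1)^{r-s}\gamma_{s,r}=\binom{r}{s}$. Summing the multinomial coefficients in $P_u$ over all compositions of $r-s$ into positive parts produces $\binom{r}{s}\cdot a(r-s)$, where $a(m)$ denotes the ordered Bell number; a short induction based on the recurrence $a(m)=\sum_{j\geq 1}\binom{m}{j}a(m-j)$ together with the inequality $\sum_{j\geq 1}(1/2)^j/j!=e^{1/2}-1<1$ shows that $a(m)\leq 2^m m!$, and therefore $\sum_u P_u\leq\binom{r}{s}\cdot 2^{r-s}(r-s)!\leq 2^r r!$. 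Combining all estimates,
\[
\Big|w_{s,r}-\binom{r}{s}\Big|\leq \frac{2\cdot 3^r r^3}{n}\cdot 2^r r!=\frac{2\cdot 6^r r^3 r!}{n}\leq \frac{r^3\, 18^r\, r!}{n},
\]
the last step using $2\leq 3^r$. The only real delicacy is the sign bookkeeping---matching the factor $(-1)^{r-s+1}$ in front of $w_{s,r}$ and the factor $(-1)^{|\mathrm{dom}(u)|}$ attached to each $\mu_u$ with the corresponding factors $(-1)^{|\mathrm{dom}(u)|+1}$ and $(-1)^{r-s}$ appearing in $\gamma_{s,r}$ and in \eqref{e8.5}---after which the remaining estimates are routine.
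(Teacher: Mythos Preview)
Your proof is correct and follows essentially the same strategy as the paper: approximate each $\mu_{h,s}$ by $\binom{s}{h}$, propagate through the product to approximate $\mu_u$ by $P_u$, and identify the main term with $(-1)^{r-s}\gamma_{s,r}=\binom{r}{s}$ via \eqref{e8.5}. The only differences are technical: the paper bounds the product perturbation by a telescoping sum together with $|\mu_{u(i),u(i+1)}|\leq \tfrac{3}{2}\binom{u(i+1)}{u(i)}$ (yielding a factor $(3/2)^r\cdot 6^r=9^r$), whereas you use the cleaner inequality $(1+x)^\ell-1\leq 2\ell x$; and the paper controls the sum over $u$ by the crude bound $P_u\leq r!$ together with $|\mathcal{X}_{s,r}|\leq 2^{r-s-1}$, whereas you sum $\sum_u P_u$ exactly as $\binom{r}{s}$ times an ordered Bell number and then bound the latter by $2^{r-s}(r-s)!$. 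Both routes land at $18^r$, and your sign bookkeeping matching $w_{s,r}$ with $(-1)^{r-s}\gamma_{s,r}$ is correct.
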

We postpone the proof of Lemma \ref{l9.3} to the end of this section.

\subsubsection{Main result} \label{subsubsec9.2.2}

We have the following proposition.
\begin{prop} \label{p9.4}
Let  $\boldsymbol{\zeta}\colon [n]^r\times [n]^r\to \rr$ be a tensor which satisfies
\emph{(\hyperref[A5]{$\mathcal{A}$5})} and \emph{(\hyperref[A6]{$\mathcal{A}$6})}, and let
$Z$ be the statistic associated with $\boldsymbol{\zeta}$ via \eqref{e2.1}. Then we have
\begin{equation} \label{e9.12}
Z = \sum_{s=1}^{r} n^{r-s} w_{s,r}\, R_s + n^r w_{0,r}\,\boldsymbol{\zeta}_0
\end{equation}
where $R_s$ and $w_{s,r}$ are as in \eqref{e9.6} and \eqref{e9.9} respectively,
and $\boldsymbol{\zeta}_0$ is the average of the entries of $\boldsymbol{\zeta}$.
\end{prop}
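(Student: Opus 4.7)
The plan is to prove \eqref{e9.12} by setting up a one-step recursion that expresses the truncated statistic $T_s \coloneqq \sum_{i \in [n]^s_{\mathrm{Inj}}} \boldsymbol{\zeta}_s(i, \pi \circ i)$ in terms of $R_s$ and the lower-level $T_h$'s ($h < s$), and then unrolling the recursion from level $r$ down to level $0$. As a preparatory observation, note that by the vanishing-diagonal hypothesis (\hyperref[A6]{$\mathcal{A}$6}) and the fact that $\pi$ preserves injectivity, we have $Z = T_r$ (using $\boldsymbol{\zeta}_r = \boldsymbol{\zeta}$), with the natural convention $T_0 = \boldsymbol{\zeta}_0$.

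The one-step recursion would be obtained by solving \eqref{e9.4} for $\boldsymbol{\zeta}_s$ and summing over $i \in [n]^s_{\mathrm{Inj}}$ with $p = \pi \circ i$:
\[T_s = R_s - \!\!\!\sum_{\substack{F, G \subseteq [s] \\ (F, G) \neq ([s], [s])}} \!\!\!(-1)^{|F| + |G|} \!\!\!\sum_{i \in [n]^s_{\mathrm{Inj}}}\!\! \mathcal{A}_{F, G}[\boldsymbol{\zeta}_s]\bigl(i \upharpoonright F,\, \pi \circ i \upharpoonright G\bigr).\]
By the symmetry (\hyperref[A5]{$\mathcal{A}$5}), each inner sum depends on $(F, G)$ only through $(|F|, |G|) = (h_1, h_2)$. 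For $(h_1, h_2)$ with $h \coloneqq \max(h_1, h_2) < s$, I would parametrize $i$ by $j = i \upharpoonright [h] \in [n]^h_{\mathrm{Inj}}$ together with an injective extension (contributing a factor $(n-h)!/(n-s)!$), invoke the composition identity $\mathcal{A}_{F, G}[\boldsymbol{\zeta}_s] = \mathcal{A}_{F, G}[\boldsymbol{\zeta}]$, and expand the average of $\boldsymbol{\zeta}$ via inclusion--exclusion over the number $f$ of ``collisions'' between the free coordinates of the averaging and the fixed tuple $j$ (this is the point where (\hyperref[A6]{$\mathcal{A}$6}) enters). Aggregating over all $(h_1, h_2)$ with $\max(h_1, h_2) = h$ would produce the binomial factor $\binom{s}{h}$ and the two-slot factor $2^{s - h - f}$ of \eqref{e9.7} (each of the $s - h$ non-fixed positions being assigned either to the first or the second slot), while the inner inclusion--exclusion would generate the remaining $(-1)^f \binom{s-h}{f}$ and $(n-s+f)!/((n-s)!\, n^f)$; the upshot is a recursion whose coefficient of $T_h$ combines $\mu_{h, s}$ from \eqref{e9.7} with the appropriate power of $n$ matching the extension count.

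Unrolling this recursion from level $r$ down to level $0$ represents $T_r$ as a sum over strictly increasing chains $s = u(1) < u(2) < \cdots < u(\ell) = r$ in $\mathcal{X}_{s, r}$; each such chain contributes $R_s$ (or, when $s = 0$, $\boldsymbol{\zeta}_0$) with coefficient proportional to $(-1)^{\ell} \prod_{i=1}^{\ell-1} \mu_{u(i), u(i+1)}$ times a power of $n$ that telescopes to $n^{r - s}$. Summing over $u \in \mathcal{X}_{s, r}$ and comparing with \eqref{e9.9}, \eqref{e9.10} would identify the coefficient of $R_s$ as $n^{r-s} w_{s, r}$ and that of $\boldsymbol{\zeta}_0$ as $n^r w_{0, r}$, proving \eqref{e9.12}. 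The hard part is carrying out the inclusion--exclusion bookkeeping in the recursion step carefully enough that the combined contributions of all $(h_1, h_2)$ with $\max(h_1, h_2) = h$ assemble into exactly the coefficient $\mu_{h, s}$ defined by \eqref{e9.7}: the two-slot factor $2^{s - h - f}$ and the alternating signs $(-1)^f$ in \eqref{e9.7} are the joint signatures of the two layers of inclusion--exclusion (one from summing over $(h_1, h_2)$, the other from the injectivity-preserving averaging inside $\mathcal{A}_{F, G}[\boldsymbol{\zeta}]$), and the overall sign $(-1)^{r - s + 1}$ in \eqref{e9.9} emerges from propagating $(-1)^{|F| + |G|}$ through the unrolling.
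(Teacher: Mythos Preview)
Your overall strategy---prove a one-step recursion expressing $T_s$ in terms of $R_s$ and the $T_h$ with $h<s$, then unroll it over chains in $\mathcal{X}_{s,r}$---is exactly the paper's approach. The gap is in the recursion step itself.

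The claim that the inner sum $\sum_{i\in[n]^s_{\mathrm{Inj}}} \mathcal{A}_{F,G}[\boldsymbol{\zeta}_s](i\!\upharpoonright\! F,\,\pi\circ i\!\upharpoonright\! G)$ depends on $(F,G)$ only through $(|F|,|G|)$ is false. The symmetry (\hyperref[A5]{$\mathcal{A}$5}) lets you permute the two arguments of $\boldsymbol{\zeta}_s$ \emph{independently}, but in this sum both arguments are tied to the \emph{same} $i$: the first through $i\!\upharpoonright\! F$, the second through $\pi\circ i\!\upharpoonright\! G$. What survives is only the diagonal $\mathbb{S}_s$-action $i\mapsto i\circ\sigma$, so the sum is invariant under $(F,G)\mapsto(\sigma(F),\sigma(G))$ and therefore depends on the triple $(|F|,|G|,|F\cap G|)$, not just on $(|F|,|G|)$. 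Concretely, for $s=2$ compare $F=G=\{1\}$ (coupled through $\pi$) with $F=\{1\},G=\{2\}$ (essentially decoupled): the two inner sums differ. Consequently your grouping by $h=\max(h_1,h_2)$ and the parametrization $j=i\!\upharpoonright\![h]$ do not lead to a closed recursion in the $T_h$, and the subsequent counting ("each of the $s-h$ non-fixed positions to the first or second slot") does not produce $\mu_{h,s}$.

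The correct bookkeeping (which is what the paper does) is to group by $H=F\cap G$ and set $h=|H|$. Using (\hyperref[A6]{$\mathcal{A}$6}) one reduces the inner sum for each such $(F,G)$ to (a multiple of) $\sum_{i\in[n]^H_{\mathrm{Inj}}}\mathcal{A}_{H,H}[\boldsymbol{\zeta}_s](i,\pi\circ i)=T_h$. The $\binom{s}{h}$ is then the number of choices of $H$; for fixed $H$ the pairs $(F,G)$ with $F\cap G=H$ are parametrized by a set $K=(F\cup G)\setminus H\subseteq[s]\setminus H$ of size $k$, together with one of $2^{k}$ splittings of $K$ into $F\setminus G$ and $G\setminus F$. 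The factor $(n-|F\cup G|)!/(n-s)!=(n-h-k)!/(n-s)!$ comes from summing over the free coordinates of $i$, and the change of variable $f=s-h-k$ gives exactly the expression \eqref{e9.7}. Once this one-step identity \eqref{e9.16} is in hand, your unrolling argument over $\mathcal{X}_{s,r}$ goes through and yields \eqref{e9.12}.
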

\begin{proof}
For every $s\in \{0,\dots, r\}$ let $Z_s$ denote the $Z$-statistic associated with
$\boldsymbol{\zeta}_s$ via \eqref{e2.1}. (In particular, $Z_0$ is constant,
and takes the value $\boldsymbol{\zeta}_0$.) Clearly, we have $Z_r=Z$.
Next, for every $s,p\in \{0,\dots, r\}$ with $s<p$,
\begin{enumerate}
\item[$\bullet$] let $\mathcal{X}_{s,p,r}\coloneqq \big\{u\in\mathcal{X}_{s,r}:
u\big(|\mathrm{dom}(u)|-1\big)\meg p\big\}$ where $\mathcal{X}_{s,r}$ is as in Subsection \ref{subsec8.1},
\end{enumerate}
and set
\begin{equation} \label{e9.13}
D_{s,p} \coloneqq \sum_{u\in \mathcal{X}_{s,p,r}} (-1)^{|\mathrm{dom}(u)|}\,\mu_u.
\end{equation}
Since $\mathcal{X}_{s,r}=\mathcal{X}_{s,s+1,r}$ for every $s\in \{0,\dots,r-1\}$, by \eqref{e9.9} we have
\begin{equation} \label{e9.14}
w_{s,r}= (-1)^{r-s+1}\, D_{s,s+1}.
\end{equation}
Moreover, using a decomposition as in \eqref{e8.3}, we see that for every $s,p\in \{0,\dots, r-1\}$ with $s<p$,
\begin{equation} \label{e9.15}
D_{s,p} = D_{s,p+1} - D_{p,p+1}\, \mu_{s,p}.
\end{equation}

Next, we claim that for every $s\in [r]$ we have
\begin{equation} \label{e9.16}
Z_s = R_s + \sum_{h=0}^{s-1} \,\mu_{h,s}\, n^{s-h}(-1)^{s-h-1}Z_h.
\end{equation}
Indeed, fix $s\in [r]$. Then, upon conditioning on the value of $\pi$,
\begin{align} \label{e9.17}
Z_s(\pi) & \stackrel{\eqref{e2.1},(\hyperref[A6]{\mathcal{A}6})}{=} \sum_{i\in [n]^s_{\mathrm{Inj}}}
\boldsymbol{\zeta}_s(i,\pi\circ i) \stackrel{\eqref{e9.6}}{=} R_s(\pi) + \sum_{i\in [n]^s_{\mathrm{Inj}}}
\big(\boldsymbol{\zeta}_s(i,\pi\circ i)- \mathcal{H}[\boldsymbol{\zeta}_s](i,\pi\circ i)\big) \\
& \hspace{0.35cm} \stackrel{\eqref{e9.4}}{=} R_s(\pi) \, - \!\!\!\! \sum_{\substack{F,G\subseteq [s]\\ |F|+|G|< 2s}}
(-1)^{|F|+|G|}  \sum_{i\in[n]^s_{\mathrm{Inj}}}
\mathcal{A}_{F,G}[\boldsymbol{\zeta}_s](i\upharpoonright F, \pi\circ i\upharpoonright G) \nonumber \\
& \hspace{0.5cm} = \, R_s(\pi) \, - \!\!\!\! \sum_{\substack{F,G\subseteq[s]\\ |F|+|G|<2s}} (-1)^{|F|+|G|} \,
n^{|(F\cup G)^{\complement}|}\, \frac{\big(n-|F\cup G|\big)!}{(n-s)!\, n^{|(F\cup G)^{\complement}|}}
\ \times \nonumber \\
& \hspace{3.5cm} \times \, \sum_{i\in [n]^{F\cup G}_{\mathrm{Inj}}}
\mathcal{A}_{F,G}[\boldsymbol{\zeta}_s](i\upharpoonright F, \pi\circ i\upharpoonright G) \nonumber
\end{align}
where the complements above are taken with respect to $[s]$. Therefore, after observing that
$|(F\cup G)^{\complement}|+|F\setminus G|+|G\setminus F|=|(F\cap G)^{\complement}|$, we obtain that
\begin{align} \label{e9.18}
Z_s(\pi) & \stackrel{(\hyperref[A6]{\mathcal{A}6}),\eqref{e9.3}}{=} R_s(\pi) -
\sum_{\substack{F,G\subseteq [s]\\ |F|+|G|< 2s}}
(-1)^{|F|+|G|}\, n^{|(F\cap G)^{\complement}|} \,
\frac{\big(n-|F\cup G|\big)!}{(n-s)!\, n^{|(F\cup G)^{\complement}|}}
\ \times \\
& \hspace{4cm} \times \, \sum_{i\in [n]^{F\cap G}_{\mathrm{Inj}}}
\mathcal{A}_{F\cap G,F\cap G}[\boldsymbol{\zeta}_s](i, \pi\circ i) \nonumber \\
& \hspace{0.56cm} = R_s(\pi) - \sum_{H \varsubsetneq [s]} n^{s-|H|} \sum_{i\in [n]^H_{\mathrm{Inj}}}
\mathcal{A}_{H,H}[\boldsymbol{\zeta}_s](i, \pi\circ i) \ \times \nonumber \\
& \hspace{4cm} \times \sum_{\substack{F,G\subseteq [s]\\ F\cup G=H}} (-1)^{|F\cup G|+|F\cap G|} \,
\frac{\big(n-|F\cup G|\big)!}{(n-s)!\, n^{|(F\cup G)^{\complement}|}}. \nonumber
\end{align}
Fix a proper subset $H$ of $[s]$ and set $h=|H|$. Then we have
\begin{align} \label{e9.19}
& \sum_{\substack{F,G\subseteq [s]\\ F\cup G=H}} (-1)^{|F\cup G|+|F\cap G|} \,
\frac{\big(n-|F\cup G|\big)!}{(n-s)!\, n^{|(F\cup G)^{\complement}|}} \\
= & \sum_{K\subseteq [s]\setminus H} 2^{|K|} (-1)^{|K|}
\frac{\big(n-h-|K|\big)!}{(n-s)!\, n^{s-h-|K|}}
= \sum_{k=0}^{s-h} \binom{s-h}{k} 2^k (-1)^k
\frac{\big(n-h-k\big)!}{(n-s)!\, n^{s-h-k}} \nonumber \\
= & \sum_{f=0}^{s-h} \binom{s-h}{f} 2^{s-h-f} (-1)^{s-h-f}
\frac{\big(n-s+f\big)!}{(n-s)!\, n^f} \nonumber
\end{align}
where the last equality follows from the change of variable ``$f=s-h-k$". Thus,
by \eqref{e9.18} and \eqref{e9.19}, we conclude that
\begin{align} \label{e9.20}
Z_s(\pi) & = R_s(\pi) - \sum_{h=0}^{s-1}n^{s-h}\sum_{H \in \binom{[s]}{h}} \sum_{i\in [n]^H_{\mathrm{Inj}}}
\mathcal{A}_{H,H}[\boldsymbol{\zeta}_s](i, \pi\circ i) \ \times \\
 & \hspace{4cm} \times \, \sum_{f=0}^{s-h} \binom{s-h}{f}\, 2^{s-h-f} (-1)^{s-h-f}
\frac{\big(n-s+f\big)!}{(n-s)!\, n^f} \nonumber.
\end{align}
By the definition of $\boldsymbol{\zeta}_s$ in \eqref{e9.5} and (\hyperref[A5]{$\mathcal{A}$5}),
for every $H\varsubsetneq [s]$, setting $h\coloneqq |H|$, we have
$\sum_{i\in [n]^H_{\mathrm{Inj}}}\mathcal{A}_{H,H}[\boldsymbol{\zeta}_s](i, \pi\circ i) =
\sum_{i\in [n]^h_{\mathrm{Inj}}}\boldsymbol{\zeta}_h(i, \pi\circ i) = Z_h(\pi)$;
hence, \eqref{e9.20} yields that
\begin{align} \label{e9.21}
Z_s(\pi) & =  R_s(\pi) - \sum_{h=0}^{s-1}n^{s-h} \binom{s}{h}  Z_h(\pi) \ \times \\
& \hspace{3cm} \times \, \sum_{f=0}^{s-h} \binom{s-h}{f}\, 2^{s-h-f} (-1)^{s-h-f}
\frac{\big(n-s+f\big)!}{(n-s)!\, n^f}.\nonumber
\end{align}
Thus, \eqref{e9.16} follows from \eqref{e9.21} and \eqref{e9.7}.

Next, proceeding by backwards induction, we will show that for every $p\in [r]$ we have
\begin{equation} \label{e9.22}
Z_r = R_r + \sum_{q=p}^{r-1}n^{r-q} w_{q,r} R_q
 + \sum_{q=0}^{p-1}(-1)^{r-q-1}n^{r-q} D_{q,p} Z_q
\end{equation}
with the convection that a sum of the form $\sum_{i=a}^{b}$ with $b<a$ is equal to zero.
For the case ``$p=r$", notice that for every $q\in \{0,\dots,r-1\}$ the set
$\mathcal{X}_{q,r,r}$ is a singleton which contains the map $u\colon [2]\to\mathbb{Z}$
with $u(1)=q$ and $u(2)=r$; in particular, for every $q\in \{0,\dots,r-1\}$
we have $D_{q,r} = \mu_{q,r}$, and so \eqref{e9.22} for ``$p=r$" follows from
\eqref{e9.16} for ``$s=r$".  Next, let $p\in [r-1]$ and assume that equality
\eqref{e9.22} holds true for $p+1$, that is,
\begin{equation} \label{e9.23}
Z_r  = R_r + \sum_{q=p+1}^{r-1}n^{r-q} w_{q,r} R_q
 + \sum_{q=0}^{p}(-1)^{r-q-1}n^{r-q} D_{q,p+1} Z_q .
\end{equation}
Plugging in \eqref{e9.23} equality \eqref{e9.16} for ``$s=p$", we see that
\begin{align} \label{e9.24}
Z_r & = R_r + \sum_{q=p+1}^{r-1}n^{r-q} w_{q,r} R_q
+ (-1)^{r-p-1}n^{r-p} D_{p,p+1} R_p \ + \\
& \hspace{3.5cm}  +\, \sum_{q=0}^{p-1}(-1)^{r-q-1}n^{r-q} (D_{q,p+1} - D_{p,p+1} \mu_{q,p}) Z_q. \nonumber
\end{align}
Using \eqref{e9.14} for ``$s=p$", \eqref{e9.15} for ``$s=q$" and \eqref{e9.24}, we conclude that
\eqref{e9.22} also holds true for $p$. This completes the proof of \eqref{e9.22}.

Finally, after recalling that $w_{r,r}=1$, the desired equality \eqref{e9.12} follows
from \eqref{e9.22} applied for ``$p=1$" and \eqref{e9.14} applied for ``$s=0$". The proof
of Proposition \ref{p9.4} is completed.
\end{proof}

\subsection{Proof of Lemma \ref{l9.3}} \label{subsubsec9.3}

Our first goal is to approximate the quantities $\mu_{q,p}$. To this end,
fix $q,p\in \{0,1,\dots,r\}$ with $q<p$. Then, by \eqref{e9.7}, we have
\begin{equation} \label{e9.25}
\mu_{q,p} = \binom{p}{q} \sum_{f=0}^{p-q} 2^{p-q-f}\, (-1)^f\, \binom{p-q}{f}
\prod_{u=1}^{f} \Big(1-\frac{p-u}{n}\Big);
\end{equation}
moreover,
\begin{equation} \label{e9.26}
\sum_{f=0}^{p-q} 2^{p-q-f}\, (-1)^f\, \binom{p-q}{f} = 1.
\end{equation}
Also observe that for every $f\in \{0,\dots, p-q\}$ we have
\begin{equation} \label{e9.27}
1\meg \prod_{u=1}^{f}\Big(1-\frac{p-u}{n}\Big)\meg \prod_{v=1}^{r} \Big(1-\frac{v}{n}\Big)
\meg 1- \sum_{v=1}^{r} \frac{v}{n}\meg 1-\frac{r^2}{n}.
\end{equation}
On the other hand, we have
\begin{equation} \label{e9.28}
\sum_{f=0}^{p-q} \Big|2^{p-q-f}(-1)^f \binom{p-q}{f} \Big| =
\sum_{f=0}^{p-q} 2^{p-q-f} \binom{p-q}{f} = 3^{p-q} \mik 3^r
\end{equation}
By \eqref{e9.25}--\eqref{e9.28}, we obtain that
\begin{equation} \label{e9.29}
\Big|\mu_{q,p} - \binom{p}{q} \Big| \mik \binom{p}{q} \frac{3^r r^2}{n} \mik \frac{6^r r^2}{n}.
\end{equation}

Next, we shall approximate the quantities $\mu_u$. So, fix $u\in \bigcup_{s=0}^{r-1}\mathcal{X}_{s,r}$
and, for notational simplicity, set $\ell\coloneqq |\mathrm{dom}(u)|$. Then,
\begin{align} \label{e9.30}
\Big| \mu_u - \prod_{i=2}^{\ell} \binom{u(i)}{u(i-1)} \Big|
= \Big| & \prod_{i=2}^{\ell}\mu_{u(i-1),u(i)} - \prod_{i=2}^{\ell} \binom{u(i)}{u(i-1)} \Big| \\
& \stackrel{\eqref{e9.29}}{\mik} \sum_{j=1}^{\ell-1}
\Big|\prod_{i=1}^{j-1}\mu_{u(i),u(i+1)}\Big| \cdot
\frac{6^r r^2}{n} \cdot \prod_{i=j+1}^{\ell-1} \binom{u(i+1)}{u(i)} \nonumber
\end{align}
with the usual convention that a product of the form $\prod_{i=a}^{b}$ with $b<a$ is equal to $1$.
By \eqref{e9.29} and the fact that $n\meg 6^r r^2$, we see that
$|\mu_{u(i),u(i+1)}| \mik \frac{3}{2} \binom{u(i+1)}{u(i)}$ for every $i\in\{2,\dots,\ell\}$.
Hence, by \eqref{e9.30},
\begin{align} \label{e9.31}
\Big| \mu_u - \prod_{i=2}^{\ell} & \binom{u(i)}{u(i-1)} \Big| \mik
\frac{9^r r^3}{n}\, \prod_{i=2}^{\ell} \binom{u(i)}{u(i-1)} \\
& = \frac{9^r r^3}{n}\, \binom{r}{u(1), u(2)-u(1),\dots,u(\ell)-u(\ell-1)}
\mik \frac{12^r r^3 r!}{n}. \nonumber
\end{align}

Finally, let $s\in \{0,\dots, r-1\}$. Since $|\mathcal{X}_{s,r}| = 2^{r-s-1} \mik 2^r$,
by \eqref{e9.9}, \eqref{e8.1} and \eqref{e9.31},
\begin{equation} \label{e9.32}
|w_{s,r} - (-1)^{r-s}\gamma_{s,r}| \mik
\sum_{u\in\mathcal{X}_{s,r}} \Big|\mu_u - \prod_{i=2}^{|\mathrm{dom}(u)|}
\binom{u(i)}{u(i+1)} \Big| \mik \frac{18^r r^3 r!}{n}.
\end{equation}
By \eqref{e8.5} and \eqref{e9.32}, we conclude that \eqref{e9.11} is satisfied, as desired.

%----------------------Preparatory lemmas---------------------%

\section{Preparatory lemmas} \label{sec10}

\numberwithin{equation}{section}

Our goal in this section is to present some preparatory lemmas which are needed
for the proof of Theorem \ref{t1.4} but are not related to the main argument.

The first lemma is a simple comparison result of gaussians.
\begin{lem} \label{l10.1}
Let $\mu_1,\mu_2\in\rr$, and let $\sigma_1,\sigma_2\meg 0$ such that $\max\{\sigma_1,\sigma_2\}>0$. Then,
\begin{equation} \label{e10.1}
d_K\big(\mathcal{N}(\mu_1,\sigma_1^2), \mathcal{N}(\mu_2,\sigma_2^2)\big) \mik
\frac{1}{\max\{\sigma_1,\sigma_2\}}\, \big(|\mu_1-\mu_2|+ |\sigma_1 - \sigma_2|\big).
\end{equation}
\end{lem}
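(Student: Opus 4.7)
The plan is to interpolate via an auxiliary gaussian that shares the mean of one distribution and the variance of the other. Since the right-hand side of \eqref{e10.1} is symmetric in the indices $1$ and $2$, I may assume $\sigma_1\meg\sigma_2$, so that $\sigma_1=\max\{\sigma_1,\sigma_2\}>0$. If $\sigma_2=0$ then the inequality is trivial, because $d_K$ is always at most $1$ and the right-hand side is at least $\sigma_1/\sigma_1=1$. So from now on I assume $\sigma_1\meg\sigma_2>0$.

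Applying the triangle inequality with the intermediate distribution $\mathcal{N}(\mu_2,\sigma_1^2)$,
\begin{equation*}
d_K\big(\mathcal{N}(\mu_1,\sigma_1^2),\mathcal{N}(\mu_2,\sigma_2^2)\big) \mik
d_K\big(\mathcal{N}(\mu_1,\sigma_1^2),\mathcal{N}(\mu_2,\sigma_1^2)\big) +
d_K\big(\mathcal{N}(\mu_2,\sigma_1^2),\mathcal{N}(\mu_2,\sigma_2^2)\big).
\end{equation*}
The first summand compares two gaussians with the common scale $\sigma_1$, so the Lipschitz constant $1/\sqrt{2\pi}$ of $\Phi$ immediately yields the bound $|\mu_1-\mu_2|/(\sigma_1\sqrt{2\pi})\mik|\mu_1-\mu_2|/\sigma_1$. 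This disposes of the mean contribution.

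For the variance contribution, after translating by $\mu_2$ and rescaling by $\sigma_1$, the task reduces to showing, with $r\coloneqq\sigma_2/\sigma_1\in(0,1]$,
\begin{equation*}
\sup_{x\in\rr}\big|\Phi(x)-\Phi(x/r)\big|\mik 1-r.
\end{equation*}
By the identity $\Phi(-y)=1-\Phi(y)$ it suffices to handle $x\meg 0$, where $\Phi(x/r)\meg\Phi(x)$. The crux is the monotonicity estimate $r\,\Phi(x/r)\mik\Phi(x)$ for every $x\meg 0$: setting $g(x)\coloneqq\Phi(x)-r\Phi(x/r)$, I have $g(0)=(1-r)/2\meg 0$, while $g'(x)=\phi(x)-\phi(x/r)\meg 0$ because $x/r\meg x\meg 0$ and $\phi$ is decreasing on $[0,\infty)$. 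Hence $g\meg 0$ on $[0,\infty)$, which gives $\Phi(x/r)-\Phi(x)\mik(1-r)\Phi(x/r)\mik 1-r$, as required.

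Adding the mean and variance estimates produces $d_K\mik|\mu_1-\mu_2|/\sigma_1+(\sigma_1-\sigma_2)/\sigma_1$, which is exactly \eqref{e10.1}. The only step that is not an immediate application of Lipschitzness and the triangle inequality is the monotonicity bound for the variance part; this is where I expect the main (but very mild) obstacle to lie.
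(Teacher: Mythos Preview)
Your proof is correct and follows essentially the same route as the paper: reduce to $\sigma_1\meg\sigma_2$, split via the triangle inequality into a mean contribution and a variance contribution, and bound each by $|\mu_1-\mu_2|/\sigma_1$ and $(\sigma_1-\sigma_2)/\sigma_1$ respectively. The only notable difference is in the variance step: the paper treats the case $\sigma_2/\sigma_1\mik 1/2$ as trivial and then uses an integral estimate (via $\sup_{x>0} xe^{-x^2/2}\mik 1$) when $\sigma_2/\sigma_1>1/2$, whereas your monotonicity bound $r\,\Phi(x/r)\mik\Phi(x)$ handles all $r\in(0,1]$ uniformly and is slightly cleaner.
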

\begin{proof}
Clearly, we may assume that $\sigma_1\meg \sigma_2$. (In particular, we have $\sigma_1>0$.)
Set $\sigma\coloneqq \sigma_2/\sigma_1$ and notice that $0\mik \sigma \mik 1$.
We first observe that
\begin{align} \label{e10.2}
d_K\big(\mathcal{N}(\mu_1,\sigma_1^2),&\, \mathcal{N}(\mu_2,\sigma_2^2)\big) =
d_K\Big(\mathcal{N}(0,1), \mathcal{N}\Big( \frac{\mu_2-\mu_1}{\sigma_1}, \sigma^2\Big)\Big) \\
& \mik d_K\Big(\mathcal{N}(0,1), \mathcal{N}\Big( \frac{\mu_2-\mu_1}{\sigma_1}, 1\Big)\Big)+
d_K\big(\mathcal{N}(0,1), \mathcal{N}(0,\sigma^2)\big). \nonumber
\end{align}
The first distance in the right-hand-side of \eqref{e10.2} can be estimated by
\begin{align} \label{e10.3}
d_K\Big(\mathcal{N}(0,1),& \, \mathcal{N}\Big( \frac{\mu_2-\mu_1}{\sigma_1}, 1\Big)\Big) \\
& = \sup_{x\in\rr} \Big|\mathbb{P}\big(\mathcal{N}(0,1)\mik x\big) -
\mathbb{P}\Big(\mathcal{N}(0,1)\mik x-\frac{\mu_2-\mu_1}{\sigma_1} \Big) \Big| \nonumber \\
& \mik \mathbb{P}\Big( -\frac{|\mu_1-\mu_2|}{2\sigma_1}\mik \mathcal{N}(0,1)\mik \frac{|\mu_1-\mu_2|}{2\sigma_1}\Big)
\mik \frac{|\mu_1-\mu_2|}{\sigma_1}. \nonumber
\end{align}
Next, notice that if $\sigma\mik 1/2$, then \eqref{e10.1} follows immediately by \eqref{e10.2} and \eqref{e10.3}.
Thus, in order to estimate the second distance in the right-hand-side of \eqref{e10.2}, we may assume
that $\sigma>1/2$. Then, upon recalling that $\sigma\mik 1$, we obtain that
\begin{align} \label{e10.4}
d_K& \big(\mathcal{N}(0,1),\mathcal{N}(0,\sigma^2)\big)
 = \sup_{x\in\rr} \big|\mathbb{P}\big(\mathcal{N}(0,1)\mik x\big) -
\mathbb{P}\big(\mathcal{N}(0,\sigma^2)\mik x\big)\big| \\
 & = \sup_{x\in\rr} \big|\mathbb{P}\big(\mathcal{N}(0,1)\mik x\big) -
\mathbb{P}\big(\mathcal{N}(0,1)\mik x/\sigma\big)\big|  =
\sup_{x>0} \mathbb{P}\big(x< \mathcal{N}(0,1)\mik x/\sigma\big) \nonumber \\
& =\sup_{x>0}\, \frac{1}{\sqrt{2\pi}}\,\int_{x}^{x+ (\frac{1}{\sigma}-1)x} e^{-\frac{t^2}{2}}\, dt
\mik \sup_{x>0}\, \frac{2|\sigma-1|}{\sqrt{2\pi}}\, x e^{-\frac{x^2}{2}}
\mik |\sigma-1|= \frac{|\sigma_1-\sigma_2|}{\sigma_1}. \nonumber
\end{align}
By \eqref{e10.2}--\eqref{e10.4}, we conclude that \eqref{e10.1} is satisfied.
\end{proof}
The second lemma is a variant of Lemma \ref{l10.1} and refers to mixtures of gaussians.
\begin{lem} \label{l10.2}
Let $\sigma>0$, and let $Y$ be a real-valued random variable defined on some probability space
$(\Omega,\mathcal{F},\mathbf{P})$ with zero mean and finite second moment. Let $Y_\omega$ denote
a realization of\, $Y$, and let $M$ be the mixture with respect to $(\Omega,\mathcal{F},\mathbf{P})$
of $\mathcal{N}(Y_\omega,\sigma^2)$, that~is, for every $x\in\mathbb{R}$ we have
\begin{equation} \label{e10.5}
\mathbb{P}\big(M\mik x)= \mathbf{E}\big[\mathbb{P}\big(\mathcal{N}(Y,\sigma^2)\mik x\big)\big]
\end{equation}
where $\mathbf{E}$ denotes expectation with respect to\, $\mathbf{P}$. Then we have
\begin{equation} \label{e10.6}
d_K\big(M,\mathcal{N}(0,\sigma^2)\big) \mik \frac{\mathbf{E}[Y^2]}{2\sqrt{2\pi e}\,\sigma^2}.
\end{equation}
\end{lem}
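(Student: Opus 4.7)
The plan is to reduce everything to pointwise comparison of the CDFs and apply a second-order Taylor expansion, exploiting $\mathbf{E}[Y]=0$ to kill the linear term.

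First, I would fix $x\in\mathbb{R}$ and write $\Phi_\sigma$, $\phi_\sigma$ for the CDF and density of $\mathcal{N}(0,\sigma^2)$. Since $\mathcal{N}(y,\sigma^2)\leq x$ has the same probability as $\mathcal{N}(0,\sigma^2)\leq x-y$, the mixture formula \eqref{e10.5} gives
\[
\mathbb{P}(M\leq x)-\Phi_\sigma(x)=\mathbf{E}\bigl[\Phi_\sigma(x-Y)-\Phi_\sigma(x)\bigr].
\]
Next, I would Taylor-expand $y\mapsto \Phi_\sigma(x-y)$ around $y=0$ to second order: since its first derivative at $0$ is $-\phi_\sigma(x)$ and its second derivative equals $\phi'_\sigma(x-y)$, the Lagrange form gives
\[
\Phi_\sigma(x-y)-\Phi_\sigma(x)=-y\,\phi_\sigma(x)+\frac{y^2}{2}\,\phi'_\sigma(x-\eta(y))
\]
for some $\eta(y)$ between $0$ and $y$. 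Taking expectations and using $\mathbf{E}[Y]=0$ eliminates the linear term, leaving
\[
\mathbb{P}(M\leq x)-\Phi_\sigma(x)=\tfrac12\,\mathbf{E}\bigl[Y^2\,\phi'_\sigma(x-\eta(Y))\bigr].
\]

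The final step is to bound $\|\phi'_\sigma\|_\infty$. A direct calculation yields $\phi'_\sigma(t)=-\frac{t}{\sigma^3\sqrt{2\pi}}\,e^{-t^2/(2\sigma^2)}$, whose modulus is maximized at $t=\pm\sigma$ with value $\frac{1}{\sigma^2\sqrt{2\pi e}}$. Inserting this bound and taking absolute values inside the expectation gives
\[
\bigl|\mathbb{P}(M\leq x)-\Phi_\sigma(x)\bigr|\leq \frac{\mathbf{E}[Y^2]}{2\sqrt{2\pi e}\,\sigma^2},
\]
and taking the supremum over $x\in\mathbb{R}$ yields \eqref{e10.6}. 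There is no genuine obstacle here: the only subtlety is ensuring the Taylor remainder is integrable, which follows because $\phi'_\sigma$ is uniformly bounded and $Y$ has finite second moment, so Fubini/dominated convergence applies without issue.
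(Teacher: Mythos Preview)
Your proof is correct and is essentially the same as the paper's: both express $\mathbb{P}(M\le x)-\Phi_\sigma(x)$ as $\mathbf{E}[\Phi_\sigma(x-Y)-\Phi_\sigma(x)]$, apply a second-order Taylor approximation so that the linear term is killed by $\mathbf{E}[Y]=0$, and bound the remainder via $\|\phi_\sigma'\|_\infty=\frac{1}{\sigma^2\sqrt{2\pi e}}$. The only cosmetic difference is that the paper packages the CDF increment as the integral $F_{\sigma,x}(s)=\int_{x-s}^x \phi_\sigma(t)\,dt$ before Taylor-expanding, while you expand $\Phi_\sigma(x-\cdot)$ directly.
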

\begin{proof}
Let $x\in\mathbb{R}$ be arbitrary. Then, setting $P=[Y\meg 0]$ and $N=[Y<0]$, we have
\begin{align} \label{e10.7}
\big|\prob(M& \mik x)  -\prob\big(\mathcal{N}(0,\sigma^2)\mik x\big)\big| =
\big| \mathbf{E}\big[ \mathbb{P}\big(\mathcal{N}(Y,\sigma^2)\mik x\big)\big] -
\prob\big(\mathcal{N}(0,\sigma^2)\mik x\big)\big| \\
& = \big| \mathbf{E}\big[ \mathbb{P}\big(\mathcal{N}(0,\sigma^2)\mik x-Y\big)\big] -
\prob\big(\mathcal{N}(0,\sigma^2)\mik x\big)\big| \nonumber \\
& = \big| \mathbf{E}\big[ \mathbf{1}_{P} \mathbb{P}\big(x-Y<\mathcal{N}(0,\sigma^2)\mik x\big)
- \mathbf{1}_{N} \prob\big(x<\mathcal{N}(0,\sigma^2)\mik x-Y\big)\big]\big|. \nonumber
\end{align}
Define $F_{\sigma,x}\colon \rr\to\rr$ by
\begin{equation} \label{e10.8}
F_{\sigma,x}(s) \coloneqq \frac{1}{\sqrt{2\pi}\, \sigma} \int_{x-s}^x e^{-\frac{t^2}{2\sigma^2}}\, dt
\end{equation}
with the usual convention that $\int_a^b g(t)\, dt= -\int_b^a g(t)\, dt$ if $a>b$. Using Taylor approximation,
it is easy to see that for every $s\in\mathbb{R}$ we have
\begin{equation} \label{e10.9}
\Big| F_{\sigma,x}(s)- \frac{1}{\sqrt{2\pi}\,\sigma}\, e^{-\frac{x^2}{2\sigma^2}}\, s\Big| \mik
\frac{s^2}{2\sqrt{2\pi e}\,\sigma^2}.
\end{equation}
Moreover, by \eqref{e10.7}, \eqref{e10.8} and the fact that $\mathbf{E}[Y]=0$,
\begin{align} \label{e10.10}
\big|\prob(M \mik x)-\prob\big(\mathcal{N}(0,\sigma^2)\mik x\big)\big| & =
\big| \mathbf{E}\big[ \mathbf{1}_{P} F_{\sigma,x}(Y) +
\mathbf{1}_{N} F_{\sigma,x}(Y)\big]\big| \\
& =  \big| \mathbf{E}\big[F_{\sigma,x}(Y)\big]\big|
\stackrel{\eqref{e10.9}}{\mik}  \nonumber \frac{\mathbf{E}[Y^2]}{2\sqrt{2\pi e}\, \sigma^2}. \qedhere
\end{align}
\end{proof}
\begin{rem} \label{r10.3}
It is easy to see that if the random variable $Y$ in Lemma \ref{l10.2} does not have zero mean,
then we have $d_K\big(M,\mathcal{N}(0,\sigma^2)\big)\mik \frac{|\mathbf{E}[Y]|}{\sigma}+
\frac{\mathrm{Var}(Y)}{2\sqrt{2\pi e}\,\sigma^2}$ where $M$ denotes the mixture associated
with $Y$ via \eqref{e10.6}.
\end{rem}
The third lemma is a coherence property of Hoeffding's operators, and it concerns doubly-indexed tensors
whose entries are products of entries of tensors.
\begin{lem} \label{l10.4}
Let $n\meg s$ be positive integers, let $\boldsymbol{a},\boldsymbol{b}\colon [n]^s\to\rr$ be tensors,
and define $\boldsymbol{\zeta}\colon [n]^s\times[n]^s\to\rr$ by setting
$\boldsymbol{\zeta}(i,p)\coloneqq \boldsymbol{a}(i)\cdot \boldsymbol{b}(p)$. Then for every $i,p\in[n]^s$,
\begin{equation} \label{e10.11}
\mathcal{H}[\boldsymbol{\zeta}](i,p) = \mathcal{H}[\boldsymbol{a}](i)\cdot \mathcal{H}[\boldsymbol{b}](p)
\end{equation}
where $\mathcal{H}[\boldsymbol{\zeta}]$ is the Hoeffding tensor associated with $\boldsymbol{\zeta}$
via \eqref{e9.4}, and $\mathcal{H}[\boldsymbol{a}]$ and $\mathcal{H}[\boldsymbol{b}]$ are the
Hoeffding tensors associated with $\boldsymbol{a}$ and $\boldsymbol{b}$, respectively, via \eqref{e9.2}.
\end{lem}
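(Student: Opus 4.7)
The plan is to verify the identity by unwinding the definitions and exploiting the fact that a product structure on the tensor $\boldsymbol{\zeta}$ is preserved by every averaging operator $\mathcal{A}_{F,G}$, and hence by the Hoeffding operator which is a signed linear combination of these.

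First I would establish the factorization of the averaging operators. Starting from the definition \eqref{e9.3}, for every $F,G\subseteq [s]$, every $i\in [n]^F$ and every $p\in [n]^G$, we can split the double sum over $j,q$ into a product of independent sums,
\[
\mathcal{A}_{F,G}[\boldsymbol{\zeta}](i,p) = \frac{1}{n^{s-|F|}}\sum_{i\sqsubseteq j\in [n]^s}\boldsymbol{a}(j) \cdot \frac{1}{n^{s-|G|}}\sum_{p\sqsubseteq q\in [n]^s}\boldsymbol{b}(q) = \mathcal{A}_F[\boldsymbol{a}](i)\cdot \mathcal{A}_G[\boldsymbol{b}](p),
\]
where the operators on the right are the single-indexed averaging operators from \eqref{e9.1}.

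Next I would plug this factorization into the definition \eqref{e9.4}, using the key numerical identity $(-1)^{|F|+|G|}=(-1)^{s-|F|}\cdot(-1)^{s-|G|}$ (valid because $(-1)^{2s}=1$) to rewrite the sign in the form that matches \eqref{e9.2}. This gives, for every $i,p\in [n]^s$,
\[
\mathcal{H}[\boldsymbol{\zeta}](i,p)=\sum_{F,G\subseteq [s]}(-1)^{s-|F|}(-1)^{s-|G|}\,\mathcal{A}_F[\boldsymbol{a}](i\upharpoonright F)\cdot \mathcal{A}_G[\boldsymbol{b}](p\upharpoonright G).
\]
The double sum now factors as a product of two single sums over $F$ and over $G$, and each factor is precisely $\mathcal{H}[\boldsymbol{a}](i)$ and $\mathcal{H}[\boldsymbol{b}](p)$ respectively, by \eqref{e9.2}. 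This yields \eqref{e10.11}.

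There is no real obstacle here; the statement is essentially a bookkeeping identity. The only mild point to be careful about is the comparison of the two sign conventions used in \eqref{e9.2} and \eqref{e9.4}, which must be reconciled via the trivial parity computation above before the factorization becomes visible.
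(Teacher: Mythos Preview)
Your proof is correct and follows essentially the same approach as the paper: first factor the averaging operators $\mathcal{A}_{F,G}[\boldsymbol{\zeta}]=\mathcal{A}_F[\boldsymbol{a}]\cdot\mathcal{A}_G[\boldsymbol{b}]$, then rewrite the sign $(-1)^{|F|+|G|}=(-1)^{s-|F|}(-1)^{s-|G|}$ and split the double sum into a product matching \eqref{e9.2}. There is nothing to add.
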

\begin{proof}
By \eqref{e9.1}, \eqref{e9.3} and the definition of $\boldsymbol{\zeta}$, for every $F,G\subseteq [s]$
and $i,p\in [n]^s$ we have
\begin{equation} \label{e10.12}
\mathcal{A}_{F,G}[\boldsymbol{\zeta}](i\upharpoonright F,p\upharpoonright G) =
\mathcal{A}_{F}[\boldsymbol{a}](i\upharpoonright F) \cdot
\mathcal{A}_{G}[\boldsymbol{b}](p\upharpoonright G).
\end{equation}
Therefore,
\begin{align} \label{e10.13}
& \ \ \mathcal{H}[\boldsymbol{\zeta}](i,p) = \sum_{F,G\subseteq[s]}(-1)^{|F|+|G|}\,
\mathcal{A}_{F,G}[\boldsymbol{\zeta}](i\upharpoonright F,p\upharpoonright G) = \\
= \sum_{F,G\subseteq[s]} & (-1)^{s-|F|}(-1)^{s-|G|}\, \mathcal{A}_{F}[\boldsymbol{a}](i\upharpoonright F)
\cdot \mathcal{A}_{G}[\boldsymbol{b}](p\upharpoonright G) \nonumber \\
= \Big(\sum_{F\subseteq[s]} & (-1)^{s-|F|} \mathcal{A}_{F}[\boldsymbol{a}](i\upharpoonright F)\Big)
\!\cdot\! \Big(\sum_{G\subseteq[s]}(-1)^{s-|G|} \mathcal{A}_{G}[\boldsymbol{b}](p\upharpoonright G)\Big)
\!= \!\mathcal{H}[\boldsymbol{a}](i)\!\cdot\!\mathcal{H}[\boldsymbol{b}](p).  \nonumber \qedhere
\end{align}
\end{proof}
The fourth, and last, result in this section is a representation of the parameters $\Sigma_s$
defined in~\eqref{e1.9}. Specifically, let $\bbx$ be a random tensor which satisfies (\hyperref[A1]{$\mathcal{A}$1})
and (\hyperref[A2]{$\mathcal{A}$2}), and let $(\Omega,\mathcal{F},\prob)$ denote the underlying
probability space. Every realization $\bbx_{\omega}$ of $\bbx$ is a (deterministic) real tensor
$\bbx_\omega\colon [n]^s\to\rr$ and, consequently, we may associate with~$\bbx_{\omega}$
the average and Hoeffding tensors defined in Subsection \ref{subsec9.1}. The following
lemma relates their statistical behavior with the parameters $\Sigma_s$.
\begin{lem} \label{l10.5}
Let $\bbx$ which satisfies \emph{(\hyperref[A1]{$\mathcal{A}$1})}
and \emph{(\hyperref[A2]{$\mathcal{A}$2})}. Then for every $s\in \{0,\dots,d\}$,
\begin{equation} \label{e10.14}
\Big| \sum_{p\in [n]^s} \ave\big[ \mathcal{H}\big[\mathcal{A}_{[s]}[\bbx_\omega]\big](p)^2\big] -
n^s \, \Sigma_s\Big| \mik n^s\, \frac{8d^2 2^d}{n}
\end{equation}
where $\Sigma_s$ is as in \eqref{e1.9}. In particular, we have $\Sigma_s\meg - \frac{8d^2 2^d}{n}$.
\end{lem}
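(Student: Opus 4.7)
The strategy is to express $\sum_{p \in [n]^s} \mathcal{H}[T](p)^2$, with $T := \mathcal{A}_{[s]}[\bbx_\omega]$, as a signed sum indexed by subsets of $[s]$, and then identify its expectation with $n^s \Sigma_s$ up to controlled error. First, I expand $\mathcal{H}[T](p)^2$ via \eqref{e9.2} and sum over $p$. The summand $\mathcal{A}_F[T](p \upharpoonright F)\mathcal{A}_G[T](p \upharpoonright G)$ depends on $p$ only through $p \upharpoonright (F \cup G)$, so summing the free coordinates yields a factor $n^{s-|F \cup G|}$. The iterated-averaging identity $\sum_{r_1 \in [n]^{F \setminus G}} \mathcal{A}_F[T](r, r_1) = n^{|F \setminus G|}\mathcal{A}_{F \cap G}[T](r)$ (and its twin for $G$) then reduces the inner sum to $n^{|F \setminus G|+|G \setminus F|}\sum_{r \in [n]^{F \cap G}} \mathcal{A}_{F \cap G}[T](r)^2$. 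Setting $H = F \cap G$ and writing $F = H \sqcup A$, $G = H \sqcup B$ with $A,B \subseteq [s] \setminus H$ disjoint, each element of $[s] \setminus H$ contributes $-1$ (in $A$), $-1$ (in $B$) or $+1$ (in neither), whence $\sum_{F \cap G = H}(-1)^{|F|+|G|} = (-1)^{s-|H|}$. Combining these yields
\begin{equation*}
\sum_{p \in [n]^s} \mathcal{H}[T](p)^2 = \sum_{H \subseteq [s]} (-1)^{s-|H|}\, n^{s-|H|} \sum_{r \in [n]^H} \mathcal{A}_H[T](r)^2.
\end{equation*}

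Next, I take expectations. Since $\mathcal{A}_H[T](r) = n^{-(d-|H|)}\sum_{r \sqsubseteq j \in [n]^d} X_j$, I obtain
\begin{equation*}
\sum_{r \in [n]^H} \ave\big[\mathcal{A}_H[T](r)^2\big] = \frac{1}{n^{2d-2|H|}} \sum_{\substack{j,j'\in[n]^d \\ j \upharpoonright H = j' \upharpoonright H}} \ave[X_j X_{j'}].
\end{equation*}
By \emph{(\hyperref[A2]{$\mathcal{A}$2})}, for $j,j' \in [n]^d_{\mathrm{Inj}}$ the correlation $\ave[X_j X_{j'}]$ equals $\delta_t$ with $t = |\mathrm{Im}(j) \cap \mathrm{Im}(j')| \in \{|H|,\ldots,d\}$. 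The leading contribution is the minimal-overlap regime $t = |H|$: a direct count gives it as $\frac{n_{(d)}(n-d)_{(d-|H|)}}{n^{2d-2|H|}}\,\delta_{|H|}$, where $n_{(k)} := n(n-1)\cdots(n-k+1)$. Since the $2d-|H|$ falling factors each lie in $[n-2d,n]$, this deviates from $n^{|H|}\delta_{|H|}$ by at most $4d^2 n^{|H|-1}$ (using $|\delta_{|H|}| \leq 1$ from Cauchy--Schwarz and \emph{(\hyperref[A1]{$\mathcal{A}$1})}).

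The remaining error contributions come from (a) pairs with $t > |H|$ and (b) pairs with $j$ or $j'$ non-injective. The number of non-injective $j \in [n]^d$ is at most $\binom{d}{2}n^{d-1}$, so (b) contributes at most $d^2 n^{|H|-1}$ after dividing by $n^{2d-2|H|}$ and using $|\ave[X_jX_{j'}]| \leq 1$. For (a), the count of injective pairs with $j \upharpoonright H = j' \upharpoonright H$ and $t > |H|$ is bounded by $n_{(d)}\big[(n-|H|)_{(d-|H|)} - (n-d)_{(d-|H|)}\big] \leq d^2 n^{2d-|H|-1}$, contributing at most $d^2 n^{|H|-1}$. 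Summing the three error sources,
\begin{equation*}
\Big|\sum_{r \in [n]^H} \ave[\mathcal{A}_H[T](r)^2] - n^{|H|}\delta_{|H|}\Big| \leq 6 d^2 n^{|H|-1}.
\end{equation*}
Noting that $n^s \Sigma_s = \sum_{H \subseteq [s]}(-1)^{s-|H|} n^{s-|H|} \cdot n^{|H|}\delta_{|H|}$ (group subsets by cardinality: $\binom{s}{h}$ of them have size $h$), substituting into the identity of the first paragraph, and using $2^s \leq 2^d$ together with $s \leq d$, I conclude
\begin{equation*}
\Big|\sum_{p \in [n]^s} \ave[\mathcal{H}[T](p)^2] - n^s \Sigma_s\Big| \leq 6 d^2 \cdot 2^s\, n^{s-1} \leq n^s \cdot \frac{8 d^2 2^d}{n}.
\end{equation*}
Finally, since the left-hand side of the identity in the first paragraph is a sum of nonnegative terms (squares), $n^s \Sigma_s \geq -n^s \cdot 8d^2 2^d/n$, giving $\Sigma_s \geq -8 d^2 2^d / n$.

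The main obstacle is the combinatorial bookkeeping in the error estimate: the three sources of error---non-injective multi-indices, higher overlap classes $t > |H|$, and the polynomial-versus-rising-factorial discrepancy in the $t = |H|$ term---must each be tracked with constants that telescope cleanly over the $2^s$ subsets $H \subseteq [s]$ to produce the claimed $8d^2 2^d / n$. The only analytic inputs beyond exchangeability are the uniform bounds $|\delta_t| \leq 1$ and $|\ave[X_jX_{j'}]| \leq 1$, both immediate from Cauchy--Schwarz and $\ave[X_i^2] \leq 1$.
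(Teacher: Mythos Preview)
Your proof is correct and follows essentially the same route as the paper. You derive the identity
\[
\sum_{p\in[n]^s}\mathcal{H}[T](p)^2=\sum_{H\subseteq[s]}(-1)^{s-|H|}\,n^{s-|H|}\sum_{r\in[n]^H}\mathcal{A}_H[T](r)^2,
\]
which is exactly the paper's \eqref{e10.17} (the paper groups by $h=|H|$ and uses symmetry of $\bbx$ to replace $\mathcal{A}_H$ by $\mathcal{A}_{[h]}$, but this is cosmetic). For the error term, the paper bounds $\big|\ave[\mathcal{A}_{[h]}[\bbx_\omega](p)^2]-\delta_h\big|$ for each injective $p$ separately via an explicit expansion over overlap classes (their \eqref{e10.18}--\eqref{e10.20}) and then sums, whereas you bound the full sum $\sum_{r\in[n]^H}$ directly; the underlying counting---main term, higher-overlap term, non-injective term---is the same. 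One minor remark: by \textup{(\hyperref[A2]{$\mathcal{A}$2})} the diagonal entries of $\bbx$ vanish, so your source (b) actually contributes zero, though this does not affect the final bound.
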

\begin{proof}
Assume that $s\in [d]$, and notice that
\begin{align}
\label{e10.15} & \sum_{p\in [n]^s}
\ave\big[ \mathcal{H}\big[\mathcal{A}_{[s]}[\bbx_\omega]\big](p)^2\big]
\stackrel{\eqref{e9.1},\eqref{e9.2}}{=} \\
= & \ \ave\Big[ \sum_{p\in [n]^s} \sum_{F,G\subseteq [s]} (-1)^{|F|+|G|}\,
\mathcal{A}_{F}[\bbx_\omega](p\upharpoonright F)\,
\mathcal{A}_{G}[\bbx_\omega](p\upharpoonright G) \Big] \nonumber \\
= & \sum_{F,G\subseteq [s]} (-1)^{|F|+|G|} \
\ave\Big[ n^{s-|F\cup G|} \sum_{p\in [n]^{F\cup G}}
\mathcal{A}_{F}[\bbx_\omega](p\upharpoonright F)\,
\mathcal{A}_{G}[\bbx_\omega](p\upharpoonright G) \Big] \nonumber \\
= & \sum_{F,G\subseteq [s]} (-1)^{|F|+|G|} \
\ave\Big[ n^{s-|F\cap G|} \sum_{p\in [n]^{F\cap G}}
\mathcal{A}_{F\cap G}[\bbx_\omega](p)\, \mathcal{A}_{F\cap G}[\bbx_\omega](p) \Big] \nonumber \\
= & \ n^s \sum_{F,G\subseteq [s]} (-1)^{|F|+|G|} \,
\frac{1}{n^{|F\cap G|}} \, \sum_{p\in [n]^{F\cap G}}
\ave\big[\mathcal{A}_{F\cap G}[\bbx_\omega](p)^2\big]. \nonumber
\end{align}
Thus, after observing that for every $H\subseteq [s]$ we have
\begin{equation} \label{e10.16}
(-1)^{s-|H|}= \sum_{\substack{K,L\subseteq [s]\setminus H\\K\cap L=\emptyset}} (-1)^{|K|} \, (-1)^{|L|},
\end{equation}
by \eqref{e10.15} and (\hyperref[A2]{$\mathcal{A}$2}) we obtain that
\begin{equation} \label{e10.17}
\sum_{p\in [n]^s} \ave\big[ \mathcal{H}\big[\mathcal{A}_{[s]}[\bbx_\omega]\big](p)^2\big] =
n^s (-1)^s \sum_{h=0}^s \binom{s}{h} \frac{(-1)^h}{n^h} \sum_{p\in [n]^{h}}
\ave\big[\mathcal{A}_{[h]}[\bbx_\omega](p)^2\big]
\end{equation}
for every $s\in [d]$. On the other hand, if ``$s=0$", then \eqref{e10.17} is trivially valid.
Hence, in what follows, we have at our disposal identity \eqref{e10.17} for every $s\in \{0,1,\dots,d\}$.

We proceed to estimate $\ave\big[\mathcal{A}_{[h]}[\bbx_\omega](p)^2\big]$.
By (\hyperref[A2]{$\mathcal{A}$2}), this quantity is $0$ if $p\notin [n]^{h}_{\mathrm{Inj}}$.
So,~let $p\in [n]^{h}_{\mathrm{Inj}}$ for some $h\in \{0,1,\dots,s\}$ and notice that
\begin{align} \label{e10.18}
\ave\big[ \mathcal{A}_{[h]}&[\bbx_\omega](p)^2\big]  =
\frac{1}{n^{2(d-h)}} \sum_{\substack{q_1,q_2 \in [n]^d_{\mathrm{Inj}}\\p\extension q_1,q_2}} \ave[X_{q_1}X_{q_2}] \\
& = \frac{1}{n^{2(d-h)}} \sum_{t=0}^{d-h} \frac{(n-h)!}{(n-2d+h+t)!} \cdot \frac{1}{t!} \cdot
\Big( \frac{(d-h)!}{(d-h-t)!}\Big)^2 \, \delta_{h+t} \nonumber
\end{align}
where we have used (\hyperref[A2]{$\mathcal{A}$2}) and the definition of $\delta_0,\delta_1,\dots,\delta_d$
in Paragraph \ref{subsubsec1.2.1}. We will need the following elementary fact.
\begin{fact} \label{f10.6}
Let $n,d$ be positive integers with $n\meg 2d$. Then for every $k,\ell\in \{0,\dots,d\}$,
\begin{equation} \label{e10.19}
\Big| \frac{1}{n^\ell}\cdot \frac{(n-k)!}{n-k-\ell)!} - 1\Big| \mik \frac{(k+\ell)^2}{n}.
\end{equation}
\end{fact}
By (\hyperref[A1]{$\mathcal{A}$1}) and the Cauchy--Schwarz inequality, we see that $\delta_t\mik 1$
for every $t\in \{0,\dots,d\}$. Using this observation, \eqref{e10.18} and Fact \ref{f10.6},
we obtain that
\begin{equation} \label{e10.20}
\Big| \ave\big[ \mathcal{A}_{[h]}[\bbx_\omega](p)^2\big] -\delta_h \Big| \mik
\frac{4d^2}{n} + \sum_{t=1}^{d-h} \frac{1}{n^t}\cdot \frac{1}{t!} \cdot
\Big( \frac{(d-h)!}{(d-h-t)!}\Big)^2 \mik \frac{7d^2}{n}.
\end{equation}
This, in turn, implies that for every $h\in \{0,\dots,d\}$ we have
\begin{equation} \label{e10.21}
\Big| \frac{1}{n^h} \sum_{p\in [n]^h}
\ave\big[ \mathcal{A}_{[h]}[\bbx_\omega](p)^2\big] -\delta_h \Big|
\mik \frac{8d^2}{n}.
\end{equation}
Plugging inequality \eqref{e10.21} in the right-hand-side of \eqref{e10.17}, we conclude that
\begin{align} \label{e10.22}
\Big| \sum_{p\in [n]^s} \ave\big[ &\mathcal{H}\big[\mathcal{A}_{[s]}[\bbx_\omega]\big](p)^2\big] -
n^s \, \Sigma_s \Big| \stackrel{\eqref{e1.9}}{\mik} \\
& n^s \sum_{h=0}^s \binom{s}{h}\, \Big| \frac{1}{n^h} \sum_{p\in [n]^h}
\ave\big[ \mathcal{A}_{[h]}[\bbx_\omega](p)^2\big] -\delta_h\Big|
\mik n^s \, \frac{8d^2 2^d}{n}. \nonumber
\end{align}
for every $s\in \{0,1,\dots, d\}$. The proof of Lemma \ref{l10.5} is thus completed.
\end{proof}

%----------------Proof of Theorem \ref*{t1.4}-----------------%

\section{Proof of Theorem \ref*{t1.4}} \label{sec11}

\numberwithin{equation}{section}

\subsection{Overview of the argument} \label{subsec11.1}

As we have noted in Section \ref{sec2}, Theorem \ref{t1.4} follows by expressing the distribution of
$\langle \bbth,\bbx\rangle$ as a mixture of distributions of simpler random variables,
and then integrating the Kolmogorov distance of each component of the mixture to an
appropriately chosen gaussian. Of course, in order to implement this strategy,
the selection of the mixture is quite important. Usually, in arguments of this sort,
the components satisfy some independence-type property; however, these are not available
in our context since we only have at our disposal the fact that the distribution
of the random tensor $\bbx$ is invariant under certain symmetries.

The key observation in the proof of Theorem \ref{t1.4} is that the distribution of
$\langle \bbth,\bbx\rangle$ can be expressed as a mixture of $Z$-statistics which can
then be handled effectively by Theorem \ref{t2.2}. In retrospect, this maneuver is
very natural, so much so that sometimes we tend to view Theorems \ref{t1.4} and \ref{t2.2}
as a single result.

\subsection{A preliminary estimate} \label{subsec11.2}

Set $K\coloneqq \ave\big[|X_{(1,\dots,d)}|^3\big]$ and recall that $\kappa\!=\!20 d^3 18^d (2d)!$
and $B= \big\| \frac{1}{n^d}\sum_{i\in [n]^d} X_i\big\|^2_{L_2}$;
in what follows, we will use these constants without further notice.
The following proposition is the main estimate of the proof of Theorem \ref{t1.4}.
\begin{prop} \label{p11.1}
Let the notation and assumptions be as in Theorem \emph{\ref{t1.4}}.
Assume that $n\meg \kappa$ and $\seminorm{\bbth}_0 < \sqrt{n}$, and set
\begin{equation} \label{e11.1}
\sigma_1^2 \coloneqq d^2\Big(1-\frac{\seminorm{\bbth}_0^2}{n} \Big)\,\delta_1
\stackrel{\eqref{e1.11}}{>} 0
\end{equation}
Then
\begin{equation} \label{e11.2}
d_K\big(\langle\bbth,\bbx\rangle,\mathcal{N}(0,\sigma_1^2)\big) \mik
E_1'+ E'_2 + E'_3
\end{equation}
where, setting $\bbth_+\coloneqq \sum_{i\in[n]^d} \theta_i$, we have
\begin{align}
\label{e11.3} E'_1 & \coloneqq 5\,\mathrm{osc}(\bbx)^{1-\alpha}+
5B^{1-\alpha} + 5\Big(\frac{\kappa}{n}\Big)^{1-\alpha} +
\frac{\seminorm{\bbth}_0^2\, B}{\sqrt{2\pi e}\, \sigma_1^2} \\
\label{e11.4} E_2' & \coloneqq 2^{22}C_1d^3\, \frac{\ave\big[|X_{(1,\dots,d)}|^3\big]}{\sigma_1^3}
\Big( \sum_{j=1}^n \Big|\Big(\! \sum_{\substack{i\in[n]^d\\i(1) = j}}\!\! \theta_i\Big)-\frac{\bbth_+}{n}\Big|^3\Big) \\
\label{e11.5} E_3' & \coloneqq \frac{2\sqrt{2} C_d}{\sigma_1}\, \sum_{s=2}^d \binom{d}{s} \,
\sqrt{\Sigma_s+ \frac{8d^2 2^d}{n}} \, \seminorm{\mathcal{H}\big[\mathcal{S}_{[s]}[\bbth]\big]}_s.
\end{align}
Here, $\mathcal{H}\big[\mathcal{S}_{[s]}[\bbth]\big]$ denotes the Hoeffding tensor associated with
the tensor $\mathcal{S}_{[s]}[\bbth]$ for every $s\in \{2,\dots,d\}$. $($See Subsection \emph{\ref{subsec9.1}}.$)$
Moreover, the constant $C_1\meg 1$ is as in \eqref{e2.4}, and $C_d=5d^2e^d(2d)!$ is as in Theorem \emph{\ref{t2.2}}.
\end{prop}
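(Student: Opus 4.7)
The plan is to realize the distribution of $\langle\bbth,\bbx\rangle$ as a mixture of tensor permutation statistics (indexed by realizations of $\bbx$), decompose each component via Proposition~\ref{p9.4}, hit the resulting $W$-statistic with Theorem~\ref{t2.2}, and then patch everything back together using the gaussian comparison tools of Section~\ref{sec10}. Concretely, by exchangeability, for every realization $\bbx_\omega$ of $\bbx$, the conditional distribution of $\langle\bbth,\bbx\rangle$ is that of the $Z$-statistic $\sum_{i\in [n]^d} \boldsymbol{\zeta}_\omega(i,\pi\circ i)$ associated with $\boldsymbol{\zeta}_\omega(i,p)\coloneqq \theta_i\, \bbx_\omega(p)$, where $\pi$ is uniform on $\mathbb{S}_n$. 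The tensor $\boldsymbol{\zeta}_\omega$ inherits symmetry and vanishing-diagonal from (\hyperref[A2]{$\mathcal{A}$2}), (\hyperref[A3]{$\mathcal{A}$3}), so Proposition~\ref{p9.4} decomposes it as
\[
\langle \bbth,\bbx\rangle \stackrel{d}{=}\, Y(\omega)\,+\,\sum_{s=1}^{d} n^{d-s}\, w_{s,d}\, R_s(\omega),
\qquad Y(\omega)\coloneqq n^d\, w_{0,d}\, \boldsymbol{\zeta}_{\omega,0},
\]
where $R_s(\omega)$ is the $W$-statistic built from the Hoeffding tensor of the averaging $\mathcal{A}_{[s],[s]}[\boldsymbol{\zeta}_\omega]$. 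By the product structure of $\boldsymbol{\zeta}_\omega$ and Lemma~\ref{l10.4}, this Hoeffding tensor factors as $\mathcal{H}[\mathcal{S}_{[s]}[\bbth]]\otimes \mathcal{H}[\mathcal{A}_{[s]}[\bbx_\omega]]$, which is the crucial identity coupling $\bbth$-data to $\bbx$-data.

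Next I would condition on $\omega$ and apply Theorem~\ref{t2.2} to the $W$-part. The linear piece $n^{d-1}w_{1,d}R_1(\omega)$ has $L_2$-norm that, after Lemma~\ref{l9.3} replaces $w_{1,d}$ by $d$ up to a $\kappa/n$ error, is essentially $d\sqrt{\delta_1\, \seminorm{\bbth}_1^2 (1-\seminorm{\bbth}_0^2/n)}= \sigma_1$; this identifies the correct normalization and makes the $\beta_1=n-1$ condition of Theorem~\ref{t2.2} a rescaling. The third-moment contribution of Theorem~\ref{t2.2} is the sum of $|\sum_{i(1)=j}(\theta_i-\bbth_+/n)|^3$ times $K/\sigma_1^3$, giving $E_2'$. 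For the $s\geq 2$ terms, the factorization above lets us apply Lemma~\ref{l10.5} to $\ave[\beta_s(\omega)]\leq n^s(\Sigma_s+8d^2 2^d/n)\cdot\seminorm{\mathcal{H}[\mathcal{S}_{[s]}[\bbth]]}_s^2$, and Jensen turns the $\sqrt{\beta_s/n^s}$ term of Theorem~\ref{t2.2} into the summand of $E_3'$.

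The outcome of the preceding step is a conditional Kolmogorov bound to $\mathcal{N}(Y(\omega),\sigma^2(\omega))$ where $\sigma^2(\omega)$ is the $W$-statistic variance at $\omega$. I would now pivot all these gaussians to the target $\mathcal{N}(0,\sigma_1^2)$ in two moves. First, Lemma~\ref{l10.1} gives $d_K(\mathcal{N}(Y(\omega),\sigma^2(\omega)),\mathcal{N}(Y(\omega),\sigma_1^2))\leq |\sigma(\omega)-\sigma_1|/\sigma_1$. The difference $|\sigma(\omega)^2-\sigma_1^2|$ is controlled by the oscillation of the quantity $\frac{1}{n}\sum_j(\frac{1}{n^{d-1}}\sum_{i(1)=j}X_i)^2 -\delta_1$ (which is exactly $\osc(\bbx)$ after integrating) plus a $B$-correction from the constant term and a $\kappa/n$-correction from Lemma~\ref{l9.3}. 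To convert a bound on $\ave|\sigma(\omega)^2-\sigma_1^2|$ into one on $\ave|\sigma(\omega)-\sigma_1|/\sigma_1$, I would split the expectation on the event $\{|\sigma(\omega)^2-\sigma_1^2|\leq \sigma_1^2/2\}$ and its complement; the non-degeneracy hypothesis \eqref{e1.11} ensures that on the good event $\sigma(\omega)\geq \sigma_1/\sqrt{2}$, and Markov's inequality on the bad event produces precisely the $\osc(\bbx)^{1-\alpha}$, $B^{1-\alpha}$, and $(\kappa/n)^{1-\alpha}$ terms of $E_1'$ (one factor of $\delta_1^{-\alpha}\leq \max\{\osc^{-\alpha},B^{-\alpha},(\kappa/n)^{-\alpha}\}$ is absorbed). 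Second, Lemma~\ref{l10.2} (applied to the centered random variable $Y(\omega)-\ave Y$, whose variance is $\leq \seminorm{\bbth}_0^2\, B$) collapses the $\omega$-mixture of $\mathcal{N}(Y(\omega),\sigma_1^2)$ onto $\mathcal{N}(0,\sigma_1^2)$ with cost $\seminorm{\bbth}_0^2 B/(2\sqrt{2\pi e}\,\sigma_1^2)$, the final summand of $E_1'$. The triangle inequality across these three comparisons yields \eqref{e11.2}.

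The main obstacle is the bookkeeping of the conditional variance in the gaussian comparison: one needs the $L_1$-control of $|\sigma^2(\omega)-\sigma_1^2|$ to be genuinely governed by $\osc(\bbx)$ (plus a $1/n$-scale error), and then to transfer that $L_1$-bound to an $L_1$-bound on $|\sigma(\omega)-\sigma_1|/\sigma_1$ without losing more than a polynomial factor of $\delta_1^{-1/2}$. The non-degeneracy condition \eqref{e1.11} is precisely calibrated so that the good-event/bad-event split produces the $1-\alpha$ exponent, and it is this interplay—between the scale of $\delta_1$, the oscillation, and the Berry--Esseen exponent coming from Theorem~\ref{t2.2}—that is the delicate point of the argument. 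Once this is settled, the remaining terms assemble directly from Theorem~\ref{t2.2}, Lemmas~\ref{l10.1}, \ref{l10.2}, \ref{l10.4}, \ref{l10.5}, and Proposition~\ref{p9.4}.
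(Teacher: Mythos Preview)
Your proposal is correct and follows essentially the same approach as the paper's proof: the mixture representation via exchangeability, the decomposition of each $Z_\omega$ through Proposition~\ref{p9.4} and the factorization Lemma~\ref{l10.4}, the application of Theorem~\ref{t2.2} combined with Lemma~\ref{l10.5} for the $s\geq 2$ pieces, and the gaussian comparison via Lemmas~\ref{l10.1} and~\ref{l10.2} together with a good/bad-event split governed by~\eqref{e1.11}. The only cosmetic differences are the precise threshold chosen for the good event $\Gamma$ and the order in which the two gaussian comparison lemmas are invoked; both orderings are equivalent by the triangle inequality.
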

\begin{proof}
Let $Z$ denote the random variable $\sum_{i\in[n]^d} \bbth(i)\, \bbx(\pi\circ i)$ where $\pi$
is a random permutation, independent of $\bbx$, which is uniformly distributed on $\mathbb{S}_n$.
(Here, $\bbx(\pi\circ i)$ denotes the $(\pi\circ i)$-entry of $\bbx$.) Since $\bbx$ is exchangeable,
we see that $\langle \bbth,\bbx\rangle$ and $Z$ have the same distribution. Consequently, we have
\begin{equation} \label{e11.6}
d_K\big(\langle \bbth,\bbx\rangle, \mathcal{N}(0,\sigma_1^2)\big) =
d_K\big(Z, \mathcal{N}(0,\sigma_1^2)\big).
\end{equation}
Let $(\Omega,\mathcal{F},\prob)$ denote the underlying probability space on which the random
tensor $\bbx$ is defined; as we have already pointed out, every realization $\bbx_\omega$
of $\bbx$ is a deterministic real tensor. It follows from its definition, that $Z$ is
a mixture of the $Z$-statistics $Z_\omega$ associated with the tensors
$\boldsymbol{\zeta}_\omega\colon [n]^d\times [n]^d\to\rr$ defined~by
\begin{equation} \label{e11.7}
\boldsymbol{\zeta}_\omega(i,p)\coloneqq \bbth(i) \cdot \bbx_\omega(p).
\end{equation}
Notice that $\boldsymbol{\zeta}_\omega$ satisfies
(\hyperref[A5]{$\mathcal{A}$5}) and (\hyperref[A6]{$\mathcal{A}$6}). Moreover, for every
$s\in [d]$ and every $i,p\in [n]^s$,
\begin{equation} \label{e11.8}
\mathcal{A}_{[s],[s]}[\boldsymbol{\zeta}_\omega](i,p) =
\mathcal{A}_{[s]}[\bbth](i) \cdot \mathcal{A}_{[s]}[\bbx_\omega](p)
\end{equation}
and so, by Lemma \ref{l10.4},
\begin{equation} \label{e11.9}
\mathcal{H}\big[\mathcal{A}_{[s],[s]}[\boldsymbol{\zeta}_\omega]\big](i,p) =
\mathcal{H}\big[\mathcal{A}_{[s]}[\bbth]\big](i) \cdot \mathcal{H}\big[\mathcal{A}_{[s]}[\bbx_\omega]\big](p).
\end{equation}
Therefore, setting
\begin{equation}\label{e11.10}
\overline{\boldsymbol{X}}_\omega \coloneqq \frac{1}{n^d}\sum_{i\in[n]^d} \bbx_\omega(i),
\end{equation}
by Proposition \ref{p9.4} and Lemma \ref{l9.3}, we obtain that
\begin{align} \label{e11.11}
& Z_\omega = \sum_{s=1}^{d}n^{d-s} \binom{d}{s} \Big(1+\frac{\lambda_s}{n}\Big)
\sum_{i\in[n]^s_{\mathrm{Inj}}} \mathcal{H}\big[\mathcal{A}_{[s]}[\bbth]\big](i)\cdot
\mathcal{H}\big[\mathcal{A}_{[s]}[\bbx_\omega]\big](\pi\circ i) \ + \\
& \hspace{5.5cm} + \, \Big(1+\frac{\lambda_0}{n}\Big)\, \bbth_+\, \overline{\boldsymbol{X}}_\omega \nonumber \\
\stackrel{\eqref{e9.1},\eqref{e9.2}}{=} & \sum_{s=1}^{d} \binom{d}{s} \Big(1+\frac{\lambda_s}{n}\Big)
\sum_{i\in[n]^s_{\mathrm{Inj}}} \mathcal{H}\big[\mathcal{S}_{[s]}[\bbth]\big](i)\cdot
\mathcal{H}\big[\mathcal{A}_{[s]}[\bbx_\omega]\big](\pi\circ i)
+ \Big(1+\frac{\lambda_0}{n}\Big)\, \bbth_+\, \overline{\boldsymbol{X}}_\omega  \nonumber
\end{align}
where $|\lambda_s|\mik d^3 18^d d! \mik \kappa/20$ for every $s\in \{0,\dots,d\}$;
in particular, since $n\meg \kappa$,
\begin{equation} \label{e11.12}
\frac{19}{20}\mik 1+\frac{\lambda_s}{n}\mik \frac{21}{20} \ \ \ \ \text{ for every } s\in \{0,1,\dots,d\}.
\end{equation}
Set $Y\coloneqq \big(1+\frac{\lambda_0}{n}\big)\, \bbth_+ \overline{\bbx}$, and let $M$ denote the mixture
in \eqref{e10.5} associated with $Y$ and ``$\sigma=\sigma_1$". By \eqref{e11.6}, Lemma \ref{l10.2}
and the triangle inequality for the Kolmogorov distance, we have
\begin{align} \label{e11.13}
& d_K\big(\langle \bbth,\bbx\rangle,\mathcal{N}(0,\sigma_1^2)\big) \mik
d_K(Z,M)+ d_K\big(M,\mathcal{N}(0,\sigma_1^2)\big) \mik \\
\mik  \ave\big[ d_K\big(Z,& \mathcal{N}(Y,\sigma_1^2)\big)\big]
+ \frac{1}{2\sqrt{2\pi e}\, \sigma_1^2}\, \ave[Y^2]
\stackrel{\eqref{e11.12}}{\mik} \ave\big[ d_K\big(Z-Y,\mathcal{N}(0,\sigma_1^2)\big)\big]
+ \frac{\seminorm{\bbth}_0^2\, B}{\sqrt{2\pi e}\, \sigma_1^2}. \nonumber
\end{align}
Let $\sigma^2_\omega$ denote the variance of the one dimensional component,
\[ d\, \Big(1+\frac{\lambda_1}{n}\Big) \sum_{i\in [n]}
\mathcal{H} \big[\mathcal{S}_{[1]}[\bbth]\big](i) \cdot
\mathcal{H}\big[\mathcal{A}_{[1]}[\bbx_\omega]\big](\pi\circ i), \]
of $Z_\omega-Y_\omega$. Since $\seminorm{\bbth}_1=1$, we have
\begin{align} \label{e11.14}
\sigma^2_\omega & = d^2\Big(1+\frac{\lambda_1}{n}\Big)^2 \frac{1}{n-1}
  \Big(\sum_{i\in [n]} \mathcal{H}\big[\mathcal{S}_{[1]}[\bbth]\big](i) ^2\Big)
  \Big(\sum_{p\in [n]} \mathcal{H}\big[\mathcal{A}_{[1]}[\bbx_\omega]\big](p)^2\Big)\\
& = d^2\Big(1+\frac{\lambda_1}{n}\Big)^2 \frac{1}{n-1}
  \Big(1 - \frac{\boldsymbol{\theta}_+^2}{n}\Big)
	\Big(\sum_{p\in [n]} \mathcal{H}\big[\mathcal{A}_{[1]}[\bbx_\omega]\big](p)^2\Big) \nonumber \\
& = d^2\Big(1+\frac{\lambda_1}{n}\Big)^2 \frac{1}{n}
  \Big(1 - \frac{\boldsymbol{\theta}_+^2}{n}\Big)
	\Big(\sum_{p\in [n]} \mathcal{H}\big[\mathcal{A}_{[1]}[\bbx_\omega]\big](p)^2\Big)\, + \nonumber \\
& \hspace{1cm}  +\, d^2\Big(1+\frac{\lambda_1}{n}\Big)^2 \frac{1}{n(n-1)}
  \Big(1 - \frac{\boldsymbol{\theta}_+^2}{n}\Big)
	\Big(\sum_{p\in [n]} \mathcal{H}\big[\mathcal{A}_{[1]}[\bbx_\omega]\big](p)^2\Big) \nonumber \\
& = d^2 \Big(1 - \frac{\boldsymbol{\theta}_+^2}{n}\Big)
  \Big(\frac{1}{n}\sum_{p\in[n]}\! \mathcal{A}_{[1]}[\bbx_\omega](p)^2\Big)
  + d^2\Big(1 - \frac{\boldsymbol{\theta}_+^2}{n}\Big)\frac{1}{n}\, E_\omega \nonumber
\end{align}
where
\begin{equation} \label{e11.15}
E_\omega \coloneqq
\Big(\frac{1}{n} \sum_{p\in [n]} \mathcal{H}\big[\mathcal{A}_{[1]}[\bbx_\omega]\big](p)^2\Big)\,
\Big[ \frac{n}{n-1}\Big(1+\frac{\lambda_1}{n}\Big)^2 + 2\lambda_1+\frac{\lambda_1^2}{n}\Big]
-n\overline{\boldsymbol{X}}_\omega^2.
\end{equation}
\begin{claim} \label{c11.2}
We have
\begin{equation} \label{e11.16}
\ave\big[ |E_\omega|\big] \mik n B + \kappa.
\end{equation}
\end{claim}
\begin{proof}[Proof of Claim \emph{\ref{c11.2}}]
By Lemma \ref{l10.5} and \eqref{e1.9},
\begin{equation} \label{e11.17}
\frac{1}{n}\sum_{p\in [n]} \ave\big[\mathcal{H}\big[\mathcal{A}_{[1]}[\bbx_\omega]\big](p)^2\big] \mik
\delta_1-\delta_0 +\frac{8d^2 2^d}{n} \stackrel{(\hyperref[A1]{\mathcal{A}1})}{\mik}
2+\frac{8d^2 2^d}{n}.
\end{equation}
Since $n\meg \kappa$ and $|\lambda_1|\mik \kappa/20$, \eqref{e11.16} follows from \eqref{e11.17},
the definition of $\overline{\bbx}_\omega$ in \eqref{e11.10} and the choice of the constant $B$.
\end{proof}
By Claim \ref{c11.2}, \eqref{e11.14}, the definition of oscillation in \eqref{e1.10}
and the choice of $\sigma_1^2$ in~\eqref{e11.1}, we obtain that
\begin{equation} \label{e11.18}
\ave \big[|\sigma^2_\omega-\sigma_1^2|\big] \mik
d^2 \Big(1 - \frac{\seminorm{\bbth}_0^2}{n}\Big) \Big[\mathrm{osc}(\boldsymbol{X}) +
B + \frac{\kappa}{n}\Big].
\end{equation}
Define the event
\begin{equation} \label{e11.19}
\Gamma \coloneqq \Big\{|\sigma_\omega^2-\sigma_1^2| \mik \frac{d^2}{4}
\Big(1 - \frac{\seminorm{\bbth}_0^2}{n}\Big) \, \max\Big\{
\mathrm{osc}(\boldsymbol{X})^\alpha,B^\alpha,
\Big(\frac{\kappa}{n}\Big)^{\alpha}\Big\}\Big\}.
\end{equation}
By Markov's inequality and \eqref{e11.18}, we see that
\begin{equation} \label{e11.20}
\mathbb{P}(\Gamma^{\complement}) \mik
4\mathrm{osc}(\boldsymbol{X})^{1-\alpha} + 4B^{1-\alpha} +
4\Big(\frac{\kappa}{n}\Big)^{1-\alpha}.
\end{equation}
Moreover, by \eqref{e1.11} and \eqref{e11.1}, for every $\omega\in \Gamma$ we have
\begin{equation} \label{e11.21}
\sigma_\omega\meg\frac{\sigma_1}{2}.
\end{equation}
On the other hand, setting for every  $\omega\in \Gamma$ and every $s\in \{2,\dots,d\}$
\begin{align}
\label{e11.22} \Lambda_\omega & \coloneqq
d^3 \Big(1+\frac{\lambda_1}{n}\Big)^3 \Big(\sum_{i\in [n]} \big|\mathcal{H}\big[\mathcal{S}_{[1]}[\bbth]\big](i)\big|^3\Big)
\Big(\sum_{p\in [n]} \big|\mathcal{H}[\mathcal{A}_{[1]}[\bbx_\omega]\big](p)\big|^3\Big) \\
\label{e11.23} \beta_{s,\omega} & \coloneqq
\binom{d}{s}^2 \Big(1+\frac{\lambda_s}{n}\Big)^2
\Big(\sum_{i\in [n]^s} \mathcal{H}\big[\mathcal{S}_{[s]}[\bbth]\big](i)^2\Big)
\Big(\sum_{p\in [n]^s} \mathcal{H}[\mathcal{A}_{[s]}[\bbx_\omega]\big](p)^2\Big),
\end{align}
by \eqref{e11.11}, the choice of $Y$, the fact that $n\meg \kappa\meg 4d^2$,
Theorem \ref{t2.2} and \eqref{e11.21}, we have
\begin{equation} \label{e11.24}
d_K\big(Z_\omega-Y_\omega, \, \mathcal{N}(0,\sigma_\omega^2)\big)
\mik \frac{2^{21}C_1\Lambda_\omega}{\sigma_1^{3}n}+ \frac{2C_d}{\sigma_1}
\sum_{s=2}^{d} \sqrt{ \frac{\beta_{s,\omega}}{n^s}}
\end{equation}
%\begin{align} \label{e11.24}
%d_K\big(Z_\omega-Y_\omega, \, \mathcal{N}(0,\sigma_\omega^2)\big)
%& \mik 2^{18}C_1 \frac{\Lambda_\omega}{\sigma_\omega^{3}n}+ \frac{C_d}{\sigma_\omega}
%\sum_{s=2}^{d} \sqrt{ \frac{\beta_{s,\omega}}{n^s}} \\
%& \stackrel{\eqref{e11.21}}{\mik}
%\frac{2^{21}C_1\Lambda_\omega}{\sigma_1^{3}n}+ \frac{2C_d}{\sigma_1}
%\sum_{s=2}^{d} \sqrt{ \frac{\beta_{s,\omega}}{n^s}} \nonumber
%\end{align}
where $C_1\meg 1$ is as in \eqref{e2.4} and $C_d=5d^2 e^d (2d)!$.
\begin{claim} \label{c11.3}
We have
\begin{equation} \label{e11.25}
\ave[\mathbf{1}_{\Gamma} \Lambda_\omega] \mik 2 d^3 K n \Big( \sum_{j=1}^n
\Big|\Big(\!\sum_{\substack{i\in [n]^d\\i(1)=j}}\!\! \theta_i\Big)-\frac{\bbth_+}{n}\Big|^3\Big).
\end{equation}
\end{claim}
\begin{proof}[Proof of Claim \emph{\ref{c11.3}}]
By the definition of $\Lambda_\omega$ in \eqref{e11.22} and using
\eqref{e9.1}, \eqref{e9.2}, (\hyperref[A1]{$\mathcal{A}$1}) and the triangle inequality, we obtain that
\begin{equation} \label{e11.26}
\ave[\mathbf{1}_{\Gamma} \Lambda_\omega] \mik d^3 \Big(1+\frac{\lambda_1}{n}\Big)^3 K n
\Big( \sum_{j=1}^n \Big|\Big(\!\sum_{\substack{i\in [n]^d\\i(1)=j}}\!\!\theta_i\Big)-
\frac{\bbth_+}{n}\Big|^3\Big).
\end{equation}
The claim follows from this estimate and \eqref{e11.12}.
\end{proof}
\begin{claim} \label{c11.4}
For every $s\in \{2,\dots,d\}$ we have
\begin{equation} \label{e11.27}
\ave[\mathbf{1}_{\Gamma} \beta_{s,\omega}] \mik 2 n^s \binom{d}{s}^2
\Big(\Sigma_s+ \frac{8d^2 2^d}{n}\Big)\, \seminorm{\mathcal{H}\big[\mathcal{S}_{[s]}[\bbth]\big]}_s^2.
\end{equation}
\end{claim}
\begin{proof}[Proof of Claim \emph{\ref{c11.4}}]
We first observe that, by \eqref{e11.12} and Lemma \ref{l10.5}, we have
\begin{equation} \label{e11.28}
\binom{d}{s}^2 \Big(1+\frac{\lambda_s}{n}\Big)^2 \ave\Big[ \Big(\sum_{p\in [n]^s}
\mathcal{H}[\mathcal{A}_{[s]}[\bbx_\omega]\big](p)^2\Big) \Big] \mik
2 n^s \binom{d}{s}^2 \Big(\Sigma_s+ \frac{8d^2 2^d}{n}\Big).
\end{equation}
Moreover, by the definition of the seminorm $\seminorm{\cdot}_s$ in \eqref{e1.5},
\begin{equation} \label{e11.29}
\sum_{i\in [n]^s} \mathcal{H}\big[\mathcal{S}_{[s]}[\bbth]\big](i)^2 =
\seminorm{\mathcal{H}\big[\mathcal{S}_{[s]}[\bbth]\big]}_s^2.
\end{equation}
Inequality \eqref{e11.27} follows by combining \eqref{e11.28} and \eqref{e11.29}.
\end{proof}
We are ready for the last step of the argument. By \eqref{e11.13}, we have
\begin{equation} \label{e11.30}
d_K\big( \langle\bbth,\bbx\rangle, \mathcal{N}(0,\sigma_1^2)\big) \mik
\prob(\Gamma^{\complement}) +  \ave\big[ \mathbf{1}_{\Gamma} d_K\big(Z-Y,\mathcal{N}(0,\sigma_1^2)\big)\big]
+ \frac{\seminorm{\bbth}_0^2\, B}{\sqrt{2\pi e}\, \sigma_1^2}.
\end{equation}
Moreover, by the triangle inequality for the Kolmogorov distance and Lemma \ref{l10.1},
\begin{align} \label{e11.31}
\ave\big[\mathbf{1}_{\Gamma} & d_K\big( Z-Y, \mathcal{N}(0,\sigma_1^2)\big)\big] \mik
\ave\big[\mathbf{1}_{\Gamma} d_K\big( Z-Y, \mathcal{N}(0,\sigma_\omega^2)\big)\big]+
\ave\Big[ \mathbf{1}_\Gamma\,\frac{|\sigma_1-\sigma_\omega|}{\max\{\sigma_1,\sigma_\omega\}}\Big] \\
& \mik \ave\big[\mathbf{1}_{\Gamma} d_K\big( Z-Y, \mathcal{N}(0,\sigma_\omega^2)\big)\big]+
\frac{1}{\sigma_1^2}\, \ave\big[ |\sigma_1^2-\sigma_\omega^2|\big] \nonumber \\
& \stackrel{\eqref{e11.1},\eqref{e11.18}}{\mik}
\ave\big[\mathbf{1}_{\Gamma} d_K\big( Z-Y, \mathcal{N}(0,\sigma_\omega^2)\big)\big] +
\frac{1}{\delta_1} \Big(\mathrm{osc}(\boldsymbol{X}) + B + \frac{\kappa}{n}\Big) \nonumber \\
& \hspace{1.15cm} \stackrel{\eqref{e1.11}}{\mik}
\ave\big[\mathbf{1}_{\Gamma} d_K\big( Z-Y, \mathcal{N}(0,\sigma_\omega^2)\big)\big] +
\mathrm{osc}(\boldsymbol{X})^{1-\alpha} + B^{1-\alpha} + \Big(\frac{\kappa}{n}\Big)^{1-\alpha}. \nonumber
\end{align}
By \eqref{e11.24}, Claims \ref{c11.2}--\ref{c11.4} and
the Cauchy--Schwarz inequality,
\begin{align} \label{e11.32}
\ave\big[\mathbf{1}_{\Gamma} d_K\big( Z-Y, \mathcal{N}(0,\sigma_\omega^2)\big) & \big]
\mik \frac{2^{22} C_1 d^3 K}{\sigma_1^3} \Big( \sum_{j=1}^n
\Big|\Big(\!\sum_{\substack{i\in [n]^d\\i(1)=j}}\!\! \theta_i\Big)-\frac{\bbth_+}{n}\Big|^3\Big) \, + \\
& \hspace{0.5cm} + \frac{2\sqrt{2} C_d}{\sigma_1} \sum_{s=2}^d \binom{d}{s}
\sqrt{\Sigma_s+ \frac{8d^2 2^d}{n}}\, \seminorm{\mathcal{H}\big[\mathcal{S}_{[s]}[\bbth]\big]}_s^2. \nonumber
\end{align}
The desired estimate \eqref{e11.2} follows by combining \eqref{e11.30}, \eqref{e11.20},
\eqref{e11.31} and \eqref{e11.32}, and invoking the definition of $E'_1, E'_2, E'_3$.
The proof of Proposition \ref{p11.1} is completed.
\end{proof}

\subsection{Completion of the proof} \label{subsec11.3}

We are finally in a position to complete the proof of Theorem \ref{t1.4}. To this end, we first
observe that we may assume that $n\meg \kappa$ and
$\seminorm{\bbth}_0 \mik \sqrt{n}/2$. (Otherwise, \eqref{e1.12} is straightforward.)
By \eqref{e11.1}, the last assumption implies, in particular, that
\begin{equation} \label{e11.33}
\sigma_1^2 \meg \frac{3d^2\delta_1}{4}.
\end{equation}
Next observe that, by the triangle inequality for the Kolmogorov distance, Lemma \ref{l10.1},
\eqref{e11.33} and Proposition \ref{p1.2}, we have
\begin{align} \label{e11.34}
d_K\big(\mathcal{N}(0,\sigma_1^2),\, &\mathcal{N}(0,\sigma^2)\big) \mik
d_K\big(\mathcal{N}(0,d^2\delta_1),\mathcal{N}(0,\sigma^2)\big)+ \frac{\seminorm{\bbth}_0^2}{n} \\
&\mik \Big| \frac{\delta_0\, (\seminorm{\bbth}_0^2-1)}{d^2\delta_1}\Big| +
\frac{1}{d^2\delta_1}\, \sum_{s=2}^d \binom{d}{s}^2\, s! \,
|\Sigma_s| \cdot \seminorm{\bbth}_s^2 + \frac{\seminorm{\bbth}_0^2}{n}. \nonumber
\end{align}
(Recall that $\sigma^2$ denotes the variance of $\langle\bbth,\bbx\rangle$.) On the other hand,
by (\hyperref[A1]{$\mathcal{A}$1}), (\hyperref[A2]{$\mathcal{A}$2}), \eqref{e1.7},
Fact \ref{f10.6} and the definition of $B$, we have
\begin{equation} \label{e11.35}
B =\frac{1}{n^{2d}} \sum_{i,j\in [n]^d} \ave[X_i X_j] \mik |\delta_0| + \frac{4d^2}{n}.
\end{equation}
Moreover, by H\"{o}lder's inequality,
\begin{equation} \label{e11.36}
\sum_{j=1}^n \Big|\Big(\!\sum_{\substack{i\in [n]^d\\i(1)=j}} \!\!\theta_i\Big)-\frac{\bbth_+}{n}\Big|^3
\mik 2^4 \sum_{j=1}^n \Big|\!\sum_{\substack{i\in [n]^d\\i(1)=j}} \theta_i\Big|^3
\end{equation}
Finally, by the Cauchy--Schwarz inequality, for every $s\in \{2,\dots, d\}$ we have
\begin{align} \label{e11.37}
 \seminorm{\mathcal{H}\big[\mathcal{S}_{[s]}[\bbth]\big]}_s^2 & \ \, =
\sum_{i\in [n]^s} \mathcal{H}\big[\mathcal{S}_{[s]}[\bbth]\big](i)^2 = n^{2d-2s}
\sum_{i\in [n]^s} \mathcal{H}\big[\mathcal{A}_{[s]}[\bbth]\big](i)^2 \\
& \stackrel{\eqref{e9.2}}{=} n^{2d-2s} \sum_{i\in [n]^s} \Big( \sum_{F\subseteq [s]}
(-1)^{s-|F|} \mathcal{A}_F[\bbth](i\upharpoonright F)\Big)^2 \nonumber \\
& \ \mik n^{2d-2s}\, 2^s \sum_{F\subseteq [s]} \sum_{i\in [n]^s}
\mathcal{A}_F[\bbth](i\upharpoonright F)^2 \nonumber \\
& \ \mik n^{2d-2s}\, 2^s \sum_{F\subseteq [s]} \sum_{i\in [n]^s} \Big(
\frac{1}{n^{s-|F|}} \sum_{i\upharpoonright F\extension j\in [n]^s}
\mathcal{A}_{[s]}[\bbth](j)^2\Big) \nonumber \\
& \ = n^{2d-2s}\, 2^s \sum_{F\subseteq [s]} \sum_{j\in [n]^s}
\mathcal{A}_{[s]}[\bbth](j)^2 \stackrel{\eqref{e1.5}}{=} 2^{2s}
\seminorm{\bbth}_s^2 \mik 12 s!\, \seminorm{\bbth}_s^2 \nonumber
\end{align}
where in the last inequality we have used the fact that $2^{2s}\mik 12 s!$ for every integer $s\meg 2$.

Inequality \eqref{e1.12} now follows from \eqref{e11.2}, \eqref{e11.33}--\eqref{e11.37},
Fact \ref{f1.3} and the estimate $C_1\mik 451$ obtained in \cite{CF15}.
The proof of Theorem \ref{t1.4} is completed.

%-------------------Proofs of applications--------------------%

\section{Proofs of applications} \label{sec12}

\numberwithin{equation}{section}

\subsection{Exchangeable random vectors} \label{subsec12.1}

We start by giving the proof of Corollary \ref{c3.1}.
\begin{proof}[Proof of Corollary \emph{\ref{c3.1}}]
First observe that we may assume that $\mathrm{osc}(\bbx)\mik 1$ and $n\meg \kappa_1$.
(Otherwise, \eqref{e3.2} is straightforward.) Therefore, by \eqref{e3.1}, the non-degenericity
condition \eqref{e1.11} is satisfied for every $\alpha\in (0,1)$. Taking into account this remark,
the result follows by Theorem~\ref{t1.4} albeit with a different absolute constant. The fact that
in \eqref{e3.2} we can select the constant $\kappa_1=4320$ follows after noticing that,
for the one-dimensional case, in the proof of Proposition \ref{p11.1} it suffices to
apply \eqref{e2.4} with $C_1=451$.
\end{proof}
\begin{rem} \label{r12.1}
It is instructive to compare Corollary \ref{c3.1} with the classical work of Diaconis and
Freedman~\cite{DF80} which estimates the total variation distance between the marginals of the law
of an exchangeable random vector, and the corresponding marginals of a mixture of product measures.
The results in~\cite{DF80} are very informative, but unfortunately they become quantitatively
less useful for exchangeable random vectors whose entries are not finite-valued. On the other hand, by considering
a statistically less rigid quantity, Corollary \ref{c3.1} provides estimates of equal
quantitative strength for all permissible distributions.
\end{rem}
We proceed to estimate the oscillation of exchangeable random vectors. We first treat
the case of random vectors whose entries have finite fourth moment.
\begin{fact} \label{f12.2}
Let $\bbx$ be an exchangeable random vector in $\rr^n$ $(n\meg 2)$ whose entries have zero mean,
unit variance and finite fourth moment. Then we have
\begin{equation} \label{e12.1}
\mathrm{osc}(\bbx) \mik \sqrt{ \big|\ave[X_1^2 X_2^2]-1\big|} + \frac{\ave[X_1^4]^{1/2}}{\sqrt{n}}.
\end{equation}
\end{fact}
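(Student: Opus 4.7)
The plan is to reduce the claim to a variance computation via a standard $L_1$-to-$L_2$ passage, and then split the variance into a ``diagonal'' and an ``off-diagonal'' contribution.

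First, I would specialize the definition \eqref{e1.10} of the oscillation to the one-dimensional setting $d=1$. In that case the inner sum $\sum_{i\in [n]^d,\, i(1)=j} X_i$ reduces to $X_j$, and since $\delta_1=\ave[X_1^2]=1$ by assumption, we have
\begin{equation*}
\mathrm{osc}(\bbx)=\Big\|\frac{1}{n}\sum_{j=1}^n X_j^2 - 1\Big\|_{L_1}.
\end{equation*}
Using the elementary fact $\|\cdot\|_{L_1}\mik\|\cdot\|_{L_2}$, and since the random variable inside the norm has zero mean (as $\ave[X_j^2]=1$), this is at most $\sqrt{\mathrm{Var}\big(\tfrac{1}{n}\sum_j X_j^2\big)}$.

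Next, I would compute the variance explicitly using exchangeability. Expanding the square and using that $\ave[X_j^4]=\ave[X_1^4]$ for every $j$ and $\ave[X_j^2 X_k^2]=\ave[X_1^2 X_2^2]$ for every $j\neq k$, one finds
\begin{equation*}
\mathrm{Var}\Big(\frac{1}{n}\sum_{j=1}^n X_j^2\Big)=\frac{1}{n}\big(\ave[X_1^4]-1\big)+\frac{n-1}{n}\big(\ave[X_1^2 X_2^2]-1\big).
\end{equation*}

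Finally, applying the two-step estimate $\sqrt{a+b}\mik\sqrt{|a|+|b|}\mik\sqrt{|a|}+\sqrt{|b|}$ splits the square root into the two desired pieces. For the first piece, note that $\ave[X_1^4]\meg \ave[X_1^2]^2=1$ by Jensen, so $|\ave[X_1^4]-1|\mik \ave[X_1^4]$, giving $\sqrt{\tfrac{1}{n}|\ave[X_1^4]-1|}\mik \tfrac{\ave[X_1^4]^{1/2}}{\sqrt{n}}$; for the second, the factor $\tfrac{n-1}{n}\mik 1$ yields $\sqrt{\big|\ave[X_1^2 X_2^2]-1\big|}$. Combining these gives \eqref{e12.1}.

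There is no real obstacle here---the argument is essentially a textbook variance computation for a symmetric sum of squares of exchangeable variables. The only mildly subtle point is that $\ave[X_1^2X_2^2]-1$ can be negative (indeed, must be when the covariance is negative), so one cannot drop the absolute value directly; passing through $\sqrt{a+b}\mik\sqrt{|a|}+\sqrt{|b|}$ is what makes the bound clean.
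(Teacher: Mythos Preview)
Your proposal is correct and follows essentially the same route as the paper: both specialize the oscillation to $\|\tfrac{1}{n}\sum_j X_j^2-1\|_{L_1}$, pass to the $L_2$ norm, expand the second moment using exchangeability, and then split via $\sqrt{a+b}\mik\sqrt{|a|}+\sqrt{|b|}$. The only cosmetic difference is that the paper groups the variance as $\frac{\ave[X_1^4]-\ave[X_1^2X_2^2]}{n}+\ave[X_1^2X_2^2]-1$ rather than your $\frac{1}{n}(\ave[X_1^4]-1)+\frac{n-1}{n}(\ave[X_1^2X_2^2]-1)$, but this leads to the same final bound.
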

\begin{proof}
By the Cauchy--Schwarz inequality and our assumptions, we have
\begin{align} \label{e12.2}
& \mathrm{osc}(\bbx) \stackrel{\eqref{e1.10}}{=}  \Big\|\frac{1}{n}\sum_{i=1}^n X_i^2-1\Big\|_{L_1}
\mik  \Big\|\frac{1}{n}\sum_{i=1}^n X_i^2-1\Big\|_{L_2} \\
= & \ \Big( \frac{1}{n^2} \sum_{i,j=1}^n \ave[X_i^2X_j^2] + 1 -
\frac{2}{n}\sum_{i=1}^n \ave[X_i^2]\Big)^{1/2}  \nonumber \\
= & \ \Big( \frac{\ave[X_1^4]-\ave[X_1^2X_2^2]}{n}+ \ave[X_1^2X_2^2]-1\Big)^{1/2}
\mik \sqrt{ \big|\ave[X_1^2 X_2^2]-1\big|} + \frac{\ave[X_1^4]^{1/2}}{\sqrt{n}}. \nonumber \qedhere
\end{align}
\end{proof}
The following proposition supplements Fact \ref{f12.2}, and it estimates the oscillation of exchangeable
random vectors whose entries have finite third moment.
\begin{prop} \label{p12.3}
Let $\bbx$ be an exchangeable random vector in $\rr^n$ $(n\meg 2)$ whose entries have zero mean,
unit variance and finite third moment. Then we have
\begin{equation} \label{e12.3}
\mathrm{osc}(\bbx) \mik \sqrt{ \big|\ave[X_1^2 X_2^2]-1\big|} +
\frac{4\ave\big[|X_1|^3\big]}{\sqrt[4]{n}}.
\end{equation}
\end{prop}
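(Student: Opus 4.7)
The plan is to adapt the argument of Fact \ref{f12.2} by truncating the $X_j$'s at a suitable level $M>0$, to compensate for the missing fourth-moment control. Set $L \coloneqq \ave[|X_1|^3]$ and write $X_j = Y_j + Z_j$ where $Y_j \coloneqq X_j\, \mathbf{1}_{\{|X_j|\mik M\}}$ and $Z_j \coloneqq X_j\, \mathbf{1}_{\{|X_j|>M\}}$. Since the supports are disjoint we have $X_j^2 = Y_j^2 + Z_j^2$, and the identity $\ave[Y_1^2] + \ave[Z_1^2] = \ave[X_1^2] = 1$ lets us decompose $\frac{1}{n}\sum_j X_j^2 - 1$ as the sum of $\frac{1}{n}\sum_j Y_j^2 - \ave[Y_1^2]$ and $\frac{1}{n}\sum_j Z_j^2 - \ave[Z_1^2]$. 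The triangle inequality then splits $\osc(\bbx)$ into a \emph{bulk} and a \emph{tail} contribution, which I will estimate separately and then balance by optimizing $M$.

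The tail contribution follows from the positivity of $Z_j^2$ together with the pointwise bound $Z_j^2 \mik |X_j|^3/M$ valid on $\{|X_j|>M\}$; this gives $\big\|\frac{1}{n}\sum_j Z_j^2 - \ave[Z_1^2]\big\|_{L_1} \mik 2\ave[Z_1^2] \mik 2L/M$. For the bulk contribution, Jensen's inequality passes from $L_1$ to $L_2$, and exchangeability produces the variance decomposition
\begin{equation*}
\Big\|\tfrac{1}{n}\sum_j Y_j^2 - \ave[Y_1^2]\Big\|_{L_2}^2 = \frac{\mathrm{Var}(Y_1^2)}{n} + \frac{n-1}{n}\,\mathrm{Cov}(Y_1^2,Y_2^2).
\end{equation*}
The diagonal term is handled by the estimate $\ave[Y_1^4] \mik M\, \ave[|Y_1|^3] \mik ML$, which contributes $ML/n$. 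To compare the truncated covariance with the parameter $\ave[X_1^2 X_2^2]-1$ appearing in the statement, I will substitute $Y_j^2 = X_j^2 - Z_j^2$ and invoke exchangeability to obtain
\begin{equation*}
\mathrm{Cov}(Y_1^2,Y_2^2) = \mathrm{Cov}(X_1^2,X_2^2) - 2\,\mathrm{Cov}(X_1^2,Z_2^2) + \mathrm{Cov}(Z_1^2,Z_2^2),
\end{equation*}
and bound the two error covariances by combining $Z_j^2 \mik |X_j|^3/M$ with the tail moment estimate $\ave[Z_1^2]\mik L/M$ via the Cauchy--Schwarz inequality, producing truncation errors of order $L/M$.

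Collecting everything and applying $\sqrt{a+b}\mik \sqrt{a}+\sqrt{b}$, the final estimate has the form $\osc(\bbx) \mik \sqrt{|\ave[X_1^2 X_2^2]-1|} + O\big(\sqrt{ML/n}\big) + O(L/M)$. The choice $M = \sqrt{n}$ balances the two $L$-dependent errors (each becoming at most $L/n^{1/4}$ after using $L\meg 1$, which follows from Jensen applied to the unit-variance assumption), and a careful tracking of constants produces the coefficient $4$ stated in \eqref{e12.3}. The main technical obstacle is the covariance step: with only a third moment at hand, the mixed quantity $\ave[X_1^2 Z_2^2]$ has no clean pointwise upper bound in terms of $L$ alone, so one must exploit the factor $|X_2|>M$ in the truncation event and apply the third-order pointwise inequality at the right moment to absorb all error contributions into a linear-in-$L$ term, without incurring a fourth-moment penalty.
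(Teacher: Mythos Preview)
Your truncation strategy with $M=\sqrt{n}$ is exactly the paper's approach, and your decomposition of $\osc(\bbx)$ into bulk and tail pieces is correct. The gap is in the covariance comparison. You claim that each of the two error covariances $\mathrm{Cov}(X_1^2,Z_2^2)$ and $\mathrm{Cov}(Z_1^2,Z_2^2)$ is $O(L/M)$, but this is false: both involve the quantity $\ave[Z_1^2 Z_2^2]$, which for strongly correlated $X_1,X_2$ behaves like a tail fourth moment and can be arbitrarily large relative to $L/M$. (Take $X_1=X_2$ with a three-point distribution placing mass $p$ at $\pm a$ and adjust $a\to\infty$ with $pa^2$ fixed; then $\ave[Z_1^2 Z_2^2]=\ave[Z_1^4]\asymp a^2$ while $L/M\asymp 1$.) The pointwise bound $Z_j^2\mik |X_j|^3/M$ together with Cauchy--Schwarz only reduces this to a sixth-moment quantity, which you do not control.

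The remedy is not to decompose at all. Since $Y_j^2\mik X_j^2$ pointwise and $\ave[Y_1^2]=1-\ave[Z_1^2]\meg 1-L/M$, one bounds the truncated covariance directly:
\[
\mathrm{Cov}(Y_1^2,Y_2^2)=\ave[Y_1^2 Y_2^2]-(\ave[Y_1^2])^2\mik \ave[X_1^2 X_2^2]-\big(1-\tfrac{L}{M}\big)^2\mik \big(\ave[X_1^2 X_2^2]-1\big)+\tfrac{2L}{M}.
\]
This is precisely the step the paper performs (in the equivalent form $\ave[G_1G_2]\mik\ave[X_1^2X_2^2]$ and $-2\ave[G_1]+1=2\ave[B_1]-1$), and it needs only the one-sided monotonicity $Y_j^2\mik X_j^2$ together with the tail third-moment bound $\ave[Z_1^2]\mik L/M$. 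With this correction your argument goes through and yields the constant~$4$ after setting $M=\sqrt{n}$ and using $L\meg 1$.
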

\begin{proof}
We will use a standard truncation argument. Specifically, set $K\coloneqq \ave\big[|X_1|^3\big]$,
and let $\lambda>0$ be a cut-off parameter to be specified later.
Moreover, for every $i\in [n]$ set $G_i\coloneqq X_i^2\, \mathbf{1}_{[|X_i|\mik\lambda]}$
and $B_i\coloneqq X_i^2\,\mathbf{1}_{[|X_i|>\lambda]}$, and notice that
\begin{equation} \label{e12.4}
\ave[G_i^2] = \ave[X_i^3 X_i\,\mathbf{1}_{[|X_i|\mik\lambda]}]\mik
\lambda K;
\end{equation}
on the other hand, by H\"{o}lder's inequality and Markov's inequality,
\begin{equation} \label{e12.5}
\|B_i\|_{L_1} \mik \ave\big[ |X_i|^3\big]^{\frac{2}{3}}\,
\mathbb{P}\big(|X_i|>\lambda\big)^{\frac{1}{3}}\mik \frac{K}{\lambda}.
\end{equation}
Since the random vectors $(G_1,\dots,G_n)$ and $(B_1,\dots,B_n)$ are exchangeable, we obtain that
\begin{align} \label{e12.6}
\mathrm{osc}(\bbx) & \stackrel{\eqref{e1.10}}{=}  \Big\|\frac{1}{n}\sum_{i=1}^n X_i^2-1\Big\|_{L_1}
\mik  \Big\|\frac{1}{n}\sum_{i=1}^n G_i-1\Big\|_{L_1} + \|B_1\|_{L_1} \\
& \stackrel{\eqref{e12.5}}{\mik}
\Big\|\frac{1}{n}\sum_{i=1}^n G_i-1\Big\|_{L_1} + \frac{K}{\lambda}
\mik \Big\|\frac{1}{n}\sum_{i=1}^n G_i-1\Big\|_{L_2} + \frac{K}{\lambda} \nonumber \\
& \ \ =  \Big( \frac{\ave[G_1^2]-\ave[G_1 G_2]}{n}+ \ave[G_1 G_2]-1+ 2\ave[B_1]\Big)^{1/2}
+ \frac{K}{\lambda} \nonumber \\
& \ \ \mik \Big( \frac{\ave[G_1^2]}{n}+ \ave[X_1^2X_2^2]-1 +2 \ave[B_1]\Big)^{1/2}
+ \frac{K}{\lambda} \nonumber \\
& \, \stackrel{\eqref{e12.4},\eqref{e12.5}}{\mik} \frac{\sqrt{\lambda K}}{\sqrt{n}} +
\sqrt{ \big|\ave[X_1^2 X_2^2]-1\big|} + \sqrt{2} \frac{\sqrt{K}}{\sqrt{\lambda}}+
\frac{K}{\lambda} \nonumber
\end{align}
where we have used the Cauchy--Schwarz inequality, and that $0\mik \ave[G_1G_2]\mik \ave[G_1^2]$ and
$\ave[G_1G_2]\mik \ave[X_1^2X_2^2]$. The result follows by applying \eqref{e12.6}
for $\lambda=\sqrt[4]{n}$.
\end{proof}

\subsubsection{Optimality} \label{subsubsec12.1.1}

It follows from \eqref{e3.2} that if $\bbx$ is an exchangeable and isotropic random vector in~$\rr^n$ which satisfies
\begin{equation} \label{e12.7}
\ave\big[|X_1|^3]=O(1),
\end{equation}
then for every positive integer $k\mik n$ we have
\begin{equation} \label{e12.8}
d_K\Big( \frac{X_1+\dots+X_k}{\sqrt{k}}, \mathcal{N}(0,1)\Big) =
O\Big( \mathrm{osc}(\bbx)+ \frac{k}{n} + \frac{1}{\sqrt{k}}\Big).
\end{equation}
Our goal in this subsection is to show that this estimate is optimal, in the sense that
each of the three error terms that appear in the right-hand-side of \eqref{e12.8} can be dominated
by the Kolmogorov distance in \eqref{e12.8} for an appropriately selected exchangeable and
isotropic random vector which satisfies \eqref{e12.7}.

The necessity of the error term $O(1/\sqrt{k})$ is a classical observation, and
it can be attained by considering a random vector with independent Rademacher entries.
The necessity of the first error term---the oscillation---is shown in the next example.
\begin{examp} \label{ex12.4}
Let $n\meg 2$ be an arbitrary integer. Fix $0<\ee\mik 1/2$, and let $\bbx$ be a random vector
in $\rr^n$ whose distribution satisfies
\begin{equation} \label{e12.9}
\prob(\bbx\in A)= \ee\, \prob(\boldsymbol{Z}\in A)+ (1-\ee)\, \prob\Big(\frac{\boldsymbol{R}}{\sqrt{1-\ee}}\in A\Big)
\end{equation}
for every Borel subset $A$ of $\rr^n$, where $\boldsymbol{Z}$ is a random vector in $\rr^n$ with
constant zero entries and $\boldsymbol{R}$ is a random vector in $\rr^n$ with independent Rademacher entries.
Notice that $\bbx$ is exchangeable, isotropic and bounded; in particular, it satisfies \eqref{e12.7}. Moreover,
\begin{equation} \label{e12.10}
\mathrm{osc}(\bbx)=2\ee.
\end{equation}
Finally, for every positive integer $k\mik n$ we have
\begin{equation} \label{e12.11}
d_K\Big( \frac{X_1+\dots+X_k}{\sqrt{k}}, \mathcal{N}(0,1)\Big) \meg
\frac{1}{2}\,\prob\Big( \frac{X_1+\dots+X_k}{\sqrt{k}}=0\Big)
\meg \frac{\ee}{2} \stackrel{\eqref{e12.10}}{\meg} \frac{\mathrm{osc}(\bbx)}{4}.
\end{equation}
\end{examp}
The necessity of the error term $O(k/n)$ is the content of the following example.
\begin{examp} \label{ex12.5}
It is a slight modification of the results of Diaconis and Freedman in \cite[Section 4]{DF80}.
(See also \cite{DF87} for closely related results.)

Specifically, let $n$ be a sufficiently large positive even integer, and
let $\boldsymbol{\xi}=(\xi_1,\dots,\xi_n)$ be a random vector which is uniformly
distributed on the set of all $x\in \{-1,1\}^n$ whose coordinates sum-up to $0$.
Notice that $\boldsymbol{\xi}$~is~exchangeable. Using a non-asymptotic version
of Stirling's approximation---see \cite{Ro55}---and elementary computations,
it is easy to verify that for every integer $k\in \{16,\dots,\frac{n}{14}\}$ and every integer
$a\in \big[-\frac{\sqrt{k}}{2},\frac{\sqrt{k}}{2}\big]$ we have
\begin{equation} \label{e12.12}
\prob(S_k=a)\meg \prob(R_k=a)\cdot \Big(1+\frac{1}{32}\, \frac{k}{n}\Big)
\end{equation}
where $S_k\coloneqq \xi_1+\dots+\xi_k$ and $R_k\coloneqq r_1+\dots+r_k$ with $r_1,\dots,r_k$
independent Rademacher random variables. This estimate and the classical
Berry--Esseen theorem in turn imply that for every\footnote{Here, given two positive quantities
$a$ and $b$, we write $a\gtrsim b$ to denote the fact that $a\meg C b$ for some positive
universal constant $C$.} $n \gtrsim k\gtrsim n^{2/3}$ we have
\begin{equation} \label{e12.13}
d_K\Big( \frac{\xi_1+\dots+\xi_k}{\sqrt{k}}, \mathcal{N}(0,1)\Big) \gtrsim \frac{k}{n};
\end{equation}
on the other hand, by anticoncentration considerations, if $k=O(n^{2/3})$, then we have
\begin{equation} \label{e12.14}
d_K\Big( \frac{\xi_1+\dots+\xi_k}{\sqrt{k}}, \mathcal{N}(0,1)\Big) \gtrsim \frac{1}{\sqrt{k}}
\gtrsim \frac{k}{n}.
\end{equation}
In other words, we have the desired lower bound in the regime $k=O(n)$. Notice, however,
that the random vector $\boldsymbol{\xi}$ is not isotropic, but this is not a serious
problem and it can be easily fixed by appropriately perturbing $\boldsymbol{\xi}$.

Specifically, let $A_0,A_1,\dots,A_n$ be pairwise disjoint events which are independent
of $\boldsymbol{\xi}$ and satisfy $\prob(A_0)=\frac{1}{n-1}$ and
$\prob(A_1)=\dots=\prob(A_n)=\frac{n-2}{n(n-1)}$. For every $i\in [n]$ set
\begin{equation} \label{e12.15}
E_i\coloneqq \mathbf{1}_{A_0}-\frac{n}{n-2}\mathbf{1}_{A_i} \ \ \text{ and } \ \
X_i\coloneqq \frac{1}{\sigma} (\xi_i+E_i)
\end{equation}
where $\sigma^2\coloneqq \ave\big[(\xi_i+E_i)^2\big]=1+\frac{2}{n-2}$. Using the fact
that $\ave[\xi_1\xi_2]=-\frac{1}{n-1}$, it~is straightforward to check that the random
vector $\bbx=(X_1,\dots,X_n)$ is exchangeable, isotropic and bounded---hence,
it satisfies \eqref{e12.7}---and, moreover,
\begin{equation} \label{e12.16}
\mathrm{osc}(\bbx)=O\Big(\frac{1}{n}\Big) \ \ \text{ and } \ \
\big|\ave[X_1^2X_2^2]-1\big|=O\Big(\frac{1}{n}\Big).
\end{equation}
Next let $k\mik n$ be a positive integer. Note that the random variable
$\frac{1}{\sqrt{k}}(E_1+\dots+E_k)$ is independent of $\frac{1}{\sqrt{k}}(\xi_1+\dots+\xi_k)$
and it is highly concentrated around zero: it takes the value $\sqrt{k}$ with probability
$\frac{1}{n-1}$, the value $-\frac{1}{\sqrt{k}}\cdot\frac{n}{n-2}$ with probability $\frac{k(n-2)}{n(n-1)}$
and the value $0$ everywhere else. Taking into account these remarks and using
\eqref{e12.12} and the classical Berry--Esseen theorem, we obtain that for every
$n \gtrsim k\gtrsim n^{2/3}$,
\begin{equation} \label{e12.17}
d_K\Big( \frac{X_1+\dots+X_k}{\sqrt{k}}, \mathcal{N}(0,1)\Big) \gtrsim \frac{k}{n}.
\end{equation}
After observing that the analogue of \eqref{e12.14} for the random vector $\bbx$ is also
valid if $k=O(n^{2/3})$, we see that for every $k=O(n)$,
\begin{equation} \label{e12.18}
d_K\Big( \frac{X_1+\dots+X_k}{\sqrt{k}}, \mathcal{N}(0,1)\Big) \gtrsim \frac{k}{n}.
\end{equation}
Finally, by replacing $\bbx$ with a subvector of appropriate length\footnote{More precisely,
of length $cn$ where $c>0$ is a sufficiently small constant.} if necessary, we conclude that \eqref{e12.18}
holds true for every positive integer $k\mik n$; in particular, by \eqref{e12.8}, \eqref{e12.16} and
\eqref{e12.18}, it follows that the Berry--Esseen bound
\begin{equation} \label{e12.19}
d_K\Big( \frac{X_1+\dots+X_k}{\sqrt{k}}, \mathcal{N}(0,1)\Big) =O\Big(\frac{1}{\sqrt{k}}\Big)
\end{equation}
is satisfied only in the regime $k=O(n^{2/3})$.
\end{examp}

\subsection{High-dimensional examples} \label{subsec12.2}

Our goal in this subsection is twofold: to estimate the oscillation of high-dimensional exchangeable
random tensors, and to present related applications. We first deal with random tensors whose entries
have finite fourth moment. Specifically, we have the following analogue of Fact \ref{f12.2}.
(The proof is left to the reader.)
\begin{fact} \label{f12.6}
Let $\bbx$ which satisfy \emph{(\hyperref[A1]{$\mathcal{A}$1})} and
\emph{(\hyperref[A2]{$\mathcal{A}$2})}. Assume that the entries of $\bbx$ have
finite fourth moment. Then we have
\begin{equation} \label{e12.20}
\mathrm{osc}(\bbx) \mik \sqrt{\mathrm{pc}(\bbx)}+ \frac{5d}{\sqrt{n}} \big(|\delta_1|+\ave[X_{(1,\dots,d)}^4]\big)^{1/2}.
\end{equation}
\end{fact}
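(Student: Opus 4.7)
The plan is to adapt the one-dimensional argument of Fact~\ref{f12.2} to the tensor setting via a careful enumeration of overlap patterns of four multi-indices. Writing $Y_j \coloneqq \frac{1}{n^{d-1}}\sum_{i\in [n]^d_{\mathrm{Inj}},\, i(1)=j}X_i$ and $S \coloneqq \frac{1}{n}\sum_{j=1}^n Y_j^2$, I would first pass from $L_1$ to $L_2$ via Jensen and use exchangeability to expand
\begin{equation} \label{eqproposalmain}
\ave\big[(S-\delta_1)^2\big] = \tfrac{\ave[Y_1^4] - \ave[Y_1^2 Y_2^2]}{n} + \bigl(\ave[Y_1^2 Y_2^2] - \delta_1^2\bigr) - 2\delta_1\bigl(\ave[Y_1^2] - \delta_1\bigr).
\end{equation}
Jensen applied to the averages defining $Y_j$ immediately yields $\ave[Y_1^4],\, \ave[Y_1^2 Y_2^2] \mik \ave[X_{(1,\dots,d)}^4]$, absorbing the first summand into $2\ave[X_{(1,\dots,d)}^4]/n$.

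The middle summand is where the parallelepipedal correlation enters. I would expand
\[ \ave[Y_1^2 Y_2^2] = n^{-4(d-1)}\sum \ave[X_{i_1}X_{i_2}X_{i_3}X_{i_4}] \]
over injective $i_1,i_2$ with $i_k(1)=1$ and injective $i_3,i_4$ with $i_k(1)=2$, and single out the \emph{parallelepipedal} pattern, $\mathrm{Im}(i_1)\cap\mathrm{Im}(i_2) = \{1\}$, $\mathrm{Im}(i_3)\cap\mathrm{Im}(i_4) = \{2\}$, and $\bigl(\mathrm{Im}(i_1)\cup\mathrm{Im}(i_2)\bigr)\cap\bigl(\mathrm{Im}(i_3)\cup\mathrm{Im}(i_4)\bigr) = \emptyset$. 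By symmetry and exchangeability, each such quadruple contributes $P \coloneqq \ave[X_{(1,\dots,d)}X_{(1,d+1,\dots,2d-1)}X_{(2d,\dots,3d-1)}X_{(2d,3d,\dots,4d-2)}]$. A direct count gives that the parallelepipedal quadruples total $n^{4(d-1)}\bigl(1 + O(d^2/n)\bigr)$, while every non-parallelepipedal pattern carries at least one extra index coincidence and therefore has count $O\bigl(d^2 n^{4(d-1)-1}\bigr)$ and expectation $\mik \ave[X_{(1,\dots,d)}^4]$ by H\"older; combined with $|P| \mik \ave[X_{(1,\dots,d)}^4]$ and $|P - \delta_1^2| = \mathrm{pc}(\bbx)$, this gives $|\ave[Y_1^2 Y_2^2] - \delta_1^2| \mik \mathrm{pc}(\bbx) + O(d^2/n)\,\ave[X_{(1,\dots,d)}^4]$.

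The third summand is handled by an analogous but simpler count: decomposing injective pairs $(i,i')$ with $i(1)=i'(1)=1$ by $s = |\mathrm{Im}(i)\cap\mathrm{Im}(i')|$ and using exchangeability and symmetry yields $\ave[Y_1^2] = n^{-2(d-1)}\sum_{s=1}^d N_s\delta_s$ with $N_1 = (n-1)\cdots(n-2d+2)$ and $N_s = O(d^2 n^{2d-1-s})$ for $s\meg 2$, while $|\delta_s| \mik \delta_d \mik \sqrt{\ave[X_{(1,\dots,d)}^4]}$ by Cauchy--Schwarz and (\hyperref[A1]{$\mathcal{A}$1}). Consequently $|\ave[Y_1^2] - \delta_1| \mik O(d^2 n^{-1})\bigl(|\delta_1| + \sqrt{\ave[X_{(1,\dots,d)}^4]}\bigr)$, and AM--GM on the cross term $2|\delta_1|\sqrt{\ave[X_{(1,\dots,d)}^4]}$ absorbs the contribution of this summand into $O(d^2/n)(|\delta_1| + \ave[X_{(1,\dots,d)}^4])$.

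Plugging these three bounds back into \eqref{eqproposalmain} produces $\ave[(S-\delta_1)^2] \mik \mathrm{pc}(\bbx) + Cd^2n^{-1}\bigl(|\delta_1| + \ave[X_{(1,\dots,d)}^4]\bigr)$ for an explicit absolute constant $C$, and \eqref{e12.20} then follows from $\sqrt{a+b} \mik \sqrt a + \sqrt b$, with the factor $5d$ emerging once one tightens the constant to $\sqrt C \mik 5$. The main obstacle is the combinatorial census of non-parallelepipedal patterns: one must confirm that each such pattern is penalized by a factor of $1/n$ relative to the generic count and that the finitely many patterns (in bijection with a set whose cardinality depends only on $d$) aggregate into a bound of the form $O(d^2/n)\ave[X_{(1,\dots,d)}^4]$, in direct analogy with the off-diagonal error terms appearing in the one-dimensional Fact~\ref{f12.2}.
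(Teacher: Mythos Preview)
Your proposal is correct and is precisely the argument the paper intends: the paper explicitly introduces Fact~\ref{f12.6} as ``the following analogue of Fact~\ref{f12.2}'' and leaves the proof to the reader, so the direct generalization you outline---pass to $L_2$, expand $\ave[(S-\delta_1)^2]$ via exchangeability, and control the three summands by isolating the parallelepipedal pattern in $\ave[Y_1^2Y_2^2]$---is exactly the route implied. One minor remark: your claim ``$N_s=O(d^2 n^{2d-1-s})$ for $s\meg 2$'' is not quite right for large $s$ (the $d$-dependence worsens), but what you actually need is only the aggregate bound $\sum_{s\meg 2} N_s = O(d^2 n^{2d-3})$, which follows from the union bound over the $(d-1)^2$ possible extra coincidences, together with $|\delta_s|\mik \delta_d\mik \sqrt{\ave[X_{(1,\dots,d)}^4]}$; this is what your argument uses, so there is no real gap.
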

%\begin{proof}
%By the Cauchy--Schwarz inequality, we have
%\begin{align} \label{e12.21}
%& \mathrm{osc}(\bbx) \stackrel{\eqref{e1.10}}{\mik}
%\Big\|\frac{1}{n}\sum_{j=1}^n \Big(\frac{1}{n^{d-1}}\sum_{\substack{i\in [n]^d\\i(1) = j}}
%X_i\Big)^2-\delta_1\Big\|_{L_2} \\
%= \ave\Big[ \frac{1}{n^2} & \sum_{j_1,j_2=1}^n
%\frac{1}{n^{4d-4}}\Big(\sum_{\substack{i\in [n]^d\\i(1) = j_1}} X_i\Big)^2
%\Big(\sum_{\substack{p\in [n]^d\\p(1) = j_2}} X_p\Big)^2
%+\delta_1^2- \frac{2\delta_1}{n} \sum_{j=1}^n
%\Big(\frac{1}{n^{d-1}}\sum_{\substack{i\in [n]^d\\i(1) = j}} X_i\Big)^2 \Big]^{1/2}. \nonumber
%\end{align}
%The desired estimate \eqref{e12.20} follows from \eqref{e12.21}, the exchangeability of $\bbx$,
%H\"{o}lder's inequality and invoking the definition of $\mathrm{pc}(\bbx)$ in \eqref{e3.7}.
%\end{proof}
The following proposition estimates the oscillation of dissociated, exchangeable and symmetric random
tensors, as well as the oscillation of their mixtures.
\begin{prop} \label{p12.7}
Let $n,d$ be positive integers with $n\meg 4d$, and let $\bbx=\langle X_i: i\in [n]^d\rangle$ be an exchangeable
and symmetric random tensor whose entries have finite third moment.
\begin{enumerate}
\item[(1)] If $\bbx$ is dissociated, then
\begin{equation} \label{e12.21}
\mathrm{osc}(\bbx) \mik \frac{16d\, \big(\ave\big[|X_{(1,\dots,d)}|^3\big]+1\big)}{\sqrt[4]{n}}.
\end{equation}
\item[(2)] If $\bbx$ is a mixture of dissociated, exchangeable and symmetric random tensors,~then
\begin{equation} \label{e12.22}
\mathrm{osc}(\bbx) \mik \sqrt{\mathrm{pc}(\bbx)} +
\frac{16d\, \big(\ave\big[|X_{(1,\dots,d)}|^3\big]+1\big)}{\sqrt[4]{n}}.
\end{equation}
\end{enumerate}
\end{prop}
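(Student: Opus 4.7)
The plan is to establish part~(1) by a truncation argument in the spirit of Proposition~\ref{p12.3}, and then deduce part~(2) by conditioning on the mixing variable and exploiting the fact that for a dissociated tensor the parallelepipedal correlation vanishes.

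For part~(1), set $K\coloneqq\ave[|X_{(1,\dots,d)}|^3]$, $\lambda\coloneqq n^{1/4}$, $G_i\coloneqq X_i\mathbf{1}_{[|X_i|\mik\lambda]}$, $B_i\coloneqq X_i-G_i$, and $Y_j\coloneqq\frac{1}{n^{d-1}}\sum_{i\in[n]^d,\,i(1)=j}X_i$, with $Y_j^G,Y_j^B$ defined analogously using $G_i,B_i$. The identity $Y_j^2-(Y_j^G)^2=2Y_j^GY_j^B+(Y_j^B)^2$ combined with exchangeability gives
\[
\osc(\bbx)\mik \Big\|\frac{1}{n}\sum_{j=1}^n(Y_j^G)^2-\delta_1\Big\|_{L_1}+2\|Y_1^GY_1^B\|_{L_1}+\ave[(Y_1^B)^2].
\]
The pointwise bound $|Y_1^G|\mik\lambda$ together with the Markov-type estimates $\ave[|B_1|]\mik K/\lambda^2$ and $\ave[B_1^2]\mik K/\lambda$ (via $|X_1|\mik|X_1|^3/\lambda^2$ and $X_1^2\mik|X_1|^3/\lambda$ on $\{|X_1|>\lambda\}$) show that each of the last two terms is $O(K/\lambda)=O(K/n^{1/4})$. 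I will bound the first term by $|\ave[(Y_1^G)^2]-\delta_1|+\sqrt{\mathrm{Var}(n^{-1}\sum_j(Y_j^G)^2)}$; the mean shift is controlled by the same truncation bounds plus the exchangeability computation $|\ave[Y_1^2]-\delta_1|=O(d^2K^{2/3}/n)$, obtained by grouping pairs $(i,i')\in[n]^d\times[n]^d$ with $i(1)=i'(1)=1$ according to $|\mathrm{Im}(i)\cap\mathrm{Im}(i')|$ and noting that non-generic patterns form an $O(d^2/n)$ fraction, each bounded by $\ave[X_{(1,\dots,d)}^2]\mik K^{2/3}$.

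The crux is the variance. I will split
\[
\mathrm{Var}\Big(\frac{1}{n}\sum_{j}(Y_j^G)^2\Big)=\frac{1}{n}\mathrm{Var}\big((Y_1^G)^2\big)+\frac{n-1}{n}\mathrm{Cov}\big((Y_1^G)^2,(Y_2^G)^2\big),
\]
and control the single-index term via $\ave[(Y_1^G)^4]\mik\lambda^2\ave[(Y_1^G)^2]\mik\lambda^2K^{2/3}$. For the covariance I will expand $(Y_1^G)^2(Y_2^G)^2$ as a sum over quadruples $(i_1,i_2,i_3,i_4)$ with $i_1(1)=i_2(1)=1$ and $i_3(1)=i_4(1)=2$. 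By dissociativity, whenever $\mathrm{Im}(i_1)\cup\mathrm{Im}(i_2)$ and $\mathrm{Im}(i_3)\cup\mathrm{Im}(i_4)$ are disjoint, $G_{i_1}G_{i_2}$ is independent of $G_{i_3}G_{i_4}$, so such quadruples contribute zero to the covariance. The remaining ``overlapping'' quadruples form at most an $O(d^2/n)$ fraction, and on each of them Cauchy--Schwarz together with $G_j^2\mik\lambda^2$ yields $|\ave[G_{i_1}G_{i_2}G_{i_3}G_{i_4}]|\mik\ave[G_{i_1}^2G_{i_3}^2]^{1/2}\ave[G_{i_2}^2G_{i_4}^2]^{1/2}\mik\lambda^2K^{2/3}$. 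Hence $|\mathrm{Cov}|\mik O(\lambda^2K^{2/3}d^2/n)$ and $\sqrt{\mathrm{Var}}\mik O(\lambda K^{1/3}d/\sqrt n)=O(K^{1/3}d/n^{1/4})$. Summing all pieces with $\lambda=n^{1/4}$ yields $\osc(\bbx)\mik 16d(K+1)/n^{1/4}$.

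For part~(2), let $T$ denote the mixing variable, so that conditional on $T$ the tensor $\bbx$ is dissociated, exchangeable and symmetric, and set $\delta_1(T)\coloneqq\ave[X_{(1,\dots,d)}X_{(1,d+1,\dots,2d-1)}\mid T]$, so that $\delta_1=\ave[\delta_1(T)]$. The triangle inequality gives
\[
\osc(\bbx)\mik\ave\Big[\Big|\frac{1}{n}\sum_{j}Y_j^2-\delta_1(T)\Big|\Big]+\ave[|\delta_1(T)-\delta_1|],
\]
and applying part~(1) conditionally on $T$ together with the tower property $\ave[K(T)]=K$ bounds the first summand by $16d(K+1)/n^{1/4}$. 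For the second, observe that the index sets $\{1,\dots,2d-1\}$ and $\{2d,\dots,4d-2\}$ appearing in the definition of $\mathrm{pc}(\bbx)$ are disjoint, so conditional dissociativity gives $\ave[X_{(1,\dots,d)}X_{(1,d+1,\dots,2d-1)}X_{(2d,\dots,3d-1)}X_{(2d,3d,\dots,4d-2)}\mid T]=\delta_1(T)^2$; averaging yields $\mathrm{pc}(\bbx)=\mathrm{Var}(\delta_1(T))$, and Cauchy--Schwarz delivers $\ave[|\delta_1(T)-\delta_1|]\mik\sqrt{\mathrm{pc}(\bbx)}$. The principal technical obstacle is the covariance estimate in part~(1): the naive bound $|\mathrm{Cov}|\mik\lambda^4\cdot O(d^2/n)$ would, at $\lambda=n^{1/4}$, give a useless variance of order $d^2$. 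The essential refinement is to apply the pointwise bound $|G_i|\mik\lambda$ to only two of the four factors while handling the remaining two via Cauchy--Schwarz and the second-moment control $\ave[G_i^2]\mik K^{2/3}$, thereby replacing $\lambda^4$ by $\lambda^2K^{2/3}$; balancing this against the truncation error $K/\lambda$ is what forces precisely $\lambda=n^{1/4}$ and yields the claimed exponent.
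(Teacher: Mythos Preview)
Your argument is correct and follows essentially the same strategy as the paper: a truncation argument exploiting dissociativity for part~(1), and conditioning on the mixing variable together with the identity $\mathrm{pc}(\bbx)=\mathrm{Var}(\delta_1(T))$ for part~(2). The only notable difference is the cutoff: you take $\lambda=n^{1/4}$ and bound the covariance by observing that non-overlapping quadruples cancel exactly, whereas the paper takes $\lambda=\sqrt{n}$ and bounds $|\ave[Z_{j_1}^2Z_{j_2}^2]-\delta_1^2|$ by reducing via exchangeability to a single canonical quadruple and using the Hölder-type estimate $\ave[Z_j^4]\mik K\lambda$; both routes give the same $n^{-1/4}$ rate.
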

\begin{proof}
We start with the proof of part (1). Set $K\coloneqq \ave\big[ |X_{(1,\dots,d)}|^3\big]$.
By H\"{o}lder's inequality, we have $\ave\big[|X_iX_p|\big]\mik K^{2/3}$
for every $i,p\in [n]^d$; in particular, $\delta_1\mik K^{2/3}$.

As in the proof of Proposition \ref{p12.3}, we will use a truncation argument. Let
$\lambda>0$ be a cut-off parameter which will be specified later. For every $i\in [n]^d$ set
\begin{equation} \label{e12.23}
G_i\coloneqq X_i\,\mathbf{1}_{[|X_i|\mik \lambda]} \ \ \ \text{ and } \ \ \
B_i\coloneqq X_i\,\mathbf{1}_{[|X_i|>\lambda]}
\end{equation}
and observe that
\begin{equation} \label{e12.24}
\|B_i\|_{L_1}\mik \frac{K}{\lambda^2} \ \ \ \text{ and } \ \ \ \|B_i\|_{L_2}\mik \sqrt{\frac{K}{\lambda}}.
\end{equation}
Moreover, for every $j\in [n]$ set
\begin{equation} \label{e12.25}
Y_j\coloneqq \frac{1}{n^{d-1}} \sum_{\substack{i\in [n]^d\\i(1) = j}} X_i \ \ \ \text{ and } \ \ \
Z_j\coloneqq \frac{1}{n^{d-1}} \sum_{\substack{i\in [n]^d\\i(1) = j}} G_i.
\end{equation}
By \eqref{e12.24} and the Cauchy--Schwarz inequality, we have
\begin{equation} \label{e12.26}
\ave\big[|Y_j^2-Z^2_j|\big] \mik \frac{1}{n^{2d-2}} \!\!\!\sum_{\substack{i,p\in [n]^d\\i(1)=p(1)= j}}\!\!\!
\big(\ave\big[|G_iB_p|\big]+\ave\big[|B_iG_p|\big]+\ave\big[|B_iB_p|\big]\Big) \mik \frac{3K}{\lambda}
\end{equation}
which yields that
\begin{align} \label{e12.27}
& \mathrm{osc}(\bbx) \stackrel{\eqref{e1.10}}{=} \Big\| \frac{1}{n}\sum_{j=1}^n Y_j^2-\delta_1\Big\|_{L_1}
\mik \Big\| \frac{1}{n}\sum_{j=1}^n Z_j^2-\delta_1\Big\|_{L_1} + \frac{3K}{\lambda} \mik \\
\mik \ \Big\| \frac{1}{n} & \sum_{j=1}^n Z_j^2-\delta_1\Big\|_{L_2} + \frac{3K}{\lambda} =
\Big( \frac{1}{n^2} \sum_{j_1,j_2=1}^n Z^2_{j_1}Z^2_{j_2} + \delta_1^2 -
2\delta_1\frac{1}{n} \sum_{j=1}^n Z_j^2\Big)^{1/2}  + \frac{3K}{\lambda}. \nonumber
\end{align}
Using \eqref{e12.24} and H\"{o}lder's inequality, it is easy to see that for every $j\in [n]$,
\begin{equation} \label{e12.28}
\big|\ave[Z_j^2]-\delta_1\big|\mik \frac{3K}{\lambda}+\frac{4d^2K^{2/3}}{n} \ \ \ \ \text{ and } \ \ \ \
\ave[Z_j^4]\mik K\lambda.
\end{equation}
Next, set $i_1\coloneqq (1,2,\dots,d)$, $i_2\coloneqq (1,d+1,\dots,2d-1)$,
$i_3\coloneqq (2d,2d+1,\dots,3d-1)$ and $i_4\coloneqq (2d,3d,\dots,4d-2)$ if $d\meg 2$;
otherwise, set $i_1=i_2=1$ and $i_3=i_4=2$. By the exchangeability of $\bbx$
and H\"{o}lder's inequality, for every $j_1,j_2\in [n]$ with $j_1\neq j_2$~we~have
\begin{equation} \label{e12.29}
\big|\ave[Z^2_{j_1}Z^2_{j_2}]-\delta_1^2\big| \mik
\big|\ave[G_{i_1}G_{i_2}G_{i_3}G_{i_4}]-\delta_1^2\big| + \frac{8d^2 K^{4/3}}{n}+ K\lambda\,\frac{8d^2}{n};
\end{equation}
on the other hand, by the dissociativity and exchangeability of $\bbx$,
\begin{equation} \label{e12.30}
\ave[G_{i_1}G_{i_2}G_{i_3}G_{i_4}]=\ave[G_{i_1}G_{i_2}]\cdot \ave[G_{i_3}G_{i_4}]=
\ave[G_{i_1}G_{i_2}]^2.
\end{equation}
Since $X_{i_1}X_{i_2}=G_{i_1}G_{i_2}+G_{i_1}B_{i_2}+B_{i_1}G_{i_2}+B_{i_1}B_{i_2}$
and $\ave[X_{i_1}X_{i_2}]=\delta_1$, by \eqref{e12.24} and H\"{o}lder's inequality,
we obtain that
\begin{equation} \label{e12.31}
\big|\ave[G_{i_1}G_{i_2}G_{i_3}G_{i_4}]-\delta_1^2\big| \mik \frac{6K^{5/3}}{\lambda}.
\end{equation}
By \eqref{e12.27}--\eqref{e12.31} and setting $\lambda=\sqrt{n}$,
we conclude that \eqref{e12.21} is satisfied.

We proceed to the proof of part (2). Let $(\Omega,\mathcal{F},\mathbf{P})$ be a probability
space, and assume that $\bbx$ is the mixture with respect to $(\Omega,\mathcal{F},\mathbf{P})$
of the dissociated, exchangeable and symmetric random tensors\footnote{Here, we assume that this
process is sufficiently measurable.} $\langle \bbx_\omega: \omega\in \Omega\rangle$, that is,
\begin{equation} \label{e12.32}
\prob(\bbx\in A)= \mathbf{E}\big[ \prob(\bbx_\omega\in A)\big]
\end{equation}
for every Borel subset $A$ of $\rr^{[n]^d}$, where $\mathbf{E}$ denotes expectation with
respect to $\mathbf{P}$. Since the entries of $\bbx$ have finite third moment,
the entries of $\bbx_\omega$ also have finite third moment $\mathbf{P}$-almost surely.
For every $i\in [n]^d$ let $\bbx_\omega(i)$ denote the $i$-entry of $\bbx_{\omega}$. Set
\begin{equation} \label{e12.33}
Z_\omega\coloneqq \frac{1}{n}\sum_{j=1}^n  \Big(\frac{1}{n^{d-1}}
\sum_{\substack{i\in [n]^d\\i(1) = j}} \bbx_{\omega}(i)\Big)^2
\end{equation}
and observe that
\begin{equation} \label{e12.34}
\mathrm{osc}(\bbx)= \mathbf{E}\big[\|Z_{\omega}-\delta_1\|_{L_1}\big].
\end{equation}
If $\delta_{1,\omega}\coloneqq \delta_1(\bbx_\omega)$ is the parameter in~\eqref{e1.6} associated
with the random tensor $\bbx_\omega$ and $K_\omega$ denotes the absolute third moment of its entries,
then, by \eqref{e12.34}, the triangle inequality for the $L_1$-norm and the Cauchy--Schwarz inequality,
\begin{align} \label{e12.35}
\mathrm{osc}(\bbx) & \mik \mathbf{E}\big[\|Z_\omega-\delta_{1,\omega}\|_{L_1}\big]
+\mathbf{E}\big[|\delta_{1,\omega}-\delta_1|\big] \\
& = \mathbf{E}\big[\mathrm{osc}(\bbx_\omega)\big] +
\mathbf{E}\big[|\delta_{1,\omega}-\delta_1|\big]
\stackrel{\eqref{e12.21}}{\mik} \mathbf{E}\big[|\delta_{1,\omega}-\delta_1|\big]
+ \frac{16d\, \big(\mathbf{E}[K_\omega]+1\big)}{\sqrt[4]{n}} \nonumber \\
& \mik \sqrt{\big|\mathbf{E}[\delta_{1,\omega}^2]-\delta_1^2\big|} +
\frac{16d\, \big(\ave\big[|X_{(1,\dots,d)}|^3\big]+1\big)}{\sqrt[4]{n}}. \nonumber
\end{align}
Finally, since the random tensors $\langle \bbx_\omega:\omega\in \Omega\rangle$
are exchangeable and dissociated,~we~have
\begin{equation} \label{e12.36}
\mathrm{pc}(\bbx) =\big|\mathbf{E}[\delta_{1,\omega}^2]-\delta_1^2\big|.
\end{equation}
Combining \eqref{e12.35} and \eqref{e12.36} the result follows.
\end{proof}
\begin{rem} \label{r12.8}
Note that the proof of Proposition \ref{p12.7} actually yields the validity of
\eqref{e12.21} under the slightly weaker hypothesis that the subtensor
$\bbx_{[4d-2]}=\langle X_i:i\in [4d-2]^d\rangle$ of $\bbx$ is dissociated.
\end{rem}

\subsubsection{Infinitely extendible random tensors} \label{subsubsec12.2.1}

Recall that an exchangeable random tensor $\bbx=\langle X_i: i\in [n]^d\rangle$ is called
\textit{infinitely extendible} if there exists an exchangeable (infinite) random tensor
$\boldsymbol{Y}=\langle Y_i: i\in \nn^d\rangle$ such that
$\boldsymbol{Y}_n\coloneqq \langle Y_i: i\in [n]^d\rangle$ and $\bbx$ have the
same distribution. It is a classical observation that exchangeable random tensors
might fail to be infinitely extendible. Nevertheless, many exchangeable random tensors that
appear in practice are infinitely extendible; \textit{e.g.}, all exchangeable random tensors
whose entries are functions of i.i.d. random variables are infinitely extendible.

Arguably, infinitely extendible exchangeable random tensors are better-behaved;
for instance, it is easy to see that if $\bbx$ satisfies (\hyperref[A1]{$\mathcal{A}$1})
and (\hyperref[A2]{$\mathcal{A}$2}) and it is infinitely extendible, then the parameters
\[ \delta_0(\bbx),\dots,\delta_d(\bbx) \ \ \ \text{ and } \ \ \ \Sigma_0(\bbx),\dots,\Sigma_d(\bbx) \]
associated with $\bbx$ via \eqref{e1.6}, \eqref{e1.7} and \eqref{e1.9},
are all nonnegative. That said, we have the following version of Theorem \ref{t1.4}
for this class of random tensors.
\begin{cor} \label{c12.9}
Let $\bbx,\bbth$ which satisfy \emph{(\hyperref[A1]{$\mathcal{A}$1})},
\emph{(\hyperref[A2]{$\mathcal{A}$2})}, \emph{(\hyperref[A3]{$\mathcal{A}$3})} with $n\meg 4d$,
and assume that $\bbx$ is infinitely extendible. Assume, moreover, that $\seminorm{\bbth}_1=1$,
and let $\alpha \in (0,1)$ such that the following non-degenericity condition holds true
\begin{equation} \label{e12.37}
\delta_1> \max\big\{ \mathrm{pc}(\bbx)^{\frac{\alpha}{2}}, \delta_0^\alpha\big\}
\end{equation}
where $\mathrm{pc}(\bbx)$ is as in \eqref{e3.7}.
Then, setting $\sigma^2\coloneqq \mathrm{Var}\big(\langle \bbth,\bbx\rangle\big)$, we have
\begin{equation} \label{e12.38}
d_K\big( \langle\bbth,\bbx\rangle, \mathcal{N}(0,\sigma^2)\big) \mik E''_1+E''_2+E''_3
\end{equation}
where
\begin{align}
\label{e12.39} E''_1 & \coloneqq 5\mathrm{pc}(\bbx)^{\frac{1-\alpha}{2}}+
5\delta_0^{1-\alpha} +
\frac{\delta_0}{d^2\delta_1}\, \big|\seminorm{\bbth}^2_0-1\big| \\
\label{e12.40} E''_2 & \coloneqq 2^{36}\, \frac{\ave\big[|X_{(1,\dots,d)}|^3\big]}{\delta_1^{3/2}}
\Big( \sum_{j=1}^n \Big|\!\sum_{\substack{i\in[n]^d\\i(1) = j}} \theta_i\Big|^3\Big) \\
\label{e12.41} E''_3 & \coloneqq 3\kappa\,
\frac{1}{d\sqrt{\delta_1}}\, \sum_{s=2}^d \binom{d}{s} \sqrt{s!} \,
\sqrt{\Sigma_s} \, \seminorm{\bbth}_s.
\end{align}
Here, $\kappa=20 d^3 18^d (2d)!$ is as in Theorem \emph{\ref{t1.4}}.
\end{cor}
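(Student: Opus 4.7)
The plan is to derive Corollary \ref{c12.9} as a specialization of Theorem \ref{t1.4}, exploiting two consequences of the infinite extendibility hypothesis. First, by the Aldous--Hoover representation \cite{Ald81,Hoo79,Kal05}, any symmetric, exchangeable, and infinitely extendible random tensor $\bbx$ satisfying (\hyperref[A1]{$\mathcal{A}$1}) and (\hyperref[A2]{$\mathcal{A}$2}) can be written as a mixture of symmetric, exchangeable, and dissociated random tensors $\bbx_\omega$ satisfying the same conditions. This representation immediately yields the nonnegativity of $\delta_0,\dots,\delta_d$ and of $\Sigma_0,\dots,\Sigma_d$ asserted before the corollary, since each of these parameters then arises, via Proposition \ref{p1.2} and Lemma \ref{l10.5} applied to the mixture components, as an expectation of a square (or of a product of identically distributed factors indexed by disjoint coordinates). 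Moreover, Proposition \ref{p12.7}(2) applies directly and gives
\[ \mathrm{osc}(\bbx)\mik \sqrt{\mathrm{pc}(\bbx)}+\frac{16d\big(\ave[|X_{(1,\dots,d)}|^3]+1\big)}{\sqrt[4]{n}}. \]

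I would then make a preliminary reduction to the regime where $n$ is sufficiently large with respect to $d$ and $\ave[|X_{(1,\dots,d)}|^3]$: otherwise the right-hand side of \eqref{e12.38}, in particular the $E''_2$ piece, already exceeds $1$ and the Kolmogorov bound is trivial. In this large-$n$ regime the correction $16d(\ave[|X_{(1,\dots,d)}|^3]+1)/\sqrt[4]{n}$ above is dominated by $\sqrt{\mathrm{pc}(\bbx)}$, and the terms $6\kappa/n^{1-\alpha}$ and $4\seminorm{\bbth}_0^2/n$ appearing in $E_1$ of Theorem \ref{t1.4} are dominated either by the main term of $E''_1$ or by $E''_2$.

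The core step is to apply Theorem \ref{t1.4} with the same $\alpha$. Using the elementary inequality $(a+b)^\alpha\mik a^\alpha+b^\alpha$ for $\alpha\in(0,1)$ together with Proposition \ref{p12.7}(2), the hypothesis $\delta_1>\mathrm{pc}(\bbx)^{\alpha/2}$ implies $\delta_1\meg \mathrm{osc}(\bbx)^\alpha$ up to an error of order $n^{-\alpha/4}$ that is absorbed by the large-$n$ reduction. Similarly, the bound $B\mik\delta_0+\frac{4d^2}{n}$, which follows from Fact \ref{f10.6} exactly as in \eqref{e11.35}, together with $\delta_1>\delta_0^\alpha$, yields $\delta_1\meg B^\alpha$ up to an analogous absorbed error. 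Thus \eqref{e1.11} holds and Theorem \ref{t1.4} applies. To convert its conclusion into \eqref{e12.38} I would substitute Proposition \ref{p12.7}(2) into the $5\mathrm{osc}(\bbx)^{1-\alpha}$ term of $E_1$, obtaining $\mathrm{pc}(\bbx)^{(1-\alpha)/2}$ plus an absorbed remainder; use $\delta_0\meg 0$ to drop the absolute value in the third summand of $E_1$; bound $B^{1-\alpha}$ by $\delta_0^{1-\alpha}+(4d^2/n)^{1-\alpha}$ with the second piece again absorbed; and finally use $\Sigma_s\meg 0$ to drop the correction $+16d^2 2^d/n$ inside the square root of $E_3$.

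The main obstacle is the bookkeeping: each of the lower-order corrections produced by Proposition \ref{p12.7}(2), Fact \ref{f10.6}, and Fact \ref{f1.3} must be tracked and shown to fit within the claimed bounds $E''_1+E''_2+E''_3$ without inflating the absolute constants. The cleanest presentation is probably to treat explicitly a ``$n$ small vs.\ $n$ large'' dichotomy relative to $d$ and $\ave[|X_{(1,\dots,d)}|^3]$, handling the former regime by triviality and the latter by a uniform absorption argument. No new combinatorial or probabilistic idea appears to be required beyond what has been developed in Sections \ref{sec9}--\ref{sec12}.
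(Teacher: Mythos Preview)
Your proposal misses the key maneuver that makes the stated constants work exactly. The paper does \emph{not} apply Theorem~\ref{t1.4} to $\bbx$ at the given size $n$ and then absorb the residual $1/n$ terms. Instead, it uses infinite extendibility a second time: pick an exchangeable extension $\boldsymbol{Y}=\langle Y_i:i\in\nn^d\rangle$, zero-pad $\bbth$ to a tensor $\bbth_N$ on $[N]^d$ for arbitrary $N\meg n$, note that $\langle\bbth,\bbx\rangle$ and $\langle\bbth_N,\boldsymbol{Y}_N\rangle$ are equidistributed, and that $\delta_s,\Sigma_s,\mathrm{pc}$, the third moment, and every $\seminorm{\cdot}_s$ are unchanged. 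Proposition~\ref{p12.7}(2) applied to $\boldsymbol{Y}_N$ gives $\mathrm{osc}(\boldsymbol{Y}_N)\mik\sqrt{\mathrm{pc}(\bbx)}+O(N^{-1/4})$, and $B_N\mik\delta_0+O(N^{-1})$; hence \eqref{e1.11} holds for $\boldsymbol{Y}_N$ once $N$ is large, regardless of the original $n$. Applying Theorem~\ref{t1.4} to $(\boldsymbol{Y}_N,\bbth_N)$ and letting $N\to\infty$ kills every term containing $1/N$ and converts $\sqrt{\Sigma_s+16d^22^d/N}$ into $\sqrt{\Sigma_s}$ and $\mathrm{osc}(\boldsymbol{Y}_N)^{1-\alpha}$ into $\mathrm{pc}(\bbx)^{(1-\alpha)/2}$, yielding \eqref{e12.38} with the exact constants stated.

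Your absorption strategy cannot recover these exact constants and in some regimes fails outright. The constants in $E''_1,E''_2,E''_3$ match those of $E_1,E_2,E_3$ with the $1/n$ pieces simply deleted, leaving no slack. Concretely, take $\delta_0=\mathrm{pc}(\bbx)=0$ so that $E''_1=0$; then the term $6\kappa/n^{1-\alpha}$ in $E_1$ must be absorbed into $E''_2$ or $E''_3$, but these can be made arbitrarily small by choosing $\bbth$ appropriately (e.g.\ $E''_2$ is only guaranteed to be $\gtrsim n^{-1/2}$, which does not dominate $n^{-(1-\alpha)}$ when $\alpha>1/2$). Your small-$n$/large-$n$ dichotomy also does not verify \eqref{e1.11} at the given $n$: the corollary only assumes $n\meg 4d$, whereas Theorem~\ref{t1.4} implicitly needs $\delta_1\meg(\kappa/n)^\alpha$, which can fail for moderate $n$ without forcing $E''_2\meg 1$. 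The passage to $N\to\infty$ sidesteps all of this at once.
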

The asymptotic behavior of the normalized averages $\frac{1}{n^{d-1/2}} \sum_{i\in [n]^d_{\mathrm{Inj}}} X_i$
which correspond to a random tensor $\bbx=\langle X_i: i\in [n]^d\rangle$ as in Corollary \ref{c12.9},
is well-studied in the literature; see, \textit{e.g.}, \cite{BCRT58,DDG21,EW78,Si76}.
For this particular case, Corollary~\ref{c12.9} yields Berry--Esseen bounds under the conditions
\begin{equation} \label{e12.42}
\delta_0=0, \ \ \ \ \mathrm{pc}(\bbx)=0 \ \ \ \text{ and } \ \ \ \delta_1>0.
\end{equation}
These conditions also cover the case of several popular
statistics (\textit{e.g.}, non-degenerate, possibly weighted, U-statistics).
\begin{proof}[Proof of Corollary \emph{\ref{c12.9}}]
Let $\boldsymbol{Y}=\langle Y_i:i\in \nn^d\rangle$ be an exchangeable (infinite) random tensor
such that $\bbx$ and $\boldsymbol{Y}_n$ have the same distribution, where for every integer $N\meg d$
by $\boldsymbol{Y}_N$ we denote the subtensor $\langle Y_i:i\in [N]^d\rangle$ of\, $\boldsymbol{Y}$.
By (\hyperref[A2]{$\mathcal{A}$2}), we see that $\boldsymbol{Y}$ is also symmetric
and its diagonal terms vanish. Therefore, by the Aldous--Hoover theorem
\cite{Ald81,Hoo79}, $\boldsymbol{Y}$~is~a mixture of exchangeable,
dissociated and symmetric infinite random tensors whose diagonal terms vanish.
This property is inherited, of course, to the subtensors of $\boldsymbol{Y}$. In particular,
for every integer $N\meg n$ the finite tensor $\boldsymbol{Y}_N$ is a mixture of exchangeable,
dissociated and symmetric random tensors and, consequently, part (2) of Proposition \ref{p12.7}
can be applied to $\boldsymbol{Y}_N$; also notice that
\begin{enumerate}
\item[---] the entries of $\boldsymbol{Y}_N$ and $\bbx$ have the same absolute third moment,
\item[---] $\delta_s(\bbx)=\delta_s(\boldsymbol{Y}_N)$ and
$\Sigma_s(\bbx)=\Sigma_s(\boldsymbol{Y}_N)$ for every $s\in\{0,\dots,d\}$, and
\item[---] $\mathrm{pc}(\bbx)=\mathrm{pc}(\boldsymbol{Y}_N)$.
\end{enumerate}
Next fix a real tensor $\bbth=\langle \theta_i: i\in [n]^d\rangle$ which satisfies
(\hyperref[A3]{$\mathcal{A}$3}) and with $\seminorm{\bbth}_1=1$. Let $N\meg n$ be an arbitrary integer.
We extend $\bbth$ to a real tensor $\bbth_N=\langle \theta_i: i\in [N]^d\rangle$ by setting
$\theta_i=0$ if $i\notin [n]^d$; clearly we have $\seminorm{\bbth}_s=\seminorm{\bbth_N}_s$
for every $s\in \{0,\dots,d\}$. Moreover, the random variables $\langle \bbth,\bbx\rangle$
and $\langle \bbth_N,\boldsymbol{Y}_N\rangle$ have the same distribution.
By the previous observations and \eqref{e12.22}, we have
\begin{equation} \label{e12.43}
\mathrm{osc}(\boldsymbol{Y}_N) \mik \sqrt{\mathrm{pc}(\bbx)} +
\frac{16d\, \big(\ave\big[|X_{(1,\dots,d)}|^3\big]+1\big)}{\sqrt[4]{N}}
\end{equation}
while, by \eqref{e11.35},
\begin{equation} \label{e12.44}
B_N=\Big\| \frac{1}{N^d} \sum_{i\in [N]^d} Y_i\Big\|_{L_2}^2 \mik \delta_0 + \frac{4d^2}{N}.
\end{equation}
By \eqref{e12.43}, \eqref{e12.44} and \eqref{e12.37}, we see that the non-degenericity condition \eqref{e1.11}
for the random tensor $\boldsymbol{Y}_N$ is satisfied for every sufficiently large positive integer $N$.
The result follows by applying Theorem \ref{t1.4} to $\boldsymbol{Y}_N, \bbth_N$
and using \eqref{e12.43}, and then taking the limit as $N$ goes to infinity.
\end{proof}

\subsection{Anticoncentration of polynomials: proof of Theorem \ref{t3.2}} \label{subsec12.3}

Let $n,d,\kappa,k$ and $\boldsymbol{\xi}=(\xi_1,\dots,\xi_n)$ be as in the statement of the theorem,
and fix a polynomial $f$ as in \eqref{e3.13}. We define a random tensor
$\bbx=\langle X_i: i\in [n]^d\rangle$ and a real tensor
$\bbth=\langle \theta_i:i\in [n]^d\rangle$ by setting $X_i=\theta_i=0$
if $i\notin [n]^d_{\mathrm{Inj}}$, and
\begin{equation} \label{e12.45}
X_i\coloneqq \prod_{r=1}^d \xi_{i_r} - \ave\Big[\prod_{r=1}^d \xi_{i_r}\Big] \ \ \ \ \text{ and } \ \ \ \
\theta_i\coloneqq \frac{a_{\{i_1,\dots,i_d\}}}{d!}
\end{equation}
for every $i=(i_1,\dots,i_d)\in  [n]^d_{\mathrm{Inj}}$, where
$\boldsymbol{\alpha}=\langle a_F: F\in \binom{[n]}{d} \rangle$ are the coefficients of $f$.
It is clear that with these choices we have
$\langle \bbth,\bbx\rangle=f(\boldsymbol{\xi})-\ave\big[f(\boldsymbol{\xi})\big]$ and, consequently,
\begin{enumerate}
\item[---] $\mathrm{Var}\big(\langle \bbth,\bbx\rangle\big)=\mathrm{Var}\big(f(\boldsymbol{\xi})\big)$, and
\item[---] $\mathcal{L}_{\langle \bbth,\bbx\rangle}(\ee)=\mathcal{L}_{f(\boldsymbol{\xi})}(\ee)$ for every $\ee>0$.
\end{enumerate}
Also notice that $\bbx$ and $\bbth$ satisfy the basic assumptions (\hyperref[A1]{$\mathcal{A}$1}),
(\hyperref[A2]{$\mathcal{A}$2}) and (\hyperref[A3]{$\mathcal{A}$3}). Moreover, by a tedious but fairly
straightforward computation, it is not hard to see that
\begin{align}
\label{e12.46} & \big|\delta_s- p^{2d-s}(1-p^s)\big| \mik p^{2d-s}\, \frac{12d^2}{k}
\ \ \text{ for every } s\in \{0,\dots,d\} \\
\label{e12.47} & \big|\Sigma_s-p^{2d-s}(1-p)^s\big| \mik p^{2d-s}(1+p)^s\, \frac{12d^2}{k}
\ \ \text{ for every } s\in \{1,\dots,d\} \\
\label{e12.48} & \mathrm{osc}(\bbx)\mik p^{2d-1}\, \frac{15d^2}{k} \\
\label{e12.49} & B= \Big\| \frac{1}{n^d} \sum_{i\in [n]^d} X_i\Big\|_{L_2}^2 =0
\end{align}
where $p=k/n$. By \eqref{e3.15}, \eqref{e12.46}, \eqref{e12.48} and \eqref{e12.49}, it follows that
the non-degenericity condition \eqref{e1.11} is satisfied with $\alpha=1/2$. Therefore, after appropriately
normalizing, by Theorem~\ref{t1.4} we obtain that
\begin{align} \label{e12.50}
d_K\big(\langle \bbth,\bbx\rangle,\mathcal{N}(0,\sigma^2)& \big) \mik
\frac{8\kappa}{\sqrt{p}}\cdot \frac{1}{\sqrt{n}} + \frac{6}{(1-p)\seminorm{\bbth}^2_1}
\cdot \frac{\seminorm{\bbth}^2_0}{n} \, +\\
& + \frac{2^{38}p^{3/2}}{p^{3d}(1-p)^{3/2}\seminorm{\bbth}^3_1}
\Big( \sum_{j=1}^n \Big|\!\sum_{\substack{i\in[n]^d\\i(1) = j}} \theta_i\Big|^3\Big)\, + \nonumber \\
& + \frac{8\kappa p^{1/2}}{dp^d(1-p)^{1/2}\seminorm{\bbth}_1}\, \sum_{s=2}^d \binom{d}{s} \sqrt{s!} \,
\sqrt{p^{2d-s}(1-p)^s} \, \seminorm{\bbth}_s \nonumber
\end{align}
where we have used \eqref{e3.15} and \eqref{e12.46}--\eqref{e12.49}. Next observe that for every $\ee>0$,
\begin{align} \label{e12.51}
\mathcal{L}_{\langle \bbth,\bbx\rangle}(\ee) & \mik \mathcal{L}_{\mathcal{N}(0,\sigma^2)}(\ee)
+2d_K\big(\langle \bbth,\bbx\rangle,\mathcal{N}(0,\sigma^2)\big) \\
& \hspace{1.5cm} \mik \frac{\ee}{\sqrt{2\pi}\,\sigma}+
2d_K\big(\langle \bbth,\bbx\rangle,\mathcal{N}(0,\sigma^2)\big). \nonumber
\end{align}
On the other hand, by the choice of the random tensor $\bbth$ in \eqref{e12.45} and the definitions
of the relevant seminorms in \eqref{e1.5} and \eqref{e3.14}, for every $s\in \{0,\dots,d\}$ we have
\begin{equation} \label{e12.52}
\seminorm{\bbth}_s= \frac{(d-s)!}{d!}\, \sqrt{s!}\, \seminorm{\boldsymbol{\alpha}}_s.
\end{equation}
Combining \eqref{e12.50}, \eqref{e12.51} and \eqref{e12.52}, we see that \eqref{e3.17} is satisfied.

Finally, in order to verify the estimate for the variance $\sigma^2$ of $f(\boldsymbol{\xi})$
observe that
\begin{equation} \label{e12.54}
\sigma^2=\mathrm{Var}\big(\langle\bbth,\bbx\rangle\big) \stackrel{\eqref{e1.8},\eqref{e1.9}}{=}
\sum_{s=0}^{d} \binom{d}{s}^2\, s!\, \Sigma_s\, \seminorm{\bbth}_s^2 \stackrel{\eqref{e12.52}}{=}
\sum_{s=0}^{d} \Sigma_s\, \seminorm{\boldsymbol{\alpha}}_s^2.
\end{equation}
Thus, \eqref{e3.16} follows from \eqref{e12.54} together with \eqref{e12.46}, \eqref{e12.47}
and \eqref{e3.15}.

%-------------------------------------------------------------%
%                           Appendix                          %
%-------------------------------------------------------------%

\appendix

\numberwithin{equation}{section}

\section{Proof of Lemma \ref*{l7.3}} \label{appendix}

The argument is an interesting and nontrivial instance of the Stein/Chen method of normal approximation
via concentration (see, \textit{e.g.}, \cite[Section 6]{BC14}).

\subsection{\!} \label{subappA.1}

Recall that $n,d$ are positive integers with $d\meg 2$ and $n\meg 4d^2$. We start by setting
\begin{equation} \label{ea.1}
\delta \coloneqq 16\, \frac{\Lambda}{n} \ \ \ \text{ and } \ \ \
\eta_\delta\coloneqq \frac{n}{4}\, \ave\big[|\Xi_1-\Xi_1'|\cdot\min\{|\Xi_1-\Xi_1'|,\delta\}\, |\, \pi_1\big].
\end{equation}		
Notice that, by \eqref{e7.6}, we have
\begin{equation} \label{ea.2}
\delta \meg \frac{n}{4} \, \ave\big[|\Xi_1-\Xi_1'|^3\big].
\end{equation}
Using \eqref{e7.5}, \eqref{e7.6}, \eqref{ea.2} and the fact that
$\min(\alpha,\beta)\meg \alpha - \frac{\alpha^2}{4\beta}$ for every $\alpha\meg 0$ and $\beta>0$,
we obtain that
\begin{align} \label{ea.3}
\ave[\eta_\delta]  & = \frac{n}{4}\, \ave\big[|\Xi_1-\Xi_1'|\cdot\min\{|\Xi_1-\Xi_1'|,\delta\}\big] \\
& \meg \frac{n}{4}\, \ave\big[(\Xi_1-\Xi_1')^2\big] - \frac{n}{16\delta}\, \ave\big[|\Xi_1-\Xi_1'|^3\big]
= 1- \frac{n^2}{16^2\Lambda}\, \ave\big[|\Xi_1-\Xi_1'|^3\big] \meg \frac{3}{4}. \nonumber
\end{align}
\begin{sbl} \label{sbla.1}
We have
\begin{equation} \label{ea.4}
\mathrm{Var}(\eta_\delta) \mik 2^9\delta^2.
\end{equation}
\end{sbl}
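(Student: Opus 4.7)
The plan is to expand $\mathrm{Var}(\eta_\delta)$ as a sum of pairwise covariances and bound them via a Hoeffding decomposition of each summand. Writing
\[
D_{ij}(\pi) \coloneqq \xi_1(i,\pi(i)) + \xi_1(j,\pi(j)) - \xi_1(i,\pi(j)) - \xi_1(j,\pi(i))
\]
and $F(x) \coloneqq |x|\min\{|x|,\delta\}$, one has, using that $(I_1,I_2)$ is uniform on $[n]^2$ and that $D_{ii}=0$,
\[
\eta_\delta(\pi_1) \;=\; \frac{1}{4n}\sum_{i,j\in[n]} F\bigl(D_{ij}(\pi_1)\bigr),
\]
where $F$ is even, $2\delta$-Lipschitz, and satisfies $F(x)\mik\delta|x|$. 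Expanding the variance then gives
\[
\mathrm{Var}(\eta_\delta) \;=\; \frac{1}{16n^2}\sum_{(i,j),(k,l)}\mathrm{Cov}_{\pi_1}\!\bigl(F(D_{ij}(\pi_1)),\,F(D_{kl}(\pi_1))\bigr),
\]
and the idea is to partition this sum according to $r\coloneqq|\{i,j\}\cap\{k,l\}|\in\{0,1,2\}$ and bound each block separately.

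For $r=2$ (that is, $\{i,j\}=\{k,l\}$) one uses $|F|\mik\delta|D|$ to bound the covariance by $\delta^2\,\ave[D_{ij}^2]$; summing and applying the identity $\sum_{i\neq j}\ave[D_{ij}^2]=4n$, which is a direct consequence of the formula $\ave[(\Xi_1-\Xi_1')^2]=4/n$ from Lemma \ref{l7.1}, produces a contribution of order $\delta^2/n$. For $r\in\{0,1\}$, I will apply the Hoeffding decomposition to $F_{ij}(a,b)\coloneqq F(D_{ij}(a,b))$ viewed as a function on $[n]^2_{\mathrm{Inj}}$: by the symmetry $F_{ij}(a,b)=F_{ij}(b,a)$, one obtains $F_{ij}(a,b)=\mu_{ij}+\alpha_{ij}(a)+\alpha_{ij}(b)+\beta_{ij}(a,b)$ with $\alpha_{ij}\colon[n]\to\rr$ centered and $\beta_{ij}$ a Hoeffding tensor in the sense of Definition \ref{d2.1}. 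The pointwise bound $|F|\mik\delta|D|$ yields the $L^2$ estimates
\[
\sum_a \alpha_{ij}(a)^2 \mik \frac{\delta^2}{n-1}\sum_{a\neq b}D_{ij}(a,b)^2, \qquad \sum_{a\neq b}\beta_{ij}(a,b)^2 \mik \delta^2\sum_{a\neq b}D_{ij}(a,b)^2.
\]
Plugging these into the explicit covariance formulas, which are obtained by conditioning on the joint distribution of three or four coordinates of the uniform permutation, and using the global identity $\sum_{i\neq j,\,a\neq b}D_{ij}(a,b)^2=4n^2(n-1)$, produces contributions of order $\delta^2$ in each of the remaining cases.

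The main technical obstacle lies in the $r=1$ and $r=0$ blocks, where the permutation constraint forces the covariances to depend on inner products of the doubly-centered pieces $\beta_{ij}$, $\beta_{kl}$ taken along suitable slices; for instance, in the $r=1$ case a short calculation gives
\[
\ave\bigl[\beta_{ij}(\pi_1(i),\pi_1(j))\,\beta_{il}(\pi_1(i),\pi_1(l))\bigr] \;=\; -\frac{1}{n(n-1)(n-2)}\sum_{a\neq b}\beta_{ij}(a,b)\beta_{il}(a,b).
\]
The required bounds follow from careful applications of Cauchy--Schwarz across the index configurations, or alternatively by invoking Proposition \ref{p5.1} with $s=2$ and $t=\emptyset$ applied to the doubly-indexed tensor $\Gamma((i,j),(a,b))\coloneqq\beta_{ij}(a,b)$, which supplies exactly the $n^{s-r}$ scaling needed to balance the prefactor $1/(16n^2)$. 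Collecting all contributions and absorbing universal constants yields the announced bound $\mathrm{Var}(\eta_\delta)\mik 2^9\,\delta^2$.
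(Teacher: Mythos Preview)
Your approach is essentially the same as the paper's: both rest on a Hoeffding decomposition of $\boldsymbol{\zeta}(i,j,a,b)=F(D_{ij}(a,b))$ in the second pair of coordinates. Your $\beta_{ij}(a,b)$ coincides with the paper's $\boldsymbol{\zeta}_D(i,j,a,b)$, and your $\alpha_{ij}$ corresponds to the one-dimensional marginal pieces $\boldsymbol{\zeta}_D(i,\cdot,p,\cdot)$, $\boldsymbol{\zeta}_D(\cdot,j,\cdot,q)$. The paper organises the computation by writing $\eta_\delta-\ave[\eta_\delta]$ as a sum of three permutation statistics (the Barbour--Chen decomposition) and bounding the second moment of each; you instead expand $\mathrm{Var}(\eta_\delta)$ into covariances and partition by $r=|\{i,j\}\cap\{k,l\}|$. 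These are two bookkeeping schemes for the same calculation, and both rely on the $L^2$ bounds coming from $|F|\mik\delta|D|$ together with the identity $\sum_{i\neq j,\,a\neq b}D_{ij}(a,b)^2=4n^2(n-1)$.

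One point does not work as written: the proposed alternative of applying Proposition~\ref{p5.1} to $\Gamma\big((i,j),(a,b)\big)=\beta_{ij}(a,b)$ fails, because that proposition requires the tensor to be Hoeffding in \emph{both} pairs of indices, whereas $\Gamma$ is centered only in the $(a,b)$ variables (nothing forces $\sum_i\beta_{ij}(a,b)=0$). You should therefore rely on the direct Cauchy--Schwarz route. This is exactly what the paper does for the $E_1$ term, using the row/column centering \eqref{ea.23} of $\boldsymbol{\zeta}_D$ in the second pair together with the uniform bound $|\boldsymbol{\zeta}_D|\mik 4\delta$; in your language, the $r\in\{0,1\}$ $\beta$--$\beta$ covariances collapse after exploiting $\sum_a\beta_{ij}(a,b)=\sum_b\beta_{ij}(a,b)=0$ inside the expectation over distinct values of $\pi_1$, and the remaining sums are controlled by $\sum_{a\neq b}\beta_{ij}(a,b)^2\mik\delta^2\sum_{a\neq b}D_{ij}(a,b)^2$. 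The $\alpha$--$\beta$ cross terms deserve a line of their own but are handled the same way.
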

We postpone the proof of Sublemma \ref{sbla.1} to the end of this appendix.

\subsection{\!} \label{subappA.2}

Now let $z\in\mathbb{R}$ be arbitrary. We will show that the probabilities
$\mathbb{P}(z-|\Theta|\mik \Xi_1\mik z)$ and $\mathbb{P}\big(z\mik \Xi_1 \mik z+|\Theta|\big)$
are both upper-bounded by the quantity appearing in the right-hand-side of \eqref{e7.20};
clearly, this is enough to complete the proof. The argument  is symmetric, and so
we shall focus on bounding the probability $\mathbb{P}\big(z\mik \Xi_1 \mik z+|\Theta|\big)$.

Let $f_\Theta\colon\rr\to\rr$ be defined by
\begin{equation} \label{ea.5}
f_\Theta(x) \coloneqq
\begin{cases}
-\big(\frac{1}{2}|\Theta|+\delta\big) & \text{if } x\mik z-\delta, \\
-\frac{1}{2}|\Theta|+x-z & \text{if } z-\delta<x<z+|\Theta|+\delta, \\
\frac{1}{2}|\Theta|+\delta & \text{if } x\meg z+|\Theta|+\delta.
\end{cases}
\end{equation}
By (\hyperref[E4]{$\mathcal{E}$4}), the pair $(\pi_1,\pi_2)$ is exchangeable.
Hence, by \eqref{e7.1} and \eqref{e7.19}, we have
\begin{equation} \label{ea.6}
\ave\big[(\Xi_1'-\Xi_1)\big(f_\Theta(\Xi_1') + f_{\Theta'}(\Xi_1)\big)\big]=0
\end{equation}
which in turn implies, after adding $2\,\ave\big[(\Xi_1-\Xi_1')f_\Theta(\Xi_1)\big]$
and multiplying by $\frac{n}{4}$, that
\begin{align}
\label{ea.7} \frac{n}{2}\, & \ave\big[(\Xi_1-\Xi'_1) f_\Theta(\Xi_1)\big] = \\
&= \frac{n}{4}\,\ave\big[(\Xi_1'-\Xi_1)\big(f_{\Theta'}(\Xi_1) - f_{\Theta}(\Xi_1)\big)\big]
+ \frac{n}{4}\, \ave\big[(\Xi_1'-\Xi_1) \big(f_\Theta(\Xi_1') - f_{\Theta}(\Xi_1)\big)\big]. \nonumber
\end{align}
Moreover, by \eqref{e7.4}, we have
\begin{equation} \label{ea.8}
\frac{n}{2}\, \ave\big[(\Xi_1'-\Xi_1)f_\Theta(\Xi_1)\big] = \frac{n}{2}\,
\ave\big[ f_\Theta(\Xi_1)\, \ave[\Xi_1'-\Xi_1\,|\,\pi_1]\big] =\ave\big[f_\Theta(\Xi_1)\Xi_1\big]
\end{equation}
and so, by \eqref{e7.3}, the Cauchy--Schwarz inequality and the fact that
$\|f_\Theta\|_{L_\infty}\mik \delta +\frac{1}{2}|\Theta|$,
\begin{equation} \label{ea.9}
\Big|\frac{n}{2}\,\ave\big[(\Xi_1'-\Xi_1)f_\Theta(\Xi_1)\big]\Big| \mik
\delta\, \ave\big[|\Xi_1|\big] +\frac{1}{2}\,\ave\big[|\Xi_1\Theta|\big] \mik
\delta + \frac{1}{2}\, \|\Theta\|_{L_2}.
\end{equation}
By the definition of $\Theta$ in \eqref{e7.19}, the triangle inequality and \eqref{e7.7},
we thus have
\begin{equation} \label{ea.10}
\Big|\frac{n}{2}\,\ave\big[(\Xi_1'-\Xi_1)f_\Theta(\Xi_1)\big]\Big| \mik
\delta + e^{d} (2d)!\, \sum_{s=2}^{d}\sqrt{\frac{\beta_s}{n^s}}.
\end{equation}

Next observe that $\|f_\Theta - f_{\Theta'}\|_{L_{\infty}}\mik \frac{1}{2}\,\big||\Theta|-|\Theta'|\big|
\mik \frac{1}{2}\,|\Theta-\Theta'|$. Hence,
\begin{align}
\label{ea.11} & \Big|\frac{n}{4}\,\ave\big[(\Xi_1'-\Xi_1)(f_{\Theta'}(\Xi_1) - f_{\Theta}(\Xi_1))\big]\Big|
\mik \Big| \frac{n}{8}\, \ave\big[|\Xi_1'-\Xi_1|\cdot|\Theta - \Theta'|\big] \Big|\\
\mik \frac{n}{8}\, & \|\Xi_1'-\Xi_1\|_{L_2} \|\Theta - \Theta'\|_{L_2}
\stackrel{\eqref{e7.5},\eqref{e7.19}}{\mik}\, \frac{\sqrt{n}}{4} \sum_{s=2}^{d}\|\Xi_s-\Xi_s'\|_{L_2}
\mik 2\,d^2e^{d}(2d)!\, \sum_{s=2}^{d}\sqrt{\frac{\beta_s}{n^s}} \nonumber
\end{align}
where we have used the Cauchy--Schwarz inequality, the triangle inequality and \eqref{e7.8}.

From this point on, the proof proceeds exactly as in \cite[Lemma 2.2]{BC05}.
More precisely, in order to estimate the second term of the right-hand-side of \eqref{ea.7},
by the fundamental theorem of calculus, we have
\begin{align}
\label{ea.12} A & \coloneqq \frac{n}{4}\, \ave\big[(\Xi_1'-\Xi_1) \big(f_\Theta(\Xi_1') - f_{\Theta}(\Xi_1)\big)\big]
 = \frac{n}{4}\,\ave\Big[(\Xi_1'-\Xi_1) \int_{0}^{\Xi_1'-\Xi_1} \!\! f'_\Theta(\Xi_1+t)\, dt\Big] \\
 & \, = \frac{n}{4}\, \ave\Big[(\Xi_1'-\Xi_1) \int_{-\infty}^{+\infty} f'_\Theta(\Xi_1+t)\,
\big(\mathbf{1}_{[0,\Xi_1'-\Xi_1]}(t)-\mathbf{1}_{[\Xi_1'-\Xi_1,0]}(t)\big)\, dt \Big] \nonumber
\end{align}
with the convention that $\mathbf{1}_{[\beta,\alpha]}$ is constantly $0$ if $\alpha<\beta$.
Also observe that, by \eqref{ea.5}, we have $f'_\Theta=\mathbf{1}_{[z-\delta,z+\delta+|\Theta|]}$
which implies that for every $t\in\rr$,
\begin{enumerate}
\item[$\bullet$] $(\Xi_1'-\Xi_1)\,f'_\Theta(\Xi_1+t)\,
\big(\mathbf{1}_{[0,\Xi_1'-\Xi_1]}(t)-\mathbf{1}_{[\Xi_1'-\Xi_1,0]}(t)\big) \meg 0$.
\end{enumerate}
Therefore,
\begin{equation} \label{ea.13}
A \meg \frac{n}{4}\, \ave\Big[\int_{|t|\mik\delta} (\Xi_1'-\Xi_1)\,
\mathbf{1}_{[z-\delta,z+\delta+|\Theta|]}(\Xi_1+t)\,
\big(\mathbf{1}_{[0,\Xi_1'-\Xi_1]}(t)-\mathbf{1}_{[\Xi_1'-\Xi_1,0]}(t)\big)\, dt\Big].
\end{equation}
Moreover, for every $|t|\mik\delta$ we have
\begin{enumerate}
\item[$\bullet$] $(\Xi_1'-\Xi_1)\,
\big(\mathbf{1}_{[0,\Xi_1'-\Xi_1]}(t)-\mathbf{1}_{[\Xi_1'-\Xi_1,0]}(t)\big)\meg 0$ and
\item[$\bullet$] $\mathbf{1}_{[z-\delta,z+\delta+|\Theta|]}(\Xi_1+t) \meg
\mathbf{1}_{[z,z+|\Theta|]}(\Xi_1)$,
\end{enumerate}
and so, by \eqref{ea.13},
\begin{align}
\label{ea.14} & A \meg \frac{n}{4}\,
\ave\Big[ \mathbf{1}_{[z,z+|\Theta|]}(\Xi_1)\, (\Xi_1'-\Xi_1) \int_{|t|\mik\delta}
\big(\mathbf{1}_{[0,\Xi_1'-\Xi_1]}(t)-\mathbf{1}_{[\Xi_1'-\Xi_1,0]}(t)\big)\, dt\Big] \\
 = \ \, & \frac{n}{4}\,
\ave\big[ \mathbf{1}_{[z,z+|\Theta|]}(\Xi_1)\, |\Xi_1'-\Xi_1|\cdot
\min\big\{\delta,|\Xi_1'-\Xi_1|\big\}\big] \nonumber \\
 \stackrel{\eqref{ea.1}}{=} &
\ave\big[\mathbf{1}_{[z,z+|\Theta|]}(\Xi_1)\,\eta_\delta\big] \meg
 \ave\big[\mathbf{1}_{[z,z+|\Theta|]}(\Xi_1)\big]\, \ave[\eta_\delta] -
\big|\ave\big[\mathbf{1}_{[z,z+|\Theta|]}(\Xi_1)\big(\eta_\delta-\ave[\eta_\delta]\big)\big]\big| \nonumber \\
 \stackrel{\eqref{ea.3}}{\meg} & \frac{3}{4}\,\mathbb{P}\big(z\mik \Xi_1\mik z+|\Theta|\big) -
\big|\ave\big[\mathbf{1}_{[z,z+|\Theta|]}(\Xi_1)\big(\eta_\delta-\ave[\eta_\delta]\big)\big]\big|. \nonumber
\end{align}
On the other hand, by Sublemma \ref{sbla.1} and inequality $\alpha\beta\mik\frac{1}{2}\,(\alpha^2+\beta^2)$
applied for ``$\alpha=\frac{1}{\sqrt{2}}\,\mathbf{1}_{[z,z+|\Theta|]}(\Xi_1)$" and
``$\beta = \sqrt{2}\,(\eta_\delta-\ave[\eta_\delta])$", we see that
\begin{align} \label{ea.15}
\ave\big[\mathbf{1}_{[z,z+|\Theta|]}(\Xi_1)\big(\eta_\delta-\ave[\eta_\delta]\big)\big]
& \mik \frac{1}{2}\, \Big( \frac{1}{2}\, \mathbb{P}\big(z\mik \Xi_1\mik z+|\Theta|\big)
+ 2 \mathrm{Var}(\eta_\delta) \Big) \\
& \mik \frac{1}{4}\, \mathbb{P}\big(z\mik \Xi_1\mik z+|\Theta|\big) + 2^9\delta^2. \nonumber
\end{align}
Thus, by \eqref{ea.14} and \eqref{ea.15}, we have
\begin{equation} \label{ea.16}
A\meg \frac{1}{2}\, \mathbb{P}\big(z\mik \Xi_1\mik z+|\Theta|\big) - 2^9\delta^2.
\end{equation}
Invoking the choice of $\delta$ in \eqref{ea.1} and the choice of $A$ in \eqref{ea.12} and
combining \eqref{ea.7}, \eqref{ea.10}, \eqref{ea.11} and \eqref{ea.16}, we conclude that
\begin{equation} \label{ea.17}
\mathbb{P}\big(z\mik \Xi_1\mik z + |\Theta|\big) \mik
2^5\frac{\Lambda}{n}+2^{17} \frac{\Lambda^2}{n^2} +
5d^2 e^d (2d)! \sum_{s=2}^{d}\sqrt{\frac{\beta_s}{n^s}}.
\end{equation}

\subsection{Proof of Sublemma \ref{sbla.1}} \label{subappA.3}

Let $\boldsymbol{\zeta}\colon [n]^2\times [n]^2\to\rr$ be defined by setting
\begin{align} \label{ea.18}
\boldsymbol{\zeta}(i,j,p,q) & \coloneqq |\boldsymbol{\xi}_1(i,p) +
\boldsymbol{\xi}_1(j,q) - \boldsymbol{\xi}_1(i,q) - \boldsymbol{\xi}_1(j,p)| \ \times \\
& \hspace{2.0cm} \times \min\big\{|\boldsymbol{\xi}_1(i,p) + \boldsymbol{\xi}_1(j,q)
-\boldsymbol{\xi}_1(i,q) - \boldsymbol{\xi}_1(j,p)|, \delta\big\} \nonumber
\end{align}
for every $i,j,p,q\in [n]$ with $i\neq j$ and $p\neq q$; otherwise, set $\boldsymbol{\zeta}(i,j,p,q)=0$.
Then observe that, by the choice of $\eta_{\delta}$ in \eqref{ea.1}, we have
\begin{equation} \label{ea.19}
\eta_\delta = \frac{n}{4}\, \ave\big[ |\Xi_1-\Xi_1'| \cdot \min\big\{|\Xi_1-\Xi_1'|,\delta\big\}\,
|\, \pi_1\big] = \frac{1}{4n}\, \sum_{\substack{i,j\in[n]\\i\neq j}}
\boldsymbol{\zeta}\big(i,j,\pi_1(i),\pi_1(j)\big).
\end{equation}
Also recall that, by (\hyperref[E3]{$\mathcal{E}$3}), $\pi_1$ is uniformly distributed on $\mathbb{S}_n$.
Thus, \eqref{ea.19} asserts the random variable $\eta_\delta$ can be expressed as the $Z$-statistic
of order two associated with the tensor $\boldsymbol{\zeta}$ which has a rather special form.

The proof is based on a specific decomposition of this $Z$-statistic. This decomposition,
also introduced by Barbour/Chen \cite{BC05}, is less symmetric than the decomposition described
in Section \ref{sec9} (yet it has enough symmetries in order to be computationally useful),
but it has the advantage that its non-constant components are mean zero random variables.
Of course, this property is very useful for computing the variance. For the convenience
of the reader we shall briefly recall this decomposition; for more information
we refer to \cite{BC05}.

Specifically, for every $i,j,p,q\in [n]$ set
\[ \boldsymbol{\zeta}(i,j,\cdot,q) \coloneqq \frac{1}{n-1}\sum_{\substack{r\in[n]\\r\neq q}}
 \boldsymbol{\zeta}(i,j,r,q), \ \ \ \boldsymbol{\zeta}(i,j,p,\cdot) \coloneqq
 \frac{1}{n-1}\sum_{\substack{r\in[n]\\r\neq p}} \boldsymbol{\zeta}(i,j,p,r), \]
\[ \hspace{0.15cm} \boldsymbol{\zeta}(i,j,\cdot,\cdot) \coloneqq \frac{1}{n(n-1)}
 \sum_{\substack{r,s\in[n]\\r\neq s}} \boldsymbol{\zeta}(i,j,r,s), \ \ \
 \boldsymbol{\zeta}(i,\cdot,p,\cdot) \coloneqq \frac{1}{(n-1)^2}
 \sum_{\substack{r,s\in[n]\\r\neq i,s\neq p}} \boldsymbol{\zeta}(i,r,p,s),\]
\[ \! \boldsymbol{\zeta}(i,\cdot,\cdot,\cdot) \coloneqq \frac{1}{n(n-1)^2}
 \sum_{\substack{r,s,\ell\in[n]\\\ell\neq i,r\neq s}} \! \boldsymbol{\zeta}(i,\ell,r,s),
 \ \ \ \boldsymbol{\zeta}(\cdot,j,\cdot,q) \coloneqq \frac{1}{(n-1)^2}
 \sum_{\substack{r,s\in[n]\\r\neq j,s\neq q}} \! \boldsymbol{\zeta}(r,j,s,q), \]
\[ \hspace{0.3cm} \boldsymbol{\zeta}(\cdot,j,\cdot,\cdot) \coloneqq \frac{1}{n(n-1)^2}
 \sum_{\substack{r,s,\ell\in[n]\\\ell\neq j,r\neq s}} \!\boldsymbol{\zeta}(\ell,j,r,s),
 \ \ \  \boldsymbol{\zeta}(\cdot,\cdot,\cdot,\cdot) \coloneqq \frac{1}{n^2(n-1)^2}
 \sum_{\substack{r,s,k,\ell\in[n]\\ r\neq s,k\neq\ell}} \! \boldsymbol{\zeta}(r,s,k,\ell). \]
Then, for every $i,j,p,q\in [n]$ we define
\begin{align}
\label{ea.20} \boldsymbol{\zeta}_D(i,j,p,q) & \coloneqq \boldsymbol{\zeta}(i,j,p,q) -
\boldsymbol{\zeta}(i,j,\cdot,q)-\boldsymbol{\zeta}(i,j,p,\cdot) +\boldsymbol{\zeta}(i,j,\cdot,\cdot) \\
\label{ea.21} \boldsymbol{\zeta}_D(i,\cdot,p,\cdot) & \coloneqq \boldsymbol{\zeta}(i,\cdot,p,\cdot) -
\boldsymbol{\zeta}(i,\cdot,\cdot,\cdot) \\
\label{ea.22} \boldsymbol{\zeta}_D(\cdot,j,\cdot,q) & \coloneqq \boldsymbol{\zeta}(\cdot,j,\cdot,q) -
\boldsymbol{\zeta}(\cdot,j,\cdot,\cdot).
\end{align}
Notice that for every $i,p,q\in [n]$ we have $\boldsymbol{\zeta}_D(i,i,p,q)=0$; moreover,
for every $i,j\in [n]$,
\begin{equation} \label{ea.23}
\sum_{\substack{p,q\in[n]\\p\neq q}}\boldsymbol{\zeta}_D(i,j,p,q) = 0, \ \ \ \
\sum_{p=1}^{n}\boldsymbol{\zeta}_D(i,\cdot,p,\cdot)=0 \ \ \text{ and } \ \
\sum_{p=1}^{n}\boldsymbol{\zeta}_D(\cdot,i,\cdot,p)=0.
\end{equation}
By \eqref{ea.19} and the previous definitions, we may decompose $\eta_\delta$ as
\begin{align}
\label{ea.24} \eta_\delta & = \frac{1}{4n} \sum_{\substack{i,j\in[n]\\i\neq j}}
\boldsymbol{\zeta}_D\big(i,j,\pi_1(i),\pi_1(j)\big) +
\frac{n-1}{4n}\sum_{i=1}^{n}\boldsymbol{\zeta}_D \big(\cdot,i,\cdot,\pi_1(i)\big) \ + \\
& \hspace{3.0cm} + \frac{n-1}{4n}\sum_{i=1}^{n}\boldsymbol{\zeta}_D\big(i,\cdot,\pi_1(i),\cdot\big)
+\frac{n-1}{4}\boldsymbol{\zeta}(\cdot,\cdot,\cdot,\cdot). \nonumber
\end{align}
Invoking \eqref{ea.23} and the fact that $\pi_1$ is uniformly distributed on $\mathbb{S}_n$
we see, in particular, that $\ave[\eta_\delta] = \frac{n-1}{4}\boldsymbol{\zeta}(\cdot,\cdot,\cdot,\cdot)$.
Therefore,
\begin{align}
\label{ea.25} \mathrm{Var}(\eta_\delta) & \mik
\frac{3}{2^4n^2}\, \ave\Big[\Big(\sum_{\substack{i,j\in[n]\\i\neq j}}
\boldsymbol{\zeta}_D\big(i,j,\pi_1(i),\pi_1(j)\big)\Big)^2\Big] \, + \\
& \hspace{1.0cm} + \frac{3}{2^4}\, \ave\Big[\Big(\sum_{i=1}^{n}\boldsymbol{\zeta}_D
\big(\cdot,i,\cdot,\pi_1(i)\big)\Big)^2\Big] +
\frac{3}{2^4}\, \ave\Big[\Big(\sum_{i=1}^{n}
\boldsymbol{\zeta}_D\big(i,\cdot,\pi_1(i),\cdot\big)\Big)^2\Big] \nonumber \\
& \coloneqq E_1 +E_2 +E_3. \nonumber
\end{align}
Finally, using the Cauchy--Schwarz inequality, \eqref{ea.23} and arguing\footnote{The argument in
this context is actually simpler.} as in Subsection \ref{subsec5.6}, it is not hard to verify that
\begin{equation} \label{ea.26}
E_1\mik \Big(\frac{2^63}{n}+2^63\Big)\delta^2, \ \ \ \
E_2\mik 2^3 3\delta^2 \ \ \text{ and } \ \ E_3\mik 2^3 3\delta^2.
\end{equation}
By \eqref{ea.25}, \eqref{ea.26} and the fact that $n\meg 3$, we conclude that \eqref{ea.4} is satisfied.

\end{document}